\DeclareMathAlphabet{\mathpzc}{OT1}{pzc}{m}{it}
\theoremstyle{plain}
\newcommand{\refnewtheoremn}[4]{
\newaliascnt{#1}{#2}
\newtheorem{#1}[#1]{#3}
\aliascntresetthe{#1}
\expandafter\providecommand\csname #1autorefname\endcsname{#4}}
\newcommand{\refnewtheorem}[3]{\refnewtheoremn{#1}{#2}{#3}{#3}}
\def\makeCal#1{
\expandafter\newcommand\csname c#1\endcsname{\mathcal{#1}}}
\def\makeBB#1{
\expandafter\newcommand\csname b#1\endcsname{\mathbb{#1}}}
\def\makeFrak#1{
\expandafter\newcommand\csname f#1\endcsname{\mathfrak{#1}}}
\edef\y{\@Alph\count@}
\newtheorem{theorem}{Theorem}[section]
\theoremstyle{definition}
\DeclareMathOperator{\PGL}{\operatorname{PGL}}
\DeclareMathOperator{\Hom}{\operatorname{Hom}}
\DeclareMathOperator{\MCG}{\operatorname{MCG}}
\DeclareMathOperator{\Tilt}{\operatorname{Tilt}}
\DeclareMathOperator{\Aut}{\operatorname{Aut}}
\DeclareMathOperator{\Sph}{\operatorname{Sph}}
\DeclareMathOperator{\Exch}{\operatorname{Exch}}
\DeclareMathOperator{\Stab}{\operatorname{Stab}}
\DeclareMathOperator{\Tri}{\operatorname{Tri}}
\DeclareMathOperator{\Zer}{\operatorname{Zer}}
\DeclareMathOperator{\Pol}{\operatorname{Pol}}
\DeclareMathOperator{\Crit}{\operatorname{Crit}}
\DeclareMathOperator{\Res}{\operatorname{Res}}
\DeclareMathOperator{\sgn}{\operatorname{sgn}}
\DeclareMathOperator{\Ext}{\operatorname{Ext}}
\DeclareMathOperator{\Nil}{\operatorname{Nil}}
\DeclareMathOperator{\Tw}{\operatorname{Tw}}
\DeclareMathOperator{\Quad}{\operatorname{Quad}}
\newcommand{\cAut}{\mathpzc{Aut}}
\newcommand{\cSph}{\mathpzc{Sph}}
\renewcommand{\Im}{\operatorname{Im}}
\renewcommand{\Re}{\operatorname{Re}}
\begin{document}

\title[Stability conditions, cluster varieties, and Riemann-Hilbert problems]{Stability conditions, cluster varieties, \protect\\and Riemann-Hilbert problems from surfaces}
\author{Dylan G.L. Allegretti}

\date{}

\maketitle

\begin{abstract}
We consider two interesting spaces associated to a quiver with potential: a space of stability conditions and a cluster variety. In the case where the quiver with potential arises from an ideal triangulation of a marked bordered surface, we construct a natural map from a dense subset of the space of stability conditions to the cluster variety. Using this construction, we give solutions to a family of Riemann-Hilbert problems arising in Donaldson-Thomas theory.
\end{abstract}

\section{Introduction}

This paper is the main work in a series \cite{Allegretti17, AllegrettiBridgeland, Allegretti19, Allegretti20} on the relationship between two spaces. One of these spaces is a complex manifold parametrizing Bridgeland stability conditions on a certain 3-Calabi-Yau triangulated category, and the other is a cluster variety. The structure of both spaces is controlled by the combinatorics of quiver mutations, and yet the two spaces look quite different geometrically. Indeed, the space of stability conditions has a cell decomposition, whereas the cluster variety is composed of algebraic tori glued together by birational maps. The aim of this paper is to understand the highly nontrivial relationship between these spaces in a large class of examples arising from triangulated surfaces.

A hint that there might be some deep relationship between stability conditions and the cluster variety comes from the work of Gaiotto, Moore, and Neitzke in physics \cite{GMN1, GMN2}. Their work paints a remarkable conjectural picture involving Higgs bundles, Donaldson-Thomas invariants, and the Kontsevich-Soibelman wall-crossing formula. While these papers have led to a great deal of recent mathematical work, a complete and mathematically rigorous approach to the physical theories studied in \cite{GMN1, GMN2} does not yet exist.

The present paper is a step towards a mathematical understanding of the work of Gaiotto, Moore, and Neitzke. Rather than attempt to formalize their ideas in complete generality, we work in the so called conformal limit, which was first studied in the physics literature in~\cite{Gaiotto} and later in works by mathematicians~\cite{DFKMMN, CollierWentworth}. By combining the results of our previous papers \cite{AllegrettiBridgeland, Allegretti19, Allegretti20}, we construct a canonical map from a dense subset of the space of stability conditions to the cluster variety and show that this map relates various features of the two spaces. This construction generalizes our earlier results~\cite{Allegretti17} and gives a way of understanding~\cite{GMN2} mathematically in the conformal limit.

At the heart of the construction of Gaiotto, Moore, and Neitzke is a certain Riemann-Hilbert problem. In this paper, we will consider the conformal limit of this Riemann-Hilbert problem, which was recently studied by Bridgeland in the context of Donaldson-Thomas theory~\cite{Bridgeland19}. A solution of this problem is a piecewise meromorphic function on~$\mathbb{C}^*$ having prescribed discontinuities along a collection of rays. Previously, solutions of this problem were known only in a handful of special cases~\cite{Bridgeland19, Bridgeland17, Barbieri}. Here, using the relationship between the space of stability conditions and the cluster variety, we give solutions of the Riemann-Hilbert problem in the much larger class of examples associated to triangulated surfaces. We suggest that solving these Riemann-Hilbert problems is the key to gaining a deeper understanding of the relationship between stability conditions and the cluster variety.

\subsection{Quadratic differentials and local systems}

In order to relate the space of stability conditions and the cluster variety, we will focus on a class of examples in which these spaces can be interpreted as moduli spaces of geometric structures on surfaces. In this class of examples, the work of Bridgeland and Smith~\cite{BridgelandSmith} shows that the space of stability conditions is isomorphic to a moduli space of meromorphic quadratic differentials, while the work of Fock and Goncharov~\cite{FockGoncharov1} shows that the cluster variety is birational to a moduli space of local systems equipped with additional framing data. Before describing the relationship between these two moduli spaces, we will discuss an analogous relationship between holomorphic differentials and unframed local systems, where the story is quite classical.

A holomorphic quadratic differential on a Riemann surface~$S$ is defined as a holomorphic section of $\omega_S^{\otimes2}$ where $\omega_S$ is the holomorphic cotangent bundle of~$S$. If $\mathbb{S}$ is any closed, oriented surface of genus~$g\geq2$, let $\mathscr{T}(\mathbb{S})$ denote the Teichm\"uller space of $\mathbb{S}$, viewed as the moduli space of Riemann surfaces~$S$ equipped with a marking, that is, an isotopy class of orientation preserving diffeomorphisms $\theta:\mathbb{S}\rightarrow S$. There is a vector bundle 
\[
q:\mathscr{Q}(\mathbb{S})\rightarrow\mathscr{T}(\mathbb{S})
\]
whose fiber over $(S,\theta)$ is the vector space 
\begin{equation}
\label{eqn:holomorphicdifferentials}
H^0(S,\omega_S^{\otimes2})\cong\mathbb{C}^{3g-3}
\end{equation}
of holomorphic quadratic differentials.

The notion of a quadratic differential is closely related to the notion of a projective structure. A projective structure on a Riemann surface $S$ is defined as an atlas of holomorphic charts $z_i:U_i\rightarrow\mathbb{P}^1$ where the domains $U_i$ form an open cover of~$S$ and each transition function $g_{ij}=z_i\circ z_j^{-1}$ is the restriction of an element of $\PGL_2(\mathbb{C})$. Suppose we are given a projective structure $\mathcal{P}$ and quadratic differential $\phi$ on a Riemann surface~$S$. If $z:U\rightarrow\mathbb{P}^1$ is a chart of~$\mathcal{P}$ and we write 
\[
\phi(z)=\varphi(z)dz^{\otimes2}
\]
for some holomorphic function $\varphi(z)$ in this local coordinate, then we obtain a chart in a new projective structure $\mathcal{P}+\phi$ by considering the ratio of two linearly independent solutions of the differential equation 
\begin{equation}
\label{eqn:introschrodinger}
y''(z)-\varphi(z)\cdot y(z)=0.
\end{equation}
This construction gives the set of projective structures on a Riemann surface the structure of an affine space for the vector space~\eqref{eqn:holomorphicdifferentials}.

Given a compact oriented surface $\mathbb{S}$ as before, we can consider the set $\mathscr{P}(\mathbb{S})$ of equivalence classes of triples $(S,\mathcal{P},\theta)$ where $S$ is a Riemann surface equipped with a projective structure~$\mathcal{P}$, and $\theta:\mathbb{S}\rightarrow S$ is a marking. Two triples $(S_1,\mathcal{P}_1,\theta_1)$ and $(S_2,\mathcal{P}_2,\theta_2)$ are considered to be equivalent if there is a biholomorphism $f:S_1\rightarrow S_2$ which preserves the projective structures and commutes with the markings in the obvious way. The set $\mathscr{P}(\mathbb{S})$ has the natural structure of a complex manifold of dimension $6g-6$, and there is an obvious forgetful map 
\[
p:\mathscr{P}(\mathbb{S})\rightarrow\mathscr{T}(\mathbb{S})
\]
which is an affine bundle for the vector bundle $q$ of quadratic differentials.

It follows that after choosing a continuous section of the bundle~$p$, we get a (not necessarily holomorphic) homeomorphism 
\begin{equation}
\label{eqn:identificationholomorphic}
\mathscr{Q}(\mathbb{S})\cong\mathscr{P}(\mathbb{S}).
\end{equation}
One natural choice of section is the uniformizing section. Given a point $(S,\theta)$ in the Teichm\"uller space $\mathscr{T}(\mathbb{S})$, its image under this section is the triple $(S,\mathcal{P},\theta)$ where the charts of $\mathcal{P}$ are defined as local sections of the universal covering map $\widetilde{S}\rightarrow S$. By the uniformization theorem, the universal cover $\widetilde{S}$ is isomorphic to $\mathbb{P}^1$,~$\mathbb{C}$, or $\mathbb{H}$, with deck transformations acting as elements of $\PGL_2(\mathbb{C})$. Hence this construction defines a projective structure.

Associated to the surface $\mathbb{S}$ is the quotient stack 
\[
\mathscr{X}(\mathbb{S})=\Hom(\pi_1(\mathbb{S}),G)/G
\]
parametrizing representations of the fundamental group of~$\mathbb{S}$ into $G=\PGL_2(\mathbb{C})$ up to conjugation. Any projective structure determines an associated $G$-local system and hence a point of~$\mathscr{X}(\mathbb{S})$ called the monodromy of the projective structure. The stack $\mathscr{X}(\mathbb{S})$ contains an open substack $\mathscr{X}^*(\mathbb{S})$ having the structure of a complex manifold, and there is a holomorphic map $F:\mathscr{P}(\mathbb{S})\rightarrow\mathscr{X}^*(\mathbb{S})$ sending a projective structure to its monodromy. By composing with the identification~\eqref{eqn:identificationholomorphic} defined using the uniformizing section, we obtain a mapping class group equivariant continuous map 
\[
\widehat{F}:\mathscr{Q}(\mathbb{S})\rightarrow\mathscr{X}^*(\mathbb{S}).
\]
A classical result known as Hejhal's theorem states that the monodromy map $F$ is a local biholomorphism~\cite{Hubbard}. This implies that the map $\widehat{F}$ is a local homeomorphism.

The construction we have just described is closely related to another construction in differential geometry. Given a compact Riemann surface~$S$, one can think of the space~\eqref{eqn:holomorphicdifferentials} as the base of Hitchin's integrable system. Then the choice of a holomorphic quadratic differential $\phi$ is equivalent to a choice of Higgs bundle on the Hitchin section. By the nonabelian Hodge~correspondence, this determines a corresponding family of local systems on the surface~$S$. A conjecture of Gaiotto~\cite{Gaiotto}, proved mathematically in~\cite{DFKMMN}, states that in a certain scaling limit known as the conformal limit, one recovers the local system~$\widehat{F}(\phi)$. This is the reason we defined $\widehat{F}$ in the manner described above. In particular, it is the reason we used the uniformizing section to make the identification~\eqref{eqn:identificationholomorphic}.

\subsection{Meromorphic differentials and framed local systems}

Let us now consider a compact, connected Riemann surface~$S$ and a meromorphic quadratic differential~$\phi$ on~$S$. In other words, $\phi$~is a meromorphic section of the line bundle $\omega_S^{\otimes2}$. If $p\in S$ is a pole of~$\phi$ of order $m\geq3$, then we will see below that there are $m-2$ distinguished tangent vectors at the point~$p$. We can define a compact oriented surface $\mathbb{S}$ with boundary by taking the oriented real blow up of~$S$ at each of the poles of $\phi$ order~$\geq3$. We also get a finite set $\mathbb{M}$ of marked points on~$\mathbb{S}$. It consists of the points on the boundary of~$\mathbb{S}$ corresponding to the distinguished tangent directions, together with the poles of~$\phi$ of order $\leq2$, regarded as marked points in the interior of~$\mathbb{S}$.

In general, a pair $(\mathbb{S},\mathbb{M})$ consisting of a compact oriented surface~$\mathbb{S}$ and a nonempty finite set $\mathbb{M}\subset\mathbb{S}$ of marked points such that each boundary component has at least one marked point is called a marked bordered surface. Given an arbitrary marked bordered surface $(\mathbb{S},\mathbb{M})$, a marking $\theta$ of~$(S,\phi)$ is defined as an isotopy class of isomorphisms from $(\mathbb{S},\mathbb{M})$ to the marked bordered surface determined by the pair $(S,\phi)$. In Section~\ref{sec:QuadraticDifferentials}, we define a complex manifold $\mathscr{Q}(\mathbb{S},\mathbb{M})$ parametrizing triples $(S,\phi,\theta)$ where $S$ is a compact Riemann surface equipped with a meromorphic differential~$\phi$ with simple zeros and a marking $\theta$ of~$(S,\phi)$ by~$(\mathbb{S},\mathbb{M})$. We also consider a ramified cover 
\[
\mathscr{Q}^\pm(\mathbb{S},\mathbb{M})\rightarrow\mathscr{Q}(\mathbb{S},\mathbb{M})
\]
of degree $2^{|\mathbb{P}|}$ where $\mathbb{P}\subset\mathbb{M}$ is the set of interior marked points. This cover is branched precisely over the locus of quadratic differentials having simple poles.

In~\cite{AllegrettiBridgeland}, we introduced the related notion of a meromorphic projective structure. This notion has meaning because of the fact mentioned above that the set of projective structures on a fixed Riemann surface is an affine space for the vector space of quadratic differentials. If we fix an ordinary projective structure on a compact Riemann surface~$S$, then the charts of a meromorphic projective structure are obtained by taking ratios of solutions of~\eqref{eqn:introschrodinger} where now the quadratic differential~$\phi$ is allowed to have poles. This quadratic differential~$\phi$ is called a polar differential for the meromorphic projective structure.

If $\mathcal{P}$ is a meromorphic projective structure on a Riemann surface~$S$, then a marking of the pair $(S,\mathcal{P})$ by~$(\mathbb{S},\mathbb{M})$ is defined as a marking of $(S,\phi)$ by~$(\mathbb{S},\mathbb{M})$ where $\phi$ is any polar differential for~$\mathcal{P}$. As we will review in Section~\ref{sec:ProjectiveStructuresAndMonodromy}, there is a moduli space $\mathscr{P}(\mathbb{S},\mathbb{M})$ parametrizing equivalence classes of triples $(S,\mathcal{P},\theta)$ where $S$ is a Riemann surface equipped with a meromorphic projective structure~$\mathcal{P}$, and $\theta$ is a marking of $(S,\mathcal{P})$ by~$(\mathbb{S},\mathbb{M})$. If we assume that $|\mathbb{M}|\geq3$ whenever $\mathbb{S}$ has genus zero, then it is a complex manifold. We will also define a ramified cover 
\[
\mathscr{P}^\pm(\mathbb{S},\mathbb{M})\rightarrow\mathscr{P}(\mathbb{S},\mathbb{M})
\]
of degree $2^{|\mathbb{P}|}$ over this manifold of projective structures.

In Section~\ref{sec:RelatingTheModuliSpaces}, we use uniformization to prove that there is a non-holomorphic open embedding $\mathscr{Q}(\mathbb{S},\mathbb{M})\hookrightarrow\mathscr{P}(\mathbb{S},\mathbb{M})$ analogous to the homeomorphism~\eqref{eqn:identificationholomorphic}. It is an embedding rather than a homeomorphism because our quadratic differentials are assumed to have simple zeros. There is a canonical lift to an open embedding 
\begin{equation}
\label{eqn:identificationmeromorphic}
\mathscr{Q}^\pm(\mathbb{S},\mathbb{M})\hookrightarrow\mathscr{P}^\pm(\mathbb{S},\mathbb{M})
\end{equation}
of the covering spaces.

The main result of~\cite{AllegrettiBridgeland} was the construction of a natural map taking a meromorphic projective structure to its monodromy. In the case of meromorphic projective structures, the monodromy is most naturally viewed as a framed rather than ordinary local system. A framed $G$-local system on a marked bordered surface $(\mathbb{S},\mathbb{M})$ was defined in~\cite{FockGoncharov1} as a $G$-local system $\mathcal{G}$ on the punctured surface $\mathbb{S}\setminus\mathbb{P}$ together with a flat section of the associated $\mathbb{P}^1$-bundle $\mathcal{L}=\mathcal{G}\times_G\mathbb{P}^1$ near each of the marked points. There is a moduli stack $\mathscr{X}(\mathbb{S},\mathbb{M})$ parametrizing framed $G$-local systems on~$(\mathbb{S},\mathbb{M})$. It contains an open substack $\mathscr{X}^*(\mathbb{S},\mathbb{M})$ which we can view as a non-Hausdorff manifold. If the surface~$\mathbb{S}$ has genus zero, let us assume that $|\mathbb{M}|\geq3$. Then the main result of~\cite{AllegrettiBridgeland} gives a mapping class group equivariant holomorphic map 
\[
F:\mathscr{P}^*(\mathbb{S},\mathbb{M})\rightarrow\mathscr{X}^*(\mathbb{S},\mathbb{M})
\]
from a dense open subset $\mathscr{P}^*(\mathbb{S},\mathbb{M})\subset\mathscr{P}^\pm(\mathbb{S},\mathbb{M})$ into this complex manifold. It sends a projective structure to its monodromy local system with the additional framing defined by the Stokes data for the equation~\eqref{eqn:introschrodinger}. If we write $\mathscr{Q}^*(\mathbb{S},\mathbb{M})\subset\mathscr{Q}^\pm(\mathbb{S},\mathbb{M})$ for the preimage of the set $\mathscr{P}^*(\mathbb{S},\mathbb{M})$ under the embedding~\eqref{eqn:identificationmeromorphic}, then $F$ gives rise to a mapping class group equivariant continuous map 
\begin{equation}
\label{eqn:map}
\widehat{F}:\mathscr{Q}^*(\mathbb{S},\mathbb{M})\rightarrow\mathscr{X}^*(\mathbb{S},\mathbb{M}).
\end{equation}
A recent generalization of Hejhal's theorem implies that if $(\mathbb{S},\mathbb{M})$ has no interior marked points, then this map is a local homeomorphism~\cite{GuptaMj}. We conjecture that this remains true without the extra assumption on~$(\mathbb{S},\mathbb{M})$.

\subsection{Stability conditions and the cluster variety}

The main idea of this paper is that~\eqref{eqn:map} can be viewed as a map from a dense subset of a space of stability conditions to a corresponding cluster variety, and that in general such maps are closely related to the Riemann-Hilbert problems of~\cite{Bridgeland19}. Both the space of stability conditions and the cluster variety can be constructed from the data of a quiver with potential, which in our situation arises from a choice of triangulation of~$(\mathbb{S},\mathbb{M})$.

More precisely, an ideal triangulation of a marked bordered surface $(\mathbb{S},\mathbb{M})$ is defined as a triangulation of~$\mathbb{S}$, all of whose edges begin and end at points of~$\mathbb{M}$. Given an ideal triangulation~$T$ of~$(\mathbb{S},\mathbb{M})$, one can construct an associated quiver $Q(T)$, an example of which is illustrated in Figure~\ref{fig:quivertriangulation}. This quiver has a vertex on each internal edge of the triangulation, and these vertices are connected in such a way that there is a small clockwise oriented 3-cycle inscribed in each internal triangle. For purposes of this discussion, we will assume that the triangulation is regular, meaning that each interior marked point has valency at least three.

\begin{figure}[ht]
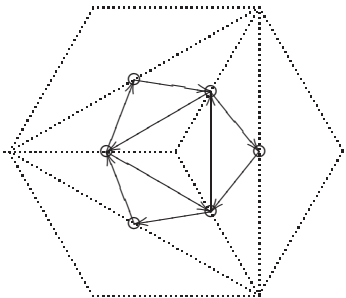
 \begin{center}
\[
\xy /l1.75pc/:
{\xypolygon6"A"{~:{(3,0):}~>{.}}},
{"A1"\PATH~={**@{.}}'"A3"'},
{"A3"\PATH~={**@{.}}'"A5"'},
{"A5"\PATH~={**@{.}}'"A1"'},
{(1,0)\PATH~={**@{.}}'"A3"'},
{(1,0)\PATH~={**@{.}}'"A5"'},
{(1,0)\PATH~={**@{.}}'"A1"'},
{\xypolygon3"B"{~:{(0,1.5):}~>{}}},
{\xypolygon3"C"{~:{(0,-1.25):}~>{}}},
"B1"*{\circ}, 
"B2"*{\circ},
"B3"*{\circ},
"C1"*{\circ}, 
"C2"*{\circ},
"C3"*{\circ},
\xygraph{
"C3":"B1",
"C2":"C3",
"B1":"C2",
"C1":"C2",
"B3":"C1",
"C2":"B3",
"C3":"C1",
"B2":"C3",
"C1":"B2",
}
\endxy
\]
\caption{The quiver associated to an ideal triangulation of a punctured disk.\label{fig:quivertriangulation}}
\end{center} \end{figure}

A potential for a quiver is a formal linear combination of oriented cycles. In the case of the quiver~$Q(T)$, there are two obvious types of oriented cycles, namely the clockwise oriented 3-cycle~$\tau(t)$ in each internal triangle $t$ of~$T$ and the counterclockwise oriented cycle $\pi(p)$ of length at least three encircling each interior marked point~$p$. There is a canonical potential for the quiver $Q(T)$ defined by the formula 
\[
W(T)=\sum_t\tau(t)-\sum_p\pi(p).
\]
This definition was extended to the case of non-regular triangulations by Labardini-Fragoso~\cite{LabardiniFragoso1}, who also showed that the quivers with potential associated to any two ideal triangulations of~$(\mathbb{S},\mathbb{M})$ are related by a sequence of elementary operations called mutations.

Associated to a quiver with potential $(Q,W)$ satisfying a certain nondegeneracy condition are various objects whose precise definitions will be given in Section~\ref{sec:StabilityConditionsAndTheClusterVariety}. Most important for us is an associated 3-Calabi-Yau triangulated category $\mathcal{D}(Q,W)$. Explicitly, it is defined as the full subcategory of the derived category of the complete Ginzburg algebra of $(Q,W)$ consisting of modules with finite-dimensional cohomology. This category is equipped with a canonical bounded t-structure whose heart $\mathcal{A}(Q,W)\subset\mathcal{D}(Q,W)$ is a full abelian subcategory encoding the quiver~$Q$. A result of Keller and Yang~\cite{KellerYang} says that if $(Q',W')$ is a quiver with potential obtained from $(Q,W)$ by mutation, then the associated categories $\mathcal{D}(Q,W)$ and $\mathcal{D}(Q',W')$ are related by a pair of canonical triangulated equivalences.

The structure of the category $\mathcal{D}=\mathcal{D}(Q,W)$ is controlled by the tilting graph $\Tilt(\mathcal{D})$, which has vertices in bijection with the finite length hearts in~$\mathcal{D}$ and where two vertices are connected by an edge if the associated hearts are related by the operation of tilting. There is a distinguished component $\Tilt_\Delta(\mathcal{D})\subset\Tilt(\mathcal{D})$ which contains the distinguished heart $\mathcal{A}(Q,W)$. The group $\Aut(\mathcal{D})$ of triangulated autoequivalences acts naturally on $\Tilt(\mathcal{D})$. We denote by $\Aut_\Delta(\mathcal{D})$ the subgroup that preserves the distinguished component $\Tilt_\Delta(\mathcal{D})$ and by $\cAut_\Delta(\mathcal{D})$ the quotient of $\Aut_\Delta(\mathcal{D})$ by the subgroup of autoequivalences which act trivially on~$\Tilt_\Delta(\mathcal{D})$. There is also a distinguished subgroup $\cSph_\Delta(\Delta)\subset\cAut_\Delta(\mathcal{D})$ generated by the spherical twist functors introduced by Seidel and Thomas~\cite{SeidelThomas}. The quotient 
\[
\Exch_\Delta(\mathcal{D})=\Tilt_\Delta(\mathcal{D})/\cSph_\Delta(\mathcal{D})
\]
is known as the heart exchange graph. It carries an action of the group 
\[
\mathcal{G}_\Delta(\mathcal{D})=\cAut_\Delta(\mathcal{D})/\cSph_\Delta(\mathcal{D}),
\]
which is known as the cluster modular group.

The space of stability conditions on a triangulated category was introduced by Bridgeland~\cite{Bridgeland07} to formalize ideas about stability of D-branes in string theory. It is a complex manifold equipped with commuting actions of the group of autoequivalences and the group of complex numbers. In the case of the triangulated category $\mathcal{D}=\mathcal{D}(Q,W)$ associated to a quiver with potential, the space of stability conditions contains a distinguished component $\Stab_\Delta(\mathcal{D})$. We will be interested in the quotient 
\[
\Sigma(Q,W)=\Stab_\Delta(\mathcal{D})/\cSph_\Delta(\mathcal{D}),
\]
which has a natural cell decomposition with dual graph $\Exch_\Delta(\mathcal{D})$ and an action by the group~$\mathcal{G}_\Delta(\mathcal{D})$. On the other hand, the cluster Poisson variety $\mathscr{X}^{\mathrm{cl}}(Q)$ was introduced by Fock and Goncharov in the context of higher Teichm\"uller theory~\cite{FockGoncharov1, FockGoncharov2}. It is a nonseparated scheme which is a union of algebraic tori corresponding to the vertices of $\Exch_\Delta(\mathcal{D})$ glued by explicit birational transformations. It also carries a natural action of the cluster modular group~$\mathcal{G}_\Delta(\mathcal{D})$.

Suppose now that we are given a marked bordered surface $(\mathbb{S},\mathbb{M})$ satisfying an amenability condition which we formulate in Definition~\ref{def:amenable} below. If $T_0$ is an ideal triangulation of~$(\mathbb{S},\mathbb{M})$, then we get an associated quiver with potential $(Q,W)=(Q(T_0),W(T_0))$ and an associated triangulated category $\mathcal{D}=\mathcal{D}(Q,W)$. In this case, the cluster variety is known by~\cite{FockGoncharov1} to be birational to the moduli space $\mathscr{X}(\mathbb{S},\mathbb{M})$, and we prove a slight extension of the result of~\cite{BridgelandSmith} to get an isomorphism of manifolds $\Sigma(Q,W)\cong\mathscr{Q}^\pm(\mathbb{S},\mathbb{M})$.

\begin{theorem}
\label{thm:intromain}
There is a dense open set $\Sigma^*(Q,W)\subset\Sigma(Q,W)$ and a $\mathcal{G}_\Delta(\mathcal{D})$-equivariant continuous map 
\[
\widehat{F}:\Sigma^*(Q,W)\rightarrow\mathscr{X}^{\mathrm{cl}}(Q)
\]
from this set to the cluster variety. If $\sigma\in\Sigma^*(Q,W)$ lies in the cell corresponding to some vertex $\mathcal{A}\in\Exch_\Delta(\mathcal{D})$, then for $z\in\mathbb{C}$ with $-\frac{1}{2}<\Re(z)<\frac{1}{2}$ and $\Im(z)\gg0$, the point $\widehat{F}(z\cdot\sigma)$ lies in the algebraic torus corresponding to~$\mathcal{A}$.
\end{theorem}

It follows from the results of~\cite{GuptaMj} that, at least when $(\mathbb{S},\mathbb{M})$ has no interior marked points, the map $\widehat{F}$ is a local homeomorphism. We note that Theorem~\ref{thm:intromain} generalizes the main results obtained in~\cite{Allegretti17} for quivers of Dynkin type~A and that a much simpler tropical analog of the map $\widehat{F}$ appeared in~\cite{Goncharov}.

\subsection{The Riemann-Hilbert problem}

Consider again the 3-Calabi-Yau triangulated category $\mathcal{D}=\mathcal{D}(Q,W)$ associated to a quiver with potential. Its Grothendieck group $\Gamma=K(\mathcal{D})\cong\mathbb{Z}^{\oplus n}$ is a lattice of finite rank equipped with a skew form $\langle -,-\rangle$ given by the Euler form. If we are given a stability condition $\sigma$ on~$\mathcal{D}$, then part of the data defining $\sigma$ is a group homomorphism $Z:\Gamma\rightarrow\mathbb{C}$ called the central charge. For a generic choice of~$\sigma$, the methods of Donaldson-Thomas theory can be used to define a collection of integer invariants $\Omega(\gamma)\in\mathbb{Z}$ for~$\gamma\in\Gamma$. The latter are called the BPS~invariants. They satisfy the symmetry $\Omega(\gamma)=\Omega(-\gamma)$ together with a finiteness condition called the support property.

In this paper, we will be interested in a certain Riemann-Hilbert problem associated to the data $(\Gamma,Z,\Omega)$. To state this Riemann-Hilbert problem, we consider rays in~$\mathbb{C}^*$ of the form $\mathbb{R}_{>0}\cdot Z(\gamma)$ where $\gamma\in\Gamma$ is a class satisfying $\Omega(\gamma)\neq0$. Such rays are said to be active, and their union forms a diagram in~$\mathbb{C}^*$, an example of which is illustrated in Figure~\ref{fig:raydiagram}.

\begin{figure}[ht]
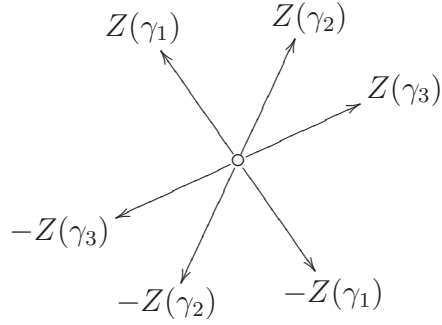

\begin{center}
\[
\xy /l1.5pc/:
{\xypolygon18"A"{~:{(2,2):}~>{}}};
(1,0)*{\circ}="a";
(3,-2.8)*{Z(\gamma_1)};
(-1,2.8)*{-Z(\gamma_1)};
(-0.5,-3)*{Z(\gamma_2)};
(2.5,3)*{-Z(\gamma_2)};
(-2.5,-1.5)*{Z(\gamma_3)};
(4.75,1.5)*{-Z(\gamma_3)};
\xygraph{
"a":"A2",
"a":"A11",
"a":"A9",
"a":"A18",
"a":"A5",
"a":"A14",
}
\endxy
\]
\end{center}
\caption{A ray diagram.\label{fig:raydiagram}}
\end{figure}

The lattice $\Gamma$ determines an object called the twisted torus:
\[
\mathbb{T}=\left\{g:\Gamma\rightarrow\mathbb{C}^*:g(\gamma_1+\gamma_2)=(-1)^{\langle\gamma_1,\gamma_2\rangle}g(\gamma_1)g(\gamma_2)\right\}.
\]
This set $\mathbb{T}$ has the natural structure of an algebraic variety whose coordinate ring is spanned as a vector space by the functions $x_{\gamma}:\mathbb{T}\rightarrow\mathbb{C}^*$ given by $x_\gamma(g)=g(\gamma)$. In Section~\ref{sec:TheRiemannHilbertProblem}, we explain how to associate, to each active ray $\ell\subset\mathbb{C}^*$, a birational automorphism $\mathbf{S}(\ell)$ of~$\mathbb{T}$. For a generic $\sigma$, the results of~\cite{Bridgeland19} imply that this transformation is given on functions by 
\[
\mathbf{S}(\ell)^*(x_\beta)=x_\beta\cdot\prod_{Z(\gamma)\in\ell}(1-x_\gamma)^{\Omega(\gamma)\cdot\langle\beta,\gamma\rangle}.
\]
This transformation is closely related to the transformations used to glue tori in the definition of the cluster variety in Section~\ref{sec:TheClusterPoissonVariety}.

The Riemann-Hilbert problem that we consider concerns maps from~$\mathbb{C}^*$ to~$\mathbb{T}$ with prescribed discontinuities along the active rays. In Section~\ref{sec:TheRiemannHilbertProblem}, we will give a careful formulation of this Riemann-Hilbert problem; for now we just give the rough idea.

\begin{problem}[\cite{Bridgeland19}]
Fix a point $\xi\in\mathbb{T}$. Construct a partially defined map 
\[
\mathcal{X}:\mathbb{C}^*\rightarrow\mathbb{T}
\]
such that the composition $\mathcal{X}_\gamma=x_\gamma\circ\mathcal{X}$ is meromorphic in the complement of the active rays for each $\gamma\in\Gamma$ and the following properties are satisfied:
\begin{enumerate}
\item[(RH1)] As $t\in\mathbb{C}^*$ crosses an active ray $\ell\subset\mathbb{C}^*$ in the counterclockwise direction, the function $\mathcal{X}(t)$ undergoes a discontinuous jump described by the formula 
\[
\mathcal{X}(t)\mapsto(\mathbf{S}(\ell))(\mathcal{X}(t)).
\]
\item[(RH2)] As $t\rightarrow0$, one has $\exp(Z(\gamma)/t)\cdot\mathcal{X}_\gamma(t)\rightarrow\xi(\gamma)$ for each $\gamma\in\Gamma$.
\item[(RH3)] As $t\rightarrow\infty$, the functions $\mathcal{X}_\gamma(t)$ for $\gamma\in\Gamma$ have at most polynomial growth.
\end{enumerate}
\end{problem}

We will be interested in the Riemann-Hilbert problem associated to a generic stability condition $\sigma\in\Sigma(Q,W)$ where $(Q,W)$ is the quiver with potential arising from an ideal triangulation of an amenable marked bordered surface $(\mathbb{S},\mathbb{M})$. In this case there is a distinguished choice of the point $\xi\in\mathbb{T}$. We will also consider a modified version of this problem which is obtained by dropping the condition (RH3). We will refer to this modified problem as the weak Riemann-Hilbert problem.

\begin{theorem}
Let $\sigma\in\Sigma(Q,W)$ be a generic stability condition where $(Q,W)$ is the quiver with potential associated to an ideal triangulation of an amenable marked bordered surface $(\mathbb{S},\mathbb{M})$. Then there is a canonical solution of the weak Riemann-Hilbert problem associated to~$\sigma$ and the distinguished point $\xi\in\mathbb{T}$. If we assume moreover that the surface $\mathbb{S}$ is closed, then it is a solution of the full Riemann-Hilbert problem.
\end{theorem}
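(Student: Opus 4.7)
The plan is to construct the solution explicitly using the map $F:\Sigma^*(Q,W)\to\mathscr{X}^{\mathrm{cl}}(Q)$ of Theorem \ref{thm:intromain} together with the natural $\mathbb{C}$-action on $\Sigma(Q,W)$, which rescales the central charge by $Z\mapsto e^{-i\pi z}Z$. With $z(t)=(i\pi)^{-1}\log t$, set
\[
\mathcal{X}(t) := F\bigl(z(t)\cdot\sigma\bigr).
\]
A preliminary step is to identify $\mathscr{X}^{\mathrm{cl}}(Q)$ (or the appropriate double cover needed to absorb the sign twist) with the twisted torus $\mathbb{T}$: a distinguished heart of $\sigma$ provides a $\mathbb{Z}$-basis of $\Gamma=K(\mathcal{D})$, the Euler form becomes the skew form on $\mathbb{T}$, and the twist factor $(-1)^{\langle\gamma_1,\gamma_2\rangle}$ is absorbed into the $2^{|\mathbb{P}|}$-fold cover $\mathscr{Q}^\pm\to\mathscr{Q}$ already present in our setup. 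Under this identification, the functions $\mathcal{X}_\gamma(t)=x_\gamma(\mathcal{X}(t))$ are meromorphic on the complement of the active rays, because $F$ is continuous and each cluster coordinate $x_\gamma$ is rational on its torus.

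The jump condition (RH1) is checked ray by ray. As $t$ crosses an active ray $\ell=\mathbb{R}_{>0}\cdot Z(\gamma)$ counterclockwise, the rescaled central charge forces $z(t)\cdot\sigma$ to cross a wall of the cell decomposition of $\Sigma(Q,W)$ into an adjacent cell indexed by a tilted heart. Theorem \ref{thm:intromain} places the two values of $F$ in a pair of algebraic tori of $\mathscr{X}^{\mathrm{cl}}(Q)$ glued by an explicit Fock-Goncharov cluster transformation, and one must match this with $\mathbf{S}(\ell)$. The matching combines the Bridgeland-Smith classification of semistable objects of phase on $\ell$ as direct sums of spherical objects indexed by finite saddle trajectories with the known equality between Fock-Goncharov cluster mutations and the Kontsevich-Soibelman automorphisms for quivers from surfaces; in the basic case of a single spherical class the formula $\mathbf{S}(\ell)^*(x_\beta)=x_\beta(1-x_\gamma)^{\langle\beta,\gamma\rangle}$ coincides with a single cluster mutation, and the general case follows by composing such mutations in the order of increasing phase within $\ell$.

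The asymptotic condition (RH2) follows from the second part of Theorem \ref{thm:intromain} combined with WKB analysis of the Schr\"odinger equation \eqref{eqn:introschrodinger} with small parameter $t$. For $\Im z\gg 0$ in the strip $|\Re z|<\tfrac12$, the point $z\cdot\sigma$ lies in a fixed cell, $F(z\cdot\sigma)$ lies in the associated algebraic torus, and standard WKB asymptotics show that each cluster coordinate (computed as a cross-ratio of flat sections of the $\mathbb{P}^1$-bundle attached to the Schr\"odinger equation) behaves as $\mathcal{X}_\gamma(t)\sim\xi(\gamma)\exp(-Z(\gamma)/t)$ as $t\to 0$. The limiting coefficient $\xi(\gamma)$ is the distinguished point of $\mathbb{T}$ determined by $\sigma$, and $\mathcal{G}_\Delta(\mathcal{D})$-equivariance together with (RH1) ensures that the same limit is attained from every sector.

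The main obstacle is (RH3), which is where the assumption that $\mathbb{S}$ is closed enters. When $\mathbb{S}$ has no boundary, all poles of $\phi$ have order at most $2$, so the Schr\"odinger equation has only regular singular points and its monodromy depends analytically on the small parameter $t^{-2}$. As $t\to\infty$ the equation degenerates to $y''=0$ and its monodromy tends to the identity, so each $\mathcal{X}_\gamma(t)$, being a cross-ratio of flat sections of the $\mathbb{P}^1$-bundle, is uniformly bounded and in particular has at most polynomial growth. When $\mathbb{S}$ has boundary, the poles of order $\geq 3$ produce irregular singular points whose Stokes data depends discontinuously on the sector of $t$ and can grow exponentially as $t\to\infty$; this obstructs (RH3) in general and explains why only the weak problem is solved in that setting.
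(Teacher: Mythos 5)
Your overall strategy matches the paper's: rescale the quadratic differential, apply the monodromy map, read off Fock-Goncharov coordinates, and verify (RH1)--(RH3). But there are several gaps, and one of them is serious.

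\textbf{The twisted torus.} You propose to absorb the sign factor $(-1)^{\langle\gamma_1,\gamma_2\rangle}$ into the $2^{|\mathbb{P}|}$-fold cover $\mathscr{Q}^\pm\to\mathscr{Q}$. These are unrelated: that cover is about choices of sign for the residue at double poles, while the twist on $\mathbb{T}_-$ is resolved by the basepoint $\xi$ of Proposition~\ref{prop:basepoint}, which takes values $\pm1$ depending on whether a class is represented by a closed or non-closed saddle connection. The solution is not $F(z(t)\cdot\sigma)$ but $\mathcal{X}_{r,\gamma}(t)=\xi(\gamma)\cdot\mathcal{Y}_{\phi_\theta,\gamma}(e^{-i\theta}t)$; the $\xi$-factor must be built into the construction, not recovered later from asymptotics.

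\textbf{(RH1).} Your sketch reproduces the spirit of the argument, but the heavy lifting is done by Proposition~\ref{prop:basepoint}(2), which identifies $\mathbf{S}(\Delta)$ with the birational change of Fock-Goncharov coordinates between the two WKB triangulations $\tau_\pm$. The composite-of-single-mutations picture you describe is too coarse: ring domains contribute $\Omega=-2$, and the identification of the BPS automorphism with the coordinate change requires the wall-crossing result for quadratic differentials as a whole, not a mutation-by-mutation matching. Also the formula you quote drops the exponent $\Omega(\gamma)$.

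\textbf{(RH2).} You invoke ``standard WKB asymptotics,'' but the actual ingredient (Theorem~\ref{thm:asymptoticssaddlefree}, via Allegretti's earlier paper) is the identification of $\mathcal{Y}_{\phi,\gamma}$ with Borel-resummed Voros symbols together with the Iwaki--Nakanishi asymptotics. That is a genuine analytic theorem, not a routine WKB estimate; it needs to be cited, not asserted.

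\textbf{(RH3) --- the main gap.} Your argument here does not work. Writing $\eta=1/t$, the family is $\mathcal{P}_\phi(1/\eta)=\mathcal{P}+\eta^2\phi$, so as $\eta\to0$ the Schr\"odinger equation tends to $y''-Q_2(z)y=0$, the equation attached to the uniformizing projective structure -- \emph{not} to $y''=0$. The limiting monodromy is the Fuchsian monodromy of the punctured surface, and by Lemma~\ref{lem:Q2} its local monodromy at each double pole is parabolic with repeated eigenvalue $-1$, so the eigenvalue pair $\lambda_\pm(\eta)=-\exp(\pm\Res_p(\phi)\eta/2)$ degenerates at $\eta=0$. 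When eigenvalues coalesce, the eigenline is not automatically continuous in the parameter, let alone bounded, and cross-ratios of framing lines can blow up. The paper's Lemma~\ref{lem:extendframing} is exactly the device needed to overcome this: it shows the chosen eigenline extends holomorphically across $\eta=0$ by noting that the matrix $M(\eta)-\lambda_\pm(\eta)I$ has a diagonal entry not identically zero, so the eigenline is given locally by a ratio of holomorphic functions. That yields only that each $\mathcal{X}_\gamma(1/\eta)$ is \emph{meromorphic} at $\eta=0$, which is why (RH3) permits polynomial growth rather than asking for boundedness. Your claim of uniform boundedness would force $k=0$, which is neither expected nor provable by the method. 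Finally, your closing remark that Stokes data ``can grow exponentially'' in the bordered case overstates the situation: the paper merely says the method fails there and conjectures (RH3) still holds.
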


We conjecture that the second statement in this theorem holds without the additional restriction on the surface~$\mathbb{S}$. As we will see in Section~\ref{sec:SolvingTheRiemannHilbertProblem}, the map~\eqref{eqn:map}, or equivalently the map $\widehat{F}$ of Theorem~\ref{thm:intromain}, is a crucial technical ingredient in the solution of the Riemann-Hilbert problem. This suggests the possibility of solving the Riemann-Hilbert problem for other quivers with potential by constructing a densely defined map from the space of stability conditions to the cluster variety. It is also interesting to ask if this can be reversed. That is, if we can solve the Riemann-Hilbert problem for any generic $\sigma\in\Sigma(Q,W)$, can we use this to construct a map as in Theorem~\ref{thm:intromain}, even in examples where the space of stability conditions and the cluster variety do not have alternative modular interpretations?

\subsection*{Acknowledgements.}
This project began as a collaboration with Tom~Bridgeland, who contributed many crucial insights throughout. The author acknowledges helpful conversations and correspondence with Giordano~Cotti, Laura~Fredrickson, Subhojoy~Gupta, Davide~Guzzetti, Alastair~King, Dmitry~Korotkin, Davide~Masoero, Andrew~Neitzke, Claude~Sabbah, and Richard~Wentworth. This work was supported by the European Research Council grant ERC-AdG StabilityDTCluster while the author was employed at the University of Sheffield and by the National Science Foundation grant DMS-1440140 while the author was in residence at the Mathematical Sciences Research Institute in Berkeley, California, during the Fall 2019 semester.

\section{Triangulated surfaces}

In this section, we introduce some basic definitions concerning marked bordered surfaces and ideal triangulations. Further details on this material can be found in~\cite{BridgelandSmith,FST}.

\subsection{Marked bordered surfaces}
\label{sec:MarkedBorderedSurfaces}

A \emph{marked bordered surface} is defined to be a pair $(\mathbb{S},\mathbb{M})$ where $\mathbb{S}$ is a compact, connected, oriented smooth surface with boundary, and $\mathbb{M}$ is a nonempty finite set of marked points on~$\mathbb{S}$ such that every boundary component contains at least one marked point. Marked points in the interior of~$\mathbb{S}$ are called \emph{punctures}, and the set of all punctures of the marked bordered surface will be denoted $\mathbb{P}\subset\mathbb{M}$.

To avoid various degenerate situations, we will often restrict attention to marked bordered surfaces of the following type considered in~\cite{BridgelandSmith}.

\begin{definition}
\label{def:amenable}
A marked bordered surface $(\mathbb{S},\mathbb{M})$ is \emph{amenable} if it is not one of the following:
\begin{enumerate}
\item A closed surface with a single puncture.
\item A sphere with $\leq5$ punctures.
\item An unpunctured disk with $\leq4$ marked points on its boundary.
\item A once-punctured disk with one, two, or four marked points on its boundary.
\item A twice punctured disk with two marked points on its boundary.
\item An annulus with one marked point on each boundary component.
\end{enumerate}
\end{definition}

An isomorphism of marked bordered surfaces $(\mathbb{S}_1,\mathbb{M}_1)$ and $(\mathbb{S}_2,\mathbb{M}_2)$ is an orientation preserving diffeomorphism $f:\mathbb{S}_1\rightarrow\mathbb{S}_2$ which induces a bijection $\mathbb{M}_1\rightarrow\mathbb{M}_2$. Two such isomorphisms are said to be isotopic if the underlying diffeomorphisms are related by an isotopy through diffeomorphisms $f_t:\mathbb{S}_1\rightarrow\mathbb{S}_2$ which also induce bijections $\mathbb{M}_1\rightarrow\mathbb{M}_2$. The \emph{mapping class group} $\MCG(\mathbb{S},\mathbb{M})$ is defined to be the group of all isotopy classes of isomorphisms from $(\mathbb{S},\mathbb{M})$ to itself.

It is sometimes convenient to replace $\mathbb{S}$ by the surface $\mathbb{S}'$ obtained by taking the real oriented blowup of~$\mathbb{S}$ at each puncture. This modified surface $\mathbb{S}'$ has a boundary component with no marked points corresponding to each puncture of the original surface~$\mathbb{S}$. A marked bordered surface $(\mathbb{S},\mathbb{M})$ is determined up to isomorphism by its genus $g=g(\mathbb{S})$ and a collection of nonnegative integers $\{k_1,\dots,k_d\}$ encoding the number of marked points on each boundary component of~$\mathbb{S}'$. An associated integer which will appear very often in what follows is 
\begin{equation}
\label{eqn:dimension}
n=6g-6+\sum_i(k_i+3).
\end{equation}

\subsection{Ideal triangulations}

Let $(\mathbb{S},\mathbb{M})$ be a marked bordered surface. By an \emph{arc} on $(\mathbb{S},\mathbb{M})$, we mean a smooth path $\gamma$ on~$\mathbb{S}$ connecting points of~$\mathbb{M}$ whose interior lies in the interior of~$\mathbb{S}\setminus\mathbb{M}$ and which has no self-intersections in its interior. We also require that $\gamma$ is not homotopic, relative to its endpoints, to a single point or to a path in~$\partial\mathbb{S}$ whose interior contains no marked points. A segment of the boundary of~$\mathbb{S}$ that connects two marked points (possibly coinciding) without passing through a third marked point is called a \emph{boundary segment}.

Two arcs are considered to be equivalent if they are related by a homotopy through arcs or a reversal of orientation. Two arcs are \emph{compatible} if there exist arcs in their respective equivalence classes which do not intersect in the interior of~$\mathbb{S}$. An \emph{ideal triangulation} of~$(\mathbb{S},\mathbb{M})$ is defined to be a maximal collection of pairwise compatible arcs on $(\mathbb{S},\mathbb{M})$, considered up to equivalence. Figure~\ref{fig:triangulation} shows an example of an ideal triangulation of a disk with five marked points on its boundary.

\begin{figure}[ht]
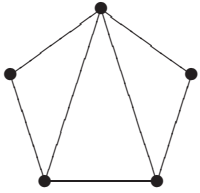

\begin{center}
\[
\xy /l1pc/:
{\xypolygon5"A"{~:{(-3,0):}}};
{"A5"\PATH~={**@{-}}'"A2"'"A4"};
"A1"*{\bullet};
"A2"*{\bullet};
"A3"*{\bullet};
"A4"*{\bullet};
"A5"*{\bullet};
\endxy
\]
\end{center}
\caption{An ideal triangulation of a disc with five marked points.}\label{fig:triangulation}
\end{figure}

When talking about an ideal triangulation of~$(\mathbb{S},\mathbb{M})$, we will always fix a collection of representatives for its arcs so that no two arcs intersect in the interior of~$\mathbb{S}$. Then a \emph{triangle} of an ideal triangulation~$T$ is defined to be the closure in~$\mathbb{S}$ of a connected component of the complement of all arcs of~$T$. An \emph{edge} of an ideal triangulation is an arc of the triangulation or a boundary segment. Any triangle is topologically a disk containing two or three distinct edges of the triangulation.

If a triangle contains just two distinct edges of the triangulation, then it is said to be \emph{self-folded}; an example is illustrated in Figure~\ref{fig:selffolded}. In this case, the edge in the interior of the triangle is called the \emph{self-folded edge} and the other edge is called the \emph{encircling edge}. The \emph{valency} of a puncture $p\in\mathbb{P}$ with respect to an ideal triangulation~$T$ is the number of half edges of~$T$ that are incident to~$p$. Note that a puncture has valency one if and only if it is contained in a self-folded triangle. An ideal triangulation will be called \emph{regular} if all punctures have valency $\geq3$. In particular, a regular ideal triangulation contains no self-folded triangles.

\begin{figure}[ht]
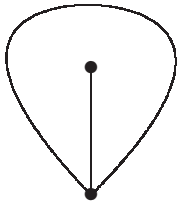

\begin{center}
\[
\xy /l0.5pc/:
(0,-4)*{}="N"; 
(0,8)*{}="S"; 
(0,0);"S" **\dir{-}; 
(0,0)*{\bullet}; 
"S"*{\bullet}; 
"S";"N" **\crv{(-12,-4) & (0,-4)}; 
"S";"N" **\crv{(12,-4.5) & (0,-4)}; 
\endxy
\]
\caption{A self-folded triangle.\label{fig:selffolded}}
\end{center}
\end{figure}

Suppose $T$ is an ideal triangulation of a marked bordered surface $(\mathbb{S},\mathbb{M})$ and $k$ is an arc of~$T$. We say that an ideal triangulation $T'$ is obtained from $T$ by a \emph{flip} of the arc~$k$ if $T'$ is different from~$T$ and there is an arc $k'$ of~$T'$ such that $T\setminus\{k\}=T'\setminus\{k'\}$. Figure~\ref{fig:flip} illustrates the triangles in a neighborhood of the arcs $k$ and~$k'$. Note that neither $k$ nor $k'$ is a self-folded edge.

\begin{figure}[ht]
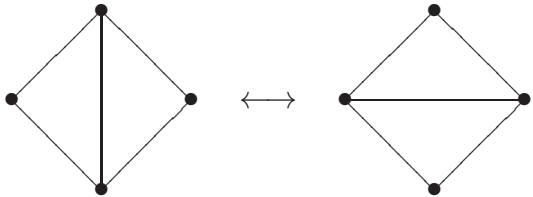

\begin{center}
\[
\xy /l1pc/:
{\xypolygon4"A"{~:{(2,2):}}};
{\xypolygon4"B"{~:{(2.5,0):}~>{}}};
{\xypolygon4"C"{~:{(0.8,0.8):}~>{}}};
{"A1"\PATH~={**@{-}}'"A3"};
"A1"*{\bullet};
"A2"*{\bullet};
"A3"*{\bullet};
"A4"*{\bullet};
\endxy
\quad
\longleftrightarrow
\quad
\xy /l1pc/:
{\xypolygon4"A"{~:{(2,2):}}};
{\xypolygon4"B"{~:{(2.5,0):}~>{}}};
{\xypolygon4"C"{~:{(0.8,0.8):}~>{}}};
{"A2"\PATH~={**@{-}}'"A4"};
"A1"*{\bullet};
"A2"*{\bullet};
"A3"*{\bullet};
"A4"*{\bullet};
\endxy
\]
\end{center}
\caption{A flip of an arc.\label{fig:flip}}
\end{figure}

It is well known that any two ideal triangulations of a marked bordered surface $(\mathbb{S},\mathbb{M})$ are related by a sequence of flips~\cite{FST}. Thus any two ideal triangulations of $(\mathbb{S},\mathbb{M})$ have the same number of arcs, namely the number $n$ from~\eqref{eqn:dimension}.

\subsection{Tagged triangulations}

Let $(\mathbb{S},\mathbb{M})$ be a marked bordered surface. A \emph{signing} is defined as a function $\epsilon:\mathbb{P}\rightarrow\{\pm1\}$ associating a sign $\epsilon(p)$ to each puncture $p\in\mathbb{P}$. A \emph{signed triangulation} of $(\mathbb{S},\mathbb{M})$ is a pair $(T,\epsilon)$ consisting of an ideal triangulation $T$ of $(\mathbb{S},\mathbb{M})$ and a signing~$\epsilon$. Two signed triangulations $(T_1,\epsilon_1)$ and $(T_2,\epsilon_2)$ are considered to be equivalent if we have $T_1=T_2$ and the signings $\epsilon_i$ differ only at punctures of valency one. A \emph{tagged triangulation} is defined as an equivalence class of signed triangulations.

Suppose $\tau$ is a tagged triangulation which is represented by a signed triangulation $(T,\epsilon)$. By a \emph{tagged arc} of $\tau$, we mean an arc of the underlying ideal triangulation~$T$. Let $(T,\epsilon')$ be another signed triangulation where $\epsilon'$ differs from~$\epsilon$ at a single puncture~$p$ of valency one with respect to~$T$. Let $j$ be the unique edge of $T$ which is incident to this puncture~$p$, and let $k$ be the encircling edge of the self-folded triangle containing~$j$. Then the tagged arc represented by~$j$ in~$(T,\epsilon)$ is considered to be equivalent to the tagged arc represented by~$k$ in the other signed triangulation $(T,\epsilon')$.

We say that a signed triangulation $(T',\epsilon)$ is obtained from the signed triangulation $(T,\epsilon)$ by a flip of an arc $k$ of~$T$ if the underlying ideal triangulation $T'$ is obtained from $T$ by a flip of~$k$. We say that a tagged triangulation $\tau'$ is obtained from the tagged triangulation $\tau$ by a flip of the tagged arc~$j$ if $\tau$ is represented by a signed triangulation $(T,\epsilon)$ and $\tau'$ by a signed triangulation~$(T',\epsilon)$ and these signed triangulations are related by a flip of~$k$. The important point is that while we cannot flip a self-folded edge in an ordinary triangulation, we can flip any tagged arc of a tagged triangulation. Thus we get an $n$-regular graph $\Tri_{\bowtie}(\mathbb{S},\mathbb{M})$ whose vertices are tagged triangulations of $(\mathbb{S},\mathbb{M})$ and where two vertices are connected by an edge if the corresponding tagged triangulations are related by a flip.

The mapping class group of $\MCG(\mathbb{S},\mathbb{M})$ acts on the set $\mathbb{P}$ of punctures, and we define the \emph{signed mapping class group} to be the corresponding semidirect product 
\[
\MCG^\pm(\mathbb{S},\mathbb{M})=\MCG(\mathbb{S},\mathbb{M})\ltimes\mathbb{Z}_2^{\mathbb{P}}.
\]
There is an action of $\MCG^\pm(\mathbb{S},\mathbb{M})$ on the set of signed triangulations, where the $\mathbb{Z}_2^\mathbb{P}$ factor acts by changing the signing. This descends to give an action on the set of tagged triangulations and in fact an action on the graph $\Tri_{\bowtie}(\mathbb{S},\mathbb{M})$ by automorphisms.

\subsection{The exchange matrix}

Let $T$ be an ideal triangulation of a marked bordered surface $(\mathbb{S},\mathbb{M})$. We will describe an $n\times n$ matrix which encodes the combinatorics of this triangulation~$T$. For each arc $j$ of~$T$, we will denote by $\pi_T(j)$ the arc defined as follows: If $j$ is the interior edge of a self-folded triangle, we let $\pi_T(j)$ be the encircling edge, and we define $\pi_T(j)=j$ otherwise. For each non-self-folded triangle $t$ of~$T$, we define a number $\varepsilon_{ij}^t$ by the following rules: 
\begin{enumerate}
\item $\varepsilon_{ij}^t=+1$ if $\pi_T(i)$ and $\pi_T(j)$ are edges of $t$ with $\pi_T(j)$ following $\pi_T(i)$ in the counterclockwise order defined by the orientation.
\item $\varepsilon_{ij}^t=-1$ if the same holds with the clockwise order.
\item $\varepsilon_{ij}^t=0$ otherwise.
\end{enumerate}
Finally, we define the $(i,j)$ element of the \emph{exchange matrix} associated to~$T$ to be the sum 
\[
\varepsilon_{ij}=\sum_t\varepsilon_{ij}^t
\]
over all non-self-folded triangles of~$T$.

\subsection{Quivers with potential}

The exchange matrix $\varepsilon_{ij}$ associated to an ideal triangulation $T$ of a marked bordered surface $(\mathbb{S},\mathbb{M})$ determines in a natural way a quiver. Namely, given the ideal triangulation $T$, we define $Q(T)$ to be the quiver whose vertices are the arcs of~$T$, with $\varepsilon_{ij}$ arrows from~$j$ to~$i$ whenever $\varepsilon_{ij}>0$. Note that since the exchange matrix is skew-symmetric, the associated quiver is 2-acyclic.

Recall that a potential for a quiver is a formal linear combination of oriented cycles. By the work of Labardini-Fragoro~\cite{LabardiniFragoso1}, we can associate to any signed triangulation $(T,\epsilon)$ of a marked bordered surface $(\mathbb{S},\mathbb{M})$ a canonical potential for the quiver $Q(T)$. We will describe this construction in the special case where the triangulation~$T$ is regular; details on the generalization of this construction to the case of non-regular ideal triangulations can be found in~\cite{LabardiniFragoso1}. In the case of regular~$T$, there are two canonical types of oriented cycles in the associated quiver~$Q(T)$. On the one hand, for each triangle $t$ whose edges are all arcs, there is a clockwise oriented 3-cycle~$\tau(t)$. On the other hand, for each puncture $p\in\mathbb{P}$, there is a counterclockwise oriented cycle~$\pi(p)$ of length at least three encircling the puncture. We define a potential $W(T,\epsilon)$ for~$Q(T)$ by 
\[
W(T,\epsilon)=\sum_t\tau(t)-\sum_p\epsilon(p)\pi(p)
\]
where the first sum runs over all triangles whose edges are arcs, and the second sum runs over all punctures.

In~\cite{DWZ}, Derksen, Weyman, and Zelevinsky defined an equivalence relation on quivers with potential known as \emph{right equivalence}. The following result was proved by Labardini-Fragoso.

\begin{theorem}[\cite{LabardiniFragoso2}, Theorem~6.1]
\label{thm:poprightequivalence}
Let $(\mathbb{S},\mathbb{M})$ be an amenable marked bordered surface. Then up to right equivalence, the quiver with potential associated to a signed triangulation of $(\mathbb{S},\mathbb{M})$ depends only on the underlying tagged triangulation.
\end{theorem}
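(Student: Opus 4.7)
The plan is to reduce to a local calculation at a single puncture and then exhibit an explicit right equivalence. First I would observe that by the definition of the equivalence relation on signed triangulations, if $(T_1,\epsilon_1)\sim(T_2,\epsilon_2)$ then necessarily $T_1=T_2$, so the underlying quivers $Q(T_1)$ and $Q(T_2)$ are literally equal and there is no work to do on the quiver side. What remains is to compare the two potentials. Writing $\epsilon_2$ as obtained from $\epsilon_1$ by a sequence of single sign-flips at valency-one punctures, and using that right equivalence is closed under composition, it suffices to handle the case where $\epsilon'=\epsilon$ except at a single valency-one puncture $p$.

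Next I would invoke the explicit definition of $W(T,\epsilon)$ for non-regular triangulations given in \cite{LabardiniFragoso1}, and decompose the potential as $W(T,\epsilon)=W_0+\epsilon(p)\cdot C_p$, where $W_0$ is independent of $\epsilon(p)$ and $C_p$ is a local contribution built from the cycles passing through the self-folded edge $\ell$ and its encircling edge $r$ surrounding $p$. The goal is then to produce an automorphism $\varphi$ of the completed path algebra $\widehat{\mathbb{C}Q(T)}$ that fixes every vertex idempotent (so it qualifies as a right equivalence in the sense of Derksen-Weyman-Zelevinsky) and satisfies that $\varphi(W(T,\epsilon))$ is cyclically equivalent to $W(T,\epsilon')$.

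The key step is constructing $\varphi$ explicitly. The only arrows appearing in $C_p$ are those attached to $\ell$ and $r$ in the quiver, so $\varphi$ can be taken to be the identity outside this local neighborhood. Inside, $\varphi$ acts as a sign rescaling on the arrows between $\{\ell,r\}$ and the vertices of the adjacent non-self-folded triangle; the precise sign pattern is dictated by the requirement that the overall sign of $C_p$ be reversed while the 3-cycle contribution $\tau(t')$ of the adjacent triangle remains invariant modulo cyclic equivalence. A direct substitution then yields the desired right equivalence in the single-puncture case, and composing these local automorphisms across all punctures where the signings differ handles the general case.

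The main obstacle is handling configurations where the self-folded triangle sits in a degenerate local picture, for instance when it is adjacent to another self-folded triangle, since then the arrows and cycles entering $C_p$ interact in a more intricate way and the naive sign rescaling can conflict with itself. Amenability enters precisely here: the surfaces excluded in Definition~\ref{def:amenable} are exactly those on which such pathological local configurations are forced, and removing them leaves only finitely many local pictures that have to be checked. Working through these remaining cases one at a time and verifying the explicit right equivalence in each is the technical heart of the argument, and it is where one genuinely relies on the detailed case analysis of~\cite{LabardiniFragoso2}.
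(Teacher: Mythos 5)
The paper does not actually prove this statement: it is imported verbatim from Labardini-Fragoso's work (the cited Theorem~6.1 of \cite{LabardiniFragoso2}) and used as a black box, with no argument given or even sketched. So there is no internal proof to compare your proposal against; I can only assess the proposal's plausibility on its own terms and against what the paper records about the surrounding definitions.

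Your setup is sound: since equivalent signed triangulations have the same underlying ideal triangulation, $Q(T_1)=Q(T_2)$ on the nose, and since right equivalence is closed under composition, it does reduce to a single sign flip at one valency-one puncture. The plan of exhibiting an explicit algebra automorphism fixing the vertex idempotents, supported on the arrows near the self-folded/encircling pair, is also the natural thing to try. However, two points need flagging. First, the claim that amenability is \emph{exactly} the condition that rules out the pathological local configurations is an over-statement. The exclusions in Definition~\ref{def:amenable} are imported from Bridgeland--Smith and are imposed for unrelated reasons (nonemptiness and dimension of the quadratic-differential moduli space, nondegeneracy of $(Q,W)$, existence of enough arcs); Labardini-Fragoso's original Theorem~6.1 carries its own, somewhat different, hypotheses, and it is the paper's convention to phrase everything under amenability rather than amenability being tailored to this theorem. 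Second, the guess that the right equivalence is a pure diagonal sign-rescaling of arrows is unverified here, because the paper only gives the formula $W(T,\epsilon)=\sum_t\tau(t)-\sum_p\epsilon(p)\pi(p)$ in the \emph{regular} case and explicitly defers the self-folded definition to \cite{LabardiniFragoso1}. For a valency-one puncture there is no length-$\geq 3$ cycle encircling $p$ in $Q(T)$, and Labardini-Fragoso's potential contribution there is built differently; the actual right equivalence in that proof is not obviously diagonal and may need unipotent correction terms in the completed path algebra. As written, your ``direct substitution then yields the desired right equivalence'' step is the one place where the argument genuinely leans on unstated details of \cite{LabardiniFragoso1, LabardiniFragoso2} rather than on anything available in this paper.
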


A potential is said to be \emph{reduced} if it is a sum of cycles of length $\geq3$. If $(Q,W)$ is a quiver with reduced potential, and $k$ is a vertex of~$Q$ which is not contained in an oriented 2-cycle, then Derksen, Weyman, and Zelevinsky~\cite{DWZ} defined a new quiver with potential $\mu_k(Q,W)$ called the quiver with potential obtained by \emph{mutation} in the direction~$k$. It is well defined up to right equivalence and depends only on the right equivalence class of $(Q,W)$. The main result of~\cite{LabardiniFragoso1} on quivers with potential associated to ideal triangulations is the following.

\begin{theorem}[\cite{LabardiniFragoso1}, Theorem~30]
\label{thm:flipmutation}
Let $(\mathbb{S},\mathbb{M})$ be a marked bordered surface and $(T,\epsilon)$ a signed triangulation of $(\mathbb{S},\mathbb{M})$. If $T'$ is the ideal triangulation obtained from~$T$ by a flip of the edge~$k$, then up to right equivalence 
\[
(Q(T'),W(T',\epsilon))=\mu_k(Q(T),W(T,\epsilon)).
\]
\end{theorem}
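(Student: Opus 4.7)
The plan is to reduce the statement to a computation local to the flipped edge $k$, and then to carry out an explicit case analysis matching the Derksen-Weyman-Zelevinsky mutation algorithm against the combinatorial change of the triangulation. The first observation is that both $Q(T)$ and $W(T,\epsilon)$ are defined by strictly local rules: each triangle of $T$ contributes independently to the exchange matrix and, when it is not self-folded, to a 3-cycle in the potential, while each puncture $p$ contributes a cycle $\pi(p)$ depending only on the arcs incident to $p$. Symmetrically, the DWZ mutation $\mu_k$ only modifies arrows at $k$ and cycles passing through $k$. Consequently, passing from $(T,\epsilon)$ to $(T',\epsilon)$ changes the quiver with potential only in the portion supported on the two triangles on either side of $k$ together with the punctures that are vertices of those triangles, so it suffices to verify the claim configuration by configuration in a neighborhood of $k$.

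The matrix part of the claim, $\varepsilon_{ij}(T') = \mu_k(\varepsilon_{ij}(T))$, is a standard result of Fomin-Shapiro-Thurston and corresponds exactly to the first stage of DWZ mutation (pre-mutation on arrows). The real work is to track the potential. I would enumerate the local pictures around $k$: neither triangle containing $k$ has $k$ as its interior self-folded edge (since such edges are not flippable), but $k$ could be the encircling edge of a self-folded triangle on either side, or both sides could be ordinary triangles, subdivided further according to which vertices are punctures and how those punctures are shared with adjacent triangles. For each configuration, I would write down the cycles of $W(T,\epsilon)$ meeting $k$, namely the two adjacent triangle 3-cycles (when present) together with the portions of the puncture cycles $\pi(p)$ that pass through $k$; then apply DWZ pre-mutation (reversing arrows at $k$ and inserting formal composites $[\alpha\beta]$), substitute $\alpha,\beta\mapsto\beta^*,\alpha^*$ in the relevant cycles, add the Koszul term $\sum[\alpha\beta]\beta^*\alpha^*$, and finally reduce by eliminating 2-cycles with an explicit algebra automorphism of the complete path algebra. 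Comparing the output with $W(T',\epsilon)$ in each case finishes the proof.

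The main obstacle will be the case analysis itself, especially when a puncture lies at a vertex of a triangle at $k$, since the flip can change the valency of that puncture and hence the length of $\pi(p)$. The delicate step is to show that DWZ reduction precisely lengthens or shortens $\pi(p)$ and glues in the new 3-cycles at the flipped edge with the correct signs; this is where the signing $\epsilon(p)$ plays its essential role, as the coefficient of $\pi(p)$ must remain $-\epsilon(p)$ after reduction. A second source of difficulty is that $\mu_k$ is only well defined up to right equivalence, so one cannot expect equality of potentials but must exhibit explicit automorphisms of the completed path algebra intertwining the two; finding these automorphisms, especially in the self-folded cases where the definition of $W(T,\epsilon)$ must be handled via the extension to non-regular triangulations, is what makes the proof genuinely technical.
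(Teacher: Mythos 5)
The paper provides no proof of this statement---it is quoted directly as Theorem~30 of \cite{LabardiniFragoso1}---so there is no internal argument to compare against; but your outline is a faithful high-level summary of Labardini-Fragoso's original strategy: reduce to a local case analysis around the flipped arc, dispose of the exchange-matrix part via Fomin-Shapiro-Thurston, and then track the potential through DWZ pre-mutation and reduction configuration by configuration, constructing explicit right-equivalences case by case.

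Two imprecisions in your description of the DWZ step are worth flagging. First, in pre-mutation the substitution performed in the old potential replaces composite paths $\alpha\beta$ passing through $k$ by the new arrows $[\alpha\beta]$; the reversed arrows $\alpha^*,\beta^*$ appear only in the added Koszul term $\sum_{\alpha,\beta}[\alpha\beta]\beta^*\alpha^*$. The substitution ``$\alpha,\beta\mapsto\beta^*,\alpha^*$ inside the relevant cycles,'' as you wrote it, would break orientations and is not the DWZ operation. Second, the matrix mutation of Fomin-Shapiro-Thurston agrees with the quiver underlying $\mu_k(Q,W)$ only after the reduction step has eliminated any 2-cycles that pre-mutation creates, not after pre-mutation alone. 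Neither of these undermines the overall plan, but both matter once one actually starts computing. Finally, as you acknowledge, the proposal only gestures at the case analysis; essentially all of the content of Labardini-Fragoso's proof lives in carrying out those cases (in particular the self-folded and puncture-adjacent ones) and exhibiting the required automorphisms of the completed path algebra.
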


A subtle point concerning the definition of mutation in~\cite{DWZ} is that if $(Q,W)$ is a 2-acyclic quiver with potential, then $\mu_k(Q,W)$ exists for any $k$, but it may not be 2-acyclic. We say that $(Q,W)$ is \emph{nondegenerate} if the quiver with potential obtained by applying any finite sequence of mutations exists and is 2-acyclic. A consequence of Theorems~\ref{thm:poprightequivalence} and~\ref{thm:flipmutation} is the following.

\begin{theorem}[\cite{LabardiniFragoso2}, Corollary~9.1]
Let $(\mathbb{S},\mathbb{M})$ be an amenable marked bordered surface. Then the quiver with potential associated to any tagged triangulation of~$(\mathbb{S},\mathbb{M})$ is nondegenerate.
\end{theorem}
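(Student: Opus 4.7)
The plan is to prove by induction on the length $r \geq 0$ of a mutation sequence the stronger assertion that $\mu_{k_r} \cdots \mu_{k_1}(Q, W)$ is right-equivalent to $(Q(T_r), W(T_r, \epsilon_r))$ for some signed triangulation $(T_r, \epsilon_r)$ of $(\mathbb{S}, \mathbb{M})$. This implies nondegeneracy directly, since the exchange matrix of an ideal triangulation is skew-symmetric so $Q(T_r)$ is automatically $2$-acyclic, and right equivalence preserves the underlying quiver. For the base case one picks an arbitrary signed representative $(T_0, \epsilon_0)$ of the given tagged triangulation and sets $(Q, W) = (Q(T_0), W(T_0, \epsilon_0))$.

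For the inductive step, assume the QP after $r$ mutations is right-equivalent to $(Q(T_r), W(T_r, \epsilon_r))$, and let $k$ denote the next mutation direction, viewed as an arc of $T_r$. If $k$ is not self-folded in $T_r$, then Theorem~\ref{thm:flipmutation} applies directly and exhibits $\mu_k(Q(T_r), W(T_r, \epsilon_r))$ as right-equivalent to $(Q(T_{r+1}), W(T_{r+1}, \epsilon_r))$, where $T_{r+1}$ is the flip of $T_r$ at $k$, closing this case.

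The hard case is when $k$ is the self-folded edge of a self-folded triangle at some valency-one puncture $p$, for Theorem~\ref{thm:flipmutation} does not apply to such edges. The strategy is to realize the DWZ mutation at $k$ as the QP of a signed triangulation by combining Theorems~\ref{thm:poprightequivalence} and~\ref{thm:flipmutation}. Because the tagged flip graph $\Tri_{\bowtie}(\mathbb{S}, \mathbb{M})$ is $n$-regular, the tagged arc represented by $k$ in $(T_r, \epsilon_r)$ can be flipped in the tagged sense, producing a new tagged triangulation $\tau'$; the candidate target is any signed representative $(T_r^*, \epsilon_r^*)$ of $\tau'$. The amenability hypothesis is essential here, since it is only under this assumption that Theorem~\ref{thm:poprightequivalence} ensures that the QP associated to a signed representative depends only on the underlying tagged triangulation up to right equivalence, making the target QP well-defined.

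The main obstacle is the direct local verification that $\mu_k(Q(T_r), W(T_r, \epsilon_r))$ is right-equivalent to $(Q(T_r^*), W(T_r^*, \epsilon_r^*))$. This requires, using Labardini-Fragoso's explicit construction of $W(T, \epsilon)$ around a self-folded triangle together with the DWZ mutation formula, an explicit matching of arrows, of monomials in the potential, and of vertex labels between the two sides. Once this computation is carried out, the induction closes immediately and the nondegeneracy follows.
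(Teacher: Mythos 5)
Your inductive skeleton is sound, the non-self-folded case is handled correctly, and you rightly isolate the self-folded case as the crux. The difficulty is that the step you call "the main obstacle" is left as a claimed computation rather than carried out, and it is not a small gap: the assertion that $\mu_j(Q(T_r),W(T_r,\epsilon_r))$ (with $j$ the self-folded edge) is right-equivalent, after a suitable vertex identification, to the QP of a signed representative of the tagged flip does not follow from Theorems~\ref{thm:poprightequivalence} and~\ref{thm:flipmutation} alone. Theorem~\ref{thm:poprightequivalence} gives a right equivalence $(Q(T),W(T,\epsilon))\sim(Q(T),W(T,\epsilon'))$ that fixes vertices; Theorem~\ref{thm:flipmutation} only applies at flippable, i.e.\ non-self-folded, arcs. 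Composing them still leaves you with $\mu_j$ of a QP on $Q(T)$ with $j$ self-folded. To conclude, you would need to compare $\mu_j$ with $\mu_m$, where $m$ is the encircling edge; these are mutations at different vertices, and relating them requires showing that the quiver automorphism of $Q(T)$ swapping $j$ and $m$ (note that since $\pi_T(j)=\pi_T(m)=m$ the rows $j$ and $m$ of the exchange matrix coincide, so this is indeed a quiver automorphism) preserves $W(T,\epsilon)$ up to right equivalence. That symmetry of the potential near a self-folded triangle is precisely the nontrivial local computation in Labardini-Fragoso's work, and it is what you defer. Without it your induction does not close, and "once this computation is carried out" is doing all the work.

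For context, the paper does not prove this statement; it cites Labardini-Fragoso's Corollary~9.1 and merely remarks that it is "a consequence" of the two preceding theorems. Your proposal is a reasonable attempt to make that claimed deduction precise, and it usefully reveals that the deduction needs an additional input about the behaviour of $W(T,\epsilon)$ under the $j\leftrightarrow m$ swap. Either supply that local argument or state explicitly that you are invoking Labardini-Fragoso's flip/mutation compatibility for tagged arcs; as written, the self-folded case is where the theorem lives and it is left open.
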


\section{Quadratic differentials}
\label{sec:QuadraticDifferentials}

This section contains basic material on meromorphic quadratic differentials on Riemann surfaces. Further details can be found in~\cite{BridgelandSmith}.

\subsection{GMN differentials}

Let $S$ be a Riemann surface and $\omega_S$ its holomorphic cotangent bundle. Then a meromorphic \emph{quadratic differential} on $S$ is defined as a meromorphic section of the line bundle $\omega_S^{\otimes2}$. Choosing a local coordinate $z$ on~$S$, we can write this as 
\[
\phi(z)=\varphi(z)dz^{\otimes2}
\]
where $\varphi(z)$ is a meromorphic function in the local coordinate. We denote by $\Zer(\phi)$,~$\Pol(\phi)\subset S$ the sets of zeros and poles of $\phi$, respectively. The union $\Crit(\phi)=\Zer(\phi)\cup\Pol(\phi)$ forms the set of \emph{critical points} of~$\phi$.

It is important to consider the set of critical points as a disjoint union 
\[
\Crit(\phi)=\Crit_{<\infty}(\phi)\cup\Crit_\infty(\phi)
\]
where $\Crit_{<\infty}(\phi)$ consists of \emph{finite critical points}, defined as zeros and simple poles, and $\Crit_\infty(\phi)$ consists of \emph{infinite critical points}, defined as poles of order $\geq2$. We will write 
\[
S^\circ=S\setminus\Crit_\infty(\phi)
\]
for the complement of the set of infinite critical points.

In this paper, we will be concerned primarily with meromorphic quadratic differentials of the following special type.

\begin{definition}
A \emph{Gaiotto-Moore-Neitzke (GMN) differential} is a meromorphic quadratic differential $\phi$ on a compact, connected Riemann surface~$S$ satisfying the following conditions:
\begin{enumerate}
\item $\phi$ has no zero of order $>1$.
\item $\phi$ has at least one pole.
\item $\phi$ has at least one finite critical point.
\end{enumerate}
A GMN differential is said to be \emph{complete} if it has no simple poles so that every pole has order $\geq2$.
\end{definition}

\subsection{The spectral cover}

Let $\phi$ be a GMN differential on a compact Riemann surface $S$ with poles of order $m_i$ at the points $p_i\in S$. We can also view $\phi$ as a holomorphic section 
\[
\varphi\in H^0(S,\omega_S(E)^{\otimes2}), \quad E=\sum_i\left\lceil\frac{m_i}{2}\right\rceil p_i
\]
with simple zeros at the zeros and odd order poles of~$\phi$. Then the \emph{spectral cover} is defined as 
\[
\Sigma_\phi=\{(p,\psi(p)):p\in S,\psi(p)\in F_p,\psi(p)\otimes\psi(p)=\varphi(p)\}\subset F
\]
where $F$ denotes the total space of the line bundle $\omega_S(E)$. This space $\Sigma_\phi$ is a manifold because $\varphi$ is assumed to have simple zeros. The natural projection $\pi:\Sigma_\phi\rightarrow S$ is a double cover branched precisely at the simple zeros and odd order poles of~$\phi$. There is also a natural involution $\tau:\Sigma_\phi\rightarrow\Sigma_\phi$ which exchanges the two sheets of the cover and commutes with the projection map~$\pi$.

We define the \emph{hat-homology} $\widehat{H}(\phi)$ of the differential $\phi$ as the group 
\[
\widehat{H}(\phi)=H_1(\Sigma_\phi^\circ,\mathbb{Z})^-
\]
where $\Sigma_\phi^\circ=\pi^{-1}(S^\circ)$ and the superscript means we take the anti-invariant part for the action of the covering involution~$\tau$. It was proved in~\cite{BridgelandSmith}, Lemma~2.2, that the hat homology is free of finite rank. There is a natural integer-valued bilinear pairing on this group coming from the intersection pairing on $H_1(\Sigma_\phi^\circ,\mathbb{Z})$.

Finally, we note that there is a globally-defined meromorphic 1-form $\psi$ on the spectral cover with the property that 
\[
\pi^*(\phi)=\psi\otimes\psi.
\]
This 1-form $\psi$ is holomorphic on~$\Sigma_\phi^\circ$ and anti-invariant under the action of the covering involution~$\tau$. We define the \emph{period} of $\phi$ to be the group homomorphism 
\[
Z_\phi:\widehat{H}(\phi)\rightarrow\mathbb{C}, \quad \gamma\mapsto\int_\gamma\psi.
\]

\subsection{Trajectories}

Let $\phi$ be a meromorphic quadratic differential on a compact Riemann surface~$S$. Near any point of $S\setminus\Crit(\phi)$, there is a distinguished local coordinate $w$, unique up to transformations of the form $w\mapsto\pm w+\text{constant}$, with respect to which the quadratic differential $\phi$ is given by 
\[
\phi(w)=dw\otimes dw.
\]
Indeed, if we have $\phi(z)=\varphi(z)dz^{\otimes2}$ for some local coordinate $z$, then $w$ is given by $w=\int\sqrt{\varphi(z)}dz$ for some choice of the square root. The \emph{horizontal foliation} for the differential $\phi$ is the foliation of $S\setminus\Crit(\phi)$ by the lines $\Im(w)=\text{constant}$.

By a \emph{straight arc} in~$S$, we mean a smooth path $\alpha:I\rightarrow S\setminus\Crit(\phi)$, defined on an open interval $I\subset\mathbb{R}$, which makes a constant angle $\pi\theta$ with the leaves of the horizontal foliation. In terms of the distinguished local coordinate~$w$, this is equivalent to the condition that the function $\Im(w/e^{i\pi\theta})$ is constant along~$\alpha$. The phase $\theta$ of a straight arc is well defined in $\mathbb{R}/\mathbb{Z}$, and a straight arc of phase $\theta=0$ is said to be \emph{horizontal}. By convention, straight arcs will be parametrized by arc length in the flat metric induced by the distinguished local coordinates, and two straight arcs will be regarded as the same if they are related by a reparametrization of the form $t\mapsto\pm t+\text{constant}$.

A straight arc is called a \emph{trajectory} if it is not the restriction of a straight arc defined on a larger interval. Thus a horizontal trajectory is the same thing as a leaf of the horizontal foliation. A \emph{saddle connection} is a trajectory of some phase~$\theta$ whose domain of definition is a finite length interval. A saddle connection is said to be \emph{closed} if its endpoints coincide. Note that if a trajectory intersects itself in $S\setminus\Crit(\phi)$ then it must be periodic and have domain $I=\mathbb{R}$. In this case it is called a \emph{closed trajectory}. By a \emph{finite-length trajectory}, we mean either a saddle connection or a closed trajectory.

\subsection{Hat-homology classes}

Let us consider again a meromorphic quadratic differential~$\phi$ on a compact Riemann surface~$S$. If $\alpha:I\rightarrow S$ is a finite-length trajectory for~$\phi$ which is horizontal, then we can consider the preimage $\widehat{\alpha}=\pi^{-1}(\alpha)$ of this trajectory in the spectral cover~$\Sigma_\phi$. This preimage is a closed curve which may be disconnected if $\alpha$ is a closed trajectory. As we have seen, there is a canonical 1-form $\psi$ on the spectral cover with the property that $\pi^*(\phi)=\psi\otimes\psi$. We can endow the closed curve $\widehat{\alpha}$ with a canonical orientation by requiring that $\psi$ evaluated on a tangent vector to the oriented curve be real and positive. Similarly, if $\alpha:I\rightarrow S$ is a finite-length trajectory with some nonzero phase $\theta$, then we can lift $\alpha$ to a closed curve $\widehat{\alpha}$ in the spectral cover. We can once again endow this closed curve with an orientation, but in this case, we require that $\psi$ evaluated on a tangent vector to the oriented curve have positive imaginary part.

Thus we associate to any finite-length trajectory $\alpha$ of the differential a corresponding cycle $\widehat{\alpha}$ in the spectral cover. The covering involution reverses the orientation of this cycle, and so we obtain a class $\widehat{\alpha}\in\widehat{H}(\phi)$ in hat-homology, which we call the class of $\alpha$.

\subsection{Critical points}

To understand the geometry of the horizontal trajectories for a differential~$\phi$, we first consider the behavior of these trajectories near a critical point of~$\phi$. Near a zero of order $k\geq1$, it is known that the horizontal trajectories form a $(k+2)$-pronged singularity as illustrated in Figure~\ref{fig:zeros} for $k=1,2$.

\begin{figure}[ht]
\begin{center}
\[
\xy /l3pc/:
(1,0)*{}="O";  
(-0.35,0.72)*{}="U";  
(-0.75,-0.05)*{}="X1";
(-0.6,0.2)*{}="X2"; 
(-0.45,0.45)*{}="X3"; 
(-0.2,1)*{}="X4";  
(0,1.25)*{}="X5";  
(0.15,1.5)*{}="X6"; 
(1.85,1.5)*{}="Y1";
(2,1.25)*{}="Y2";
(2.2,1)*{}="Y3";
(2.45,0.45)*{}="Y4"; 
(2.6,0.2)*{}="Y5";
(2.75,-0.05)*{}="Y6"; 
(1.85,-1.5)*{}="Z1";
(1.55,-1.5)*{}="Z2"; 
(1.25,-1.5)*{}="Z3";
(0.75,-1.5)*{}="Z4";
(0.45,-1.5)*{}="Z5";
(0.15,-1.5)*{}="Z6";
(2.35,0.72)*{}="V";  
(1,-1.5)*{}="W"; 
"O";"U" **\dir{-};  
"O";"V" **\dir{-}; 
"O";"W" **\dir{-}; 
"X4";"Y3" **\crv{(0.9,0.2) & (1.1,0.2)};
"X5";"Y2" **\crv{(0.9,0.5) & (1.1,0.5)}; 
"X6";"Y1" **\crv{(0.9,0.8) & (1.1,0.8)};
"Y4";"Z3" **\crv{(1.35,0) & (1.15,0)};
"Y5";"Z2" **\crv{(1.5,-0.2) & (1.5,-0.3)};
"Y6";"Z1" **\crv{(1.65,-0.4) & (1.85,-0.6)};
"Z4";"X3" **\crv{(0.85,0) & (0.65,0)};
"Z5";"X2" **\crv{(0.5,-0.3) & (0.5,-0.2)};
"Z6";"X1" **\crv{(0.15,-0.6) & (0.35,-0.4)};
(1,0)*{\times};
(1,2)*{k=1};
\endxy
\qquad 
\xy /l3pc/:
(1,0)*{}="O";  
(1,-1.75)*{}="T"; 
(-0.75,0)*{}="U"; 
(1,1.75)*{}="V";  
(2.75,0)*{}="W"; 
(-0.75,-0.75)*{}="U1";
(-0.75,-0.5)*{}="U2";
(-0.75,-0.25)*{}="U3"; 
(-0.75,0.25)*{}="U4"; 
(-0.75,0.5)*{}="U5";
(-0.75,0.75)*{}="U6";
(0.15,1.75)*{}="V1";  
(0.45,1.75)*{}="V2";
(0.75,1.75)*{}="V3";
(1.25,1.75)*{}="V4";  
(1.55,1.75)*{}="V5"; 
(1.85,1.75)*{}="V6"; 
(2.75,0.75)*{}="W1";
(2.75,0.5)*{}="W2";
(2.75,0.25)*{}="W3"; 
(2.75,-0.25)*{}="W4";
(2.75,-0.5)*{}="W5";
(2.75,-0.75)*{}="W6";
(1.85,-1.75)*{}="T1";
(1.55,-1.75)*{}="T2";
(1.25,-1.75)*{}="T3";
(0.75,-1.75)*{}="T4";
(0.45,-1.75)*{}="T5"; 
(0.15,-1.75)*{}="T6";
"O";"T" **\dir{-};   
"O";"U" **\dir{-}; 
"O";"V" **\dir{-};  
"O";"W" **\dir{-}; 
"U4";"V3" **\crv{(0.8,0.2) & (0.8,0.2)};
"U5";"V2" **\crv{(0.5,0.5) & (0.5,0.5)};
"U6";"V1" **\crv{(0.15,0.7) & (0.15,0.7)}; 
"V4";"W3" **\crv{(1.2,0.2) & (1.2,0.2)};
"V5";"W2" **\crv{(1.5,0.5) & (1.5,0.5)}; 
"V6";"W1" **\crv{(1.85,0.7) & (1.85,0.7)};
"W4";"T3" **\crv{(1.2,-0.2) & (1.2,-0.2)};
"W5";"T2" **\crv{(1.5,-0.5) & (1.5,-0.5)}; 
"W6";"T1" **\crv{(1.85,-0.7) & (1.85,-0.7)};
"T4";"U3" **\crv{(0.8,-0.2) & (0.8,-0.2)}; 
"T5";"U2" **\crv{(0.5,-0.5) & (0.5,-0.5)};
"T6";"U1" **\crv{(0.15,-0.7) & (0.15,-0.7)}; 
(1,0)*{\times};
(1,2.25)*{k=2};
\endxy
\qquad
\dots
\]
\end{center}
\caption{The horizontal foliation near a zero of order~$k\geq1$.\label{fig:zeros}}
\end{figure}

On the other hand, near a pole of order two, there is a local coordinate $t$ such that the differential can be written 
\[
\phi(t)=\frac{r}{t^2}dt^{\otimes2}
\]
for some well defined constant $r\in\mathbb{C}^*$. We define the \emph{residue} of $\phi$ at $p$ to be the quantity 
\[
\Res_p(\phi)=\pm4\pi i\sqrt{r},
\]
which is well defined up to a sign.

Near the double pole $p$, the horizontal foliation can exhibit three possible behaviors in the $t$-plane depending on the value of the residue at~$p$: 
\begin{enumerate}
\item If $\Res_p(\phi)\in\mathbb{R}$, then the horizontal trajectories are concentric circles centered on the pole.
\item If $\Res_p(\phi)\in i\mathbb{R}$, then the horizontal trajectories are radial arcs emanating from the pole.
\item If $\Res_p(\phi)\not\in\mathbb{R}\cup i\mathbb{R}$, then the horizontal trajectories are logarithmic spirals that wrap around the pole.
\end{enumerate}
Figure~\ref{fig:doublepole} illustrates the three types of foliations.

\begin{figure}[ht]
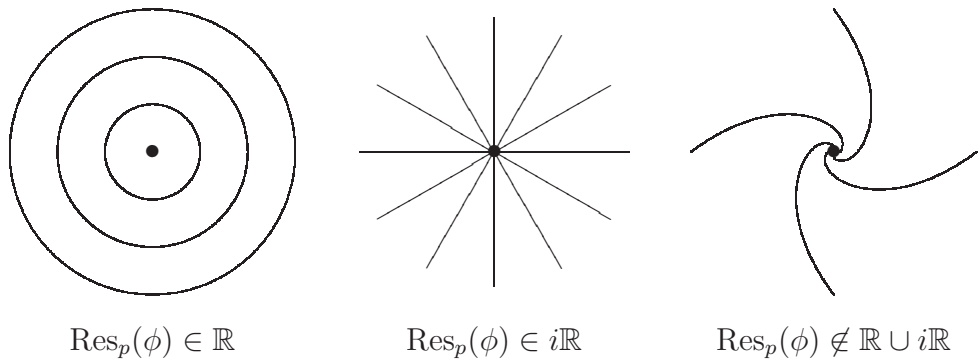

\begin{center}
\[
\xy /l1.5pc/:
(1,-3)*\xycircle(3,3){-};
(1,-2)*\xycircle(2,2){-};
(1,-1)*\xycircle(1,1){-};
(1,0)*{\bullet};
(1,4)*{\Res_p(\phi)\in\mathbb{R}};
\endxy
\qquad
\qquad
\xy /l1.5pc/:
{\xypolygon12"A"{~:{(2,2):}~>{}}};
{(1,0)\PATH~={**@{-}}'"A1"};
{(1,0)\PATH~={**@{-}}'"A2"};
{(1,0)\PATH~={**@{-}}'"A3"};
{(1,0)\PATH~={**@{-}}'"A4"};
{(1,0)\PATH~={**@{-}}'"A5"};
{(1,0)\PATH~={**@{-}}'"A6"};
{(1,0)\PATH~={**@{-}}'"A7"};
{(1,0)\PATH~={**@{-}}'"A8"};
{(1,0)\PATH~={**@{-}}'"A9"};
{(1,0)\PATH~={**@{-}}'"A10"};
{(1,0)\PATH~={**@{-}}'"A11"};
{(1,0)\PATH~={**@{-}}'"A12"};
(1,0)*{\bullet};
(1,4)*{\Res_p(\phi)\in i\mathbb{R}};
\endxy
\qquad
\xy /l1.5pc/:
(1,0)*{\bullet};
(1.13,0);(-2,0) **\crv{(1.5,0.5) & (0,1.5)};
(1,-0.13);(1,3) **\crv{(1.5,-0.5) & (2.5,1)};
(0.87,0);(4,0) **\crv{(0.5,-0.5) & (2,-1.5)};
(1,0.13);(1,-3) **\crv{(0.5,0.5) & (-0.5,-1)};
(1,4)*{\Res_p(\phi)\not\in\mathbb{R}\cup i\mathbb{R}};
\endxy
\]
\end{center}
\caption{The horizontal foliation near a pole of order two.\label{fig:doublepole}}
\end{figure}

Finally, if $p$ is a pole of order $m\geq3$, then after choosing a local coordinate $z$ with $z(p)=0$, we can write $\phi(z)=\varphi(z)dz^{\otimes2}$ where 
\[
\varphi(z)=a_0z^{-m}+a_1z^{-m+1}+a_1z^{-m+2}+\dots.
\]
We define the \emph{asymptotic horizontal directions} of $\phi$ at~$p$ to be the $m-2$ tangent vectors to the rays defined by the condition that the expression $a_0\cdot z^{2-m}$ is real and positive. The reason for the name is that there is a neighborhood $U$ of $p$ such that any horizontal trajectory that enters $U$ eventually tends to $p$ and is asymptotic to one of the asymptotic horizontal directions. We illustrate this in Figure~\ref{fig:higherorderpole} for $m=5,6$.

\begin{figure}[ht]
\begin{center}
\[
\xy /l3pc/:
{\xypolygon3"T"{~:{(2,0):}~>{}}},
{\xypolygon3"S"{~:{(1.5,0):}~>{}}},
{\xypolygon3"R"{~:{(1,0):}~>{}}},
(1,0)*{}="O"; 
(-0.35,0.72)*{}="U"; 
(2.35,0.72)*{}="V"; 
(1,-1.5)*{}="W"; 
"O";"U" **\dir{-}; 
"O";"V" **\dir{-}; 
"O";"W" **\dir{-}; 
"O";"T1" **\crv{(2,0.75) & (2.25,1.85)};
"O";"T1" **\crv{(0,0.75) & (-0.25,1.85)};
"O";"T2" **\crv{(0,0.4) & (-1.2,0.25)};
"O";"T2" **\crv{(1,-1) & (0,-2.2)};
"O";"T3" **\crv{(1,-1) & (2,-2.2)};
"O";"T3" **\crv{(2,0.4) & (3.2,0.25)};
"O";"S1" **\crv{(1.75,0.56) & (2,1.5)};
"O";"S1" **\crv{(0.25,0.56) & (0,1.5)};
"O";"S2" **\crv{(0,0.3) & (-0.9,0.19)};
"O";"S2" **\crv{(0.9,-0.8) & (0.3,-1.7)};
"O";"S3" **\crv{(2,0.3) & (2.9,0.19)};
"O";"S3" **\crv{(1.1,-0.8) & (1.7,-1.7)};
"O";"R1" **\crv{(1.5,0.5) & (1.75,1)};
"O";"R1" **\crv{(0.5,0.5) & (0.25,1)};
"O";"R2" **\crv{(0.5,0.1) & (-0.3,0.25)};
"O";"R2" **\crv{(0.75,-0.8) & (0.5,-1)};
"O";"R3" **\crv{(1.5,0.1) & (2.3,0.25)};
"O";"R3" **\crv{(1.25,-0.8) & (1.5,-1)};
(1,0)*{\bullet};
(1,2.25)*{m=5};
\endxy
\qquad
\xy /l3pc/:
{\xypolygon4"A"{~:{(2,0):}~>{}}},
{\xypolygon4"B"{~:{(1.5,0):}~>{}}},
{\xypolygon4"C"{~:{(1,0):}~>{}}},
(1,0)*{}="O"; 
(1,-1.75)*{}="T"; 
(-0.75,0)*{}="U"; 
(1,1.75)*{}="V"; 
(2.75,0)*{}="W"; 
"O";"T" **\dir{-}; 
"O";"U" **\dir{-}; 
"O";"V" **\dir{-}; 
"O";"W" **\dir{-}; 
"O";"A1" **\crv{(1,1.5) & (1.5,2.25)};
"O";"A1" **\crv{(2.5,0) & (3.25,0.5)};
"O";"A2" **\crv{(1,1.5) & (0.5,2.25)};
"O";"A2" **\crv{(-0.5,0) & (-1.25,0.5)};
"O";"A3" **\crv{(1,-1.5) & (0.5,-2.25)};
"O";"A3" **\crv{(-0.5,0) & (-1.25,-0.5)};
"O";"A4" **\crv{(1,-1.5) & (1.5,-2.25)};
"O";"A4" **\crv{(2.5,0) & (3.25,-0.5)};
"O";"B1" **\crv{(1,1) & (1.5,1.6)};
"O";"B1" **\crv{(2,0) & (2.6,0.5)};
"O";"B2" **\crv{(1,1) & (0.5,1.6)};
"O";"B2" **\crv{(0,0) & (-0.6,0.5)};
"O";"B3" **\crv{(1,-1) & (0.5,-1.6)};
"O";"B3" **\crv{(0,0) & (-0.6,-0.5)};
"O";"B4" **\crv{(1,-1) & (1.5,-1.6)};
"O";"B4" **\crv{(2,0) & (2.6,-0.5)};
"O";"C1" **\crv{(1,0.75) & (1.25,1)};
"O";"C1" **\crv{(1.75,0) & (2,0.25)};
"O";"C2" **\crv{(1,0.75) & (0.75,1)};
"O";"C2" **\crv{(0.25,0) & (0,0.25)};
"O";"C3" **\crv{(1,-0.75) & (0.75,-1)};
"O";"C3" **\crv{(0.25,0) & (0,-0.25)};
"O";"C4" **\crv{(1,-0.75) & (1.25,-1)};
"O";"C4" **\crv{(1.75,0) & (2,-0.25)};
(1,0)*{\bullet};
(1,2.25)*{m=6};
\endxy
\qquad
\dots
\]
\end{center}
\caption{The horizontal foliation near a pole of order $m\geq3$.\label{fig:higherorderpole}}
\end{figure}

\subsection{Global trajectories}

We will now study the global behavior of horizontal trajectories for a GMN differential~$\phi$ on a compact Riemann surface~$S$. It was shown in Sections~9--11 of~\cite{Strebel} (see also Section 3.4 of~\cite{BridgelandSmith}) that every horizontal trajectory of~$\phi$ is one of the following:
\begin{enumerate}
\item A \emph{saddle trajectory}, which connects two finite critical points of~$\phi$.
\item A \emph{separating trajectory}, which connects a finite and an infinite critical point of~$\phi$.
\item A \emph{generic trajectory}, which connects two infinite critical points of~$\phi$.
\item A \emph{closed trajectory}, which is a simple closed curve in $S\setminus\Crit(\phi)$.
\item A \emph{recurrent trajectory}, which has a limit set with nonempty interior in~$S$.
\end{enumerate}
Since there are only finitely many horizontal trajectories incident to any finite critical point of a quadratic differential, the horizontal foliation includes at most finitely many saddle trajectories and separating trajectories. If we remove these from~$S$, together with the critical points of~$\phi$, then the remaining surface splits as a disjoint union of connected components, which can be classified as follows:
\begin{enumerate}
\item A \emph{horizontal strip} is a maximal domain in~$S$ which is mapped by the distinguished local coordinate to a region 
\[
\{w\in\mathbb{C}:a<\Im(w)<b\}\subset\mathbb{C}.
\]
The trajectories in a horizontal strip are generic, connecting two (not necessarily distinct) poles. Each component of the boundary is composed of saddle trajectories and separating trajectories.

\item A \emph{half plane} is a maximal domain in~$S$ which is mapped by the distinguished local coordinate to 
\[
\{w\in\mathbb{C}:\Im(w)>0\}\subset\mathbb{C}.
\]
The trajectories in a half plane are generic, connecting a fixed pole of order $>2$ to itself. The boundary is composed of saddle trajectories and separating trajectories.

\item A \emph{ring domain} is a maximal domain in~$S$ which is mapped by the distinguished local coordinate to 
\[
\{w\in\mathbb{C}:a<|w|<b\}\subset\mathbb{C}^*.
\]
The trajectories in a ring domain are closed trajectories which are mapped to concentric circles by the distinguished local coordinate. Each component of the boundary is either composed of saddle trajectories or is a double pole with real residue. If one of the boundary components is a double pole, then the ring domain is said to be \emph{degenerate}.

\item A \emph{spiral domain} is the interior of the closure of a recurrent trajectory. The boundary of a spiral domain is composed of saddle trajectories.
\end{enumerate}

\subsection{Moduli spaces}

A quadratic differential $\phi$ on a compact Riemann surface $S$ with at least one pole determines a marked bordered surface $(\mathbb{S},\mathbb{M})$ by the following construction. To define the surface $\mathbb{S}$, we take the underlying smooth surface of~$S$ and perform an oriented real blowup at each pole of the differential $\phi$ having order $>2$. Then the set $\mathbb{M}$ consists of the poles of $\phi$ of order $\leq2$ considered as points in the interior of~$\mathbb{S}$, together with the points on the boundary of $\mathbb{S}$ corresponding to the asymptotic horizontal directions.

Now suppose we fix a marked bordered surface $(\mathbb{S},\mathbb{M})$. If $\phi$ is a GMN differential on a compact Riemann surface~$S$ as above, then a \emph{marking} of $(S,\phi)$ by $(\mathbb{S},\mathbb{M})$ is defined to be an isotopy class of isomorphisms between $(\mathbb{S},\mathbb{M})$ and the marked bordered surface defined by~$(S,\phi)$. A \emph{marked quadratic differential} is a triple $(S,\phi,\theta)$ where $S$ is a compact Riemann surface equipped with a GMN differential $\phi$ and $\theta$ is a marking of the pair $(S,\phi)$ by~$(\mathbb{S},\mathbb{M})$. Two such triples $(S_1,\phi_1,\theta_1)$ and $(S_2,\phi_2,\theta_2)$ are considered to be equivalent if there is a biholomorphism $S_1\rightarrow S_2$ between the underlying Riemann surfaces which preserves the quadratic differentials $\phi_i$ and commutes with the markings $\theta_i$ in the obvious way. There is a moduli space $\mathscr{Q}(\mathbb{S},\mathbb{M})$ parametrizing equivalence classes of marked quadratic differentials. We will see later that under mild assumptions it has the structure of a complex manifold of dimension~$n$ given by~\eqref{eqn:dimension}. The group $\MCG(\mathbb{S},\mathbb{M})$ acts naturally on this space by changing the marking.

It will be important later to specify additional data associated with a pole of order two of a quadratic differential. Recall that if $p$ is a pole of~$\phi$ of order two, then the residue $\Res_p(\phi)$ is well defined up to a sign. We define a \emph{signing} for $\phi$ as a choice of sign for the residue at each pole of order two. A \emph{signed differential} is a quadratic differential together with a signing. There is a branched cover 
\[
\mathscr{Q}^\pm(\mathbb{S},\mathbb{M})\rightarrow\mathscr{Q}(\mathbb{S},\mathbb{M})
\]
obtained by choosing a signing for each differential with a pole of order two. This cover has degree $2^{|\mathbb{P}|}$ and is branched precisely over the locus of quadratic differentials with simple poles. The group $\MCG^\pm(\mathbb{S},\mathbb{M})$ acts on this space by changing the markings and signings.

\subsection{Saddle-free differentials}

A GMN differential $\phi$ on a Riemann surface~$S$ is called \emph{saddle-free} if it has no horizontal saddle connections. If $\phi$ is a saddle-free differential for which $\Crit_\infty(\phi)$ is nonempty, it was shown in Lemma~3.1 of~\cite{BridgelandSmith} that $\phi$ has no closed or recurrent trajectories. It follows in this case that after removing the finitely many separating trajectories from $S\setminus\Crit(\phi)$, we obtain an open surface which is a disjoint union of horizontal strips and half planes.

In particular, if $\phi$ is a complete, saddle-free GMN differential on a Riemann surface~$S$, then the separating trajectories of~$\phi$ divide the surface $S$ into horizontal strips and half planes. Choosing one generic trajectory from each of the horizontal strips defines an ideal triangulation $T(\phi)$ of the associated marked bordered surface. This ideal triangulation $T(\phi)$ is called the \emph{WKB triangulation}. Figure~\ref{fig:WKBtriangulation} shows an example of a collection of horizontal strips and the corresponding triangles. In this picture, we indicate zeros of the quadratic differential by~$\times$ and poles by~$\bullet$.

\begin{figure}[ht]
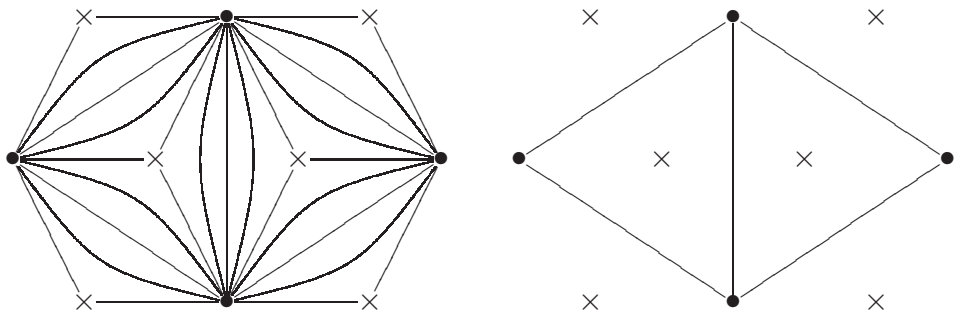

\begin{center}
\[
\xy /l2.25pc/:
(2,-2)*{\times}="11";
(0,-2)*{\bullet}="21";
(-2,-2)*{\times}="31";
(3,0)*{\bullet}="12";
(1,0)*{\times}="22";
(-1,0)*{\times}="32";
(-3,0)*{\bullet}="42";
(2,2)*{\times}="13";
(0,2)*{\bullet}="23";
(-2,2)*{\times}="33";
{"11"\PATH~={**@{-}}'"21"},
{"21"\PATH~={**@{-}}'"31"},
{"12"\PATH~={**@{-}}'"22"},
{"32"\PATH~={**@{-}}'"42"},
{"13"\PATH~={**@{-}}'"23"},
{"23"\PATH~={**@{-}}'"33"},
{"11"\PATH~={**@{-}}'"12"},
{"21"\PATH~={**@{-}}'"22"},
{"21"\PATH~={**@{-}}'"32"},
{"31"\PATH~={**@{-}}'"42"},
{"12"\PATH~={**@{-}}'"13"},
{"22"\PATH~={**@{-}}'"23"},
{"32"\PATH~={**@{-}}'"23"},
{"42"\PATH~={**@{-}}'"33"},
{"12"\PATH~={**@{-}}'"21"},
{"21"\PATH~={**@{-}}'"42"},
{"12"\PATH~={**@{-}}'"23"},
{"23"\PATH~={**@{-}}'"42"},
{"21"\PATH~={**@{-}}'"23"},
"12";"21" **\crv{(1.84,-1.56) & (1.84,-1.56)};
"12";"21" **\crv{(1.16,-0.44) & (1.16,-0.44)};
"12";"23" **\crv{(1.16,0.44) & (1.16,0.44)};
"12";"23" **\crv{(1.84,1.56) & (1.84,1.56)};
"21";"42" **\crv{(-1.84,-1.56) & (-1.84,-1.56)};
"21";"42" **\crv{(-1.16,-0.44) & (-1.16,-0.44)};
"23";"42" **\crv{(-1.84,1.56) & (-1.84,1.56)};
"23";"42" **\crv{(-1.16,0.44) & (-1.16,0.44)};
"21";"23" **\crv{(-0.5,0) & (-0.5,0)};
"21";"23" **\crv{(0.5,0) & (0.5,0)};
\endxy
\qquad
\xy /l2.25pc/:
(2,-2)*{\times}="11";
(0,-2)*{\bullet}="21";
(-2,-2)*{\times}="31";
(3,0)*{\bullet}="12";
(1,0)*{\times}="22";
(-1,0)*{\times}="32";
(-3,0)*{\bullet}="42";
(2,2)*{\times}="13";
(0,2)*{\bullet}="23";
(-2,2)*{\times}="33";
{"12"\PATH~={**@{-}}'"21"},
{"21"\PATH~={**@{-}}'"42"},
{"12"\PATH~={**@{-}}'"23"},
{"23"\PATH~={**@{-}}'"42"},
{"21"\PATH~={**@{-}}'"23"},
\endxy
\]
\end{center}
\caption{Construction of the WKB triangulation.\label{fig:WKBtriangulation}}
\end{figure}

If we choose a signing for the differential $\phi$, we obtain in a natural way a signed triangulation of the associated marked bordered surface $(\mathbb{S},\mathbb{M})$. To see this, note that if $p\in\mathbb{P}$ is a puncture with real residue, then $p$ is a double pole at the center of a ring domain. But the boundary of such a ring domain consists of saddle connections, contradicting the assumption that $\phi$ is saddle-free. It follows that the residue $\Res_p(\phi)$ is not real. Suppose we are given a choice of signing for the differential $\phi$. Then the signing $\epsilon=\epsilon(\phi)$ of the WKB triangulation $T(\phi)$ is defined so that for a puncture $p\in\mathbb{P}$, we have 
\[
\epsilon(p)\cdot\Res_p(\phi)\in\mathbb{H}
\]
where $\mathbb{H}\subset\mathbb{C}$ denotes the upper half plane. We refer to the pair $(T(\phi),\epsilon(\phi))$ as the \emph{signed WKB triangulation} of~$\phi$.

\subsection{BPS invariants}

We will now define an integer which ``counts'' finite length trajectories with a given class in hat-homology. We start by defining a GMN differential $\phi$ to be \emph{generic} if, for any two hat-homology classes $\gamma_1$,~$\gamma_2\in\widehat{H}(\phi)$, we have 
\[
\mathbb{R}\cdot Z_\phi(\gamma_1)=\mathbb{R}\cdot Z_\phi(\gamma_2) \implies \mathbb{Z}\cdot\gamma_1=\mathbb{Z}\cdot\gamma_2.
\]
Let $\phi$ be a generic GMN differential. Note that a closed trajectory of phase $\theta$ lies in a ring domain for the differential $e^{i\pi\theta}\cdot\phi$, and any other closed trajectory in this ring domain has the same class. Therefore we can speak of the class of the ring domain. The \emph{BPS invariant} associated to $\phi$ and a class $\gamma\in\widehat{H}(\phi)$ is the integer 
\begin{align*}
\Omega_\phi(\gamma) &= |\{\text{non-closed saddle connections of class $\pm\gamma$}\}| \\
&\quad - 2\cdot|\{\text{nondegenerate ring domains of class $\pm\gamma$}\}|.
\end{align*}
It was shown in~\cite{BridgelandSmith} that the integers defined in this way count stable objects in certain categories of quiver representations. The reason for the coefficient $-2$ in the above formula is that a ring domain leads to a moduli space of quiver representations isomorphic to~$\mathbb{P}^1$. Further details can be found in~\cite{BridgelandSmith}.

\section{Framed local systems}

In this section, we recall the definition of the moduli space of framed local systems from~\cite{FockGoncharov1} and describe the cluster structure of this moduli space. Our treatment is based on~\cite{AllegrettiBridgeland}.

\subsection{Framed local systems}

Let $(\mathbb{S},\mathbb{M})$ be a marked bordered surface, and let $\mathbb{S}^*=\mathbb{S}\setminus\mathbb{P}$ be the associated punctured surface. In the following, we will always write $G$ for the group $\PGL_2(\mathbb{C})$. Recall that a $G$-local system is defined as a principal $G$-bundle equipped with a flat connection. If $\mathcal{G}$ is a $G$-local system on~$\mathbb{S}^*$, then since the group $G$ has a natural left action on~$\mathbb{P}^1$, we can form the associated bundle 
\[
\mathcal{L}=\mathcal{G}\times_G\mathbb{P}^1.
\]
For each marked point $p\in\mathbb{M}$, let us fix a small contractible open neighborhood $p\in U(p)\subset\mathbb{S}$.

\begin{definition}
If $\mathcal{G}$ is a $G$-local system on the surface $\mathbb{S}^*$ then a \emph{framing} for $\mathcal{G}$ is a choice of flat section $\ell(p)$ of the associated bundle $\mathcal{L}$ over each of the sets $V(p)=U(p)\cap\mathbb{S}^*$. A \emph{framed $G$-local system} $(\mathcal{G},\ell(p))$ on $(\mathbb{S},\mathbb{M})$ is a $G$-local system together with a framing $\ell(p)$.
\end{definition}

An isomorphism of two framed $G$-local systems $(\mathcal{G}_1,\ell_1(p))$ and $(\mathcal{G}_2,\ell_2(p))$ on $(\mathbb{S},\mathbb{M})$ is an isomorphism $\theta:\mathcal{G}_1\rightarrow\mathcal{G}_2$ of the underlying $G$-local systems on~$\mathbb{S}^*$ which preserves the framings in the sense that $\theta(\ell_1(p))=\ell_2(p)$ for all $p\in\mathbb{M}$.

\subsection{Moduli spaces}

Let us fix a basepoint $x\in\mathbb{S}^*$. By a \emph{rigidified framed $G$-local system} on~$(\mathbb{S},\mathbb{M})$, we mean a framed $G$-local system $(\mathcal{G},\ell(p))$ together with a chosen point $s$ of the fiber~$\mathcal{G}_x$. An isomorphism of rigidified framed $G$-local systems $(\mathcal{G}_1,\ell_1(p),s_1)$ and $(\mathcal{G}_2,\ell_2(p),s_2)$ is an isomorphism~$\theta$ of the underlying framed $G$-local systems $(\mathcal{G}_i,\ell_i(p))$ satisfying $\theta(s_1)=s_2$. We will write $X(\mathbb{S},\mathbb{M})$ for the set of all isomorphism classes of rigidified framed $G$-local systems on~$(\mathbb{S},\mathbb{M})$

To better understand the notion of a rigidified framed local system, let us choose, for each point $p\in\mathbb{M}$, a path $\beta_p$ connecting $x$ to~$p$ whose interior lies in~$\mathbb{S}^*$. For each puncture $p\in\mathbb{P}$, we can define an element $\delta_p\in\pi_1(\mathbb{S}^*,x)$ by traveling from $x$ to~$V(p)$ along the path~$\beta_p$, encircling $p$ counterclockwise by a small loop, and then returning to $x$ along $\beta_p$. Let us consider the complex quasi-projective variety 
\[
\mathcal{V}\subset\Hom(\pi_1(\mathbb{S}^*,x),G)\times(\mathbb{P}^1)^{\mathbb{M}}
\]
consisting of pairs $(\rho,\lambda)$ such that $\rho(\delta_p)(\lambda(p))=\lambda(p)$ for all $p\in\mathbb{P}$. By Lemma~4.2 of~\cite{AllegrettiBridgeland}, there is a natural bijection between the set $X(\mathbb{S},\mathbb{M})$ and this quasi-projective variety~$\mathcal{V}$. Under this bijection, the representation $\rho$ is the monodromy representation of the local system and the point $\lambda(p)\in\mathbb{P}^1$ is defined by parallel transporting the framing $\ell(p)$ along the path $\beta_p$ into the fiber over~$x$. This fiber $\mathcal{G}_x$ is a $G$-torsor, and so we can use the chosen point $s$ to identify $\mathcal{L}_x$ with~$\mathbb{P}^1$.

There is an action of the group $G$ on the set of isomorphism classes of rigidified framed local systems $(\mathcal{G},\ell(p),s)$. An element $g\in G$ acts by fixing the underlying framed local system and mapping $s\mapsto s\cdot g$. The corresponding action on $\mathcal{V}$ is given by 
\[
g\cdot(\rho,\lambda)=(g\cdot\rho\cdot g^{-1},g\circ\lambda).
\]
We define the moduli stack of framed $G$-local systems on~$(\mathbb{S},\mathbb{M})$ as the quotient 
\[
\mathscr{X}(\mathbb{S},\mathbb{M})=X(\mathbb{S},\mathbb{M})/G
\]
by this group action.

\subsection{Coordinates from ideal triangulations}

Our next goal is to describe the cluster structure of the space $\mathscr{X}(\mathbb{S},\mathbb{M})$ following~\cite{FockGoncharov1}. To do this, it is convenient to take the universal cover of our surface. More precisely, we will equip the surface $\mathbb{S}'=\mathbb{S}\setminus\mathbb{M}$ with a complete, finite-area hyperbolic metric with totally geodesic boundary and consider its universal cover. In the chosen metric, the surface $\mathbb{S}'$ has a cusp at each of the deleted marked points, and its universal cover is a subset of the hyperbolic plane $\mathbb{H}$ with totally geodesic boundary. The cusps of the surface $\mathbb{S}'$ correspond to points on the boundary $\partial\mathbb{H}$ of~$\mathbb{H}$. The set of all such points is called the \emph{Farey set} and denoted $\mathcal{F}_\infty(\mathbb{S},\mathbb{M})$. As a cyclically ordered set, it is independent of the choice of hyperbolic metric. The action of $\pi_1(\mathbb{S}^*)$ by deck transformations on the universal cover gives rise to an action of $\pi_1(\mathbb{S}^*)$ on the Farey set.

A point of the variety $X(\mathbb{S},\mathbb{M})$ naturally determines a map $\psi:\mathcal{F}_\infty(\mathbb{S},\mathbb{M})\rightarrow\mathbb{P}^1$. Indeed, let $\tilde{x}$ be a point in the closure of the hyperbolic plane that projects to~$x\in\mathbb{S}^*$. Given any point $c\in\mathcal{F}_\infty(\mathbb{S},\mathbb{M})$ corresponding to $p\in\mathbb{M}$, we can choose a path $\tilde{\beta}$ connecting the basepoint $\tilde{x}$ to~$c$ whose interior lies in the universal cover of~$\mathbb{S}'$. If we let $\beta$ denote the image of this path under the projection to~$\mathbb{S}'$, then $\psi(c)$ is defined as the point of~$\mathbb{P}^1$ obtained by parallel transporting the framing $\ell(p)$ along $\beta$ into the fiber over~$x$. Thus we get a map $\psi:\mathcal{F}_\infty(\mathbb{S},\mathbb{M})\rightarrow\mathbb{P}^1$ which satisfies the identity 
\[
\psi(\gamma c)=\rho(\gamma)\psi(c)
\]
for all $\gamma\in\pi_1(\mathbb{S}^*)$.

Now suppose we are given an ideal triangulation $T$ of~$(\mathbb{S},\mathbb{M})$ and a general point of $X(\mathbb{S},\mathbb{M})$. We can lift the arcs of~$T$ to a collection of geodesic arcs in~$\mathbb{H}$ decomposing the universal cover into triangular regions, and the endpoints of any lifted arc are points of $\mathcal{F}_\infty(\mathbb{S},\mathbb{M})$. Thus we can use the map $\psi$ to assign a point of~$\mathbb{P}^1$ to each endpoint of a lifted arc. We can then define a cluster coordinate $X_j$ for each arc $j$ of~$T$ by the following two-step procedure: 
\begin{enumerate}
\item Let $\tilde{j}$ be a lift of the arc $j$ to the universal cover. Then there are two triangles of the lifted triangulation that share the side~$\tilde{j}$, and these form a quadrilateral in~$\mathbb{H}$. Let $c_1,\dots,c_4$ be the vertices of this quadrilateral in the counterclockwise order so that $\tilde{j}$ joins the vertices~$c_1$ and~$c_3$. For each index~$i$, let $z_i=\psi(c_i)$ and define the cross ratio 
\[
Y_j=\frac{(z_1-z_2)(z_3-z_4)}{(z_2-z_3)(z_1-z_4)}.
\]
Note that there are two ways of ordering the points $c_i$, and they give the same value for the cross ratio.

\item If $j$ is not the interior edge of a self-folded triangle, then we define $X_j=Y_j$. If $j$ is the interior edge of a self-folded triangle, let $k$ be the encircling edge of this triangle. In this case, we define $X_j=Y_jY_k$.
\end{enumerate}
In this way, we associate to a general point of $X(\mathbb{S},\mathbb{M})$ a tuple of numbers $X_j\in\mathbb{C}^*$ indexed by the arcs of the ideal triangulation~$T$. These numbers are known as the \emph{Fock-Goncharov coordinates} of the framed local system. They are invariant under the action of~$G$ on $X(\mathbb{S},\mathbb{M})$, and therefore we get a rational map 
\[
X_T:\mathscr{X}(\mathbb{S},\mathbb{M})\dashrightarrow(\mathbb{C}^*)^n.
\]
By Lemma~9.3 of~\cite{AllegrettiBridgeland}, this map is in fact a birational equivalence.

\subsection{Coordinates from tagged triangulations}

Let $(\mathbb{S},\mathbb{M})$ be a marked bordered surface and consider a framed $G$-local system on~$(\mathbb{S},\mathbb{M})$. Near each puncture $p\in\mathbb{P}$, there is a flat section $\ell(p)$ of the associated bundle $\mathcal{L}$. This flat section is invariant under the monodromy of the local system around~$p$. Therefore, if the monodromy has distinct eigenvalues, there are exactly two possible choices for $\ell(p)$, and we can consider the operation that exchanges these two choices.

\begin{proposition}[\cite{AllegrettiBridgeland}, Lemma~9.4]
There is a natural birational action of the group $\{\pm1\}$ on the stack $\mathscr{X}(\mathbb{S},\mathbb{M})$ of framed $G$-local systems where $-1$ acts on a framed $G$-local system by fixing the underlying local system and exchanging the two generically possible choices of framing at the puncture~$p$.
\end{proposition}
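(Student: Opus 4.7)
The plan is to define the involution explicitly at the level of the rigidified variety $X(\mathbb{S},\mathbb{M})$, verify it is rational and $G$-equivariant, and then descend it to the stack $\mathscr{X}(\mathbb{S},\mathbb{M})$.

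First I would identify the open locus on which the exchange is well defined. Let $U_p\subset X(\mathbb{S},\mathbb{M})$ be the open subvariety of pairs $(\rho,\lambda)$ for which $\rho(\delta_p)\in G=\PGL_2(\mathbb{C})$ is not the identity. Since the condition that an element of $G$ be non-identity is cut out by the non-vanishing of an algebraic function of its matrix entries (e.g.\ the vanishing of $\mathrm{tr}(A)^2-4\det(A)$ detects equal eigenvalues in a $\mathrm{GL}_2$-lift, while the identity corresponds to its vanishing together with $A$ being scalar), $U_p$ is indeed Zariski open and, away from this locus, $\rho(\delta_p)$ has exactly two distinct fixed points on $\mathbb{P}^1$. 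One of these is $\lambda(p)$, by the defining condition of $X(\mathbb{S},\mathbb{M})$.

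Next I would define the involution. On $U_p$ set $\sigma_p(\rho,\lambda)=(\rho,\tilde{\lambda})$ where $\tilde{\lambda}(q)=\lambda(q)$ for $q\neq p$ and $\tilde{\lambda}(p)$ is the unique fixed point of $\rho(\delta_p)$ other than $\lambda(p)$. To see that this is a morphism of algebraic varieties, work in an affine chart $\mathbb{C}\subset\mathbb{P}^1$: one may cover $U_p$ by opens in which $\lambda(p)$ is sent to $\infty$ by a fractional linear transformation that depends rationally on $\lambda(p)$, and in this coordinate $\rho(\delta_p)$ has the affine form $z\mapsto az+b$ with $a\neq 1$; the other fixed point is then $b/(1-a)$, which is a rational function of the data. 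Patching these charts shows that $\sigma_p$ is a rational self-map of $X(\mathbb{S},\mathbb{M})$, regular on $U_p$, and the relation $\sigma_p^2=\mathrm{id}$ is immediate from the construction.

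Then I would verify $G$-equivariance and descend. Under the action $g\cdot(\rho,\lambda)=(g\rho g^{-1},g\circ\lambda)$, the element $g\rho(\delta_p)g^{-1}$ fixes precisely the two points $g(\lambda(p))$ and $g(\tilde{\lambda}(p))$, so $\sigma_p(g\cdot(\rho,\lambda))=g\cdot\sigma_p(\rho,\lambda)$. Hence $\sigma_p$ descends to a birational automorphism of order two on the quotient stack $\mathscr{X}(\mathbb{S},\mathbb{M})=X(\mathbb{S},\mathbb{M})/G$, defined on the dense open substack which is the image of $U_p$; this is the desired birational $\{\pm1\}$-action.

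The only real technical obstacle is the assertion that the assignment ``other fixed point of $\rho(\delta_p)$'' is rational rather than merely set-theoretic: a priori the two fixed points are interchangeable roots of a quadratic, so naming ``the other one'' requires that a preferred root $\lambda(p)$ is already specified. The use of the framing to pin down this root, together with the explicit coordinate description $b/(1-a)$ sketched above, is exactly what makes the exchange algebraic, and once this is established the remaining verifications are formal.
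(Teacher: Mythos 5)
Your overall strategy---define the swap explicitly at the level of the rigidified variety $X(\mathbb{S},\mathbb{M})$, verify it is rational and $G$-equivariant, then descend to the quotient stack---is the natural one and does work. However, there is an error in your description of the open locus. You define $U_p$ as the locus where $\rho(\delta_p)$ is not the identity and then assert that on this locus $\rho(\delta_p)$ has exactly two distinct fixed points on $\mathbb{P}^1$. This is false: a non-identity parabolic element of $\PGL_2(\mathbb{C})$ (one whose lift $A\in\mathrm{GL}_2(\mathbb{C})$ has a repeated eigenvalue but is not scalar) has a unique fixed point on $\mathbb{P}^1$, and on the locus where $\lambda(p)$ is that unique fixed point there is no ``other'' framing to swap to. The correct open locus is the smaller one you gesture at in your parenthetical, namely the set of $(\rho,\lambda)$ for which a lift $A$ of $\rho(\delta_p)$ satisfies $\mathrm{tr}(A)^2-4\det(A)\neq 0$; this condition is well defined on $\PGL_2(\mathbb{C})$ because $\mathrm{tr}(A)^2/\det(A)$ is invariant under rescaling $A$, it is Zariski open, and it exactly characterizes those $\rho(\delta_p)$ with two distinct fixed points. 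Once $U_p$ is defined this way, your affine-chart computation (normalize $\lambda(p)$ to $\infty$, write $\rho(\delta_p)$ as $z\mapsto az+b$ with $a\neq 1$, and take the other fixed point $b/(1-a)$) correctly exhibits the exchange as a rational map regular on $U_p$, and the involutivity, $G$-equivariance, and descent to the stack go through as you describe.
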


More generally, there is a birational action of the group $\{\pm1\}^{\mathbb{P}}$ on $\mathscr{X}(\mathbb{S},\mathbb{M})$. This group also acts on the set of signed and tagged triangulations in an obvious way. The \emph{Fock-Goncharov coordinate} of a framed $G$-local system $(\mathcal{G},\ell(p))$ with respect to an arc~$j$ of the signed triangulation $(T,\epsilon)$ is defined to be the Fock-Goncharov coordinate with respect to~$j$ of the framed local system obtained by applying the group element $\epsilon\in\{\pm1\}^{\mathbb{P}}$ to $(\mathcal{G},\ell(p))$, whenever this quantity is well-defined. These coordinates provide a birational map 
\[
X_{(T,\epsilon)}:\mathscr{X}(\mathbb{S},\mathbb{M})\dashrightarrow(\mathbb{C}^*)^n.
\]
By Lemma~9.6 of~\cite{AllegrettiBridgeland}, the Fock-Goncharov coordinate of a framed local system with respect to an arc of a signed triangulation depends only on the underlying tagged arc, and therefore we can speak of the Fock-Goncharov coordinate with respect to any tagged arc in a tagged triangulation. For each tagged triangulation~$\tau$ of $(\mathbb{S},\mathbb{M})$, these coordinates provide a well defined birational equivalence $X_\tau:\mathscr{X}(\mathbb{S},\mathbb{M})\dashrightarrow(\mathbb{C}^*)^n$.

\subsection{Change of coordinates}

Note that if $(T',\epsilon)$ is the signed triangulation obtained from $(T,\epsilon)$ by performing a flip of the arc~$k$, then the arcs of~$T$ are naturally in bijection with the arcs of~$T'$. Abusing notation we will use the same symbol for an arc of $T$ and the corresponding arc of~$T'$. We will denote by $X_j$ and $X_j'$ the Fock-Goncharov coordinates with respect to an arc~$j$ of the signed triangulations $(T,\epsilon)$ and $(T',\epsilon)$, respectively.

\begin{proposition}[\cite{AllegrettiBridgeland}, Proposition~9.8]
\label{prop:flipcoordinate}
Under the assumptions of the last paragraph, the coordinates $X_j$ and $X_j'$ are related by the birational transformation 
\begin{align*}
X_j'=
\begin{cases}
X_k^{-1} & \mbox{if } j=k \\
X_j{(1+X_k^{-\sgn(\varepsilon_{jk})})}^{-\varepsilon_{jk}} & \mbox{if } j\neq k
\end{cases}
\end{align*}
where $\varepsilon_{ij}$ is the exchange matrix associated to the ideal triangulation~$T$.
\end{proposition}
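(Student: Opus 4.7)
The plan is to first reduce to the case of ordinary (untagged) ideal triangulations. The operation of flipping an arc of a signed triangulation $(T,\epsilon)$ fixes the signing $\epsilon$, so flips commute with the $\{\pm 1\}^{\mathbb{P}}$-action on framed local systems. Since $X_{(T,\epsilon)}$ is by definition obtained by first applying the birational involution corresponding to $\epsilon$ and then computing the coordinates with respect to the underlying ideal triangulation, it suffices to establish the transformation rule for the unsigned Fock-Goncharov coordinates $X_T$ and $X_{T'}$ of two ideal triangulations related by a flip of $k$.

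Next I would lift everything to the universal cover and work with the map $\psi:\mathcal{F}_\infty(\mathbb{S},\mathbb{M})\to\mathbb{P}^1$ associated to a generic point of $X(\mathbb{S},\mathbb{M})$. A lift $\tilde{k}$ is the diagonal of a quadrilateral with vertices $c_1,c_2,c_3,c_4$ in counterclockwise order, and $\tilde{k}'$ is the opposite diagonal of this same quadrilateral. Writing $z_i = \psi(c_i)$, a direct calculation with the four-point cross ratio gives $Y_{k'} = Y_k^{-1}$, which in the case when neither $k$ nor $k'$ is the interior edge of a self-folded triangle yields $X_{k'}=X_k^{-1}$.

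For $j\neq k$ I would proceed by case analysis on $\varepsilon_{jk}$. When $\varepsilon_{jk}=0$, the arcs $j$ and $k$ share no triangle, so the quadrilateral around any lift $\tilde{j}$ is unchanged by the flip and $X_j'=X_j$. When $|\varepsilon_{jk}|=1$, I would choose a common triangle and lift it; the flip then replaces exactly one vertex of the quadrilateral around $\tilde{j}$, producing a configuration of five points on $\partial\mathbb{H}$. The ratio $Y_j'/Y_j$ reduces, by the elementary cross-ratio identity
\[
(z_a - z_b)(z_c - z_d) + (z_b - z_c)(z_a - z_d) = (z_a - z_c)(z_b - z_d),
\]
to the expected factor $(1+Y_k^{\mp 1})^{\mp 1}$, with the signs dictated by the orientation convention defining $\sgn(\varepsilon_{jk})$. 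If $|\varepsilon_{jk}|=2$, the same identity is applied separately to the two triangles adjacent to $k$, producing the squared factor.

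The main obstacle will be bookkeeping the self-folded triangles, where the modified definition $X_j = Y_j Y_k$ (with $k$ there denoting the encircling edge) introduces an extra correction factor. Since a self-folded edge cannot itself be flipped, only its encircling edge can, and the configurations that must be treated separately are those in which the flipped arc is the encircling edge of some self-folded triangle, or $j$ (or the auxiliary encircling edge appearing in its definition) is the interior edge of a self-folded triangle. In each configuration, the auxiliary factors $Y_\bullet$ must be rewritten using the untagged flip rule already established, and one then verifies by direct substitution that the resulting relation between $X_j$ and $X_j'$ matches the stated formula. This finite case analysis, rather than the cross-ratio identity, is the technical heart of the argument.
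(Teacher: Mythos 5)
The paper states this proposition as a citation of Proposition~9.8 of~\cite{AllegrettiBridgeland} and does not reproduce a proof, so there is no internal argument to compare your proposal against. Your outline follows the standard route used for proving Fock--Goncharov mutation formulas, and this is also the method used in the cited paper: reduce to unsigned ideal triangulations, lift to the universal cover, and compute with cross-ratios using the three-term (Pl\"ucker) identity. The reduction to the unsigned case is justified because, by definition, $X_{(T,\epsilon)}$ is $X_T$ precomposed with the birational involution attached to $\epsilon$, the flip of a signed triangulation keeps $\epsilon$ fixed, and the exchange matrix depends only on the underlying ideal triangulation $T$. Your treatment of the $j=k$ and $|\varepsilon_{jk}|\in\{1,2\}$ cases is correct in outline.

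There is one genuine flaw in the reasoning for the case $\varepsilon_{jk}=0$. You claim that $\varepsilon_{jk}=0$ implies $j$ and $k$ share no triangle, so the quadrilateral around any lift of $j$ is unchanged. This is false: it can happen (even for non-self-folded $j,k$) that $j$ and $k$ are common edges of two triangles $t_1,t_2$ with $\varepsilon^{t_1}_{jk}=+1$ and $\varepsilon^{t_2}_{jk}=-1$, so that $\varepsilon_{jk}=0$ while the quadrilateral around $\tilde j$ really does change under the flip of $k$. The conclusion $X_j'=X_j$ still holds in this configuration because the two contributions of the form $(1+Y_k^{\mp1})^{\pm1}$ coming from the two sides of $j$ cancel, but this requires the same cross-ratio computation you use for $|\varepsilon_{jk}|=2$ applied on both sides, not the claim that nothing changed. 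Separately, the self-folded bookkeeping that you flag as ``the technical heart of the argument'' is promised but not executed; the flip can create or destroy self-folded triangles adjacent to $j$ (changing whether the correction $X_j=Y_jY_\ell$ applies and which $\ell$ occurs), so this case analysis does need to be carried out for the proof to be complete.
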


By what we have said, the formula in Proposition~\ref{prop:flipcoordinate} describes the rule for transforming between the coordinates associated with two tagged triangulations related by a flip of a tagged arc~$k$.

\subsection{Generic framed local systems}

We say that a point of $\mathscr{X}(\mathbb{S},\mathbb{M})$ is \emph{generic} if it lies in the domain of the map $X_\tau$ for some tagged triangulation $\tau$, and we write $\mathscr{X}^*(\mathbb{S},\mathbb{M})$ for the set of all generic framed $G$-local systems on~$(\mathbb{S},\mathbb{M})$. This set $\mathscr{X}^*(\mathbb{S},\mathbb{M})$ is an open substack of $\mathscr{X}(\mathbb{S},\mathbb{M})$. The charts $X_\tau$ for $\tau$ a tagged triangulation provide this open substack with the structure of a (possibly non-Hausdorff) complex manifold of dimension~$n$.

We note that our notation here differs from the notation used in~\cite{AllegrettiBridgeland}. There we defined the notion of a ``nondegenerate'' framed local system, and we wrote $\mathscr{X}^*(\mathbb{S},\mathbb{M})$ for the open substack of nondegenerate framed local systems. According to Theorem~1.2 in~\cite{AllegrettiBridgeland}, every nondegenerate framed local system is generic.

\section{Projective structures and monodromy}
\label{sec:ProjectiveStructuresAndMonodromy}

In this section, we recall the definition of a meromorphic projective structure from~\cite{AllegrettiBridgeland}. We describe the monodromy map from the space of meromorphic projective structures to the space of framed local systems.

\subsection{Projective structures}

We begin by recalling the notion of an ordinary holomorphic projective structure on a Riemann surface.

\begin{definition}
Let $S$ be a Riemann surface.
\begin{enumerate}
\item A \emph{chart} on~$S$ is a biholomorphism $z:U\rightarrow V$ where $U\subset S$ and $V\subset\mathbb{P}^1$ are nonempty open sets.
\item Two charts $z_1:U_1\rightarrow V_1$ and $z_2:U_2\rightarrow V_2$ are said to be \emph{projectively compatible} if the transition map $z_2\circ z_1^{-1}$ is the restriction of some element of $G=\PGL_2(\mathbb{C})$.
\item A \emph{projective structure} on~$S$ is a maximal collection of projectively compatible charts whose domains cover~$S$.
\end{enumerate}
\end{definition}

If $S$ is a Riemann surface, then there is a canonical choice of projective structure on~$S$. Indeed, by the uniformization theorem, we can write $S=\widetilde{S}/\Gamma$ where the universal cover $\widetilde{S}$ is either the disk, the complex plane, or the Riemann sphere, and $\Gamma\subset G$ is a discrete subgroup. In particular, $\widetilde{S}$ can be identified with an open subset of~$\mathbb{P}^1$, and we get a projective structure on~$S$ whose charts are local sections of the covering map. We call this the \emph{uniformizing projective structure} for~$S$.

If $\mathcal{P}$ is a projective structure on a Riemann surface~$S$ with universal cover $\pi:\widetilde{S}\rightarrow S$, then by~\cite{Hubbard}, Lemma~1, there exists an analytic map $f:\tilde{S}\rightarrow\mathbb{P}^1$ such that on any contractible open subset $U\subset S$, the composition $f\circ\pi^{-1}$ is a chart of~$\mathcal{P}$. Such a map is called a \emph{developing map} and is essentially unique: Any two developing maps differ by post-composition with an element of $G$.

If $f$ is a developing map for a projective structure $\mathcal{P}$, then for any $\gamma\in\pi_1(S)$, the composition $f\circ\gamma$ is another developing map for $\mathcal{P}$. It follows that there exists some $\rho(\gamma)\in G$ such that 
\[
f\circ\gamma=\rho(\gamma)\circ f.
\]
In this way, we obtain a group homomorphism $\rho:\pi_1(S)\rightarrow G$, well defined up to conjugation by elements of $G$. This group homomorphism is known as the \emph{monodromy} of the projective structure.

\subsection{Affine space structure}

An important property of projective structures is the relation between these objects and holomorphic quadratic differentials.

\begin{theorem}
The set of projective structures on a Riemann surface $S$ is an affine space for the vector space $H^0(S,\omega_S^{\otimes2})$ of holomorphic quadratic differentials.
\end{theorem}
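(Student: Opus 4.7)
The plan is to make explicit the action of $H^0(S,\omega_S^{\otimes 2})$ on the set $\mathscr{P}(S)$ of projective structures already sketched in the introduction, and then to show that this action is free and transitive using the Schwarzian derivative as the inverse operation. Concretely, given $\mathcal{P}\in\mathscr{P}(S)$ and $\phi\in H^0(S,\omega_S^{\otimes 2})$, I will define $\mathcal{P}+\phi$ as follows. For each chart $z:U\rightarrow\mathbb{P}^1$ of $\mathcal{P}$, write $\phi|_U=\varphi(z)\,dz^{\otimes 2}$, choose two linearly independent local solutions $y_1,y_2$ of $y''(z)-\varphi(z)\cdot y(z)=0$ on $U$, and declare the ratio $w=y_1/y_2:U\rightarrow\mathbb{P}^1$ to be a chart of $\mathcal{P}+\phi$. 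The collection of all such ratios, as the chart $z$ of $\mathcal{P}$ and the basis of solutions vary, is the candidate for the new projective atlas.

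Next, I would verify that this construction is well-defined. Two different bases $(y_1,y_2)$ and $(y_1',y_2')$ of the solution space of the same Schrödinger equation are related by an element of $\mathrm{GL}_2(\mathbb{C})$, so their ratios differ by a Möbius transformation, making them projectively compatible. The more delicate check is that if $\tilde z=g(z)$ is a transition between two charts of $\mathcal{P}$ with $g\in\PGL_2(\mathbb{C})$, and if $\phi=\varphi(z)\,dz^{\otimes 2}=\tilde\varphi(\tilde z)\,d\tilde z^{\otimes 2}$, then the solution spaces on overlaps are related in a way that again produces Möbius-compatible ratios. This relies on two facts: the quadratic-differential transformation law $\tilde\varphi\circ g\cdot(g')^2=\varphi$, and the observation that if $\tilde y(\tilde z)$ solves $\tilde y''-\tilde\varphi\,\tilde y=0$, then $(g')^{-1/2}\tilde y(g(z))$ solves the $z$-equation up to the correction coming from $S(g)$, which vanishes precisely because $g$ is Möbius. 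Additivity $(\mathcal{P}+\phi_1)+\phi_2=\mathcal{P}+(\phi_1+\phi_2)$ and the identity $\mathcal{P}+0=\mathcal{P}$ follow by the same local ODE bookkeeping.

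For freeness and transitivity I would pass to the Schwarzian derivative $S(F)=(F''/F')'-\tfrac12(F''/F')^2$. Given two projective structures $\mathcal{P}_1,\mathcal{P}_2$ on $S$, express the identity map of $S$ in a chart $z$ of $\mathcal{P}_1$ and a chart $w$ of $\mathcal{P}_2$ on the same open set, yielding a local biholomorphism $w=F(z)$. The classical cocycle identity
\[
S(F\circ H)=\bigl(S(F)\circ H\bigr)\cdot(H')^2+S(H)
\]
together with $S(g)=0$ for $g\in\PGL_2(\mathbb{C})$ shows that the locally defined expression $S(F)(z)\,dz^{\otimes 2}$ is independent of the chosen charts and hence patches to a globally defined holomorphic quadratic differential $\phi(\mathcal{P}_1,\mathcal{P}_2)\in H^0(S,\omega_S^{\otimes 2})$. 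Matching conventions with the Schrödinger equation used above (using the standard fact that if $w=y_1/y_2$ with $y_i$ solving $y''-\varphi y=0$, then $S(w)$ equals $-2\varphi$ up to the chosen normalization), one checks that $\mathcal{P}_1+\phi(\mathcal{P}_1,\mathcal{P}_2)=\mathcal{P}_2$ and that $\phi(\mathcal{P},\mathcal{P}+\phi)=\phi$. This simultaneously proves transitivity and freeness of the action, which is what is required for the affine space structure.

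The main obstacle is the compatibility check in paragraph two: making precise how two local Schrödinger equations written in different $\PGL_2(\mathbb{C})$-related coordinates have solution spaces related by $\mathrm{GL}_2(\mathbb{C})$, so that their ratios produce projectively compatible new charts. This amounts to the identity $S(g)=0$ for Möbius $g$ combined with the coefficient change $(g')^{-1/2}$ in the wave function, and while it is standard, it is where all the sign and factor conventions have to match those implicit in the Schwarzian bijection used in paragraph three. Once these normalizations are pinned down, everything else is formal manipulation of ODEs and the Schwarzian cocycle.
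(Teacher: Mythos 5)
Your proposal is correct and follows the standard classical argument. The paper does not prove this theorem itself but cites Hubbard (Section~2) for it; the text immediately following the statement records exactly the two ingredients you flesh out, namely the Schr\"odinger-equation action $\mathcal{P}\mapsto\mathcal{P}+\phi$ and the Schwarzian-derivative inverse $\phi=\mathcal{P}_2-\mathcal{P}_1$, so your outline is the intended proof.
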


For a proof of this theorem, see~\cite{Hubbard}, Section~2. To understand the statement more concretely, suppose we are given a projective structure $\mathcal{P}_1$ on~$S$ and a quadratic differential $\phi\in H^0(S,\omega_S^{\otimes2})$. For any chart $z:U\rightarrow\mathbb{C}$ belonging to the projective structure, we can express $\phi$ in the form $\phi(z)=\varphi(z)dz^{\otimes2}$. Then there is a new projective structure whose charts on~$U$ are obtained by taking ratios $w(z)=y_1(z)/y_2(z)$ where $y_1(z)$ and $y_2(z)$ are linearly independent solutions of the differential equation 
\begin{equation}
\label{eqn:diffeq}
y''(z)-\varphi(z)\cdot y(z)=0.
\end{equation}
We think of this new projective structure as the projective structure obtained by adding the differential $\phi$ to~$\mathcal{P}_1$, and we denote it by $\mathcal{P}_1+\phi$.

Conversely, given the projective structures $\mathcal{P}_1$ and $\mathcal{P}_2$, we can find charts $z_1,z_2:U\rightarrow\mathbb{C}$ of these projective structures defined on a common domain~$U$ and write $z_2=f(z_1)$. Then there is a quadratic differential given locally by the expression  
\[
\phi=-\frac{1}{2}\left(\left(\frac{f''}{f'}\right)'-\frac{1}{2}\left(\frac{f''}{f'}\right)^2\right) dz_1^{\otimes2},
\]
which is known as the \emph{Schwarzian derivative}. We think of this quadratic differential as the difference of $\mathcal{P}_2$ and $\mathcal{P}_1$ and denote it by $\phi=\mathcal{P}_2-\mathcal{P}_1$.

\subsection{Meromorphic projective structures}

We can now define a notion of projective structure on a Riemann surface with poles at a discrete set of points.

\begin{definition}[\cite{AllegrettiBridgeland}, Definition~3.1]
\label{def:meromorphicprojective}
A \emph{meromorphic projective structure} $\mathcal{P}$ on a Riemann surface~$S$ is defined to be a projective structure $\mathcal{P}^*$ on the complement $S^*=S\setminus P$ of a discrete subset $P\subset S$ such that, for any holomorphic projective structure $\mathcal{P}_0$ on~$S$, the quadratic differential $\phi$ on~$S^*$ defined as the difference $\phi=\mathcal{P}^*-\mathcal{P}_0|_{S^*}$ extends to a meromorphic quadratic differential on~$S$.
\end{definition}

Note that the meromorphic differential $\phi$ appearing in Definition~\ref{def:meromorphicprojective} is uniquely defined up to addition of holomorphic differentials. We call $\phi$ the \emph{polar differential} of the meromorphic projective structure. We say that the projective structure has a pole of order $m$ at a point $p\in S$ if $\phi$ has a pole of order~$m$ at~$p$.

\subsection{Local study of singularities}

We will now study the properties of a meromorphic projective structure in a neighborhood of a pole. Suppose $\mathcal{P}$ is a meromorphic projective structure on~$S$ and $p\in S$ is a pole of~$\mathcal{P}$ of order~$m\geq1$.  Let us choose an ordinary projective structure $\mathcal{P}_0$ on~$S$ and a chart $z:U\rightarrow\mathbb{C}$ in~$\mathcal{P}_0$ such that $z(p)=0$ and the set $U$ contains no pole other than~$p$. Then the polar differential $\phi=\mathcal{P}^*-\mathcal{P}_0|_{S^*}$ can be written $\phi(z)=\varphi(z)dz^{\otimes2}$ where 
\[
\varphi(z)=a_0z^{-m}+a_1z^{-m+1}+a_1z^{-m+2}+\dots.
\]
There are two possible behaviors depending on whether we have $m\leq2$ or $m>2$. In the first case, $p$ is called a \emph{regular singularity} while in the second case it is called an \emph{irregular singularity}.

Suppose first that $p$ is a regular singularity. In this case we have the following result.

\begin{proposition}[\cite{AllegrettiBridgeland}, Lemma~5.1]
\label{lem:eigenvalues}
If $m\leq2$ then the eigenvalues of the monodromy of $\mathcal{P}^*$ around~$p$ are 
\[
\lambda_\pm=-\exp(\pm r/2)
\]
where $r=\pm2\pi i\cdot\sqrt{1+4a_0}$.
\end{proposition}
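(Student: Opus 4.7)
The plan is to reduce the computation of the projective monodromy of $\mathcal{P}^*$ around $p$ to a classical Frobenius analysis of the linear ODE $y''(z) - \varphi(z) y(z) = 0$ at $z = 0$. Since the charts of $\mathcal{P}^*$ near $p$ are ratios $y_1/y_2$ of linearly independent solutions, a small loop around $p$ acts on the two-dimensional solution space by a linear transformation $M \in GL_2(\mathbb{C})$, whose image in $\PGL_2(\mathbb{C})$ is the desired projective monodromy. Because the ODE has no first-order term, the Wronskian of any two solutions is holomorphic and nonvanishing at $z = 0$, hence constant under analytic continuation, so $\det M = 1$ and $M \in SL_2(\mathbb{C})$. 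The eigenvalues appearing in the statement are those of this $SL_2$-lift, and they are determined only up to a common sign, which matches the sign ambiguity in $r = \pm 2\pi i \sqrt{1+4a_0}$.

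Next I would apply the Frobenius method at $z = 0$. When $m \leq 2$ the function $z^2 \varphi(z)$ is holomorphic at the origin, so $z = 0$ is a regular singular point of the ODE. Substituting the ansatz $y = z^\rho(1 + O(z))$ into $y'' = \varphi y$ and matching the leading $z^{\rho - 2}$ terms yields the indicial equation
\[
\rho(\rho - 1) = c, \qquad c := \lim_{z \to 0} z^2 \varphi(z),
\]
with roots $\rho_\pm = \tfrac{1}{2}(1 \pm \sqrt{1+4c})$. One reads off $c = a_0$ when $m = 2$, and $c = 0$ when $m = 1$, so the formula in the statement is consistent with the convention $a_0 = 0$ in the simple-pole case.

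In the generic non-resonant case $\rho_+ - \rho_- \notin \mathbb{Z}$, Frobenius provides a basis of solutions $y_\pm(z) = z^{\rho_\pm} h_\pm(z)$ with $h_\pm$ holomorphic and nonvanishing at the origin. Counterclockwise analytic continuation around $z = 0$ multiplies $z^{\rho_\pm}$ by $e^{2\pi i \rho_\pm}$ while leaving $h_\pm$ unchanged, so $M$ is diagonal in this basis with eigenvalues
\[
e^{2\pi i \rho_\pm} \;=\; e^{\pi i}\, e^{\pm \pi i \sqrt{1 + 4 a_0}} \;=\; -\exp(\pm r/2),
\]
where $r = \pm 2\pi i \sqrt{1+4a_0}$, which is exactly the formula asserted.

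The one subtle point is the resonant locus where $\sqrt{1 + 4 a_0} \in \mathbb{Z}$: there the second Frobenius solution may acquire a logarithmic term and $M$ can become a non-trivial Jordan block rather than a diagonal matrix. I see two ways of handling this, either of which completes the proof. The cleanest is a continuity argument, using that the entries of $M$ depend holomorphically on the coefficients of $\varphi$ (via the analytic dependence of ODE solutions on parameters), so the eigenvalues depend continuously on $a_0$ and the formula, valid on the dense non-resonant locus, extends to all values of $a_0$. Alternatively one can argue directly that in the resonant case $e^{2\pi i \rho_+} = e^{2\pi i \rho_-}$, so the logarithmic contribution adds only a unipotent summand that does not alter the eigenvalues. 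I expect this resonance issue to be the only nontrivial technical point; the rest is the standard monodromy computation at a regular singular point.
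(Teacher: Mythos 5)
Your proof is correct and uses the standard approach: the statement is cited from \cite{AllegrettiBridgeland} and proved there by exactly this kind of Frobenius/indicial-equation analysis at a regular singular point, together with the observation (via Abel's formula) that the absence of a first-order term forces $\det M = 1$. Your handling of the three subtleties — the sign ambiguity from lifting $\PGL_2$ to $\operatorname{SL}_2$, the fact that the relevant quantity is the coefficient of $z^{-2}$ (hence vanishes when $m = 1$), and the resonant case $\sqrt{1+4a_0} \in \mathbb{Z}$ where a Jordan block may appear — is what the argument actually requires, and both of your proposed resolutions of the resonance issue (continuity in $a_0$, or noting that the two Frobenius exponents give the same eigenvalue of $M$ so only the unipotent part changes) are valid.
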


In particular, when $r=2\pi in$ with $n\in\mathbb{Z}$, it can happen that the monodromy is the identity as an element of $\PGL_2(\mathbb{C})$. In this case, the pole $p$ is called an \emph{apparent singularity}.

Next, suppose that $p$ is an irregular singularity. In this case, when studying the monodromy of the projective structure~$\mathcal{P}^*$, it is important to take into account the Stokes data for the differential equation~\eqref{eqn:diffeq}. To do this, we define the \emph{anti-Stokes rays} of the equation~\eqref{eqn:diffeq} at $z=0$ to be the $m-2$ rays where the expression $a_0\cdot z^{2-m}$ is real and negative. The anti-Stokes rays bound $m-2$ closed sectors which we call the \emph{Stokes sectors}.

\begin{proposition}[\cite{AllegrettiBridgeland}, Theorem~5.2]
\label{prop:subdominant}
In the interior of each Stokes sector, there is a unique-up-to-scale solution $y(z)$ of the equation~\eqref{eqn:diffeq} such that $y(z)\rightarrow0$ as $z\rightarrow0$.
\end{proposition}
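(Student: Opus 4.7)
My plan is to prove this via the classical WKB (Liouville--Green) analysis of the Schr\"odinger equation \eqref{eqn:diffeq} at the irregular singular point $z=0$. Writing $\varphi(z)=a_0 z^{-m}+a_1 z^{-m+1}+\cdots$ with $a_0\neq 0$ and $m\geq 3$, fix a branch of $\sqrt{\varphi(z)}\sim\sqrt{a_0}\,z^{-m/2}$ and set
\[
S(z)=\int\sqrt{\varphi(z)}\,dz\sim -\frac{2\sqrt{a_0}}{m-2}\,z^{1-m/2}.
\]
Since $S(z)^2$ is to leading order a positive real multiple of $a_0 z^{2-m}$, the anti-Stokes rays at $z=0$ are precisely the rays on which $S(z)$ is purely imaginary. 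In the interior of a fixed Stokes sector $\Sigma$, $\Re(S(z))$ therefore has constant sign by continuity, and $|\Re(S(z))|\to\infty$ radially as $z\to 0$. Choose the branch so that $\Re(S(z))>0$ on the interior of $\Sigma$; then the two formal WKB modes $\varphi^{-1/4}e^{\pm S(z)}$ are exponentially subdominant and dominant, respectively, as $z\to 0$ in $\Sigma$.

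The central analytic step is to realize the subdominant formal solution by an honest holomorphic solution of \eqref{eqn:diffeq}. Substituting $y(z)=\varphi(z)^{-1/4}e^{-S(z)}u(\xi)$ with $\xi=S(z)$ reduces \eqref{eqn:diffeq} to an equation of the form
\[
u''(\xi)-2u'(\xi)=h(\xi)\,u(\xi),
\]
where $h(\xi)$ is computable from the Schwarzian derivative of the change of variables and is integrable along any ray $\xi\to\infty$ inside the image of $\Sigma$. A standard Volterra integral equation fixed-point argument (the Hukuhara--Sibuya--Wasow theory) then produces a solution with $u(\xi)\to 1$ as $\xi\to\infty$ in that image, and unwinding the substitution yields a solution $y_\Sigma(z)$ of \eqref{eqn:diffeq} satisfying $y_\Sigma(z)\sim\varphi(z)^{-1/4}e^{-S(z)}$ on the interior of $\Sigma$. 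In particular $y_\Sigma(z)\to 0$ as $z\to 0$ in the interior of $\Sigma$.

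For uniqueness I would use the dichotomy of subdominant and dominant modes. Applying the same construction with the opposite branch of the square root gives a second solution $y'(z)\sim\varphi(z)^{-1/4}e^{+S(z)}$, which together with $y_\Sigma$ spans the space of solutions of \eqref{eqn:diffeq}. Since $\Re(S(z))\to +\infty$ radially in $\Sigma$ while the prefactor $\varphi^{-1/4}\sim a_0^{-1/4}z^{m/4}$ only decays polynomially, $|y'(z)|\to\infty$. Now if $\tilde y$ is any solution with $\tilde y(z)\to 0$ as $z\to 0$ in the interior of $\Sigma$, decompose $\tilde y=\alpha y_\Sigma+\beta y'$; the exponential blow-up of $y'$ forces $\beta=0$, so $\tilde y$ is a scalar multiple of $y_\Sigma$.

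The main technical obstacle is the Volterra step itself: one must verify that the perturbation $h(\xi)$ is integrable along a family of contours reaching every point of the image sector while staying inside a region where the relevant estimates are valid, which requires care near the boundary anti-Stokes rays. This is the analytic core of the theory of linear ODEs at an irregular singular point, and the precise statement needed here is carried out in detail in \cite{AllegrettiBridgeland}; once this input is granted, existence and uniqueness follow from essentially linear-algebraic considerations.
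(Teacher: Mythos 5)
Your argument is correct and is precisely the classical Liouville--Green / Hukuhara--Sibuya--Wasow analysis of an irregular singular point that underlies the cited result in \cite{AllegrettiBridgeland} (the paper itself only cites Theorem~5.2 of that reference rather than proving it). Two minor remarks: the anti-Stokes rays as defined in the paper are where $a_0 z^{2-m}$ is real and \emph{negative}, which matches your identification that $\Re S(z)=0$ there, and the uniqueness step should note that the linear independence of $y_\Sigma$ and $y'$ follows from the contrasting growth/decay, which you implicitly use; both are fine. The one place you rightly flag as the technical core --- convergence of the Volterra iteration in the image sector --- is exactly where \cite{AllegrettiBridgeland} (following Sibuya/Wasow) carries the weight, so your appeal to that reference is appropriate and does not leave a gap.
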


A solution of the type described in Proposition~\ref{prop:subdominant} is said to be \emph{subdominant} in the given Stokes sector.

\subsection{Moduli spaces}

Let $\mathcal{P}$ be a meromorphic projective structure on a compact Riemann surface~$S$ with at least one pole. The pair $(S,\mathcal{P})$ determines a corresponding marked bordered surface, namely the marked bordered surface associated to the Riemann surface $S$ and a polar differential of~$\mathcal{P}$. Note that if $\phi_1$ and~$\phi_2$ are two polar differentials for~$\mathcal{P}$, then the two differentials have the same poles and the same distinguished tangent directions at a pole of order $>2$ since the difference $\phi_1-\phi_2$ is holomorphic. Thus the marked bordered surface associated to $(S,\mathcal{P})$ is well defined.

We define a \emph{marking} of the pair $(S,\mathcal{P})$ by a marked bordered surface $(\mathbb{S},\mathbb{M})$ to be a marking of~$(S,\phi)$ where $\phi$ is a polar differential of the projective structure $\mathcal{P}$. A \emph{marked projective structure} on~$(\mathbb{S},\mathbb{M})$ is a triple $(S,\mathcal{P},\theta)$ where $S$ is a compact Riemann surface equipped with a meromorphic projective structure~$\mathcal{P}$ and $\theta$ is a marking of the pair $(S,\mathcal{P})$ by $(\mathbb{S},\mathbb{M})$. Two such triples $(S_1,\mathcal{P}_1,\theta_1)$ and $(S_2,\mathcal{P}_2,\theta_2)$ are considered to be equivalent if there is a biholomorphism $S_1\rightarrow S_2$ between the underlying Riemann surfaces which preserves the projective structures $\mathcal{P}_i$ and commutes with the markings $\theta_i$ in the obvious way.

Let us fix a marked bordered surface $(\mathbb{S},\mathbb{M})$, and if $\mathbb{S}$ has genus $g(\mathbb{S})=0$, let us assume that $|\mathbb{M}|\geq3$. In~\cite{AllegrettiBridgeland}, we showed that the set $\mathscr{P}(\mathbb{S},\mathbb{M})$ of equivalence classes of marked projective structures on~$(\mathbb{S},\mathbb{M})$ has the natural structure of a complex manifold with dimension $n$ given by~\eqref{eqn:dimension}. The group $\MCG(\mathbb{S},\mathbb{M})$ acts on this space by changing the marking.

It will be convenient to modify this space in two ways. First, we consider a dense open subset 
\[
\mathscr{P}^\circ(\mathbb{S},\mathbb{M})\subset\mathscr{P}(\mathbb{S},\mathbb{M})
\]
whose complement is the locus of projective structures with apparent singularities. Second, we consider a branched cover 
\[
\mathscr{P}^*(\mathbb{S},\mathbb{M})\rightarrow\mathscr{P}^\circ(\mathbb{S},\mathbb{M})
\]
of degree $2^{|\mathbb{P}|}$ whose points parametrize projective structures in $\mathscr{P}^\circ(\mathbb{S},\mathbb{M})$ together with a choice of eigenline for the monodromy around each pole of order $\leq2$. By~\cite{AllegrettiBridgeland}, Proposition~8.4, this space $\mathscr{P}^*(\mathbb{S},\mathbb{M})$ is again a complex manifold of dimension~$n$.

\subsection{The monodromy map}

Suppose we are given a marked projective structure $(S,\mathcal{P},\theta)$ on a marked bordered surface $(\mathbb{S},\mathbb{M})$, and let us write $S^*=S\setminus P$ for the complement of the set $P$ of poles of~$\mathcal{P}$. By definition, $\mathcal{P}$ is given by a holomorphic projective structure $\mathcal{P}^*$ on~$S^*$, and hence we get a local system $\mathcal{G}$ on~$S^*$ defined as the monodromy of this projective structure. The surface $S^*$ can be identified with $\mathbb{S}^*=\mathbb{S}\setminus\mathbb{P}$, so we can view $\mathcal{G}$ as a local system on~$\mathbb{S}^*$.

In fact, if $(S,\mathcal{P},\theta)$ is equipped with a choice of eigenline for the monodromy around each pole of order~$\leq2$ so that we have a point of $\mathscr{P}^*(\mathbb{S},\mathbb{M})$, then this local system $\mathcal{G}$ has a natural framing. To see this, let $p\in S$ be a pole of the projective structure~$\mathcal{P}$. Choose an ordinary projective structure~$\mathcal{P}_0$ on~$S$ and a chart $z:U\rightarrow\mathbb{C}$ of~$\mathcal{P}_0$ so that $z(p)=0$. Then the polar differential $\phi=\mathcal{P}^*-\mathcal{P}_0|_{S^*}$ can be written in the form $\phi(z)=\varphi(z)dz^{\otimes2}$. There is a rank two vector bundle $E\rightarrow U$ whose fiber over a point $x\in U$ parametrizes germs of solutions of the solutions of the differential equation~\eqref{eqn:diffeq} at~$x$. In Section~6.1 of~\cite{AllegrettiBridgeland}, we explained how the projectivization $\mathbb{P}(E)$ is isomorphic to the associated bundle $\mathcal{L}=\mathcal{G}\times_G\mathbb{P}^1$. Thus, if $p$ is a regular singularity, we can define the framing near~$p$ as the chosen eigenline of the monodromy around~$p$. On the other hand, if $p$ is an irregular singularity, then by Proposition~\ref{prop:subdominant}, there is a 1-dimensional space of subdominant solutions in each Stokes sector which defines the framing near the corresponding marked point on~$\partial\mathbb{S}$.

Note that the group $\MCG^\pm(\mathbb{S},\mathbb{M})$ acts on $\mathscr{P}^*(\mathbb{S},\mathbb{M})$ by changing the marking and the choice of eigenlines. It also acts birationally on $\mathscr{X}^*(\mathbb{S},\mathbb{M})$. Using above construction, one can show the following.

\begin{theorem}[\cite{AllegrettiBridgeland}, Theorem~1.1]
\label{thm:monodromymap}
Let $(\mathbb{S},\mathbb{M})$ be a marked bordered surface, and if $g(\mathbb{S})=0$, assume that $|\mathbb{M}|\geq3$. Then there is an $\MCG^\pm(\mathbb{S},\mathbb{M})$-equivariant holomorphic map 
\[
F:\mathscr{P}^*(\mathbb{S},\mathbb{M})\rightarrow\mathscr{X}^*(\mathbb{S},\mathbb{M})
\]
sending a marked projective structure to its monodromy local system with a natural framing defined by the above construction.
\end{theorem}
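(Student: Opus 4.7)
The narrative preceding the statement already produces the candidate map $F$ on the level of sets; my plan is to verify in turn that (i) this assignment is well-defined as a rigidified framed local system up to $G$-action, (ii) its image lies in the generic open substack $\mathscr{X}^*(\mathbb{S},\mathbb{M})$, (iii) it depends holomorphically on the marked projective structure, and (iv) it intertwines the $\MCG^{\pm}(\mathbb{S},\mathbb{M})$-actions.

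For (i), given $(S,\mathcal{P},\theta)$ with a chosen eigenline at each regular singularity, the underlying $G$-local system is the monodromy of $\mathcal{P}^*$ on $S^*=S\setminus P$, transported to $\mathbb{S}^*$ via the marking. At a marked point $p\in\mathbb{P}$ (a pole of order $\leq 2$), having removed apparent singularities in passing to $\mathscr{P}^\circ$, the monodromy has distinct eigenvalues by Proposition~\ref{lem:eigenvalues}, so the chosen eigenline gives a flat section of $\mathcal{L}=\mathcal{G}\times_G\mathbb{P}^1$ on a small punctured neighborhood. At a marked point on $\partial\mathbb{S}$ corresponding to a Stokes sector at an irregular pole, Proposition~\ref{prop:subdominant} supplies a canonical line of subdominant solutions, which via the identification $\mathbb{P}(E)\cong\mathcal{L}$ recalled from Section~6.1 of~\cite{AllegrettiBridgeland} gives the required flat section of $\mathcal{L}$ near that marked point. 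Equivalence of marked projective structures corresponds to isomorphism of the resulting framed local systems, so $F$ descends to a well-defined map of sets.

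For (ii), I need to exhibit, for each point in the image, a tagged triangulation $\tau$ of $(\mathbb{S},\mathbb{M})$ relative to which the Fock-Goncharov coordinates $X_\tau$ are defined. I would argue this by choosing a polar differential $\phi$ for $\mathcal{P}$, perturbing so that $\phi$ is saddle-free (a generic condition on $\mathcal{Q}(\mathbb{S},\mathbb{M})$), and using the signed WKB triangulation $(T(\phi),\epsilon(\phi))$: the framing subspaces at the four vertices of a quadrilateral surrounding an internal edge are generic precisely because the subdominant solutions in neighboring Stokes sectors are linearly independent and distinct from the chosen eigenlines at regular singularities. Genericity of the four lines is what is needed for the cross-ratio $Y_j$ of the construction of $X_T$ to lie in $\mathbb{C}^*$, hence for $(\mathcal{G},\ell(p))\in\mathscr{X}^*(\mathbb{S},\mathbb{M})$. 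This is the most delicate step, and I would handle the case of a puncture of valency one via the tagged formalism, using that $\mathscr{P}^*$ already records the eigenline choice there.

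For (iii), local holomorphicity reduces to the well-known fact that solutions to the Schr\"odinger equation~\eqref{eqn:diffeq} depend holomorphically on the coefficient $\varphi(z)$, hence on the point of $\mathscr{P}^*(\mathbb{S},\mathbb{M})$; the monodromy representation is therefore holomorphic, and the framing data are either eigenlines of a holomorphically varying matrix with distinct eigenvalues (hence holomorphic) or subdominant lines in a Stokes sector (holomorphic by the standard asymptotic theory of irregular singularities). For (iv), the $\MCG(\mathbb{S},\mathbb{M})$-factor acts by precomposition with the marking, conjugating monodromy and permuting marked points in parallel on both sides, while the $\mathbb{Z}_2^{\mathbb{P}}$-factor acts on $\mathscr{P}^*(\mathbb{S},\mathbb{M})$ by swapping the chosen eigenline at a puncture, which matches the $\{\pm 1\}^{\mathbb{P}}$-action on $\mathscr{X}^*(\mathbb{S},\mathbb{M})$ described in the proposition before Section~4.5. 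The main obstacle is step~(ii): guaranteeing that $F$ never lands in the non-generic locus requires producing a tagged triangulation adapted to each projective structure, which is why the WKB picture for a saddle-free polar differential (and a limiting argument for the non-saddle-free case) is indispensable.
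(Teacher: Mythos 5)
The paper does not prove this theorem: it is quoted verbatim from \cite{AllegrettiBridgeland}, Theorem~1.1, after describing the construction of the framing data in the preceding paragraph. Still, the paper's own remark at the end of the ``Generic framed local systems'' subsection is instructive about the real strategy: \cite{AllegrettiBridgeland} first shows that the monodromy framed local system is \emph{nondegenerate} --- an intrinsic condition stated without reference to any triangulation --- and then proves separately (\cite{AllegrettiBridgeland}, Theorem~1.2) that nondegenerate implies generic, i.e.\ lies in the domain of $X_\tau$ for some tagged triangulation $\tau$. Your proposal collapses these two steps into one, and in doing so introduces a genuine gap in step~(ii).

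The gap is this. You propose to choose a polar differential $\phi$ for $\mathcal{P}$, perturb it to be saddle-free, and use the WKB triangulation $T(\phi)$. But the polar differential is determined by $\mathcal{P}$ up to holomorphic differentials, so perturbing $\phi$ changes the projective structure and hence changes the framed local system being tested. What you presumably mean is to rotate $\phi$ by a phase to achieve saddle-freeness, which is always possible and does not change $F$ of the projective structure. Even with that fix, however, the argument fails, because the Fock--Goncharov coordinates with respect to a WKB triangulation need not be finite at the given projective structure. Theorem~\ref{thm:meromorphic} asserts only that $t\mapsto X_{\tau(\phi),\gamma}(F(\mathcal{P}_\phi(t)))$ is \emph{meromorphic}; it may vanish or blow up at particular $t$, and this is exactly why Theorem~\ref{thm:main1} restricts the conclusion about landing in the WKB torus to $\Im(z)\gg 0$ rather than to all $z$. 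Your local observation that subdominant solutions in adjacent Stokes sectors at the same irregular pole are linearly independent is correct but insufficient: the quadrilateral surrounding an internal arc of the triangulation generally has vertices at \emph{different} poles, and there is no local reason the corresponding framing lines remain pairwise distinct after parallel transport to a common point. Establishing genericity requires the full machinery of \cite{AllegrettiBridgeland}, Theorem~1.2, which constructs an adapted tagged triangulation for each nondegenerate framed local system and need not produce the WKB triangulation of any polar differential.
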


\section{Relating the moduli spaces}
\label{sec:RelatingTheModuliSpaces}

We will now explain how to construct a densely defined map from the space of signed quadratic differentials to the space of generic framed local systems. Later, this will be interpreted as a densely defined map from a certain space of stability conditions to a cluster variety.

\subsection{Manifold structure of moduli spaces}

Recall that a \emph{family of Riemann surfaces} is defined to be a holomorphic map $\pi:X\rightarrow B$ of complex manifolds which is everywhere submersive and whose fibers $X(b)=\pi^{-1}(b)$ have complex dimension one. Let us assume that $\pi$ is proper so that the fibers $X(b)$ are compact. Choose disjoint holomorphic sections $p_i:B\rightarrow X$ for $i=1,\dots,d$ so that each Riemann surface $X(b)$ has $d$ marked points $p_i(b)$. Let us also choose $d$ positive integers~$m_i$ and consider the effective divisor 
\[
D=\sum_im_iD_i, \quad D_i=p_i(B)\subset X
\]
which restricts to give a divisor $D(b)=\sum_im_ip_i(b)$ on each surface $X(b)$. There is a vector bundle 
\[
q:Q(X/B;D)\rightarrow B
\]
whose fiber over a point~$b\in B$ is the vector space 
\[
q:Q(X/B;D)_b=H^0(X(b),\omega_{X(b)}^{\otimes2}(D(b)))
\]
of meromorphic quadratic differentials on $X(b)$ having a pole of order $\leq m_i$ at~$p_i(b)$ for $i=1,\dots,d$, and no other poles.

Now let $(\mathbb{S},\mathbb{M})$ be a fixed marked bordered surface. As in Section~\ref{sec:MarkedBorderedSurfaces}, it is determined by its genus $g=g(\mathbb{S})$ and a collection of nonnegative integers $\{k_1,\dots,k_d\}$ encoding the number of marked points on each boundary component of a modified surface $\mathbb{S}'$. Note that the integers $m_i=k_i+2$ are the pole orders of a meromorphic quadratic differential $\phi$ on a Riemann surface $S$ with associated marked bordered surface~$(\mathbb{S},\mathbb{M})$. The set of markings of the pair $(S,\phi)$ by $(\mathbb{S},\mathbb{M})$ is either empty or is a torsor for the mapping class group $\MCG(\mathbb{S},\mathbb{M})$.

\begin{lemma}
\label{lem:coveropenset}
The manifold $Q(X/B;D)$ is either empty or contains a dense open subset parametrizing differentials with simple zeros and associated marked bordered surface $(\mathbb{S},\mathbb{M})$. In the latter case, there is a principal $\MCG(\mathbb{S},\mathbb{M})$-bundle over this open set whose fiber over a differential $\phi(b)$ on the Riemann surface $X(b)$ is the set of markings of the pair $(X(b),\phi(b))$ by~$(\mathbb{S},\mathbb{M})$.
\end{lemma}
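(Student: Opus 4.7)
The plan is to first carve out the desired open subset $U \subset Q(X/B;D)$, then construct the principal bundle over it by producing a family of marked bordered surfaces and extracting the associated bundle of isotopy classes of markings.

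First I would identify $U$ as the intersection of two open conditions. The condition that $\phi(b)$ has a pole of order exactly $m_i$ at $p_i(b)$ is the nonvanishing of a leading Laurent coefficient, which is an open and dense condition on $Q(X/B;D)$. The condition that all zeros of $\phi(b)$ be simple can be phrased as transversality of the section $\phi$ to the zero section of $\omega_{X/B}^{\otimes 2}(D)$, which is open; density follows from the standard fact that simple zeros are generic for sections of a holomorphic line bundle on a compact Riemann surface. On $U$, the pole data is exactly $m_i = k_i+2$, so by the construction at the beginning of Section~3.8, the marked bordered surface associated to each pair $(X(b),\phi(b))$ has the same genus $g$ and the same boundary profile $\{k_1,\dots,k_d\}$ as $(\mathbb{S},\mathbb{M})$, and is therefore isomorphic to $(\mathbb{S},\mathbb{M})$ as a marked bordered surface. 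In particular the fiber of the purported bundle over any point of $U$ is nonempty.

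Next I would assemble these marked bordered surfaces into a family. Performing a real oriented blowup of $X$ along each section $p_i$ with $m_i \geq 3$ produces a smooth proper family $\widetilde{\pi}:\widetilde{X}|_U\to U$ of compact oriented surfaces with boundary. The sections $p_i$ with $m_i \leq 2$ descend to interior marked points. For each $p_i$ with $m_i \geq 3$, the $m_i-2$ asymptotic horizontal directions of $\phi(b)$ at $p_i(b)$ are the arguments of roots of a polynomial built from the leading Laurent coefficients of $\phi$, which are nowhere vanishing on $U$; these roots therefore vary continuously with $\phi$ and define $m_i-2$ disjoint continuous sections of the corresponding boundary circle. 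Applying Ehresmann's theorem in the version for proper submersions of manifolds with boundary, $\widetilde{\pi}$ is locally trivial as a fiber bundle with marked points, and the trivializations can be chosen to carry the marked configuration to the fixed configuration on $(\mathbb{S},\mathbb{M})$. Each such local trivialization determines a continuous family of isotopy classes of markings.

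Finally I would define $M\to U$ to be the space whose fiber over $\phi(b)$ is the set of isotopy classes of markings of $(X(b),\phi(b))$ by $(\mathbb{S},\mathbb{M})$. The local trivializations above give local sections of $M\to U$ covering $U$, so $M\to U$ is a covering space. The mapping class group $\MCG(\mathbb{S},\mathbb{M})$ acts on $M$ by precomposition, and the action is free and transitive on each fiber by the torsor property mentioned just before the statement of the lemma; hence $M\to U$ is a principal $\MCG(\mathbb{S},\mathbb{M})$-bundle. The main technical obstacle is verifying that Ehresmann's theorem applies uniformly in this boundary-with-marked-points setting and that the real oriented blowup construction extends to families in a way that keeps the asymptotic horizontal directions varying continuously; both points are standard but require care to state correctly.
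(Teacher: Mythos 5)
Your proof follows essentially the same route as the paper's (which is considerably terser): identify the open dense locus, observe that the asymptotic horizontal directions are governed by the leading Laurent coefficient and hence vary continuously, and conclude local triviality of the bundle of markings. The extra detail you supply (oriented real blowup in families, Ehresmann's theorem for proper submersions with boundary) is exactly what the paper leaves implicit when it says ``the result then follows immediately.''

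One point deserves correction. Your open set requires the pole of $\phi(b)$ at $p_i(b)$ to have order \emph{exactly} $m_i$ for every $i$. For $m_i \geq 3$ this is the right condition, since a drop in pole order changes the number of boundary marked points. But for $m_i = 2$ (the puncture points, $k_i = 0$), a differential with a \emph{simple} pole at $p_i(b)$ still determines the same marked bordered surface $(\mathbb{S},\mathbb{M})$ — the puncture is still there — and is very much intended to lie in this open set (the cover $\mathscr{Q}^\pm(\mathbb{S},\mathbb{M}) \to \mathscr{Q}(\mathbb{S},\mathbb{M})$ is branched precisely over this simple-pole locus). The paper's formulation handles this by requiring only that the zeros of $\phi(b)$ (equivalently, the zeros of the section $\varphi(b)$) be simple and disjoint from those $D_i$ with $m_i \neq 2$, so that at a puncture the section may have a simple zero (giving a simple pole of the differential) but not a higher-order one. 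Your condition yields a smaller dense open set than the one the lemma asserts, and in particular would omit the branch locus that the paper needs downstream. Relaxing the requirement at the $m_i = 2$ points to ``the differential has a pole'' (equivalently, the section has at most a simple zero along $D_i$) repairs this; the rest of your argument, including the continuity of the boundary marked-point sections and the principal-bundle construction, goes through unchanged.
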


\begin{proof}
The subset is defined as the set of quadratic differentials in $Q(X/B;D)$ whose zeros are simple and disjoint from those divisors $D_i$ for which $m_i\neq2$. This subset is easily seen to be open and dense. The result then follows immediately once one knows that the asymptotic horizontal directions vary continuously. But these directions are determined by the leading coefficient, which varies continuously.
\end{proof}

\begin{proposition}
Assume that if $g(\mathbb{S})=0$ then $|\mathbb{M}|\geq3$. Then the set $\mathscr{Q}(\mathbb{S},\mathbb{M})$ of marked quadratic differentials has the structure of a complex manifold of dimension $n$ given by~\eqref{eqn:dimension}.
\end{proposition}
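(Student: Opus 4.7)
The plan is to realize $\mathscr{Q}(\mathbb{S},\mathbb{M})$ locally as an open subset of the principal $\MCG(\mathbb{S},\mathbb{M})$-bundle furnished by Lemma~\ref{lem:coveropenset}, and then to compute the dimension via Riemann--Roch.

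Fix a representative $(S_0, \phi_0, \theta_0)$ of a point in $\mathscr{Q}(\mathbb{S},\mathbb{M})$ whose differential has poles $p_1, \ldots, p_d \in S_0$ of orders $m_i = k_i + 2$. First I would produce a holomorphic family $\pi : X \to B$ of compact Riemann surfaces with disjoint sections $p_i : B \to X$ extending the poles, arranged so that distinct fibres of $\pi$ are pairwise non-isomorphic as pointed curves and so that each $(X(b), \{p_i(b)\})$ has trivial automorphism group; in other words, $B$ is a local slice of a fine moduli space. When $2g - 2 + d > 0$ the pointed curve $(S_0, \{p_i\})$ is stable and such a $B$ is supplied by classical Kuranishi theory, possibly after passing to a finite cover to kill finite automorphisms, with $\dim_\mathbb{C} B = 3g - 3 + d$. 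When $g = 0$ and $d \leq 2$ the pointed curve has a positive-dimensional automorphism group, and this is where the hypothesis $|\mathbb{M}| \geq 3$ becomes crucial: the marking $\theta_0$ records at least three distinguished points on $\partial \mathbb{S}'$ corresponding to asymptotic horizontal directions at the poles of order $\geq 3$, and I would enlarge $B$ by parameterising enough of these tangent-direction data to rigidify the family.

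With such a $B$ in hand, I form the vector bundle $q : Q(X/B; D) \to B$ with $D = \sum_i m_i D_i$ and apply Lemma~\ref{lem:coveropenset} to obtain the dense open subset $U \subset Q(X/B; D)$ together with the principal $\MCG(\mathbb{S},\mathbb{M})$-bundle $\tilde U \to U$. Points of $\tilde U$ near the basepoint are triples $(X(b), \phi(b), \theta)$, and by the fineness of $B$ together with the simply transitive action of $\MCG(\mathbb{S},\mathbb{M})$ on markings, two such triples represent equivalent elements of $\mathscr{Q}(\mathbb{S},\mathbb{M})$ precisely when they coincide in $\tilde U$. This identifies a neighbourhood of the chosen point in $\mathscr{Q}(\mathbb{S},\mathbb{M})$ with an open subset of the complex manifold $\tilde U$; compatibility across different choices of $(X/B)$ follows from standard deformation theory.

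For the dimension, Riemann--Roch on each fibre yields
\[
\dim_\mathbb{C} H^0(X(b), \omega_{X(b)}^{\otimes 2}(D(b))) = 3g - 3 + \textstyle\sum_i m_i,
\]
since $\deg(\omega^{\otimes 2}(D)) = 4g - 4 + \sum_i m_i$ strictly exceeds $2g - 2$ under the standing hypotheses (this is where $|\mathbb{M}| \geq 3$ in genus zero is used a second time), forcing $H^1 = 0$ and giving $q$ constant rank. Combined with $\dim_\mathbb{C} B = 3g - 3 + d$ (adjusted in the enlarged genus-zero case), the total is $6g - 6 + d + \sum_i m_i = 6g - 6 + \sum_i (k_i + 3) = n$. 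The main obstacle is the handling of the genus-zero case with $d \leq 2$: one must canonically construct the enlarged base $B$ using the tangent-direction data from the marking, and verify that the resulting dimension count still produces $n$.
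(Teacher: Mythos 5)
Your strategy for the generic case agrees with the paper's: form the bundle $Q(X/B;D)$ over a moduli of pointed curves, pass to the dense open subset and $\MCG(\mathbb{S},\mathbb{M})$-cover of Lemma~\ref{lem:coveropenset}, and compute the dimension by Riemann--Roch. Two remarks, one minor and one substantive.

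The minor one: in the case $2g-2+d>0$ you ``pass to a finite cover to kill finite automorphisms,'' but the paper sidesteps this by working directly with the orbifold $B=\mathcal{M}(g,d)$ and observing that, after passing to the marking cover, the finite automorphism groups are already resolved --- any automorphism of the pointed curve $(X(b),\{p_i(b)\})$ acts nontrivially on the set of markings by $(\mathbb{S},\mathbb{M})$. This is cleaner than inserting an auxiliary finite cover and is exactly the mechanism the marking data is there to provide.

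The substantive one: you correctly flag the genus-zero case with $d\leq2$ as the real obstacle and propose to rigidify by enlarging $B$ using tangent-direction data, but you do not carry this out, and as written there is a genuine gap --- you have no base space and no dimension count when the pointed curve is unstable. The paper instead handles this case by a direct elementary computation that avoids any moduli of curves: when $g=0$ and $d\leq2$, every differential lives on $\mathbb{P}^1$ and can be written $\phi=P(z)\,dz^{\otimes2}$ with $P$ a Laurent polynomial of prescribed pole orders, so the space of such $\phi$ is simply an explicit open subset of a vector space. The hypothesis $|\mathbb{M}|\geq3$ forces $P$ to be nonconstant, which cuts the positive-dimensional automorphism group of $(\mathbb{P}^1,\{p_i\})$ down to the finite rescalings $z\mapsto\zeta z$ by roots of unity, and these act nontrivially on markings; hence the quotient by automorphisms followed by the marking cover is again a manifold, and the dimension is read off from the space of Laurent polynomials. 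If you want to salvage your rigidification route you would need to actually build the enlarged base and redo the dimension count, whereas the paper's direct approach makes all this unnecessary.
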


\begin{proof}
Assume first that if $g=0$ then $d\geq3$. Let $B=\mathcal{T}(g,d)$ denote the Teichm\"uller space parametrizing marked Riemann surfaces of genus $g$ with $d$ marked points. There is a universal curve $\pi:X\rightarrow B$ with $d$ disjoint sections $p_1,\dots,p_d$. Thus, we can apply the above construction to get a manifold $Q(X/B;D)$ parametrizing meromorphic quadratic differentials; it follows from the Riemann-Roch theorem that this space has dimension~$n$.

By Lemma~\ref{lem:coveropenset}, we can pass to a covering space of an open subset, and this covering space parametrizes quadratic differentials together with a marking by $(\mathbb{S},\mathbb{M})$. Then $\mathscr{Q}(\mathbb{S},\mathbb{M})$ is identified with the subset of points where the marking by $(\mathbb{S},\mathbb{M})$ is compatible, after blowing down all boundary components of~$\mathbb{S}$, with the marking of the corresponding point in Teichm\"uller space. Since the covering space of Lemma~\ref{lem:coveropenset} has discrete fibers, this subset is open and hence a complex manifold. Thus we have proved the proposition under our assumption. The cases when $g=0$ and $d\leq2$ can be handled directly. In these cases, the moduli spaces $\mathscr{Q}(\mathbb{S},\mathbb{M})$ parametrize differentials on~$\mathbb{P}^1$ of the form $P(z)dz^{\otimes2}$ with $P(z)$ a Laurent polynomial. The assumption $|\mathbb{M}|\geq3$ implies that $P(z)$ is not constant, so the only possible automorphisms are maps rescaling the coordinate~$z$ by roots of unity, and these are easily seen to act nontrivially on the markings. Hence $\mathscr{Q}(\mathbb{S},\mathbb{M})$ is a manifold rather than an orbifold.
\end{proof}

Recall that $\mathscr{Q}^{\pm}(\mathbb{S},\mathbb{M})$ is defined as the branched cover of $\mathscr{Q}(\mathbb{S},\mathbb{M})$ obtained by choosing a signing for each differential with a pole of order two. There is an analogous branched cover 
\[
\mathscr{P}^{\pm}(\mathbb{S},\mathbb{M})\rightarrow\mathscr{P}(\mathbb{S},\mathbb{M})
\]
obtained by choosing an eigenvalue for the monodromy around each pole of order $\leq2$. The group $\MCG^\pm(\mathbb{S},\mathbb{M})$ acts naturally on~$\mathscr{P}^\pm(\mathbb{S},\mathbb{M})$ by changing the marking and the choice of eigenvalues.

\begin{proposition}
\label{prop:signedspaces}
Assume that if $g(\mathbb{S})=0$ then $|\mathbb{M}|\geq3$. Then the sets $\mathscr{P}^{\pm}(\mathbb{S},\mathbb{M})$ and $\mathscr{Q}^{\pm}(\mathbb{S},\mathbb{M})$ are complex manifolds.
\end{proposition}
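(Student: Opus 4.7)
The strategy is to realize each branched cover as a smooth complete intersection by writing down explicit holomorphic equations for the sheets and checking transversality along the branch locus. The key observation in both cases is that the data of a ``sign'' at a puncture is the choice of a square root of an explicit holomorphic function on the base, and these functions can be arranged to have independent simple zeros along the branch divisors.

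For $\mathscr{Q}^\pm(\mathbb{S},\mathbb{M})$, fix a puncture $p\in\mathbb{P}$ and a holomorphic coordinate $z$ centered at $p$; the leading Laurent coefficient $r_p(\phi)$ of $\varphi(z)=r_p\,z^{-2}+\cdots$ defines a well-defined holomorphic function on $\mathscr{Q}(\mathbb{S},\mathbb{M})$ (any two local coordinates differ by a unit, and $r_p$ transforms by the square of a nonzero value, so its zero locus and order of vanishing are intrinsic). Since $\Res_p(\phi)=\pm 4\pi i\sqrt{r_p}$, choosing a signing at $p$ is exactly choosing a square root of $r_p$, so I would define
\[
\mathscr{Q}^\pm(\mathbb{S},\mathbb{M})=\Bigl\{(x,(s_p)_{p\in\mathbb{P}})\in\mathscr{Q}(\mathbb{S},\mathbb{M})\times\mathbb{C}^{\mathbb{P}}\;:\;s_p^2=r_p(x)\ \text{for all } p\in\mathbb{P}\Bigr\},
\]
which carries the natural projection to $\mathscr{Q}(\mathbb{S},\mathbb{M})$ and the $\MCG^\pm(\mathbb{S},\mathbb{M})$-action permuting sheets. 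The proof that this is a complex submanifold reduces, via the implicit function theorem, to showing that for any nonempty subset $P_0\subseteq\mathbb{P}$ the differentials $\{dr_p\}_{p\in P_0}$ are linearly independent at every common zero. I would verify this by the same local bump/Riemann--Roch argument used in the previous proposition: given $\phi$ with prescribed pole orders, one exhibits, for each $p\in P_0$, a one-parameter family of deformations of $\phi$ supported in a neighborhood of $p$ that shifts only the coefficient of $z^{-2}$ at $p$, leaving all Laurent expansions at the other marked points unchanged. Independence is immediate from the locality of these deformations.

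For $\mathscr{P}^\pm(\mathbb{S},\mathbb{M})$, the argument is parallel. For each pole $p\in\mathbb{P}$ of order $\leq 2$ of the meromorphic projective structure, Proposition~\ref{lem:eigenvalues} gives monodromy eigenvalues $\lambda_\pm=-\exp(\pm r_p/2)$ with $r_p=\pm 2\pi i\sqrt{1+4a_0(p)}$, where $a_0(p)$ is the leading Laurent coefficient of any polar differential at $p$, well-defined on $\mathscr{P}(\mathbb{S},\mathbb{M})$ by the same coordinate-change argument. A choice of eigenline is a choice of square root of $u_p:=1+4a_0(p)$, so I set
\[
\mathscr{P}^\pm(\mathbb{S},\mathbb{M})=\Bigl\{(y,(t_p)_{p\in\mathbb{P}})\in\mathscr{P}(\mathbb{S},\mathbb{M})\times\mathbb{C}^{\mathbb{P}}\;:\;t_p^2=u_p(y)\ \text{for all } p\in\mathbb{P}\Bigr\}.
\]
The same local-deformation argument (now at the level of polar differentials, the polar differential being determined up to a holomorphic term that does not affect $a_0$) shows that the $u_p$ form a joint submersion at their common zero loci, so $\mathscr{P}^\pm(\mathbb{S},\mathbb{M})$ is a complex manifold of dimension $n$. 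As a consistency check, the open locus $\{u_p\neq 0\text{ for all }p\}$ is precisely $\mathscr{P}^\circ(\mathbb{S},\mathbb{M})$, and the complex structure defined here restricts there to the manifold structure on $\mathscr{P}^*(\mathbb{S},\mathbb{M})$ from \cite{AllegrettiBridgeland}, Proposition~8.4; this identification justifies the notation and ensures compatibility with the monodromy map $F$ of Theorem~\ref{thm:monodromymap}.

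The principal obstacle in both cases is the transversality of the defining functions $r_p$ and $u_p$ at the branch divisors; everything else is a formal application of the implicit function theorem together with the fact that the branched covers are honest covers (of constant degree $2^{|\mathbb{P}|}$) away from the branch locus. The transversality is ultimately a local statement about independently deforming the leading Laurent coefficients at distinct punctures, which is the same cohomological input already used in the previous proposition when establishing the complex dimension $n$; the low-genus hypothesis $|\mathbb{M}|\geq 3$ when $g(\mathbb{S})=0$ is invoked in exactly the same place, to rule out the pathological $\mathbb{P}^1$ cases where the automorphism group of the base would interfere with this deformation-theoretic argument.
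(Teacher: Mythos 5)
Your overall strategy---realizing each signed moduli space as an explicit fiber product $\{(x,(s_p)): s_p^2 = r_p(x)\}$ over the unsigned space, then verifying smoothness by transversality of the branch divisors---is the same as the paper's. The paper phrases this by saying the cover is smooth provided the holomorphic map $a:\mathscr{Q}(\mathbb{S},\mathbb{M})\rightarrow\mathbb{C}^{\mathbb{P}}$ taking a differential to its leading Laurent coefficients is a submersion, and then cites the proof of Lemma~6.1 in \cite{BridgelandSmith} for that submersion; for $\mathscr{P}^\pm(\mathbb{S},\mathbb{M})$ it cites Proposition~8.4 of \cite{AllegrettiBridgeland}. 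Your set-up and the reduction to linear independence of $\{dr_p\}_{p\in P_0}$ along the branch locus are correct.

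The step that does not hold up as written is the ``local bump'' claim: you say one can exhibit, for each $p\in P_0$, a deformation of $\phi$ \emph{supported in a neighborhood of} $p$ that shifts only the coefficient of $z^{-2}$ at $p$. There is no such thing for a meromorphic section of $\omega_S^{\otimes 2}$ on a compact Riemann surface: such a deformation that vanishes on an open set vanishes identically. What one actually needs is the global cohomological statement that the evaluation map $H^0(S,\omega_S^{\otimes 2}(D))\rightarrow\bigoplus_{p\in P_0}\mathbb{C}$ reading off the $z^{-2}$-coefficients is surjective, which follows from a vanishing $H^1(S,\omega_S^{\otimes 2}(D-2P_0))=0$ (or, if that fails in low-genus cases, from additionally varying the complex structure of $S$ inside $\mathscr{Q}(\mathbb{S},\mathbb{M})$). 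This is precisely the content of the cited Lemma~6.1 in \cite{BridgelandSmith}. You do gesture at Riemann--Roch, so the underlying idea is right, but the ``locality'' of the deformations is not where the independence comes from---the deformations are global, and independence is a cohomological surjectivity statement. The same correction applies to your $\mathscr{P}^\pm$ argument. With that step replaced, your proof is sound and matches the paper's.
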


\begin{proof}
There is a holomorphic map 
\[
a:\mathscr{Q}(\mathbb{S},\mathbb{M})\rightarrow\mathbb{C}^{\mathbb{P}}
\]
sending a quadratic differential to the leading coefficient of its Laurent expansion at each of the punctures $p\in\mathbb{P}$. The branched cover $\mathscr{Q}^{\pm}(\mathbb{S},\mathbb{M})$ is defined by choosing a sign for the residue. It is smooth provided that $a$ is a submersion, which holds by the proof of~\cite{BridgelandSmith}, Lemma~6.1. The proof that $\mathscr{P}^{\pm}(\mathbb{S},\mathbb{M})$ is a complex manifold is completely analogous; the argument is identical to the proof of Proposition~8.4 in~\cite{AllegrettiBridgeland}.
\end{proof}

\subsection{An embedding of moduli spaces}
\label{sec:AnEmbeddingOfModuliSpaces}

Let $(\mathbb{S},\mathbb{M})$ be a marked bordered surface, and if $g(\mathbb{S})=0$ assume that $|\mathbb{M}|\geq3$. Let us choose a point $(S,\phi)\in\mathscr{Q}(\mathbb{S},\mathbb{M})$. Then we can define $\mathcal{P}^*$ to be the uniformizing projective structure for the punctured surface $S^*=S\setminus\Pol(\phi)$. The following result shows that $\mathcal{P}^*$ defines a meromorphic projective structure $\mathcal{P}$ on~$S$ whose polar differential has a particular form.

\begin{lemma}[\cite{Allegretti19}, Lemma~7.9]
\label{lem:Q2}
Let $\mathcal{P}^*$ and $S^*$ be as in the last paragraph. If $\chi(S^*)\leq0$ and $\mathcal{P}_0$ is any holomorphic projective structure on~$S$, then locally around any pole of $\phi$, the difference $\mathcal{P}^*-\mathcal{P}_0|_{S^*}$ can be written $Q_2(z)dz^{\otimes2}$ where 
\[
Q_2(z)=-\frac{1}{4z^2}+O(1) \quad\text{as $z\rightarrow0$}.
\]
\end{lemma}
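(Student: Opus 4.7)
The claim is local near a pole $p$ of $\phi$, so the plan is to fix a suitable holomorphic coordinate $z$ on a neighborhood $U \subset S$ of $p$ with $z(p) = 0$ and then reduce the identification of $Q_2(z)$ to a direct Schwarzian computation. The essential input from hyperbolic geometry is the standard description of cusp neighborhoods, while the essential input from projective-structure theory is the affine-space Schwarzian formula recalled in the paper. The coordinate $z$ will be chosen adapted to the uniformizing structure, so that a chart of $\mathcal{P}^*$ takes an explicit logarithmic form.

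Since $\chi(S^*) \le 0$, uniformization gives $S^* = \tilde{S}^*/\Gamma$ with $\tilde{S}^* \in \{\mathbb{C}, \mathbb{H}\}$ and $\Gamma$ acting by Möbius transformations. The end of $S^*$ at $p$ corresponds to a parabolic (respectively translation) subgroup of $\Gamma$, which I would conjugate to be generated by $w \mapsto w + 1$ fixing $\infty$. A standard cusp neighborhood $\{w : \Im w > y_0\}$ in the universal cover then covers a neighborhood $V^* \subset S^*$ of the puncture via $w \mapsto e^{2\pi i w}$. Taking $z = e^{2\pi i w}$ as the coordinate on $V^*$, which extends holomorphically to $p$ with $z(p) = 0$ since $z \to 0$ as $\Im w \to +\infty$, a local inverse of the covering map—equivalently, a chart of $\mathcal{P}^*$—is given by
\[
f(z) = \frac{\log z}{2\pi i}.
\]
Any other local inverse differs from $f$ by post-composition with an element of $\PGL_2(\mathbb{C})$, which has zero Schwarzian and so does not affect what follows.

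A routine computation using the Schwarzian formula in the paper yields
\[
\frac{f''}{f'} = -\frac{1}{z}, \qquad \left(\frac{f''}{f'}\right)' - \frac{1}{2}\left(\frac{f''}{f'}\right)^{2} = \frac{1}{z^2} - \frac{1}{2z^2} = \frac{1}{2z^2},
\]
so that the quadratic differential equal to $\mathcal{P}^*$ minus the flat projective structure determined by the coordinate $z$ is $-\tfrac{1}{4z^2}\, dz^{\otimes 2}$. On the other hand, on $U$ the given holomorphic projective structure $\mathcal{P}_0$ differs from this flat structure by a holomorphic quadratic differential on $U$, which is by definition $O(1)$ as $z \to 0$. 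Combining the two contributions gives $\mathcal{P}^* - \mathcal{P}_0|_{S^*} = Q_2(z)\, dz^{\otimes 2}$ with $Q_2(z) = -\frac{1}{4z^2} + O(1)$, as claimed.

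The step I would expect to be most delicate is the assertion that the coordinate $z = e^{2\pi i w}$ genuinely extends to a holomorphic coordinate on a neighborhood of $p$ in $S$, and that in this coordinate the universal cover near the cusp is modeled exactly by $w \mapsto e^{2\pi i w}$ with no correction terms. For hyperbolic cusps this is the classical standard-cusp-neighborhood result for Fuchsian groups; the flat case $\chi(S^*) = 0$ is handled identically using the translation group in place of the parabolic. Choosing the coordinate in this adapted way is essential: for an arbitrary coordinate around $p$, the coordinate-change formula for quadratic differentials introduces an additional $1/z$ term whose coefficient depends on the two-jet of the change of variables, and only the adapted coordinate eliminates it. Once this geometric input is in place, the remainder of the argument is a mechanical Schwarzian computation combined with the elementary observation that $\mathcal{P}_0$ contributes only a holomorphic term.
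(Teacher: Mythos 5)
Your argument is correct and follows the standard route for this kind of cusp Schwarzian estimate: uniformize $S^*$, pass to the adapted coordinate $z=e^{2\pi i w}$ coming from the standard horoball (respectively translation-cylinder) neighborhood of the end, compute the Schwarzian of the local inverse $f(z)=\log z/(2\pi i)$ against the flat structure determined by $z$, and absorb $\mathcal{P}_0$ into the $O(1)$ term since it is holomorphic across $p$. The Schwarzian computation is right with the paper's sign normalization $\phi=-\tfrac12\mathcal{S}(f)\,dz^{\otimes 2}$, giving exactly $-\tfrac{1}{4z^2}\,dz^{\otimes 2}$ with no subleading $1/z$ correction in the adapted coordinate. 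Your remark about the coordinate dependence of the $O(1)$ form is accurate (a change of coordinate with nonzero second jet introduces a $1/z$ term, so ``can be written'' here means ``in a suitable coordinate''), and the extension of $z$ to a holomorphic coordinate across $p$ is automatic: $z$ is a bounded injective holomorphic map on a punctured neighborhood, so it extends with a simple zero at $p$. This is the same approach as the cited source.
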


We note that the hypothesis $\chi(S^*)\leq0$ is essential. If $\chi(S^*)>0$ then since $\Pol(\phi)\neq\emptyset$, the surface $S^*$ is the once punctured sphere, and the uniformizing projective structure $\mathcal{P}^*$ is the standard projective structure on the complex plane.  Letting $\mathcal{P}_0$ be the standard projective structure on $\mathbb{P}^1$, one finds $\mathcal{P}^*-\mathcal{P}_0|_{S^*}=0$.

Using Lemma~\ref{lem:Q2}, we define a meromorphic projective structure $\mathcal{P}$ on~$S$. We also consider the meromorphic projective structure given by 
\[
\mathcal{P}_\phi=\mathcal{P}+\phi.
\]
Since $\mathcal{P}$ has a pole of order two at each point of~$\Pol(\phi)$, this projective structure $\mathcal{P}_\phi$ has the same associated marked bordered surface as~$\mathcal{P}$. Since the pair $(S,\phi)$ is equipped with a choice of marking by $(\mathbb{S},\mathbb{M})$, the projective structure $\mathcal{P}_\phi$ is a point of the space $\mathscr{P}(\mathbb{S},\mathbb{M})$ of marked projective structures. 

\begin{lemma}
\label{lem:residueeigenvalues}
The monodromy of $\mathcal{P}_\phi$ around a pole $p$ of~$\phi$ of order two has eigenvalues 
\[
\lambda_\pm=-\exp(\pm\Res_p(\phi)/2).
\]
\end{lemma}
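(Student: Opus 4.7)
The plan is to reduce the lemma to a direct application of Proposition~\ref{lem:eigenvalues} by unravelling the definition of $\mathcal{P}_\phi$ in a local coordinate at the pole~$p$.

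First I would choose a coordinate $z$ near~$p$ with $z(p)=0$, together with a holomorphic projective structure $\mathcal{P}_0$ on a neighborhood of~$p$ (which exists since any open subset of a Riemann surface carries such a structure). By Lemma~\ref{lem:Q2}, the difference $\mathcal{P}^*-\mathcal{P}_0|_{S^*}$ has the local form $Q_2(z)\,dz^{\otimes 2}$ with
\[
Q_2(z)=-\frac{1}{4z^2}+O(1)\quad\text{as $z\to 0$.}
\]
Since $p$ is a pole of order two of $\phi$, in the same coordinate we can write $\phi(z)=(c\,z^{-2}+O(z^{-1}))\,dz^{\otimes 2}$, where $c\in\mathbb{C}^*$ is the (coordinate-invariant) leading Laurent coefficient.

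Next I would use the affine-space structure on projective structures to write the polar differential of $\mathcal{P}_\phi=\mathcal{P}+\phi$ relative to $\mathcal{P}_0$ as the sum of these two, namely $\varphi(z)\,dz^{\otimes 2}$ with
\[
\varphi(z)=\left(c-\tfrac{1}{4}\right)z^{-2}+O(z^{-1}).
\]
Thus the polar differential of $\mathcal{P}_\phi$ at~$p$ has order $m=2$ and leading coefficient $a_0=c-\tfrac{1}{4}$. Applying Proposition~\ref{lem:eigenvalues} to $\mathcal{P}_\phi$, the eigenvalues of the monodromy around $p$ are $\lambda_\pm=-\exp(\pm r/2)$ where
\[
r=\pm 2\pi i\sqrt{1+4a_0}=\pm 2\pi i\sqrt{4c}=\pm 4\pi i\sqrt{c}.
\]
By the definition of the residue, $\Res_p(\phi)=\pm 4\pi i\sqrt{c}$, so $r$ and $\Res_p(\phi)$ agree up to a sign, and the claim follows.

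The only real obstacle is bookkeeping the sign ambiguities inherent in $\Res_p(\phi)$, in $r$, and in the square root $\sqrt{c}$. The key observation is that the pair $\{\lambda_+,\lambda_-\}$ is symmetric under $\Res_p(\phi)\mapsto -\Res_p(\phi)$, so any consistent choice of sign gives the same unordered pair of eigenvalues, which is exactly what the statement asserts.
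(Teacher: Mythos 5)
Your argument reproduces the paper's proof almost verbatim: write the polar differential of $\mathcal{P}_\phi$ near $p$ as $Q(z)\,dz^{\otimes 2}$ with $Q=Q_2+Q_0$, where $Q_2(z)=-\tfrac{1}{4z^2}+O(1)$ comes from Lemma~\ref{lem:Q2} and $Q_0(z)=cz^{-2}+O(z^{-1})$ is the local form of $\phi$, then apply Proposition~\ref{lem:eigenvalues} with $a_0=c-\tfrac14$, so $1+4a_0=4c$ and $r=\pm 4\pi i\sqrt{c}=\pm\Res_p(\phi)$. The observation that the sign ambiguities wash out because the unordered pair $\{\lambda_+,\lambda_-\}$ is symmetric in $\Res_p(\phi)\mapsto -\Res_p(\phi)$ is also correct and is exactly the point the paper relies on.

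There is one small but genuine gap: Lemma~\ref{lem:Q2} carries the hypothesis $\chi(S^*)\leq 0$, and you invoke it without checking this. The paper's proof begins by disposing of the complementary case: if $\chi(S^*)>0$ then $S=\mathbb{P}^1$ and $\phi$ has exactly one pole, which by the standing assumption $|\mathbb{M}|\geq 3$ in genus zero must have order $>2$; hence there is no pole of order two and the statement is vacuous. Without this remark, your appeal to Lemma~\ref{lem:Q2} is not justified in all cases covered by the lemma. A secondary, more cosmetic point: Lemma~\ref{lem:Q2} is stated for a \emph{global} holomorphic projective structure $\mathcal{P}_0$ on $S$, whereas you choose one only near $p$; since two holomorphic projective structures near $p$ differ by a holomorphic quadratic differential, this does not change the leading $-\tfrac{1}{4z^2}$ term, but it is cleaner (and closer to what the cited lemma actually says) to take $\mathcal{P}_0$ globally, as the paper does.
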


\begin{proof}
If we have $\chi(S^*)>0$, then $S$ is the Riemann sphere and $\phi$ has exactly one pole. By our assumption on the associated marked bordered surface, the order of this pole is necessarily~$>2$. We can therefore assume $\chi(S^*)\leq0$. Then locally around the pole $p$ a polar differential for $\mathcal{P}$ can be written $Q_2(z)dz^{\otimes2}$ where $Q_2(z)$ is a meromorphic function as in Lemma~\ref{lem:Q2}. The differential $\phi$ is given in this local coordinate by an expression $\phi(z)=Q_0(z)dz^{\otimes2}$ where 
\[
Q_0(z)=az^{-2}+O(z^{-1}) \quad\text{as $z\rightarrow0$}
\]
for some $a\neq0$. The meromorphic projective structure $\mathcal{P}_\phi$ thus has a polar differential given locally in a neighborhood of~$p$ by $Q(z)dz^{\otimes2}$ where 
\[
Q(z)=Q_0(z)+Q_2(z).
\]
Therefore, by Lemma~\ref{lem:eigenvalues}, the monodromy of $\mathcal{P}_\phi(t)$ around $p$ has eigenvalues $-\exp(\pm r/2)$ where 
\[
r=\pm2\pi i\cdot\sqrt{1+4\lim_{z\rightarrow0}z^2Q(z)}=\pm4\pi i\cdot\sqrt{a}.
\]
The leading coefficient of $Q_0(z)$ is invariant under changes of coordinates, and hence this last expression equals $\pm\Res_p(\phi)$ as desired.
\end{proof}

\begin{proposition}
\label{prop:openembedding}
There is an $\MCG(\mathbb{S},\mathbb{M})$-equivariant open embedding of moduli spaces 
\[
\mathscr{Q}(\mathbb{S},\mathbb{M})\hookrightarrow\mathscr{P}(\mathbb{S},\mathbb{M})
\]
with dense image defined by sending a differential $\phi$ to the projective structure $\mathcal{P}_\phi$. It lifts to an $\MCG^\pm(\mathbb{S},\mathbb{M})$-equivariant open embedding 
\[
\iota:\mathscr{Q}^{\pm}(\mathbb{S},\mathbb{M})\hookrightarrow\mathscr{P}^{\pm}(\mathbb{S},\mathbb{M})
\]
with dense image.
\end{proposition}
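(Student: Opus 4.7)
My plan is to define the map concretely as $\phi \mapsto \mathcal{P}_\phi := \mathcal{P} + \phi$ with $\mathcal{P}$ the uniformizing projective structure on $S^* = S \setminus \Pol(\phi)$, then verify in turn: well-definedness into $\mathscr{P}(\mathbb{S},\mathbb{M})$, holomorphy, injectivity, openness, density, and finally the canonical lift to the signed covers. For well-definedness, I would use Lemma~\ref{lem:Q2}: locally around any pole of $\phi$, the polar differential of $\mathcal{P}$ has the form $-\tfrac{1}{4z^{2}}dz^{\otimes 2}+O(1)$, so $\mathcal{P}_\phi$ is a genuine meromorphic projective structure on $S$ whose poles are exactly $\Pol(\phi)$ with the same orders and, at poles of order $\geq 3$, the same leading coefficients as $\phi$. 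Hence the marked bordered surface associated to $(S,\mathcal{P}_\phi)$ coincides with that associated to $(S,\phi)$, and the given marking $\theta$ of $(S,\phi)$ serves as a marking of $(S,\mathcal{P}_\phi)$. Equivariance under $\MCG(\mathbb{S},\mathbb{M})$ is built in, as any self-isomorphism of $(\mathbb{S},\mathbb{M})$ used to alter $\theta$ pulls back the canonically defined uniformizing projective structure to itself. The main technical obstacle is holomorphy: one must know that as $\phi$ varies holomorphically over $\mathscr{Q}(\mathbb{S},\mathbb{M})$, the uniformizing projective structure of the punctured surface $S \setminus \Pol(\phi)$ varies holomorphically in $\mathcal{P}$. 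I would invoke the classical fact that uniformizing projective structures depend holomorphically on moduli (applied to the family of punctured Riemann surfaces cut out from the universal family), and combine this with the holomorphic dependence of $\Pol(\phi)$ on $\phi$.

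For injectivity, suppose two points $(S_i,\phi_i,\theta_i)$ have the same image in $\mathscr{P}(\mathbb{S},\mathbb{M})$. Then there is a biholomorphism $f \colon S_1 \to S_2$ compatible with the markings such that $f^*\mathcal{P}_{\phi_2} = \mathcal{P}_{\phi_1}$. Since $f$ also sends the uniformizing projective structure of $S_2\setminus\Pol(\phi_2)$ to that of $S_1\setminus\Pol(\phi_1)$, subtracting off uniformizing structures gives $f^*\phi_2 = \phi_1$, so the two points coincide in $\mathscr{Q}(\mathbb{S},\mathbb{M})$. Both $\mathscr{Q}(\mathbb{S},\mathbb{M})$ and $\mathscr{P}(\mathbb{S},\mathbb{M})$ are complex manifolds of the same dimension $n$, so an injective holomorphic map between them is automatically a local biholomorphism (by the rank theorem) and hence an open embedding. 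For density, note that every marked projective structure $(S,\mathcal{P}',\theta)$ with the correct marked bordered surface has a unique difference $\phi' := \mathcal{P}' - \mathcal{P}$, which is a meromorphic quadratic differential with the prescribed pole orders. The image consists of those $\mathcal{P}'$ for which $\phi'$ has only simple zeros; this condition is open and its complement is a proper analytic subset, hence the image is dense.

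For the lift $\iota\colon\mathscr{Q}^\pm \hookrightarrow \mathscr{P}^\pm$, the key ingredient is Lemma~\ref{lem:residueeigenvalues}: at a pole $p$ of $\phi$ of order two, the two eigenvalues of the monodromy of $\mathcal{P}_\phi$ around $p$ are $-\exp(\pm\Res_p(\phi)/2)$. A signing of $\phi$ at $p$ is precisely a choice of sign of $\Res_p(\phi)$, and this determines one of the two eigenvalues as, say, $-\exp(+\Res_p(\phi)/2)$; I would take this assignment as the definition of $\iota$ on the generic locus. At a simple pole of $\phi$ one has $\Res_p(\phi)=0$ and the two eigenvalues coincide (matching the computation $r=0$ in the proof of Lemma~\ref{lem:residueeigenvalues}); thus $\iota$ sends the branch locus of $\mathscr{Q}^\pm\to\mathscr{Q}$ to the branch locus of $\mathscr{P}^\pm\to\mathscr{P}$, and the eigenline assignment extends continuously across it. Equivariance under $\MCG^\pm(\mathbb{S},\mathbb{M})$ and density of the image follow formally from the corresponding statements downstairs, which closes the proof.
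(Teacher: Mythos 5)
The overall plan matches the paper's approach, but it contains one genuine mathematical error that invalidates the crucial openness step. You claim that $\phi\mapsto\mathcal{P}_\phi$ is holomorphic, invoking ``the classical fact that uniformizing projective structures depend holomorphically on moduli.'' This is false, and the paper explicitly says so: the uniformizing projective structure depends \emph{continuously but not holomorphically} on the Riemann surface. (The uniformizing, or Fuchsian, section of the affine bundle of projective structures over Teichm\"uller space is real-analytic but never holomorphic; this is a well-known feature of uniformization.) Consequently your argument for openness, which relies on the fact that an injective holomorphic map between equidimensional complex manifolds is a local biholomorphism, does not apply. The paper sidesteps this: since the map is continuous and injective between manifolds of the same real dimension, invariance of domain yields that it is an open embedding; alternatively, one observes that the map would be a homeomorphism were $\mathscr{Q}(\mathbb{S},\mathbb{M})$ to parametrize differentials with arbitrary zeros, and restricting to the (open, dense) simple-zero locus gives an open embedding with dense image.

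The remaining steps of your proposal are fine and agree with the paper's argument: well-definedness via Lemma~\ref{lem:Q2}, injectivity by subtracting the canonical uniformizing structure, density from openness of the simple-zero condition, and the lift to $\iota\colon\mathscr{Q}^{\pm}\hookrightarrow\mathscr{P}^{\pm}$ via Lemma~\ref{lem:residueeigenvalues}, which identifies a choice of sign of residue with a choice of monodromy eigenvalue. So the only correction needed is to drop the holomorphy claim and replace the rank-theorem argument with a topological (invariance of domain) one.
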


\begin{proof}
The map sending $\phi$ to $\mathcal{P}_\phi$ is continuous since the uniformizing projective structure $\mathcal{P}$ depends continuously (although not holomorphically) on the differential $\phi$ and the moduli of the Riemann surface on which $\phi$ is defined. The map would be a homeomorphism except for the fact that $\mathscr{Q}(\mathbb{S},\mathbb{M})$ parametrizes differentials with simple rather than arbitrary zeros. Hence the image is dense. If we are given a point in $\mathscr{Q}^{\pm}(\mathbb{S},\mathbb{M})$, then we have a choice of sign for the residue at each pole of order two. By Lemma~\ref{lem:residueeigenvalues}, this is equivalent to a choice of eigenvalue of the monodromy of~$\mathcal{P}_\phi$ around each pole of order two, and hence we get a point of $\mathscr{P}^{\pm}(\mathbb{S},\mathbb{M})$.
\end{proof}

\subsection{The monodromy map}

Let $(\mathbb{S},\mathbb{M})$ be a marked bordered surface, and if $g(\mathbb{S})=0$ assume that $|\mathbb{M}|\geq3$. Recall that $\mathscr{P}^*(\mathbb{S},\mathbb{M})$ is defined as the moduli space parametrizing projective structures without apparent singularities together with a choice of eigenline for the monodromy around each pole of order $\leq2$. We will write 
\[
\mathscr{Q}^*(\mathbb{S},\mathbb{M})\subset\mathscr{Q}^{\pm}(\mathbb{S},\mathbb{M})
\]
for the preimage of $\mathscr{P}^*(\mathbb{S},\mathbb{M})$ under the map $\iota$ of Proposition~\ref{prop:openembedding}. It is a dense open subset of the moduli space of signed differentials by Proposition~\ref{prop:openembedding}, and hence a complex manifold by Proposition~\ref{prop:signedspaces}. By applying Theorem~\ref{thm:monodromymap}, we immediately obtain the following statement.

\begin{proposition}
\label{prop:monodromydifferentials}
Let $(\mathbb{S},\mathbb{M})$ be a marked bordered surface, and if $g(\mathbb{S})=0$ assume that $|\mathbb{M}|\geq3$. Then there is an $\MCG^\pm(\mathbb{S},\mathbb{M})$-equivariant continuous map 
\[
\widehat{F}:\mathscr{Q}^*(\mathbb{S},\mathbb{M})\rightarrow\mathscr{X}^*(\mathbb{S},\mathbb{M}).
\]
\end{proposition}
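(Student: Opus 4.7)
The plan is to realize $F$ as a composition of maps that have already been constructed. By definition, $\mathscr{Q}^*(\mathbb{S},\mathbb{M})$ is the preimage of $\mathscr{P}^*(\mathbb{S},\mathbb{M})$ under the open embedding $\iota\colon \mathscr{Q}^{\pm}(\mathbb{S},\mathbb{M}) \hookrightarrow \mathscr{P}^{\pm}(\mathbb{S},\mathbb{M})$ of Proposition~\ref{prop:openembedding}. Hence $\iota$ restricts to an open embedding $\iota\colon \mathscr{Q}^*(\mathbb{S},\mathbb{M}) \hookrightarrow \mathscr{P}^*(\mathbb{S},\mathbb{M})$, and I would set $F := F_\mathscr{P} \circ \iota$, where $F_\mathscr{P}\colon \mathscr{P}^*(\mathbb{S},\mathbb{M}) \rightarrow \mathscr{X}^*(\mathbb{S},\mathbb{M})$ denotes the monodromy map supplied by Theorem~\ref{thm:monodromymap}. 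Note that the source of this composition is a genuine complex manifold: it is an open subset of $\mathscr{Q}^{\pm}(\mathbb{S},\mathbb{M})$, which is itself a complex manifold by Proposition~\ref{prop:signedspaces}.

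It then remains to verify continuity and equivariance. The embedding $\iota$ is continuous by Proposition~\ref{prop:openembedding} (its underlying map $\phi \mapsto \mathcal{P}_\phi$ is continuous in the differential and in the moduli of the underlying Riemann surface), and $F_\mathscr{P}$ is holomorphic, hence continuous, by Theorem~\ref{thm:monodromymap}; the composition is therefore continuous. For equivariance, Proposition~\ref{prop:openembedding} records that $\iota$ intertwines the $\MCG^\pm(\mathbb{S},\mathbb{M})$-actions on $\mathscr{Q}^{\pm}(\mathbb{S},\mathbb{M})$ and $\mathscr{P}^{\pm}(\mathbb{S},\mathbb{M})$, while Theorem~\ref{thm:monodromymap} asserts that $F_\mathscr{P}$ is $\MCG^\pm(\mathbb{S},\mathbb{M})$-equivariant; so the composition is equivariant as well.

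No substantive obstacle arises: the statement is explicitly flagged in the text as an immediate consequence of Theorem~\ref{thm:monodromymap}. All the geometric input—identifying a signing of a differential with a choice of eigenline for the monodromy at each double pole via Lemma~\ref{lem:residueeigenvalues}, constructing the framing from Stokes data at irregular singularities and eigenlines at regular singularities, and showing that apparent-singularity loci are avoided on the preimage $\mathscr{Q}^*(\mathbb{S},\mathbb{M})$—has already been absorbed into Proposition~\ref{prop:openembedding} and Theorem~\ref{thm:monodromymap}. The only point meriting a sentence of care is that the eigenline data encoded by a signing matches the eigenline data required to land in $\mathscr{P}^*(\mathbb{S},\mathbb{M})$, but this is exactly the second half of Proposition~\ref{prop:openembedding}.
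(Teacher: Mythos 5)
Your proof is correct and matches the paper's approach exactly: the paper states this as an immediate consequence of Theorem~\ref{thm:monodromymap}, precisely by restricting the embedding $\iota$ of Proposition~\ref{prop:openembedding} to $\mathscr{Q}^*(\mathbb{S},\mathbb{M})$ and post-composing with the monodromy map, inheriting continuity and $\MCG^\pm(\mathbb{S},\mathbb{M})$-equivariance from those two results.
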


The results of~\cite{GuptaMj} show that when $\mathbb{P}=\emptyset$, the monodromy map of Theorem~\ref{thm:monodromymap} is a local isomorphism. It follows that the map~$\widehat{F}$ is a local homeomorphism in this case. We conjecture that this remains true without the assumption $\mathbb{P}=\emptyset$. It would be interesting to know whether there exists a \emph{holomorphic} map $\mathscr{Q}^*(\mathbb{S},\mathbb{M})\rightarrow\mathscr{X}^*(\mathbb{S},\mathbb{M})$, equivariant with respect to the mapping class group action. The possible existence of such a map is discussed in more detail in Section~1.7.3 of~\cite{AllegrettiBridgeland}.

\section{The Riemann-Hilbert problem}
\label{sec:TheRiemannHilbertProblem}

In this section, we formulate the Riemann-Hilbert problem associated to a quadratic differential. The version of the problem that we consider here was first formulated in~\cite{Bridgeland19} and is the conformal limit of the Riemann-Hilbert problem considered by Gaiotto, Moore, and Neitzke in~\cite{GMN1}.

\subsection{BPS structures}

To formulate our Riemann-Hilbert problem, we employ the notion of a BPS structure introduced in~\cite{Bridgeland19}. This concept axiomatizes the output of Donaldson-Thomas theory applied to a 3-Calabi-Yau triangulated category equipped with a stability condition.

\begin{definition}[\cite{Bridgeland19}, Definition~2.1]
A \emph{BPS structure} consists of 
\begin{enumerate}[label=(\alph*)]
\item A lattice $\Gamma$ called the \emph{charge lattice} equipped with a skew-symmetric bilinear form 
\[
\langle -,- \rangle:\Gamma\times\Gamma\rightarrow\mathbb{Z}
\]
called the \emph{intersection form}.
\item A group homomorphism $Z:\Gamma\rightarrow\mathbb{C}$ called the \emph{central charge}.
\item A collection of rational numbers $\Omega(\gamma)$ ($\gamma\in\Gamma$) called \emph{BPS invariants}.
\end{enumerate}
These data are required to satisfy the following properties:
\begin{enumerate}
\item \emph{Symmetry:} $\Omega(-\gamma)=\Omega(\gamma)$ for all $\gamma\in\Gamma$.
\item \emph{Support property:} Fix a norm $\|\cdot\|$ on the finite-dimensional vector space $\Gamma\otimes_{\mathbb{Z}}\mathbb{R}$. Then there exists a constant $C>0$ such that if $\Omega(\gamma)\neq0$ then $|Z(\gamma)|>C\cdot \|\gamma\|$.
\end{enumerate}
\end{definition}

If $(\Gamma,Z,\Omega)$ is a BPS structure, then the \emph{Donaldson-Thomas (DT) invariant} for $\gamma\in\Gamma$ is defined by the formula 
\begin{equation}
\label{eqn:DTinvariant}
DT(\gamma)=\sum_{\gamma=m\alpha}\frac{1}{m^2}\Omega(\alpha)\in\mathbb{Q}
\end{equation}
where the sum is over all integers $m>0$ such that $\gamma$ is divisible by~$m$ in the lattice~$\Gamma$. The BPS and DT invariants are equivalent data since, by M\"obius inversion, we can write 
\[
\Omega(\gamma)=\sum_{\gamma=m\alpha}\frac{\mu(m)}{m^2}DT(\alpha)
\]
where $\mu(m)$ is the M\"obius function.

\subsection{The ray diagram}

Suppose we are given a BPS structure $(\Gamma,Z,\Omega)$. Then an element $\gamma\in\Gamma$ will be called \emph{active} if $\Omega(\gamma)\neq0$. It follows from the support property that in any bounded region of $\mathbb{C}$ there are only finitely many points of the form $Z(\gamma)$ for $\gamma\in\Gamma$ active. This property also implies that all such points are necessarily nonzero.

We will associate to the given BPS structure a certain diagram in the complex plane. By a \emph{ray} in~$\mathbb{C}^*$, we mean a subset of the form $\ell=\mathbb{R}_{>0}\cdot z$ for some $z\in\mathbb{C}^*$. A ray will be called \emph{active} if it contains a point $Z(\gamma)$ for some active $\gamma\in\Gamma$. The \emph{ray diagram} of the BPS~structure is the union of all active rays in~$\mathbb{C}^*$. An example is illustrated in Figure~\ref{fig:raydiagram}.

The \emph{height} of an active ray $\ell\subset\mathbb{C}^*$ is defined to be the number 
\[
H(\ell)=\inf\left\{|Z(\gamma)|:\gamma\in\Gamma\text{ such that }Z(\gamma)\in\ell\text{ and }\Omega(\gamma)\neq0\right\}.
\]
A non-active ray is considered to have infinite height. The support property guarantees that for any $H>0$, there are at most finitely many rays of height $<H$.

\subsection{The twisted torus}

Suppose we have a lattice $\Gamma\cong\mathbb{Z}^n$ equipped with a skew form $\langle -,- \rangle$ as in the definition of a BPS structure. Then there is an associated algebraic torus 
\[
\mathbb{T}_+=\Hom_{\mathbb{Z}}(\Gamma,\mathbb{C}^*)\cong(\mathbb{C}^*)^n.
\]
We will consider an associated torsor 
\[
\mathbb{T}_-=\left\{g:\Gamma\rightarrow\mathbb{C}^*:g(\gamma_1+\gamma_2)=(-1)^{\langle\gamma_1,\gamma_2\rangle}g(\gamma_1)g(\gamma_2)\right\}
\]
which we call the \emph{twisted torus}. The torus $\mathbb{T}_+$ acts naturally on the twisted torus~$\mathbb{T}_-$ by 
\[
(f\cdot g)(\gamma)=f(\gamma)g(\gamma)\in\mathbb{C}^*
\]
for $f\in\mathbb{T}_+$ and $g\in\mathbb{T}_-$, and this action is free and transitive. Thus, after choosing a basepoint in the twisted torus, we get an identification of $\mathbb{T}_-$ with $\mathbb{T}_+$. We can use this identification to give $\mathbb{T}_-$ the structure of an algebraic variety. This variety structure is independent of the choice of basepoint since the translation maps on~$\mathbb{T}_+$ are algebraic. The coordinate ring $\mathbb{C}[\mathbb{T}_-]$ of the twisted torus is spanned as a vector space by the functions 
\[
x_\gamma:\mathbb{T}_-\rightarrow\mathbb{C}^*, \quad x_\gamma(g)=g(\gamma)\in\mathbb{C}^*,
\]
which are called \emph{twisted characters}. The intersection form induces a natural Poisson bracket on $\mathbb{C}[\mathbb{T}_-]$ given on the twisted characters by 
\[
\{x_\alpha,x_\beta\}=\langle\alpha,\beta\rangle\cdot x_\alpha\cdot x_\beta.
\]
In the following, we will often denote $\mathbb{T}_-$ simply by $\mathbb{T}$.

\subsection{BPS automorphisms}

Given any ray $\ell\subset\mathbb{C}^*$, we can consider the associated formal generating series 
\begin{equation}
\label{eqn:generatingseries}
DT(\ell)=\sum_{Z(\gamma)\in\ell}DT(\gamma)\cdot x_\gamma
\end{equation}
for the Donaldson-Thomas invariants~\eqref{eqn:DTinvariant}. We would like to view this generating series as a well defined holomorphic function on the twisted torus $\mathbb{T}$. To do this, we will need to consider BPS~structures satisfying an additional property. Namely, we say that a BPS structure is \emph{convergent} if, for some $R>0$,
\[
\sum_{\gamma\in\Gamma}|\Omega(\gamma)|\cdot e^{-R|Z(\gamma)|}<\infty.
\]
For any acute sector $\Delta\subset\mathbb{C}^*$ and real number $R>0$, we consider the domain $U_\Delta(R)\subset\mathbb{T}$ defined as the interior of the set 
\[
\left\{g\in\mathbb{T}:Z(\gamma)\in\Delta\text{ and }\Omega(\gamma)\neq0\implies|g(\gamma)|<\exp(-R\|\gamma\|)\right\}\subset\mathbb{T} 
\]
which is nonempty by~\cite{Bridgeland19}, Lemma~B.2. Then we have the following.

\begin{proposition}[\cite{Bridgeland19}, Proposition~4.1]
\label{prop:BPSautomorphism}
Let $(\Gamma,Z,\Omega)$ be a convergent BPS structure, and let $\Delta\subset\mathbb{C}^*$ be a convex sector. Then for sufficiently large $R>0$, the following statements hold: 
\begin{enumerate}
\item For each ray $\ell\subset\Delta$, the power series~\eqref{eqn:generatingseries} is absolutely convergent on $U_\Delta(R)$ and thus defines a holomorphic function 
\[
DT(\ell):U_\Delta(R)\rightarrow\mathbb{C}.
\]
\item The time-1 Hamiltonian flow $\exp\{DT(\ell),-\}$ of the function $DT(\ell)$ defines a holomorphic embedding 
\[
\mathbf{S}(\ell):U_\Delta(R)\rightarrow\mathbb{T}.
\]
\item For every $H>0$, the composition
\[
\mathbf{S}_{<H}(\Delta)=\mathbf{S}(\ell_1)\circ\mathbf{S}(\ell_2)\circ\dots\circ\mathbf{S}(\ell_k)
\]
exists where $\ell_1,\ell_2,\dots,\ell_k\subset\Delta$ are the rays of height $<H$ in the sector $\Delta$ in the clockwise order, and the pointwise limit 
\[
\mathbf{S}(\Delta)=\lim_{H\rightarrow\infty}\mathbf{S}_{<H}(\Delta):U_\Delta(R)\rightarrow\mathbb{T}
\]
is a well defined holomorphic embedding.
\end{enumerate}
\end{proposition}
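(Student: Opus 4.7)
The plan is to handle the three assertions in sequence, with a single common theme: the support property together with the convergence hypothesis produces exponential-decay bounds on $U_\Delta(R)$ that dominate every series and infinite product we need to control, and the Hamiltonian flow $\exp\{DT(\ell),-\}$ can be made explicit via the product formula
\[
\mathbf{S}(\ell)^*(x_\beta) \;=\; x_\beta\cdot\prod_{Z(\gamma)\in\ell}(1-x_\gamma)^{\Omega(\gamma)\langle\beta,\gamma\rangle},
\]
which is the rigorous incarnation of the formal exponential.

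First I would verify absolute convergence of \eqref{eqn:generatingseries} on $U_\Delta(R)$. The M\"obius-type relation between $DT(\gamma)$ and $\Omega(\gamma)$ gives $|DT(\gamma)|\le\sum_{\gamma=m\alpha}m^{-2}|\Omega(\alpha)|$, and on $U_\Delta(R)$ one has $|x_\gamma(g)|<\exp(-R\|\gamma\|)$ for every $\gamma$ with $Z(\gamma)\in\Delta$ and $\Omega(\gamma)\neq0$. Combining this with the support property $|Z(\gamma)|>C\|\gamma\|$, the series $\sum_\gamma|DT(\gamma)x_\gamma(g)|$ is dominated by a constant multiple of $\sum_\gamma|\Omega(\gamma)|\exp(-(R/C)|Z(\gamma)|)$, which is finite once $R$ exceeds $C$ times the constant in the convergence hypothesis. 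The same estimate is uniform on compact subsets, so $DT(\ell)$ is holomorphic on $U_\Delta(R)$.

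Second, to construct $\mathbf{S}(\ell)$ I would exploit the Poisson bracket $\{x_\alpha,x_\beta\}=\langle\alpha,\beta\rangle x_\alpha x_\beta$, which means that the Hamiltonian vector field of any twisted character is a derivation sending $x_\beta$ to a multiple of itself. Passing formally to logarithmic coordinates $y_\gamma=\log x_\gamma$, where $\{x_\alpha,-\}$ becomes a translation, the time-$1$ flow of $DT(\ell)$ gives the product formula above. The infinite product converges absolutely on $U_\Delta(R)$ by the same estimates as in Step~1 (after bounding $|\log(1-x_\gamma)|$ by a constant multiple of $|x_\gamma|$), and each factor is a nonvanishing holomorphic function since $|x_\gamma|<1$ there. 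This exhibits $\mathbf{S}(\ell)$ as a holomorphic map $U_\Delta(R)\to\mathbb{T}$; injectivity follows because by shrinking the domain (equivalently, enlarging $R$) the map differs from the identity by a factor close to $1$, and an explicit inverse can be written down by reversing the sign of $\Omega$.

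Finally, for the composition and the limit: by the support property only finitely many active rays have height $<H$, so $\mathbf{S}_{<H}(\Delta)$ is a finite composition; one only needs to check that the image of each factor lands in the domain of the next. The key estimate is that $\mathbf{S}(\ell_i)$ perturbs each coordinate $x_\gamma$ by a factor whose deviation from $1$ is controlled by $\exp(-R\|\gamma'\|)$ for the active classes $\gamma'$ on $\ell_i$; summing over all $\ell_i$ in $\Delta$, the total perturbation is dominated by the convergence sum and so, for $R$ sufficiently large, $U_\Delta(R)$ is preserved up to a harmless shrinkage absorbed into choosing a slightly larger $R$ at the start. For the limit $H\to\infty$, the tail contributions of rays of height $\ge H$ yield log-correction terms bounded by $\sum_{|Z(\gamma)|\ge H}|\Omega(\gamma)|e^{-R\|\gamma\|}\to0$, which gives pointwise convergence that is uniform on compact subsets and hence produces a holomorphic limit. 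The hardest step will be Step~2: extracting the explicit product formula from the formal Hamiltonian flow and verifying that it genuinely represents a holomorphic embedding rather than merely a formal automorphism; once that explicit form is available, Steps~1 and~3 reduce to the convergence and support estimates.
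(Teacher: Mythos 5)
This proposition is cited from \cite{Bridgeland19} (Proposition~4.1) and is not proved in the present paper, so there is no in-paper proof to compare against; I will therefore assess your proposal on its own terms.

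Your Step~1 has a direction error. To dominate $e^{-R\|\gamma\|}$ by $e^{-R'|Z(\gamma)|}$ you need a lower bound on $\|\gamma\|$ in terms of $|Z(\gamma)|$, i.e.\ $\|\gamma\| \gtrsim |Z(\gamma)|$. The support property $|Z(\gamma)| > C\|\gamma\|$ gives the \emph{upper} bound $\|\gamma\| < |Z(\gamma)|/C$, which goes the wrong way: it yields $e^{-R\|\gamma\|} > e^{-(R/C)|Z(\gamma)|}$, the opposite of what you assert. What you actually need is the elementary fact that the linear map $Z\colon\Gamma\otimes\mathbb{R}\to\mathbb{C}$ is bounded, so $|Z(\gamma)|\le M\|\gamma\|$ for some $M$; then $e^{-R\|\gamma\|}\le e^{-(R/M)|Z(\gamma)|}$, and taking $R\ge M R_0$ (with $R_0$ the constant from the convergence hypothesis) closes the estimate. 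The support property is still needed elsewhere (e.g.\ to ensure nonvanishing of $Z$ on active classes and the local finiteness of active rays), but it is not the inequality your domination step requires.

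The more substantial gap is in Step~2. The assertion that $\{x_\alpha,-\}$ ``becomes a translation'' in logarithmic coordinates is false in general: $\{x_\alpha,\log x_\beta\}=\langle\alpha,\beta\rangle\, x_\alpha$, which is point-dependent unless $x_\alpha$ is constant along the flow, and $x_\alpha$ is constant along the flow of $DT(\ell)$ only when $\langle\alpha,\gamma\rangle=0$ for every active class $\gamma$ with $Z(\gamma)\in\ell$. When a single ray carries two active classes $\gamma_1,\gamma_2$ with $\langle\gamma_1,\gamma_2\rangle\neq0$ (which is perfectly possible for a general convergent BPS structure, since $Z^{-1}(\ell)$ is a cone of codimension one in $\Gamma\otimes\mathbb{R}$), the characters $x_{\gamma_i}$ evolve nontrivially under the flow and the explicit product formula is no longer the time-$1$ flow. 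Indeed, the paper itself is careful to invoke that formula only ``for a generic $\sigma$''; the proposition, however, is stated for an arbitrary convergent BPS structure. Your proof therefore establishes the result only in the uncoupled case. A proof of the general statement must construct $\mathbf{S}(\ell)$ directly from the Hamiltonian vector field --- for instance, by showing that $\{DT(\ell),-\}$ is a holomorphic vector field satisfying suitable bounds on $U_\Delta(R)$ so that the time-$1$ flow exists, stays within a slightly smaller domain, and depends holomorphically on the initial condition; injectivity then follows because flows are invertible. The estimates from Step~1 are exactly what feeds such an argument, but the bridge from estimates to a genuine (not merely formal) flow is the content that your product-formula shortcut bypasses. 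Step~3 is sound in outline, modulo the fact that the ``harmless shrinkage'' of $U_\Delta(R)$ under each $\mathbf{S}(\ell_i)$ must be quantified and summed, which again ultimately rests on having a correct Step~2.
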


We think of the map $\mathbf{S}(\Delta)$ defined by Proposition~\ref{prop:BPSautomorphism} as a partially defined automorphism of the twisted torus and call it the \emph{BPS automorphism} associated to the sector~$\Delta$.

\subsection{BPS structures from quadratic differentials}

In this paper, we will be concerned with a particular class of BPS~structures arising from quadratic differentials. To any generic GMN differential $\phi$, we associate the triple $(\Gamma_\phi,Z_\phi,\Omega_\phi)$ where 
\begin{enumerate}[label=(\alph*)]
\item $\Gamma_\phi=\widehat{H}(\phi)$ is the hat-homology lattice of~$\phi$ and $\langle -,- \rangle$ is given by the intersection pairing on homology.
\item $Z_\phi(\gamma)=\int_\gamma\sqrt{\phi}$ is the period of~$\phi$.
\item $\Omega_\phi(\gamma)$ is the invariant counting finite-length trajectories of class $\gamma$.
\end{enumerate}
By Claim~7.1 of~\cite{Bridgeland19}, the triple $(\Gamma_\phi,Z_\phi,\Omega_\phi)$ defined in this way is a BPS structure. This BPS structure satisfies $|\Omega_\phi(\gamma)|\leq2$ for all $\gamma\in\Gamma_\phi$, and hence this BPS structure is easily seen to be convergent.

Assume now that the differential $\phi$ is complete and its associated marked bordered surface is amenable. If $\Delta\subset\mathbb{C}^*$ is a convex sector whose boundary rays are non-active with phases~$\theta_1$ and~$\theta_2$, then the rotated differentials $\phi_i=e^{-2i\theta_i}\cdot\phi$ are complete and saddle-free and therefore determine a pair of tagged WKB triangulations $\tau_i$. The associated Fock-Goncharov coordinates provide maps 
\[
X_{\tau_i}:\mathcal{X}(\mathbb{S},\mathbb{M})\dashrightarrow\Hom_{\mathbb{Z}}(\Gamma_i,\mathbb{C}^*)
\]
where $\Gamma_i\cong\mathbb{Z}^n$ is the lattice spanned by the set of tagged arcs of~$\tau_\pm$. By Lemma~10.3 in~\cite{BridgelandSmith}, the lattice $\Gamma_i$ is canonically isomorphic to the hat-homology $\widehat{H}(\phi_i)$ group. Note that we have a family of Riemann surfaces over $\mathbb{R}$, where the Riemann surface over $\theta\in\mathbb{R}$ is the spectral cover for the rotated differential $\phi_\theta=e^{-2i\theta}\cdot\phi$. It follows that the hat-homology groups $\widehat{H}(\phi_\theta)$ form a local system of lattices over $\mathbb{R}$ with flat connection given by the Gauss-Manin connection. Using this flat connection, we can identify the lattices $\Gamma_i$ with~$\Gamma_\phi=\widehat{H}(\phi)$. We can therefore think of the maps $X_{\tau_i}$ as taking values in the torus~$\mathbb{T}_+$.

\Needspace*{3\baselineskip}
\begin{proposition}[\cite{Allegretti20}]
\label{prop:basepoint}
Take notation as in the last paragraph. Then
\begin{enumerate}
\item There is a distinguished basepoint $\xi\in\mathbb{T}_-$ such that $\xi(\gamma)=-1$ if $\gamma\in\Gamma_\phi$ is the class of a non-closed saddle connection and $\xi(\gamma)=+1$ if $\gamma$ is the class of a closed saddle connection.
\item $\mathbf{S}(\Delta)$ extends to a birational automorphism of~$\mathbb{T}_-$. If we use the basepoint $\xi$ to identify $\mathbb{T}_-$ with $\mathbb{T}_+$, then this is precisely the birational transformation of~$\mathbb{T}_+$ relating the maps 
\[
X_{\tau_i}:\mathcal{X}(\mathbb{S},\mathbb{M})\dashrightarrow\mathbb{T}_+.
\]
\end{enumerate}
\end{proposition}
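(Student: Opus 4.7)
The plan is to split the argument into the two parts of the proposition, handling the construction of the basepoint first and then using it to identify the BPS automorphism with the cluster transformation. Throughout I would exploit the WKB triangulation~$\tau_1$ associated to the boundary phase~$\theta_1$, whose tagged arcs $\{j_1,\dots,j_n\}$ correspond by Lemma~10.3 of~\cite{BridgelandSmith} to a basis $\{\gamma_1,\dots,\gamma_n\}$ of $\widehat{H}(\phi_1)=\Gamma_\phi$ whose intersection form matches the exchange matrix $\varepsilon_{ij}$ of the quiver $Q(T_1)$.

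For part~(1), I would define $\xi\in\mathbb{T}_-$ by prescribing its values on the basis $\{\gamma_i\}$ and extending by the twisted multiplicativity rule. Concretely, setting $\xi(\gamma_i)=-1$ for every~$i$ (the natural choice, since each basis class is itself the class of a saddle connection for the saddle-free differential $\phi_1$) produces a unique element of~$\mathbb{T}_-$. The content is then to verify that for every active class $\gamma=\sum n_i\gamma_i$ whose phase lies in~$\Delta$, the induced value $\xi(\gamma)$ equals $-1$ when the class comes from a non-closed saddle connection and $+1$ when it comes from a ring domain (or a closed saddle connection). I would do this by induction on the height $H(\ell)$ of the active ray containing $Z(\gamma)$: at small heights the statement is immediate from the choice on the basis, and the inductive step uses that each new active class encountered as $\theta$ rotates through~$\Delta$ can be traced backwards through the sequence of WKB flips to an expression in the previous basis, allowing a direct calculation of the sign using the skew form.

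For part~(2), the strategy is to reduce to the case when $\Delta$ contains a single active ray~$\ell$. By part~(3) of Proposition~\ref{prop:BPSautomorphism}, $\mathbf{S}(\Delta)$ is the pointwise limit of finite compositions $\mathbf{S}(\ell_k)\circ\cdots\circ\mathbf{S}(\ell_1)$ over the rays of bounded height in~$\Delta$, and the transition between $X_{\tau_1}$ and $X_{\tau_2}$ factors as the corresponding sequence of cluster transformations coming from each intermediate flip, by Theorem~\ref{thm:flipmutation} and Proposition~\ref{prop:flipcoordinate}. When $\ell$ contains a single saddle connection class~$\gamma$ with $\Omega(\gamma)=1$, the BPS formula
\[
\mathbf{S}(\ell)^*(x_\beta) = x_\beta(1-x_\gamma)^{\langle\beta,\gamma\rangle}
\]
becomes, after the sign twist $x_\gamma\mapsto -X_\gamma$ dictated by $\xi(\gamma)=-1$, the map $X_\beta\mapsto X_\beta(1+X_\gamma)^{\langle\beta,\gamma\rangle}$, which agrees with the flip formula of Proposition~\ref{prop:flipcoordinate} after identifying $\langle\beta,\gamma\rangle=\varepsilon_{\beta,\gamma}$. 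In the ring-domain case one uses $\Omega=-2$ together with $\xi(\gamma)=+1$ and matches the resulting transformation with the cyclic sequence of flips by which the WKB triangulation rotates around the ring domain.

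The main obstacle is the sign bookkeeping in the ring-domain case, where a single active ray corresponds to a product of several WKB flips, and one must verify that the resulting composition of cluster mutations collapses after the twist by~$\xi$ to the single transformation $X_\beta\mapsto X_\beta(1-X_\gamma)^{-2\langle\beta,\gamma\rangle}$ predicted by the BPS formula with $\Omega=-2$. This requires tracking both the Gauss--Manin identification of hat-homology bases across successive WKB triangulations and the way in which the Fock--Goncharov coordinates on the different tori transform under the twist. Once this bookkeeping is completed, the proposition follows by combining the explicit formulae of Propositions~\ref{prop:flipcoordinate} and~\ref{prop:BPSautomorphism}.
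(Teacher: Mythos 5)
This proposition is not proved in the paper: it is cited to \cite{Allegretti20}, which the bibliography lists as ``On the wall-crossing formula for quadratic differentials. To appear.'' There is therefore no in-paper argument to compare against, and I can only assess the proposal on its own terms.

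As a strategy the proposal is plausible, but as written it is a roadmap rather than a proof, with two substantive gaps. First, for part~(1) the existence of a well-defined element of $\mathbb{T}_-$ with the stated sign properties is exactly what needs to be shown, and the proposed ``induction on the height'' never engages with the real difficulty, namely that once $\xi$ is fixed by $\xi(\gamma_i)=-1$ on a single basis, the value $\xi(\sum n_i\gamma_i)$ is forced by the twisted multiplicativity rule and involves the parities of the pairings $\langle\gamma_i,\gamma_j\rangle$; there is no a priori reason these parities cooperate to yield $-1$ on every non-closed saddle-connection class and $+1$ on every closed one. A correct argument along these lines would instead verify directly (a short computation with $f_k$ and the rule $\xi(\alpha+\beta)=(-1)^{\langle\alpha,\beta\rangle}\xi(\alpha)\xi(\beta)$, using $\langle\gamma,\gamma\rangle=0$ and that $m(m-1)$ is always even) that the prescription $\xi(\gamma_i)=-1$ is invariant under flips of the tagged triangulation, and then observe that every non-closed saddle-connection class is a simple class of a nearby WKB heart, while a closed saddle-connection/ring-domain class is an integral combination of two simple classes with even pairing. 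None of this is in the proposal, and the reader is not told why ``tracing backwards through the sequence of WKB flips'' would produce the required signs. Second, for part~(2) you yourself flag that the ring-domain bookkeeping is ``the main obstacle'' and is not carried out, and you close with ``once this bookkeeping is completed, the proposition follows.'' Matching a single $\Omega=-2$ ray to an infinite composition of cluster flips is precisely the delicate point (it is the wall-crossing for the Kronecker sub-quiver around the ring domain), so leaving it aside leaves the core of part~(2) unproved. The saddle-connection case is handled essentially correctly, but even there you should be explicit that the sign twist sends $x_\gamma\mapsto \xi(\gamma)^{-1}X_\gamma$ under the identification of $\mathbb{T}_-$ with $\mathbb{T}_+$, and you should cite Lemma~10.3 of \cite{BridgelandSmith} as the source of the identification $\langle\beta,\gamma\rangle=\varepsilon_{\beta\gamma}$ rather than asserting it.
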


\subsection{Statement of the problem}

We now formulate the Riemann-Hilbert problem associated to a convergent BPS structure. In the following, we will consider, for any ray $r\subset\mathbb{C}^*$, the half plane 
\[
\mathbb{H}_r=\{t\in\mathbb{C}^*:t=u\cdot v, \ u\in r, \ \Re(v)>0\}\subset\mathbb{C}^*
\]
centered around~$r$. We will be interested in certain meromorphic functions 
\[
\mathcal{X}_r:\mathbb{H}_r\rightarrow\mathbb{T}
\]
which we can equivalently describe by specifying the compositions $\mathcal{X}_{r,\gamma}=x_\gamma\circ\mathcal{X}_r$ with the twisted characters $x_\gamma$ for $\gamma\in\Gamma$. The statement that $\mathcal{X}_r$ is meromorphic means that these compositions are meromorphic functions $\mathbb{H}_r\rightarrow\mathbb{C}^*$. We also fix a basepoint $\xi\in\mathbb{T}$ in the twisted torus.

\begin{problem}[\cite{Bridgeland19}]
Let $(\Gamma,Z,\Omega)$ be a convergent BPS structure. Then for each non-active ray $r\in\mathbb{C}^*$, we seek a meromorphic function $\mathcal{X}_r:\mathbb{H}_r\rightarrow\mathbb{T}$ satisfying the following conditions:
\begin{enumerate}
\item[(RH1)] Let $r_-$,~$r_+\subset\mathbb{C}^*$ be non-active rays which form the boundary rays of an acute sector $\Delta$ taken in the clockwise order. Then, for $t\in\mathbb{H}_{r_-}\cap\mathbb{H}_{r_+}$ with $0<|t|\ll1$, the functions $\mathcal{X}_{r_\pm}(t)$ are holomorphic and satisfy 
\[
\mathcal{X}_{r_-}(t)=\mathbf{S}(\Delta)(\mathcal{X}_{r_+}(t)).
\]
\item[(RH2)] For each non-active ray $r\subset\mathbb{C}^*$ and each class $\gamma\in\Gamma$, 
\[
\exp(Z(\gamma)/t)\cdot\mathcal{X}_{r,\gamma}(t)\rightarrow\xi(\gamma)
\]
as $t\rightarrow0$ in~$\mathbb{H}_r$.
\item[(RH3)] For any class $\gamma$ and any non-active ray $r\subset\mathbb{C}^*$, there exists $k>0$ such that 
\[
|t|^{-k}<|\mathcal{X}_{r,\gamma}(t)|<|t|^k
\]
for $t\in\mathbb{H}_r$ with $|t|\gg0$.
\end{enumerate}
\end{problem}

Our goal in the next section is to solve this Riemann-Hilbert problem in examples where the BPS structure arises from a generic GMN differential and $\xi$ is the canonical basepoint provided by Proposition~\ref{prop:basepoint}. We will also consider a modified version of this problem which is obtained by dropping the condition (RH3). We call this modified problem the \emph{weak Riemann-Hilbert problem}.

\section{Solving the Riemann-Hilbert problem}
\label{sec:SolvingTheRiemannHilbertProblem}

We will now use the cluster coordinates introduced previously to construct meromorphic functions which solve the Riemann-Hilbert problem.

\subsection{Constructing the solution}
\label{sec:ConstructingTheSolution}

Let $(\mathbb{S},\mathbb{M})$ be an amenable marked bordered surface. Suppose we are given a point $(S,\phi)\in\mathscr{Q}^\pm(\mathbb{S},\mathbb{M})$ where the differential $\phi$ is complete. For now, we will also assume that this differential is saddle-free.

As explained in the previous section, the differential $\phi$ determines an associated Riemann-Hilbert problem. Our solution of this Riemann-Hilbert problem will depend on a choice of base meromorphic projective structure $\mathcal{P}$ on~$S$ which we take to be the uniformizing projective structure provided by Lemma~\ref{lem:Q2}. Once we have chosen a meromorphic projective structure $\mathcal{P}$ in this way, we form the one-parameter family of meromorphic projective structures given by 
\[
\mathcal{P}_\phi(t)=\mathcal{P}+t^{-2}\cdot\phi
\]
for $t\in\mathbb{C}^*$ satisfying $\Re(t)>0$. There is a local system of sets over $\{t\in\mathbb{C}^*:\Re(t)>0\}$ whose fiber over~$t$ parametrizes the set of markings for the pair $(S,t^{-2}\cdot\phi)$ by~$(\mathbb{S},\mathbb{M})$. Since $(S,\phi)$ is equipped with such a marking, we can use flat connection of this local system to get a marking for every $(S,t^{-2}\cdot\phi)$. This determines a marking for $\mathcal{P}_\phi(t)$, and hence we can think of this projective structure as a point of the space $\mathscr{P}(\mathbb{S},\mathbb{M})$. 

Since $\phi$ is equipped with a choice of signing, we see by replacing $\phi$ by $t^{-2}\cdot\phi$ in Lemma~\ref{lem:residueeigenvalues} that there is a distinguished eigenvalue for the monodromy around any double pole of~$\mathcal{P}_\phi(t)$. Let $\mathcal{V}$ be the set of all $t$ such that $\mathcal{P}_\phi(t)$ has an apparent singularity. Then for $t\not\in\mathcal{V}$, the projective structure $\mathcal{P}_\phi(t)$ is naturally a point of $\mathscr{P}^*(\mathbb{S},\mathbb{M})$, and we can apply the monodromy map of Theorem~\ref{thm:monodromymap} to get an associated framed local system $F(\mathcal{P}_\phi(t))$.

Since $\phi$ is complete and saddle-free, there is an associated tagged WKB triangulation~$\tau(\phi)$. The Fock-Goncharov coordinates with respect to~$\tau(\phi)$ provide a birational map from the space of framed local systems to the torus $\mathbb{T}_+=\Hom_{\mathbb{Z}}(\Gamma_\phi,\mathbb{C}^*)$, and for any $\gamma\in\Gamma_\phi$, we denote by $X_{\tau(\phi),\gamma}$ the composition of this birational map with the character of $\mathbb{T}_+$ provided by~$\gamma$.

\begin{theorem}[\cite{Allegretti19}, Theorem~7.16]
\label{thm:meromorphic}
For each class $\gamma\in\Gamma_\phi$, the assignment 
\[
\mathcal{Y}_{\phi,\gamma}:t\mapsto X_{\tau(\phi),\gamma}(F(\mathcal{P}_\phi(t)))
\]
extends to a meromorphic function $\mathcal{Y}_{\phi,\gamma}$ on $\{t\in\mathbb{C}^*:\Re(t)>0\}$.
\end{theorem}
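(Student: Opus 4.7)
The assignment $\mathcal{Y}_{\phi,\gamma}$ factors as $t\mapsto\mathcal{P}_\phi(t)\mapsto F(\mathcal{P}_\phi(t))\mapsto X_{\tau(\phi),\gamma}(F(\mathcal{P}_\phi(t)))$: a holomorphic family of marked meromorphic projective structures, the monodromy map of Theorem~\ref{thm:monodromymap}, and a rational cluster coordinate. My plan is to obtain meromorphicity on $\{\Re(t)>0\}\setminus\mathcal{V}$ by composing the holomorphicity results already at hand, then to extend meromorphically across the discrete exceptional set~$\mathcal{V}$ of apparent singularities by a perturbative analysis of the eigenlines of the monodromy.

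First I would verify that $\mathcal{V}$ is discrete in $\{\Re(t)>0\}$. Only regular singularities (poles of $\mathcal{P}_\phi(t)$ of order $\leq 2$) can give rise to apparent singularities, since at an irregular pole the framing is canonically pinned down by the subdominant solutions of Proposition~\ref{prop:subdominant}. Because $\phi$ is complete, every pole of $\mathcal{P}_\phi(t)$ of order two corresponds to a double pole of $\phi$ with leading Laurent coefficient $a_0\neq 0$; combining Lemma~\ref{lem:Q2} with the Laurent expansion of $t^{-2}\phi$ shows the leading coefficient of $\mathcal{P}_\phi(t)$ at such a point is $t^{-2}a_0-\tfrac14$, and Lemma~\ref{lem:eigenvalues} then reduces apparentness to a quantization condition on $t^{-1}$ of the form $4\pi i\sqrt{a_0}/t\in 2\pi i\mathbb{Z}$, whose solution set is discrete in $\{\Re(t)>0\}$. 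The map $t\mapsto\mathcal{P}_\phi(t)$ is holomorphic into $\mathscr{P}(\mathbb{S},\mathbb{M})$ (since projective structures form an affine bundle over the vector space of quadratic differentials), and the chosen signing of $\phi$ lifts it, via Lemma~\ref{lem:residueeigenvalues} and Proposition~\ref{prop:openembedding}, to a holomorphic map into $\mathscr{P}^\pm(\mathbb{S},\mathbb{M})$ which lands in $\mathscr{P}^*(\mathbb{S},\mathbb{M})$ off of~$\mathcal{V}$. Composing with the holomorphic monodromy map $F$ of Theorem~\ref{thm:monodromymap} and with $X_{\tau(\phi),\gamma}$ (a regular function on the torus chart associated with~$\tau(\phi)$) yields a meromorphic function on $\{\Re(t)>0\}\setminus\mathcal{V}$.

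The main obstacle is extending meromorphically across each $t_0\in\mathcal{V}$. At such a $t_0$ the monodromy around some double pole $p$ becomes scalar in $\PGL_2(\mathbb{C})$, so every line is fixed and the framing is no longer distinguished by the eigenspace decomposition. My approach is to choose a holomorphic lift of the monodromy matrix near $t_0$ and write $M(t)=\pm I+(t-t_0)A(t)$ with $A$ holomorphic; a short computation shows that the eigenlines of $M(t)$ and of $A(t)$ coincide for $t\neq t_0$, so they admit well-defined limits at $t_0$ given by the eigenlines of $A(t_0)$, provided $A(t_0)$ is not itself scalar. This identifies a holomorphic extension of $t\mapsto F(\mathcal{P}_\phi(t))$ to all of $\{\Re(t)>0\}$ valued in the full moduli stack $\mathscr{X}(\mathbb{S},\mathbb{M})$, possibly leaving the open substack~$\mathscr{X}^*$. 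Since $X_{\tau(\phi),\gamma}$ is a rational function on $\mathscr{X}(\mathbb{S},\mathbb{M})$---it is, up to a normalization for interior-tagged arcs, a cross-ratio of four framings pulled back from lifts to the universal cover---its pullback along the extended map is meromorphic in a neighborhood of~$t_0$. The subtlest step is ruling out the degenerate case $A(t_0)=\mu I$: this should amount to a transversality check that the locus of apparent-singularity parameters in the right half-plane is a smooth divisor, which in turn follows from the explicit form of the quantization condition derived above combined with the genericity assumptions implicit in the setup.
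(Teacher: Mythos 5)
Your proposed argument is a direct analytic-continuation proof: the composite $t\mapsto X_{\tau(\phi),\gamma}(F(\mathcal{P}_\phi(t)))$ is holomorphic where the monodromy map is defined, the exceptional locus $\mathcal{V}$ is discrete, and the framing is extended across $\mathcal{V}$ by eigenline perturbation. The broad structure is sound, and in fact the paper itself deploys the same eigenline-continuation trick in Lemma~\ref{lem:extendframing}, in the variable $\eta=1/t$ near $\eta=0$. The present paper does not reproduce the proof of Theorem~\ref{thm:meromorphic} --- it is cited from~\cite{Allegretti19}, where the surrounding discussion suggests the route goes through Voros symbols and Borel summation --- so your argument is at minimum a genuinely different, more elementary derivation.

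That said, the step you flag as ``the subtlest'' is where your proposal stops short, and you mis-diagnose it: this is not a transversality or genericity issue, and no such hypothesis is available in the theorem. It resolves by a computation you already have in hand. Lift $M(t)\in\PGL_2(\mathbb{C})$ holomorphically near $t_0$ and write $M(t)=M(t_0)+(t-t_0)A(t)$. You derived $\lambda_\pm(t)=-\exp(\pm\Res_p(\phi)/(2t))$, so
\[
\lambda_+'(t_0)-\lambda_-'(t_0)=-\frac{\Res_p(\phi)}{t_0^2}\,\lambda_+(t_0)\neq 0
\]
since $\Res_p(\phi)\neq0$ (the differential is complete, so $p$ is a genuine double pole). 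The eigenvalues of $A(t_0)$ are $\lambda_\pm'(t_0)$, which are therefore distinct, so $A(t_0)$ is never scalar and is automatically diagonalizable with two distinct eigenlines. Your ``degenerate case'' simply cannot occur, and the holomorphic extension of the framing line at each $t_0\in\mathcal{V}$ is unconditional.

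Two further points you should tighten. First, your discreteness argument is fine, but note that the quantization locus $\{r(t)\in 2\pi i\mathbb{Z}\}$ only contains $\mathcal{V}$ (equal eigenvalues is necessary but not sufficient for scalar monodromy in $\PGL_2$); a subset of a discrete set is still discrete, so nothing is lost, but the inclusion should be stated as such. Second, composing the extended holomorphic arc $t\mapsto F(\mathcal{P}_\phi(t))$ with the rational function $X_{\tau(\phi),\gamma}$ yields a meromorphic function only if the image of the arc is not contained in the indeterminacy locus of $X_{\tau(\phi),\gamma}$. You need to rule this out explicitly; the cleanest way is to invoke the fact (Theorem~\ref{thm:main1}, or equivalently~\cite{Allegretti19}, Theorem~1.3(1)) that for $t$ with $\Im(1/t)\gg 0$ the image lies in the torus chart $\mathscr{X}_{\tau(\phi)}$ where $X_{\tau(\phi),\gamma}$ is a regular invertible function, so the composite is neither identically $0$ nor identically $\infty$ nor indeterminate.
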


Now let us consider the same setup but without the assumption that $\phi$ is saddle-free. As we have seen, the differential $\phi$ determines a BPS structure and hence a ray diagram. If $r\subset\mathbb{C}^*$ is a non-active ray, then we can write $r=\mathbb{R}_{>0}\cdot e^{i\theta}$ for some phase~$\theta$, and the rotated differential $\phi_\theta=e^{-2i\theta}\cdot\phi$ will be saddle-free. For any $\gamma\in\Gamma_\phi$ and any point $t\in\mathbb{H}_r$ in the half plane centered around~$r$, we define 
\[
\mathcal{X}_{r,\gamma}(t)=\xi(\gamma)\cdot\mathcal{Y}_{\phi_\theta,\gamma}(e^{-i\theta}\cdot t)
\]
where $\xi$ is the map defined by Proposition~\ref{prop:basepoint}. By Theorem~\ref{thm:meromorphic}, this defines a meromorphic function $\mathcal{X}_r:\mathbb{H}_r\rightarrow\mathbb{T}$.

\begin{proposition}
Let $r_-$,~$r_+\subset\mathbb{C}^*$ be non-active rays which form the boundary rays of an acute sector $\Delta$ taken in the clockwise order. Then, for $t\in\mathbb{H}_{r_-}\cap\mathbb{H}_{r_+}$ with $0<|t|\ll1$, the functions $\mathcal{X}_{r_\pm}(t)$ are holomorphic and satisfy 
\[
\mathcal{X}_{r_-}(t)=\mathbf{S}(\Delta)(\mathcal{X}_{r_+}(t)).
\]
\end{proposition}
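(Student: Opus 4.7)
The plan is to reduce the jump relation to the cluster-theoretic identification of $\mathbf{S}(\Delta)$ furnished by Proposition~\ref{prop:basepoint}(2). Write $r_\pm = \mathbb{R}_{>0}\cdot e^{i\theta_\pm}$, and let $\phi_\pm = e^{-2i\theta_\pm}\cdot\phi$ be the corresponding rotated differentials, which are complete and saddle-free by choice of $r_\pm$ and thus carry tagged WKB triangulations $\tau_\pm$. A direct computation gives the identification of base objects,
\[
\mathcal{P}_{\phi_\pm}\bigl(e^{-i\theta_\pm}t\bigr) \;=\; \mathcal{P} + \bigl(e^{-i\theta_\pm}t\bigr)^{-2}\cdot e^{-2i\theta_\pm}\phi \;=\; \mathcal{P} + t^{-2}\phi \;=\; \mathcal{P}_\phi(t)
\]
as meromorphic projective structures on $S$. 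After matching markings and eigenline data by flat transport across the sector $\Delta$, both sides give the same point of $\mathscr{P}^*(\mathbb{S},\mathbb{M})$ and hence the same framed local system $\mathcal{G}(t) = F(\mathcal{P}_\phi(t))\in\mathscr{X}^*(\mathbb{S},\mathbb{M})$. By the very definition of the candidate solution, $\mathcal{X}_{r_\pm,\gamma}(t) = \xi(\gamma)\cdot X_{\tau_\pm,\gamma}(\mathcal{G}(t))$.

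Next I would invoke Proposition~\ref{prop:basepoint}(2): after the basepoint $\xi$ is used to identify $\mathbb{T}_-$ with $\mathbb{T}_+$, the birational automorphism $\mathbf{S}(\Delta)$ is exactly the cluster transformation relating the two Fock--Goncharov coordinate charts $X_{\tau_+}$ and $X_{\tau_-}$. Evaluating this identity of birational maps at the generic point $\mathcal{G}(t)\in\mathscr{X}^*(\mathbb{S},\mathbb{M})$ yields $\mathcal{X}_{r_-}(t) = \mathbf{S}(\Delta)\bigl(\mathcal{X}_{r_+}(t)\bigr)$, which is the required jump relation. For the holomorphicity assertion, it suffices to show that for $0 < |t| \ll 1$ the framed local system $\mathcal{G}(t)$ lies in the common domain of the charts $X_{\tau_\pm}$, so that $X_{\tau_\pm,\gamma}(\mathcal{G}(t))\in\mathbb{C}^*$; this follows from the (independently established) asymptotic $X_{\tau_\pm,\gamma}(\mathcal{G}(t))\sim \exp(-Z_\phi(\gamma)/t)$ coming from the exact WKB analysis of $\mathcal{P}_\phi(t)$ in the directions $r_\pm$, which is finite and nonvanishing for small $|t|$.

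The main obstacle I anticipate is the careful bookkeeping of markings and signings in the first step. While the equality $\mathcal{P}_{\phi_\pm}(e^{-i\theta_\pm}t)=\mathcal{P}_\phi(t)$ is immediate at the level of bare projective structures on $S$, asserting that both sides define the \emph{same} marked and signed point of $\mathscr{P}^*(\mathbb{S},\mathbb{M})$ requires showing that the flat transports of markings over the two-parameter family $\{s^{-2}e^{-2i\theta}\phi\}_{(s,\theta)}$ along the two paths (rotate to $\theta_\pm$, then scale to $e^{-i\theta_\pm}t$) deliver consistent data at the common endpoint, once the flips of WKB triangulations across the finitely many active rays in $\Delta$ are properly accounted for. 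The cleanest route is to deform the boundary rays $r_\pm$ continuously so that $\Delta$ is squeezed down to a single active ray (or none), reducing to two cases: the trivial case where no active ray is crossed and $\tau_+ = \tau_-$, and the case of one active ray, where Proposition~\ref{prop:flipcoordinate} combined with a direct WKB-triangulation flip calculation matches the birational change-of-coordinates to the formula for $\mathbf{S}(\ell)$ from Proposition~\ref{prop:BPSautomorphism}; the general case then follows by composition.
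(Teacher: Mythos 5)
Your proposal is correct and follows essentially the same route as the paper: observe that the rotated projective structures $\mathcal{P}_{\phi_\pm}(e^{-i\theta_\pm}t)$ all equal $\mathcal{P}_\phi(t)$, so the two candidate functions are the Fock--Goncharov coordinates of one common framed local system in the two charts $X_{\tau_\pm}$, and then cite Proposition~\ref{prop:basepoint}(2) to identify the change of charts with $\mathbf{S}(\Delta)$; the paper invokes the Gauss--Manin connection for the marking/lattice bookkeeping exactly as you describe, and cites Theorem~1.3(1) of \cite{Allegretti19} for holomorphicity, which is the same asymptotic fact you use. Your closing ``reduce to a single active ray'' strategy is superfluous, since Proposition~\ref{prop:basepoint}(2) already applies to an arbitrary convex sector with non-active boundary rays, but this does not detract from the correctness of the direct argument you give first.
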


\begin{proof}
The holomorphicity property was proved in Theorem~1.3(1) of~\cite{Allegretti19}. Let us write $r_\pm=e^{i\theta_\pm}\cdot\mathbb{R}_{>0}$. Then the rotated differentials $\phi_\pm=e^{-2i\theta_\pm}\cdot\phi$ are saddle-free. Using the Gauss-Manin connection, we can identify the hat-homology groups $\widehat{H}(\phi_{\theta_\pm})$ with the lattice $\Gamma_\phi$. For each class $\gamma\in\Gamma_\phi$, we have 
\[
\mathcal{Y}_{\phi_\pm,\gamma}(e^{-i\theta}\cdot t)=X_{\tau(\phi_\pm),\gamma}(F(\mathcal{P}_\phi)),
\]
and therefore the lemma follows from Proposition~\ref{prop:basepoint}(2).
\end{proof}

\subsection{Asymptotic behavior at zero}

We now proceed to study the asymptotic behavior of the functions~$\mathcal{X}_{r,\gamma}(t)$ as $t$ tends to zero. In~\cite{Allegretti19}, we showed that the functions $\mathcal{Y}_{\phi,\gamma}(t)$ coincide with the Borel sums of Voros symbols in a domain of the form 
\[
\mathbb{H}(\varepsilon)=\{t\in\mathbb{C}:\text{$t<\varepsilon$ and $\Re(t)>0$}\}
\]
for some $\varepsilon>0$. We used this relationship and the known asymptotic properties of Voros symbols~\cite{IwakiNakanishi} to prove the following.

\begin{theorem}[\cite{Allegretti19}, Theorem~1.5]
\label{thm:asymptoticssaddlefree}
For each class $\gamma\in\Gamma_\phi$, the function $\mathcal{Y}_{\phi,\gamma}(t)$ satisfies 
\[
\exp(Z_\phi(\gamma)/t)\cdot\mathcal{Y}_{\phi,\gamma}(t)\rightarrow1
\]
as $t\rightarrow0$ in $\mathbb{H}(\varepsilon)$.
\end{theorem}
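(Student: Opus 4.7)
The plan is to identify $\mathcal{Y}_{\phi,\gamma}(t)$ with the Borel sum of a Voros symbol from exact WKB analysis, and then read off the desired asymptotic from the leading WKB term. This is the strategy used to prove Theorem~1.5 of~\cite{Allegretti19}, and I would organize it in four steps.

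First I would express the quantity $X_{\tau(\phi),\gamma}(F(\mathcal{P}_\phi(t)))$ explicitly as a cross-ratio of boundary values of solutions to the rescaled Schr\"odinger equation $y''(z) = t^{-2}\varphi(z) y(z)$, where $\phi = \varphi(z)\,dz^{\otimes 2}$ in a local coordinate. By Proposition~\ref{prop:subdominant} and the construction of the framing preceding Theorem~\ref{thm:monodromymap}, the flat section $\ell(p)$ of the framing at each marked point is given by a subdominant solution in the appropriate Stokes sector (or, at a regular singularity, a distinguished eigenline coming from the signing). Writing out the Fock-Goncharov cross-ratio (or the analogous Wronskian expression, in the tagged case) around the quadrilateral underlying an arc $j$ of $\tau(\phi)$ therefore exhibits $\mathcal{Y}_{\phi,\gamma_j}(t)$ as an explicit algebraic combination of such subdominant solutions.

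Next, I would invoke exact WKB analysis. The formal WKB ansatz produces formal solutions of the form $\exp\bigl(\int S(z,t)\,dz\bigr)$ with $S = S_{\mathrm{odd}} + S_{\mathrm{even}}$ and $S_{\mathrm{odd}}(z,t) = t^{-1}\sqrt{\varphi(z)} + t\cdot S_1(z) + t^3\cdot S_3(z) + \cdots$. To each hat-homology class $\gamma$ one associates the Voros symbol $V_\gamma(t) = \exp\bigl(\oint_{\hat\gamma} S_{\mathrm{odd}}(z,t)\,dz\bigr)$, whose formal expansion has leading behaviour $\exp(-Z_\phi(\gamma)/t)\cdot(1 + O(t))$, with the sign fixed by the orientation conventions on $\hat\gamma$. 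By the main Borel-summability results of~\cite{IwakiNakanishi}, the saddle-free hypothesis on $\phi$ guarantees that the Borel transform of $V_\gamma(t)$ has no singularities on $\mathbb{R}_{>0}$, so its Laplace integral defines a holomorphic function on some half-disk $\mathbb{H}(\varepsilon)$ admitting $V_\gamma(t)$ as asymptotic expansion.

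The decisive step is to identify $\mathcal{Y}_{\phi,\gamma}(t)$ with this Borel sum. This rests on the fact that in each Stokes sector the Borel-resummed WKB solution coincides, up to a normalisation constant, with the genuine subdominant analytic solution of the Schr\"odinger equation. Substituting these resummed solutions into the cross-ratio obtained in the first step, the normalisation constants cancel in the cross-ratio, and one obtains exactly the Borel sum of $V_\gamma(t)$. Since Borel sums inherit the asymptotic expansion of the formal series they sum, the relation $\mathcal{Y}_{\phi,\gamma}(t) \sim \exp(-Z_\phi(\gamma)/t)\cdot(1 + O(t))$ holds in $\mathbb{H}(\varepsilon)$, which is equivalent to the claimed statement.

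The main obstacle is the identification in the previous paragraph: matching the monodromy-theoretic description of Fock-Goncharov coordinates to the analytic Borel resummation of formal WKB series. This requires rigorous control of the singularity structure of the Borel transform, together with a Koike-Schäfke-type theorem identifying Borel resummed formal solutions with actual subdominant solutions across each Stokes sector. Modulo this technical input (furnished for the surface case by~\cite{IwakiNakanishi} and Theorem~1.3 of~\cite{Allegretti19}), the final asymptotic reduces to the elementary observation that the leading term of $\oint_{\hat\gamma} S_{\mathrm{odd}}$ is $-Z_\phi(\gamma)/t$.
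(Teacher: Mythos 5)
Your proposal matches the strategy the paper attributes to the cited source; the statement is quoted from \cite{Allegretti19} and the paper does not reprove it, instead summarizing the argument in one sentence (identify $\mathcal{Y}_{\phi,\gamma}$ with the Borel sum of a Voros symbol and apply the asymptotic properties of Voros symbols from~\cite{IwakiNakanishi}). Your four-step outline is precisely that route made explicit, and your hedging on the sign of $\oint_{\hat\gamma}S_{\mathrm{odd}}$ correctly flags the only point of convention that must be pinned down to land on $\exp(-Z_\phi(\gamma)/t)$ rather than its reciprocal.
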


Using this fact, we can derive a similar asymptotic property of the function $\mathcal{X}_{r,\gamma}(t)$.

\begin{proposition}
\label{prop:RH2}
For each non-active ray $r\subset\mathbb{C}^*$ and each class $\gamma\in\Gamma_\phi$, we have 
\[
\exp(Z_\phi(\gamma)/t)\cdot\mathcal{X}_{r,\gamma}(t)\rightarrow\xi(\gamma)
\]
as $t\rightarrow0$ in~$\mathbb{H}_r$.
\end{proposition}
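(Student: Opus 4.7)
The plan is to reduce this to the already-established asymptotic result for saddle-free differentials, namely Theorem~\ref{thm:asymptoticssaddlefree}, via the definition of $\mathcal{X}_{r,\gamma}$ in terms of the $\mathcal{Y}_{\phi_\theta,\gamma}$ and a straightforward change of variable.

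First, I would fix a non-active ray $r = \mathbb{R}_{>0}\cdot e^{i\theta}\subset\mathbb{C}^*$, so that the rotated differential $\phi_\theta = e^{-2i\theta}\cdot\phi$ is saddle-free (this is exactly how $\mathcal{X}_r$ was defined in the preceding subsection). Recalling that
\[
\mathcal{X}_{r,\gamma}(t) = \xi(\gamma)\cdot\mathcal{Y}_{\phi_\theta,\gamma}(e^{-i\theta}\cdot t),
\]
the problem is to control the behavior of $\mathcal{Y}_{\phi_\theta,\gamma}(s)$ as $s = e^{-i\theta}\cdot t\to 0$. The half plane $\mathbb{H}_r$ is precisely $e^{i\theta}\cdot\{v\in\mathbb{C}^*:\Re(v)>0\}$, so as $t\to 0$ in $\mathbb{H}_r$, the point $s=e^{-i\theta}\cdot t$ tends to $0$ in the domain $\mathbb{H}(\varepsilon)=\{s\in\mathbb{C}^*:|s|<\varepsilon,\ \Re(s)>0\}$ for any $\varepsilon>0$. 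Consequently Theorem~\ref{thm:asymptoticssaddlefree}, applied to the saddle-free differential $\phi_\theta$, yields
\[
\exp\bigl(Z_{\phi_\theta}(\gamma)/s\bigr)\cdot\mathcal{Y}_{\phi_\theta,\gamma}(s)\longrightarrow 1.
\]

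The main content of the argument is then the identification of exponential factors. Under the Gauss--Manin flat connection on the local system of hat-homology lattices $\{\widehat{H}(\phi_\theta)\}_{\theta\in\mathbb{R}}$, the canonical $1$-form $\psi$ on the spectral cover pulls back to $e^{-i\theta}\cdot\psi_\phi$ along the rotation $\phi\mapsto \phi_\theta$. Integrating over a cycle $\gamma$ which we have identified with a class in $\Gamma_\phi$ via the same flat connection, this gives the period relation
\[
Z_{\phi_\theta}(\gamma) = e^{-i\theta}\cdot Z_\phi(\gamma).
\]
Substituting $s = e^{-i\theta}\cdot t$ therefore yields
\[
\frac{Z_{\phi_\theta}(\gamma)}{s} = \frac{e^{-i\theta}\cdot Z_\phi(\gamma)}{e^{-i\theta}\cdot t} = \frac{Z_\phi(\gamma)}{t},
\]
so the exponential prefactors agree on the nose.

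Combining these observations, the limit above becomes
\[
\exp\bigl(Z_\phi(\gamma)/t\bigr)\cdot\mathcal{Y}_{\phi_\theta,\gamma}(e^{-i\theta}\cdot t)\longrightarrow 1
\]
as $t\to 0$ in $\mathbb{H}_r$. Multiplying through by $\xi(\gamma)$ and using the definition of $\mathcal{X}_{r,\gamma}$ then gives the desired conclusion. The only real subtlety, and the step I would take the most care over, is verifying the period relation $Z_{\phi_\theta}(\gamma)=e^{-i\theta}Z_\phi(\gamma)$ under the parallel transport identification of hat-homology groups; everything else is bookkeeping. Since the Gauss--Manin connection is essentially trivial in the $\theta$ variable (the underlying topological cycle is unchanged, only the differential form being integrated is rescaled), this is a direct computation, but it is the one place where one must be attentive to conventions.
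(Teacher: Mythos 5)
Your proof is correct and follows exactly the same route as the paper: rotate to the saddle-free differential $\phi_\theta$, apply Theorem~\ref{thm:asymptoticssaddlefree} to $\mathcal{Y}_{\phi_\theta,\gamma}$, and use the period identity $Z_{\phi_\theta}(\gamma)=e^{-i\theta}Z_\phi(\gamma)$ under the Gauss--Manin identification to match the exponential prefactors after the substitution $\tilde{t}=e^{-i\theta}t$. The additional remarks on the domain identification and on where the period relation comes from are fine amplifications of the same argument.
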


\begin{proof}
Let us write $r=\mathbb{R}_{>0}\cdot e^{i\theta}$ for some phase $\theta\geq0$. Using the Gauss-Manin connection, we can identify $\widehat{H}(\phi_\theta)$ with the lattice~$\Gamma_\phi=\widehat{H}(\phi)$. Then for any $\gamma\in\Gamma_\phi$, we have 
\[
Z_{\phi_\theta}(\gamma)=e^{-i\theta}\cdot Z_\phi(\gamma)
\]
and hence $Z_{\phi_\theta}(\gamma)/\tilde{t}=Z_\phi(\gamma)/t$ where $\tilde{t}=e^{-i\theta}\cdot t$. By Theorem~\ref{thm:asymptoticssaddlefree}, we have 
\[
\exp(Z_\phi(\gamma)/t)\cdot\mathcal{X}_{r,\gamma}(t)=\xi(\gamma)\cdot\left(\exp(Z_{\phi_\theta}(\gamma)/\tilde{t})\cdot\mathcal{Y}_{\phi_\theta,\gamma}(\tilde{t})\right)\rightarrow\xi(\gamma)
\]
as $\tilde{t}\rightarrow0$ in~$\mathbb{H}(\varepsilon)$, or equivalently as $t\rightarrow0$ in~$\mathbb{H}_r$.
\end{proof}

\subsection{Asymptotic behavior at infinity}

Finally we will study the growth of the functions $\mathcal{X}_{r,\gamma}(t)$ as $t$ tends to infinity. Unfortunately our methods cannot be applied to an arbitrary choice of the quadratic differential $\phi$, and so in this subsection we will restrict attention to quadratic differentials having only poles of order two.

To study the behavior of the function $\mathcal{X}_{r,\gamma}(t)$ at infinity, we will change variables to $\eta=1/t$. Suppose $p$ is a pole of order two of the quadratic differential~$\phi$, and let $z$ be a local coordinate defined in a neighborhood of~$p$ so that $z(p)=0$. In this neighborhood, a polar differential for the projective structure~$\mathcal{P}_\phi(t)$ can be written in the form $Q(z,\eta)dz^{\otimes2}$. If $U$ is a disk centered at~0 in the $\eta$-plane, then there is a rank-2 vector bundle $E\rightarrow U$ where the fiber over $\eta\in U$ is the space of germs of solutions of the differential equation 
\begin{equation}
\label{eqn:etaschrodinger}
y''(z)-Q(z,\eta)\cdot y(z)=0
\end{equation}
at some fixed basepoint in the $z$-plane. For a generic $\eta\neq0$, the choice of signing for the differential~$\phi$ picks out a distinguished eigenline $\ell(\eta)$ for the monodromy of~\eqref{eqn:etaschrodinger} around $z=0$. Thus we have a section $\ell$ of the projective bundle $\mathbb{P}(E)$ over a dense open set~$V\subset U\setminus\{0\}$. By the remarks following Lemma~5.1 of~\cite{AllegrettiBridgeland}, we can find a point $\eta=\eta_0\in U$ where the monodromy of~\eqref{eqn:etaschrodinger} is diagonalizable with distinct eigenvalues. The fiber of $E$ over~$\eta_0$ has a basis determined by the eigenvectors, and we can choose a trivialization of $E$ which maps this basis to the standard basis for $\mathbb{C}^2$. This induces a trivialization of $\mathbb{P}(E)$, and hence we can regard $\ell$ as a function $V\rightarrow\mathbb{P}^1$.

\begin{lemma}
\label{lem:extendframing}
This $\ell$ extends to a holomorphic function $\ell:U\rightarrow\mathbb{P}^1$.
\end{lemma}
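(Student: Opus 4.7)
The plan is to construct, via Frobenius theory with holomorphic dependence on the parameter $\eta$, a local eigen-solution $y_+(z,\eta)$ of~\eqref{eqn:etaschrodinger} near $z=0$ that depends holomorphically on $\eta$ in a full neighborhood of $0$, and to deduce that $\ell$ extends because an eigenline varies holomorphically as soon as an eigenvector does.

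Choose the branch of $\sqrt{a}$ corresponding to the chosen signing, where $a\neq 0$ is the leading Laurent coefficient of $\phi$ at $p$, so that $\Res_p(\phi) = 4\pi i\sqrt{a}$. Combining Lemma~\ref{lem:Q2} with the expansion of $\phi$ shows
\[
Q(z,\eta) = \frac{-\tfrac14 + a\eta^2}{z^2} + O(z^{-1}) \quad \text{as } z\to 0,
\]
so the indicial exponents of~\eqref{eqn:etaschrodinger} at $z=0$ are $\rho_\pm(\eta) = \tfrac12 \pm \eta\sqrt{a}$. For $\eta$ in a sufficiently small disk about $0$, the shift $2\eta\sqrt{a}$ avoids $\mathbb{Z}_{\neq 0}$, so the Frobenius recursion for $\rho_+$, whose denominators are $n(n + 2\eta\sqrt{a})$, produces coefficients holomorphic in $\eta$ and a convergent series $y_+(z,\eta) = z^{\rho_+(\eta)}(1 + O(z))$ (standard majorant estimate). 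Analytic continuation around $z=0$ multiplies $y_+$ by $e^{2\pi i\rho_+(\eta)} = -\exp(\eta\Res_p(\phi)/2)$, so for $\eta\in V$ this is precisely the monodromy eigenvector singled out by the signing.

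Fix a basepoint $z_0\neq 0$ in the coordinate chart and a fundamental system $\tilde y_1(z,\eta),\tilde y_2(z,\eta)$ of solutions near $z_0$ depending holomorphically on $\eta\in U$ (available by standard ODE theory, since $Q$ is holomorphic in $(z,\eta)$ away from $z=0$). Analytically continuing $y_+$ along a fixed path to $z_0$ expresses $y_+ = c_1(\eta)\tilde y_1 + c_2(\eta)\tilde y_2$ with $c_1,c_2:U\to\mathbb{C}$ holomorphic; since $y_+(\cdot,\eta)$ is a nonzero solution for every $\eta\in U$, the pair $(c_1,c_2)$ never vanishes. Hence $[c_1:c_2]:U\to\mathbb{P}^1$ is a holomorphic map which agrees with $\ell$ on $V$ in the trivialization of $\mathbb{P}(E)$ induced by $\tilde y_1,\tilde y_2$, giving the required extension. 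The one genuine subtlety is that at $\eta=0$ the two indicial exponents collide and the companion solution $y_-$ degenerates into a logarithmic term; what saves us is that the branch $\rho_+$ we are tracking only needs its shifts $\rho_+ + n$ ($n\geq 1$) to avoid $\rho_-$, which holds automatically near $\eta=0$, so $y_+$ itself extends holomorphically across the collision. The eigenline $[y_+]$ then extends continuously to the unique eigenline of the (parabolic) limit monodromy — which is also approached by $[y_-]$, but this coalescence poses no obstruction to extending $[y_+]$ alone.
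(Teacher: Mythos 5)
Your argument is correct, but it takes a genuinely different route from the paper's. The paper stays at the level of the monodromy matrix: by Lemma~\ref{lem:residueeigenvalues} the distinguished eigenvalue $\lambda_+(\eta)=-\exp(\Res_p(\phi)\eta/2)$ is holomorphic on all of $U$, the monodromy $M(\eta)$ is likewise holomorphic, and the eigenline is read off from $M(\eta)-\lambda_+(\eta)I$ as a ratio of two of its entries (not both identically zero, by a short normal-form argument for generic $\eta$); a meromorphic function $U\to\mathbb{C}$ in one variable automatically extends to a holomorphic map $U\to\mathbb{P}^1$. You instead build, via Frobenius theory, a specific eigen-solution $y_+(z,\eta)$ with exponent $\rho_+(\eta)=\tfrac12+\eta\sqrt{a}$; the recursion denominators $n(n+2\eta\sqrt a)$ stay nonzero for $\eta$ small, so $y_+$ depends holomorphically on $\eta$ across $\eta=0$, and expanding in a holomorphic frame at a basepoint yields a nowhere-vanishing pair $(c_1,c_2)$ and hence a holomorphic $[c_1:c_2]:U\to\mathbb{P}^1$ agreeing with $\ell$ on~$V$. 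Both are sound: your construction is more explicit and makes the resonance at $\eta=0$ (collision of indicial exponents, logarithmic companion solution) visible and harmless, at the cost of a longer setup; the paper's argument is shorter and defers the analytic content to the holomorphy of $\lambda_+(\eta)$ and $M(\eta)$, but needs the side remark that the entries of $M(\eta)-\lambda_+(\eta)I$ used for the ratio are not identically zero. One small caveat worth noting: you implicitly fix an orientation of the loop around $z=0$ when identifying the continuation factor $e^{2\pi i\rho_+(\eta)}$ with the eigenvalue $\lambda_+(\eta)$, and this must be consistent with the orientation used in the definition of the monodromy and the signing; with your chosen branch of $\sqrt a$ this is a bookkeeping matter, not a gap, but a reader would want it spelled out.
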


\begin{proof}
Replacing $\phi$ by $\eta^2\cdot\phi$ in Lemma~\ref{lem:residueeigenvalues}, we see that for $\eta\neq0$ the eigenvalues of the monodromy of~\eqref{eqn:etaschrodinger} around $z=0$ are given by $\lambda_\pm(\eta)=-\exp(\pm\Res_p(\phi)\eta/2)$. This expression clearly defines a holomorphic function on~$U$. By the results of~\cite{AllegrettiBridgeland}, Section~8, the monodromy $M(\eta)$ of~\eqref{eqn:etaschrodinger} around $z=0$ also depends holomorphically on~$\eta\in U$. Therefore so does the matrix 
\[
M(\eta)-\lambda_\pm(\eta)=
\left(\begin{array}{cc}
a_\pm(\eta) & b_\pm(\eta) \\
c_\pm(\eta) & d_\pm(\eta)
\end{array}\right)
\]
where we are working in the basis provided by the chosen trivialization. We claim that the diagonal entries of this matrix are not both identically zero. Indeed, for~$\eta=\eta_0$ the monodromy matrix $M(\eta)$ is diagonal with distinct eigenvalues. It follows that after subtracting the scalar matrix $\lambda_\pm(\eta)$ we get a matrix which again has distinct diagonal entries. This proves the claim. Now for $\eta\in V$, the choice of signing picks out a distinguished eigenvalue, say $\lambda_+(\eta)$, which has a 1-dimensional eigenspace. A vector $v=(v_1(\eta), v_2(\eta))^t$ in this eigenspace satisfies the conditions 
\[
a_+(\eta)v_1(\eta)+b_+(\eta)v_2(\eta)=0, \quad c_+(\eta)v_1(\eta)+d_+(\eta)v_2(\eta)=0.
\]
Thus there is an affine chart on~$\mathbb{P}^1$ in which the function $\ell$ is given by one of the formulas $\ell(\eta)=-b_+(\eta)/a_+(\eta)$ or $\ell(\eta)=-c_+(\eta)/d_+(\eta)$, depending on which of the denominators is not identically zero. Hence $\ell$ is given in this chart by a meromorphic function $U\rightarrow\mathbb{C}$. Equivalently, it extends to a holomorphic function $U\rightarrow\mathbb{P}^1$.
\end{proof}

Using this lemma, we can study the growth of the functions $\mathcal{Y}_{\phi,\gamma}(t)$ as $t$ tends to infinity.

\begin{proposition}
\label{prop:asymptoticinfinityY}
Assume the quadratic differential $\phi$ is saddle-free and has only poles of order two. Then for any class $\gamma$, there exists $k>0$ such that 
\[
|t|^{-k}<|\mathcal{Y}_{\phi,\gamma}(t)|<|t|^k
\]
for $|t|\gg0$.
\end{proposition}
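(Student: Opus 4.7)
My approach is to change variables to $\eta = 1/t$ and show that $\mathcal{Y}_{\phi,\gamma}$, viewed as a function of $\eta$, extends meromorphically to a neighborhood of $\eta = 0$. Once this is established, the desired two-sided bound follows at once: any nonzero meromorphic function at a point has a Laurent expansion of the form $c\eta^k(1+O(\eta))$ for some $c \in \mathbb{C}^*$ and $k \in \mathbb{Z}$, so $|\mathcal{Y}_{\phi,\gamma}(t)|$ behaves like $|c| \cdot |t|^{-k}$ as $|t| \to \infty$, which is bounded both above and below by suitable powers of $|t|$.

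To set this up, I would first note that since $\phi$ has only poles of order two, the associated marked bordered surface is closed with $\mathbb{M} = \mathbb{P}$ consisting entirely of punctures. The framing of the local system $F(\mathcal{P}_\phi(t))$ at each marked point is therefore the chosen eigenline of the monodromy of~\eqref{eqn:etaschrodinger} around the corresponding puncture. Lemma~\ref{lem:extendframing} is precisely the statement that this eigenline $\ell_p(\eta)$ extends to a holomorphic $\mathbb{P}^1$-valued function on a disk $U$ around $\eta = 0$, for each puncture $p$.

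The second ingredient is that the monodromy representation $\rho_\eta$ of~\eqref{eqn:etaschrodinger} along any fixed loop depends holomorphically on $\eta \in U$; this is standard ODE theory, since the equation itself depends holomorphically on $\eta$ near $\eta = 0$. Combining this with the previous step and the construction of the Fock-Goncharov coordinates via the map $\psi : \mathcal{F}_\infty(\mathbb{S},\mathbb{M}) \to \mathbb{P}^1$, the four points $z_1(\eta),\dots,z_4(\eta)$ entering the cross-ratio formula for any given cluster coordinate are all $\mathbb{P}^1$-valued holomorphic functions of $\eta \in U$. Hence the quantity $X_{\tau(\phi),\gamma}$, which is either a cross-ratio $Y_j$ or a product $Y_j Y_k$, defines a meromorphic function of $\eta$ on $U$. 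By Theorem~\ref{thm:meromorphic} this function is not identically zero on $U \setminus \{0\}$, and the polynomial bound follows from the Laurent-expansion argument above.

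The main obstacle I anticipate is a global compatibility issue: Lemma~\ref{lem:extendframing} is proved locally around each individual pole, whereas the cross-ratio construction requires parallel transporting the framings along paths reaching into a universal cover and computing monodromy along loops, all of which must be handled holomorphically in a single disk around $\eta = 0$. Concretely, one needs to check that the local trivializations of $\mathbb{P}(E)$ used in Lemma~\ref{lem:extendframing} glue together to give a holomorphic family of framings and monodromy matrices parametrized by $\eta \in U$, so that the cross-ratio formula really does produce a single meromorphic function. This should follow from standard analytic-continuation arguments applied to solutions of~\eqref{eqn:etaschrodinger}, but it is the one place where genuine care is required.
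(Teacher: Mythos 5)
Your proposal is correct and takes essentially the same approach as the paper: both change variables to $\eta = 1/t$, invoke Lemma~\ref{lem:extendframing} for holomorphic dependence of the framing lines on $\eta$, parallel transport to a fixed basepoint, and observe that $\mathcal{Y}_{\phi,\gamma}(1/\eta)$ is a product of cross ratios of these lines, hence a meromorphic function of $\eta$ near $0$, which gives the polynomial bounds. The ``global compatibility'' concern you raise is handled in the paper by exactly the device you suggest — parallel transporting each $\ell_p(\eta)$ to a common basepoint to get holomorphic $\widetilde{\ell}_p(\eta)$ — and the paper is similarly terse about it.
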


\begin{proof}
As before, we will write $\eta=1/t$. Then for a generic $\eta\neq0$, there is a framed local system $F(\mathcal{P}_\phi(1/\eta))$ which associates a framing line $\ell_p(\eta)$ to each pole $p$ of the differential~$\phi$. By Lemma~\ref{lem:extendframing}, the line $\ell_p(\eta)$ depends holomorphically on the parameter~$\eta$, and in fact $\ell_p$ extends to a holomorphic function $U\rightarrow\mathbb{P}^1$ where $U$ is a disk centered at~0 in $\eta$-space. Let $\tilde{\ell}_p(\eta)$ be the line obtained by parallel transporting $\ell_p(\eta)$ to a fixed basepoint on the surface. Then $\tilde{\ell}_p$ is again a holomorphic function $U\rightarrow\mathbb{P}^1$. For any class $\gamma$, the function $\mathcal{Y}_{\phi,\gamma}(1/\eta)$ is a product of cross ratios of the lines $\tilde{\ell}_p(\eta)$, hence a meromorphic function of~$\eta\in U$. It follows that $\mathcal{Y}_{\phi,\gamma}(1/\eta)$ satisfies the required bounds for $\eta$ small.
\end{proof}

From this, we obtain the desired property of the functions $\mathcal{X}_{r,\gamma}(t)$.

\begin{proposition}
\label{prop:RH3}
Assume the quadratic differential $\phi$ has only poles of order two. Then for any class $\gamma$ and any non-active ray $r\subset\mathbb{C}^*$, there exists $k>0$ such that 
\[
|t|^{-k}<|\mathcal{X}_{r,\gamma}(t)|<|t|^k
\]
for $t\in\mathbb{H}_r$ with $|t|\gg0$.
\end{proposition}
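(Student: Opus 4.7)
The plan is a direct reduction to Proposition~\ref{prop:asymptoticinfinityY}. Writing the non-active ray as $r=\mathbb{R}_{>0}\cdot e^{i\theta}$, the definition of $\mathcal{X}_{r,\gamma}$ gives
\[
\mathcal{X}_{r,\gamma}(t) = \xi(\gamma)\cdot\mathcal{Y}_{\phi_\theta,\gamma}(e^{-i\theta}\cdot t),\qquad \phi_\theta = e^{-2i\theta}\cdot\phi.
\]
Since $r$ is non-active, the rotated differential $\phi_\theta$ is saddle-free; and since multiplying a quadratic differential by a nonzero scalar leaves the divisor of poles and their orders unchanged, $\phi_\theta$ still has only poles of order two. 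Hence the hypotheses of Proposition~\ref{prop:asymptoticinfinityY} are satisfied by $\phi_\theta$, and one obtains some $k>0$ such that $|\tilde t|^{-k}<|\mathcal{Y}_{\phi_\theta,\gamma}(\tilde t)|<|\tilde t|^k$ for $\tilde t$ in the right half plane of sufficiently large modulus.

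To finish, I would transport this bound back through the substitution $\tilde t=e^{-i\theta}t$, which identifies $\mathbb{H}_r$ with the right half plane and preserves modulus ($|\tilde t|=|t|$). The only remaining wrinkle is confirming that the scalar factor $\xi(\gamma)$ has unit modulus. This follows from Proposition~\ref{prop:basepoint}(1), which gives $\xi(\gamma)\in\{\pm 1\}$ on every saddle-connection class, combined with the twisted-character cocycle $\xi(\gamma_1+\gamma_2)=(-1)^{\langle\gamma_1,\gamma_2\rangle}\xi(\gamma_1)\xi(\gamma_2)$, which propagates $\{\pm 1\}$-values across the sublattice generated by such classes—this being all of $\Gamma_\phi$ for an amenable surface once one allows saddle connections of all rotated differentials. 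Multiplying through by $\xi(\gamma)$ therefore preserves both inequalities, yielding the desired estimate for $\mathcal{X}_{r,\gamma}$.

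There is no genuine obstacle at this stage: all of the analytic heavy lifting was already absorbed into Proposition~\ref{prop:asymptoticinfinityY} and, at root, into Lemma~\ref{lem:extendframing}, which established that the framing lines for the one-parameter family of local systems extend holomorphically across $\eta=0$. The present proposition is merely its formal corollary; the only things to verify are that rotation preserves the hypothesis ``only poles of order two'' (immediate) and that the rotation-and-rescale prescription defining $\mathcal{X}_{r,\gamma}$ from $\mathcal{Y}_{\phi_\theta,\gamma}$ preserves polynomial-type growth (also immediate, given $|\xi(\gamma)|=1$).
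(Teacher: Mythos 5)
Your proposal is essentially identical to the paper's own proof: both reduce directly to Proposition~\ref{prop:asymptoticinfinityY} via the substitution $\tilde t=e^{-i\theta}t$ and use $|\tilde t|=|t|$ to carry the bound back. You are somewhat more careful than the paper in spelling out why $|\xi(\gamma)|=1$ (the paper silently absorbs the factor $\xi(\gamma)$ into the inequality), but this is a detail of presentation rather than a different argument.
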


\begin{proof}
If $r=\mathbb{R}_{>0}\cdot e^{i\theta}$, then by Proposition~\ref{prop:asymptoticinfinityY}, there exists $k>0$ such that 
\[
|t|^{-k}=|e^{-i\theta}\cdot t|^{-k}<|\xi(\gamma)\cdot\mathcal{Y}_{\phi_\theta,\gamma}(e^{-i\theta}\cdot t)|<|e^{-i\theta}\cdot t|^k=|t|^k
\]
for $|t|=|e^{-i\theta}\cdot t|\gg0$.
\end{proof}

\section{Stability conditions and the cluster variety}
\label{sec:StabilityConditionsAndTheClusterVariety}

We will now define the space of stability conditions and the cluster variety in the abstract setting of triangulated categories. In this section, $\mathcal{D}$ will always denote a $\Bbbk$-linear triangulated category with shift functor~$[1]$.

\subsection{Hearts and tilting}

The notion of a t-structure on a triangulated category $\mathcal{D}$ is a tool that allows one to see the different abelian subcategories of~$\mathcal{D}$. Associated to a t-structure is a full subcategory $\mathcal{A}\subset\mathcal{D}$ called the heart of the t-structure. In this paper, we will deal exclusively with t-structures satisfying an additional boundedness condition. Such a t-structure is determined by its heart, which can be characterized as follows.

\begin{definition}[\cite{Bridgeland07}, Lemma~3.2]
Let $\mathcal{D}$ be a triangulated category. Then the \emph{heart} of a bounded t-structure on~$\mathcal{D}$ is a full additive subcategory $\mathcal{A}\subset\mathcal{D}$ such that 
\begin{enumerate}
\item If $j>k$ are integers, then $\Hom_{\mathcal{D}}(A[j],B[k])=0$ for all objects $A$,~$B\in\mathcal{A}$.
\item For every object $E\in\mathcal{D}$, there is a finite sequence of integers 
\[
k_1>k_2>\dots>k_s
\]
and a collection of triangles 
\[
\xymatrix{
0=E_0 \ar[rr] & & E_1 \ar[ld] \ar[rr] & & E_2\ar[ld] \ar[r] & \cdots \ar[r] & E_{s-1} \ar[rr] & & \ar[ld] E_s=E \\
& A_1 \ar@{-->}[lu] & & A_2 \ar@{-->}[lu] & & & & A_s \ar@{-->}[lu]
}
\]
with $A_j\in\mathcal{A}[k_j]$ for all~$j$.
\end{enumerate}
\end{definition}

It is known that any heart in a triangulated category is a full abelian subcategory. A heart is said to be of \emph{finite length} if it is noetherian and artinian as an abelian category. Given full subcategories $\mathcal{A}$,~$\mathcal{B}\subset\mathcal{D}$, the \emph{extension closure} $\mathcal{C}=\langle\mathcal{A},\mathcal{B}\rangle\subset\mathcal{D}$ is defined as the smallest full subcategory of $\mathcal{D}$ containing both~$\mathcal{A}$ and~$\mathcal{B}$ such that if $X\rightarrow Y\rightarrow Z\rightarrow X[1]$ is a triangle in~$\mathcal{D}$ with $X$,~$Z\in\mathcal{C}$, then $Y\in\mathcal{C}$.

A pair $(\mathcal{A}_1,\mathcal{A}_2)$ of hearts in a triangulated category~$\mathcal{D}$ is called a \emph{tilting pair} if either of the equivalent conditions 
\[
\mathcal{A}_2\subset\langle\mathcal{A}_1,\mathcal{A}_1[-1]\rangle, \quad \mathcal{A}_1\subset\langle\mathcal{A}_2[1],\mathcal{A}_2\rangle
\]
is satisfied. In this case, we say that $\mathcal{A}_1$ is a \emph{left tilt} of~$\mathcal{A}_2$ and that $\mathcal{A}_2$ is a \emph{right tilt} of~$\mathcal{A}_1$. If $(\mathcal{A}_1,\mathcal{A}_2)$ is a tilting pair, then the full subcategories $\mathcal{T}=\mathcal{A}_1\cap\mathcal{A}_2[1]$ and $\mathcal{F}=\mathcal{A}_1\cap\mathcal{A}_2$ form a torsion pair. Conversely, if $(\mathcal{T},\mathcal{F})\subset\mathcal{A}$ is a torsion pair, then the category $\mathcal{A}_2=\langle\mathcal{F},\mathcal{T}[-1]\rangle$ is a heart, and the pair $(\mathcal{A}_1,\mathcal{A}_2)$ is a tilting pair. 

We will be interested in a special case of the tilting construction. Suppose that $\mathcal{A}$ is a finite length heart and $S\in\mathcal{A}$ is a simple object. Then we define full subcategories 
\[
S^\perp=\{E\in\mathcal{A}:\Hom_{\mathcal{A}}(S,E)=0\}, \quad {^\perp}S=\{E\in\mathcal{A}:\Hom_{\mathcal{A}}(E,S)=0\}.
\]
If $\langle S\rangle\subset\mathcal{A}$ denotes the full subcategory of $\mathcal{A}$ consisting of objects $E\in\mathcal{A}$ all of whose simple factors are isomorphic to~$S$, then the pairs $(\langle S\rangle,S^\perp)$ and $({^\perp}S,\langle S\rangle)$ are torsion pairs. The corresponding tilts
\[
\mu_S^-(\mathcal{A})=\langle S[1],{^\perp}S\rangle, \quad \mu_S^+(\mathcal{A})=\langle S^\perp,S[-1]\rangle
\]
are called the left tilt and right tilt of~$\mathcal{A}$ at~$S$, respectively.

\subsection{Quivers with potential}

A finite length heart $\mathcal{A}$ in a triangulated category naturally determines a quiver $Q(\mathcal{A})$ whose vertices are in bijection with isomorphism classes of simple objects in~$\mathcal{A}$. If $i$ and $j$ are vertices of~$Q(\mathcal{A})$ corresponding to simple objects $S_i$ and~$S_j$, respectively, then the number of arrows from $i$ to~$j$ in this quiver is given by $\dim_\Bbbk\Ext_{\mathcal{A}}^1(S_i,S_j)$. The quiver $Q(\mathcal{A})$ defined in this way is known as the \emph{Ext quiver} of~$\mathcal{A}$.

A triangulated category $\mathcal{D}$ is said to be of \emph{finite type} if for all objects $E$,~$F\in\mathcal{D}$, one has 
\[
\dim_\Bbbk\bigoplus_{i\in\mathbb{Z}}\Hom_{\mathcal{D}}^i(E,F)<\infty
\]
where $\Hom_\mathcal{D}^i(E,F)=\Hom_{\mathcal{D}}(E,F[i])$. For any triangulated category~$\mathcal{D}$ of finite type, there is a bilinear form $\langle -,-\rangle:K(\mathcal{D})\times K(\mathcal{D})\rightarrow\mathbb{Z}$ known as the \emph{Euler form} and given by 
\[
\langle E,F\rangle=\sum_{i\in\mathbb{Z}}(-1)^i\dim_\Bbbk\Hom_{\mathcal{D}}^i(E,F).
\]
A triangulated category $\mathcal{D}$ of finite type is called \emph{3-Calabi-Yau (CY$_3$)} if there are functorial isomorphisms 
\[
\Hom_{\mathcal{D}}^i(E,F)\cong\Hom_{\mathcal{D}}^{3-i}(F,E)^*.
\]
If $\mathcal{D}$ is a finite type triangulated category with this property, then the Euler form is skew-symmetric.

If $\mathcal{A}\subset\mathcal{D}$ is a finite length heart in a CY$_3$ triangulated category and $S_i$,~$S_j\in\mathcal{A}$ are simple objects, then we have $\Hom_{\mathcal{D}}^{<0}(S_i,S_j)=0$, $\Hom_{\mathcal{D}}^0(S_i,S_j)\cong\Bbbk^{\delta_{ij}}$, and $\Hom_{\mathcal{D}}^1(S_i,S_j)=\Ext_{\mathcal{A}}^1(S_i,S_j)$. From these facts and the CY$_3$ property, we deduce that 
\[
\langle S_i,S_j\rangle=|\{\text{arrows $j\rightarrow i$ in $Q(\mathcal{A})$}\}|-|\{\text{arrows $i\rightarrow j$ in $Q(\mathcal{A})$}\}|
\]
for all simple objects $S_i$,~$S_j\in\mathcal{A}$.

The next result shows that after choosing a potential, we can reverse the process described above and construct a category with a canonical heart having a given Ext quiver.

\begin{theorem}[\cite{BridgelandSmith}, Theorem~7.2]
Let $(Q,W)$ be a 2-acyclic quiver with potential. Then there is an associated CY$_3$ triangulated category $\mathcal{D}(Q,W)$ over~$\Bbbk$. It has a canonical finite length heart 
\[
\mathcal{A}(Q,W)\subset\mathcal{D}(Q,W)
\]
whose Ext quiver is isomorphic to~$Q$.
\end{theorem}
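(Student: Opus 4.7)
The plan is to build $\mathcal{D}(Q,W)$ as a certain finite-dimensional subcategory of the derived category of the complete Ginzburg dg-algebra, following Ginzburg, Keller, and Kontsevich-Soibelman. Concretely, I would first form the \emph{Ginzburg dg-algebra} $\Gamma=\Gamma(Q,W)$: its underlying graded algebra is the completed path algebra of the doubled quiver $\bar Q$, obtained from $Q$ by adjoining a reverse arrow $a^*$ of degree $-1$ for each arrow $a$ of $Q$ and a loop $t_i$ of degree $-2$ at each vertex $i$, and its differential is defined on generators by $d(a)=0$, $d(a^*)=\partial_a W$, and $d(t_i)=e_i\bigl(\sum_a[a,a^*]\bigr)e_i$, extended by the Leibniz rule and continuity. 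One checks that $d^2=0$ because the commutators square to zero and the cyclic derivatives of $W$ are compatible with the commutator relation.

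Next, I would pass to the derived category $\mathcal{D}(\Gamma)$ of right dg-modules and let $\mathcal{D}(Q,W)\subset \mathcal{D}(\Gamma)$ be the full subcategory of modules whose total cohomology is finite-dimensional over $\Bbbk$. This subcategory is triangulated and of finite type. The CY$_3$ property follows from Keller's theorem that the complete Ginzburg algebra of a quiver with potential is homologically smooth and bimodule 3-Calabi-Yau, which restricts to Serre duality $\Hom^i(E,F)\cong \Hom^{3-i}(F,E)^*$ on the finite-dimensional subcategory.

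For the canonical heart, each vertex $i$ of $Q$ provides a simple one-dimensional dg-module $S_i$ supported at the idempotent $e_i$ with zero differential. These objects satisfy $\Hom^{<0}(S_i,S_j)=0$ and $\Hom^0(S_i,S_j)=\Bbbk\cdot\delta_{ij}$, so a direct computation of the $\Hom$-complex using the explicit form of $\bar Q$ shows that $\dim_\Bbbk \Ext^1(S_i,S_j)$ equals the number of arrows $i\to j$ in $Q$. I would then define $\mathcal{A}(Q,W)$ as the extension closure of $\{S_i\}$ inside $\mathcal{D}(Q,W)$, and verify via the standard criterion (for instance \cite{Bridgeland07}, Lemma~3.2 combined with the vanishing $\Ext^{\leq 0}$ between simples) that this is the heart of a bounded t-structure on $\mathcal{D}(Q,W)$, automatically of finite length with simples $\{S_i\}$.

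The main obstacle is the verification that $\mathcal{A}(Q,W)$ is indeed a heart and that every object of $\mathcal{D}(Q,W)$ admits a finite filtration with factors in shifts of $\mathcal{A}(Q,W)$: this requires showing that the minimal semifree resolution of a finite-dimensional dg-module is built from the $S_i$ by iterated extensions in a controlled way, which is where 2-acyclicity of $Q$ and the homological smoothness of $\Gamma$ enter nontrivially. Once this generation statement is in hand, the identification of the Ext quiver with $Q$ is immediate from the $\Ext^1$ computation above, and the theorem follows. I would cite Keller's construction in \cite{BridgelandSmith} rather than rederive these foundational facts.
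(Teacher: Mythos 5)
Your construction is the one the paper itself points to: the paper does not prove this theorem but simply cites Bridgeland--Smith and remarks that ``$\mathcal{D}(Q,W)$ is defined as the subcategory of the derived category of modules over the complete Ginzburg algebra of $(Q,W)$ consisting of modules with finite-dimensional total cohomology,'' which is exactly what you build. The CY$_3$ property via Keller's bimodule Calabi--Yau theorem and the identification of the canonical heart with the extension closure of the vertex simples $S_i$ are likewise the standard route; your $\Ext^1$ computation recovering $Q$ is the expected one. So your proposal takes essentially the same approach as the cited source, and you correctly isolate the only genuinely delicate point (the generation statement needed to show $\mathcal{A}(Q,W)$ is the heart of a bounded t-structure). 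One small remark: that generation step is usually handled not by analyzing semifree resolutions object-by-object, but by observing that the complete Ginzburg dg-algebra is concentrated in nonpositive degrees, so the canonical t-structure on the ambient derived category restricts to $\mathcal{D}(Q,W)$ with heart equal to the finite-dimensional nilpotent modules over $H^0(\Gamma)$, the completed Jacobi algebra; finite length and the composition series with factors $S_i$ then come for free from completeness. Also be careful with the left/right module convention when matching the Ext quiver to $Q$ rather than $Q^{\mathrm{op}}$; this is a pure bookkeeping issue but worth fixing once and for all.
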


Explicitly, the category $\mathcal{D}(Q,W)$ is defined as the subcategory of the derived category of modules over the complete Ginzburg algebra of $(Q,W)$ consisting of modules with finite-dimensional total cohomology. Up to equivalence, this category depends only on the right equivalence class of $(Q,W)$. Moreover, we have the following result of Keller and Yang.

\begin{theorem}[\cite{KellerYang}, Theorem~3.2, Corollary~5.5]
\label{thm:mutationtilt}
Let $(Q,W)$ be a 2-acyclic quiver with potential, and let $k$ be a vertex of~$Q$. Suppose $(Q',W')=\mu_k(Q,W)$ is a quiver with potential obtained by mutation in the direction~$k$. Then there is a canonical pair of $\Bbbk$-linear triangulated equivalences 
\[
\Phi_\pm:\mathcal{D}(Q',W')\rightarrow\mathcal{D}(Q,W)
\]
such that if $S_k\in\mathcal{A}(Q,W)$ denotes the simple object corresponding to the vertex $k$, then the functors $\Phi_\pm$ induce tilts at~$S_k$ in the sense that 
\[
\Phi_\pm(\mathcal{A}(Q',W'))=\mu_{S_k}^\pm(\mathcal{A}(Q,W)).
\]
\end{theorem}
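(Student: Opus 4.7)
Since this is quoted from Keller--Yang, my plan follows their silting-theoretic approach rather than trying to reprove everything from scratch. First, I would recall the explicit construction of $\mathcal{D}(Q,W)$ from the complete Ginzburg dg algebra $\Gamma = \Gamma(Q,W)$: its underlying graded algebra is the completed path algebra of the doubled quiver with an extra loop $\ell_i$ of degree $-2$ at each vertex, and its differential is determined by $W$ via $d(a^*) = \partial_a W$ and $d(\ell_i) = e_i(\sum_a [a,a^*])e_i$. The category $\mathcal{D}(Q,W)$ is the full subcategory of $\mathcal{D}(\Gamma)$ of dg modules with finite-dimensional total cohomology, and $\mathcal{A}(Q,W)$ is the extension closure of the simple modules $S_i$ at the vertices of $Q$.

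The next step is to exhibit two silting objects in the perfect derived category $\mathrm{perf}(\Gamma)$ implementing the mutation. Let $P_i = e_i\Gamma$ be the indecomposable summands of $\Gamma$. For the vertex $k$, I would define
\[
T_k^+ = \Bigl(\bigoplus_{i\neq k} P_i\Bigr) \oplus T_k^{+\prime}, \qquad T_k^- = \Bigl(\bigoplus_{i\neq k} P_i\Bigr) \oplus T_k^{-\prime},
\]
where $T_k^{+\prime}$ is the cone of the natural map $P_k \to \bigoplus_{a\colon k\to i} P_i$ given by left multiplication by the outgoing arrows at $k$, and $T_k^{-\prime}$ is the shifted cone built from the incoming arrows. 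The first verification is that each $T_k^\pm$ is silting (in particular that $\mathrm{Hom}(T_k^\pm, T_k^\pm[n]) = 0$ for $n>0$), which is a direct calculation using the explicit form of the Ginzburg differential.

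The heart of the argument is to compute the derived endomorphism dg algebra $\mathrm{REnd}_\Gamma(T_k^\pm)$ and identify it with $\Gamma(\mu_k(Q,W))$. This is where the construction of Derksen--Weyman--Zelevinsky must match the dg-algebraic story: cycle by cycle, one has to produce arrows, their duals, loops, and the new potential in the endomorphism algebra and check, using the Koszul-style relations coming from $W$, that the induced differential is the one associated with the mutated potential $\mu_k W$. This step is the main obstacle, since it is essentially a bookkeeping problem where the ``reduction'' of the potential that appears in DWZ mutation has to be traced through an explicit quasi-isomorphism involving the two-term complex $T_k^{\pm\prime}$. Granting this identification, Keller's general machinery for silting objects yields canonical triangulated equivalences $\Phi_\pm = - \otimes^{\mathbb{L}}_{\Gamma'} T_k^\pm \colon \mathcal{D}(\Gamma') \to \mathcal{D}(\Gamma)$.

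Finally I would check the two remaining assertions. That $\Phi_\pm$ restricts to an equivalence $\mathcal{D}(Q',W') \to \mathcal{D}(Q,W)$ of the finite-dimensional-cohomology subcategories follows because the $T_k^\pm$ generate $\mathrm{perf}(\Gamma)$ and hence the equivalence preserves the finite-dimensional part. For the tilting statement, it suffices to track where the simple objects $S_i' \in \mathcal{A}(Q',W')$ are sent. A direct computation with the silting object $T_k^\pm$ shows that $\Phi_\pm(S_k') \cong S_k[\mp 1]$ while for $i\neq k$ the image $\Phi_\pm(S_i')$ is the universal extension of $S_i$ by copies of $S_k$ determined by the arrows between $i$ and $k$. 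Comparing with the definition of $\mu_{S_k}^\pm(\mathcal{A}(Q,W))$ as the extension closure of $S_k[\mp 1]$ and $S_k^\perp$ (resp.\ ${}^\perp S_k$) in $\mathcal{A}(Q,W)$, one sees that these images generate exactly the tilted heart, completing the identification $\Phi_\pm(\mathcal{A}(Q',W')) = \mu_{S_k}^\pm(\mathcal{A}(Q,W))$.
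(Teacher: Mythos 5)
The paper does not actually prove Theorem~\ref{thm:mutationtilt}; it is stated as a direct citation of Keller--Yang (Theorem~3.2, Corollary~5.5) with no proof supplied. Your sketch faithfully follows the strategy of the cited paper: the explicit Ginzburg dg algebra, the two objects $T_k^\pm$ built as cone (resp.\ cocone) over the arrows leaving (resp.\ entering) the mutated vertex, the identification of $\operatorname{REnd}_\Gamma(T_k^\pm)$ with the Ginzburg algebra of the mutated quiver with potential, the derived Morita equivalence, and tracking the simple objects to exhibit the tilt. The step you flag as the main obstacle---matching the Derksen--Weyman--Zelevinsky reduction of the potential with the explicit dg endomorphism computation---is indeed the technical heart of Keller--Yang.

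One small loose end: your claim that $\Phi_\pm$ restricts to the finite-dimensional-cohomology subcategories \emph{because} the $T_k^\pm$ generate $\mathrm{perf}(\Gamma)$ is not a complete implication. Preservation of $\mathrm{perf}$ is immediate from the generation statement, but $\mathcal{D}_{fd}$ is a different subcategory. You need the additional observation that $\mathcal{D}_{fd}(\Gamma)$ can be characterized intrinsically in terms of $\mathrm{perf}(\Gamma)$ (for instance, as the full subcategory of objects $X$ with $\bigoplus_n \operatorname{Hom}(P,X[n])$ finite-dimensional for every $P\in\mathrm{perf}(\Gamma)$); since the equivalence carries $\mathrm{perf}(\Gamma')$ into $\mathrm{perf}(\Gamma)$, this condition is preserved, and only then does it follow that $\mathcal{D}_{fd}(\Gamma')$ is sent to $\mathcal{D}_{fd}(\Gamma)$. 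This is exactly what Keller--Yang verify, so your outline is in the right place but skips the justification.
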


\subsection{The tilting graph}

The \emph{tilting graph} of a triangulated category $\mathcal{D}$ is the graph $\Tilt(\mathcal{D})$ whose vertices are the finite length hearts of $\mathcal{D}$, where two vertices are connected by an edge if the corresponding hearts are related by a tilt at a simple object. If $\mathcal{A}\subset\mathcal{D}$ is a finite length heart, then we will write $\Tilt_{\mathcal{A}}(\mathcal{D})$ for the connected component of $\Tilt(\mathcal{D})$ containing the vertex~$\mathcal{A}$. If $\mathcal{B}\subset\mathcal{D}$ is a finite length heart which is a vertex of $\Tilt_{\mathcal{A}}(\mathcal{D})$, then we say that $\mathcal{B}$ is \emph{reachable} from~$\mathcal{A}$.

Let us specialize to the case where the category $\mathcal{D}=\mathcal{D}(Q,W)$ is the CY$_3$ triangulated category associated to a nondegenerate quiver with potential. In this case, there is a canonical finite length heart $\mathcal{A}(Q,W)\subset\mathcal{D}(Q,W)$, and an arbitrary finite length heart in~$\mathcal{D}$ will be called \emph{reachable} if it is reachable from this canonical heart. We will write $\Tilt_\Delta(\mathcal{D})$ for the connected component of $\Tilt(\mathcal{D})$ whose vertices are the reachable hearts.

Let us denote by $\Aut(\mathcal{D})$ the group of all triangulated autoequivalences of~$\mathcal{D}$. Then there is a natural action of $\Aut(\mathcal{D})$ on the tilting graph $\Tilt(\mathcal{D})$. An autoequivalence will be called \emph{reachable} if its action on $\Tilt(\mathcal{D})$ preserves the component $\Tilt_\Delta(\mathcal{D})$. The reachable autoequivalences form a subgroup denoted 
\[
\Aut_\Delta(\mathcal{D})\subset\Aut(\mathcal{D}).
\]
A reachable autoequivalece is called \emph{negligible} if it acts trivially on $\Tilt_\Delta(\mathcal{D})$. The negligible autoequivalences form a subgroup 
\[
\Nil_\Delta(\mathcal{D})\subset\Aut_\Delta(\mathcal{D}),
\]
and we will be interested in the quotient 
\[
\cAut_\Delta(\mathcal{D})=\Aut_\Delta(\mathcal{D})/\Nil_\Delta(\mathcal{D})
\]
which acts effectively on $\Tilt_\Delta(\mathcal{D})$.

By work of Seidel and Thomas~\cite{SeidelThomas}, one can associate to each reachable heart $\mathcal{A}$ and simple object $S\in\mathcal{A}$ an autoequivalence $\Tw_S$ of $\mathcal{D}=\mathcal{D}(Q,W)$ known as a \emph{spherical twist}. These functors generate a subgroup of the group of all triangulated autoequivalences denoted 
\[
\Sph_{\mathcal{A}}(\mathcal{D})=\langle\Tw_S:S\in\mathcal{A}\text{ simple}\rangle\subset\Aut(\mathcal{D}).
\]
The following proposition summarizes the main facts we will need about the spherical twists.

\begin{proposition}[\cite{BridgelandSmith}, Proposition~7.1] Let $\mathcal{A}\subset\mathcal{D}$ be as above. Then 
\begin{enumerate}
\item For every simple object $S\in\mathcal{A}$, one has the identity 
\[
\Tw_S(\mu_S^-(\mathcal{A}))=\mu_S^+(\mathcal{A}).
\]
\item If $\mathcal{B}$ is reachable from $\mathcal{A}$, then $\Sph_{\mathcal{B}}(\mathcal{D})=\Sph_{\mathcal{A}}(\mathcal{D})$.
\end{enumerate}
\end{proposition}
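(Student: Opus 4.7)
The plan is to establish (1) by direct calculation with the defining triangle of the spherical twist, and to deduce (2) by induction on distance in the tilting graph, reducing to the single-tilt case handled in (1).

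For part (1), I start from the functorial triangle
\[
\bigoplus_{i\in\mathbb{Z}}\Hom_{\mathcal{D}}^i(S,E)\otimes_\Bbbk S[-i]\;\longrightarrow\;E\;\longrightarrow\;\Tw_S(E)
\]
defining the spherical twist and carry out three model computations using the CY$_3$ hypothesis and the fact that $S$ is simple in the finite-length heart $\mathcal{A}$. First, $\Hom_{\mathcal{D}}^i(S,S) = \Bbbk$ for $i = 0,3$ and vanishes otherwise, and the standard computation then gives $\Tw_S(S)\simeq S[-2]$, so that $\Tw_S(S[1])\simeq S[-1]$; this is one of the generators of $\mu_S^+(\mathcal{A})=\langle S^\perp,S[-1]\rangle$. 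Second, if $E\in S^\perp\cap {}^\perp S$, the CY$_3$ duality $\Hom_{\mathcal{D}}^i(S,E)\cong\Hom_{\mathcal{D}}^{3-i}(E,S)^*$ forces all graded Hom-spaces to vanish, so $\Tw_S(E)\simeq E$. Third, for a simple $T\in{}^\perp S$ with $T\not\cong S$, the graded Hom-complex between $S$ and $T$ is concentrated in degrees $1$ and $2$ (with matching dimensions), and the defining triangle exhibits $\Tw_S(T)$ as an iterated extension of copies of $S[-1]$ by an object of $S^\perp$. Since $\mu_S^-(\mathcal{A})=\langle S[1],{}^\perp S\rangle$ is an extension closure, these three computations give $\Tw_S(\mu_S^-(\mathcal{A}))\subseteq\mu_S^+(\mathcal{A})$, and the reverse inclusion follows because $\Tw_S$ is a triangulated autoequivalence carrying hearts of bounded $t$-structures to hearts of bounded $t$-structures.

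For part (2), I argue by induction on the length of a path from $\mathcal{A}$ to $\mathcal{B}$ in the tilting graph, which reduces the claim to $\mathcal{B}=\mu_S^\pm(\mathcal{A})$ for some simple $S\in\mathcal{A}$. The defining triangle of the twist is invariant under shifting its distinguished object, so $\Tw_{S[\mp 1]}=\Tw_S$, and the new simple $S[\mp 1]$ of $\mathcal{B}$ contributes the same element of $\Aut(\mathcal{D})$ as $S$ did. For any other simple $T'$ of $\mathcal{B}$, the torsion-pair description of the tilt identifies $T'$ with either a simple $T\in\mathcal{A}$ (when the relevant $\Ext^1$ vanishes) or with a universal extension of $T$ by copies of $S$; in either case, the Seidel-Thomas braid relations together with this extension structure yield an identity of the form $\Tw_{T'} = w\,\Tw_T\,w^{-1}$ for some word $w$ in $\Tw_S^{\pm 1}$. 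Thus every generator of $\Sph_\mathcal{B}(\mathcal{D})$ lies in $\Sph_\mathcal{A}(\mathcal{D})$, and the reverse inclusion follows by the symmetric argument applied to the opposite tilt, which returns $\mathcal{A}$ from $\mathcal{B}$.

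The main obstacle is the third step of part (1): producing an explicit description of $\Tw_S(T)$ for a simple object $T\not\cong S$ with $\Ext_{\mathcal{D}}^1(S,T)\neq 0$. The CY$_3$ duality pins down the shape of the graded Hom-complex, but verifying that the resulting cone actually lands in $\langle S^\perp, S[-1]\rangle$ requires careful bookkeeping with the torsion pair $(\langle S\rangle, S^\perp)$ on $\mathcal{A}$, and is essentially the technical core of the Seidel-Thomas braid-group action. Once this computation is in hand, the analogous identification of $T'$ with an iterated extension needed for the braid-relation step of part (2) follows by the same kind of analysis.
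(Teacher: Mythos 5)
The strategy — computing $\Tw_S$ on the generators of $\mu_S^-(\mathcal{A})$ and deducing part~(2) by induction along the tilting graph — is sound and is roughly the shape of the argument behind \cite{BridgelandSmith}, Proposition~7.1. However, two of your model computations are wrong as stated, and the third step you identify as ``the technical core'' is harder than your sketch suggests.

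The claim in your second step is false. For $E\in S^\perp\cap{}^\perp S$, the CY$_3$ duality only kills $\Hom^0_{\mathcal{D}}(S,E)$ (since $E\in S^\perp$) and $\Hom^3_{\mathcal{D}}(S,E)\cong\Hom^0_{\mathcal{D}}(E,S)^*$ (since $E\in{}^\perp S$). The middle degrees $\Hom^1_{\mathcal{D}}(S,E)\cong\Ext^1_{\mathcal{A}}(S,E)$ and $\Hom^2_{\mathcal{D}}(S,E)\cong\Ext^1_{\mathcal{A}}(E,S)^*$ are completely unconstrained, and in general $\Tw_S(E)\not\simeq E$. Likewise, in your third step the ``matching dimensions'' assertion is incorrect: writing $a=\dim\Hom^1(S,T)$ and $b=\dim\Hom^2(S,T)\cong\dim\Hom^1(T,S)$, these count arrows in opposite directions of the Ext quiver and need not be equal. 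The defining triangle then reads $T\to\Tw_S(T)\to S^{a}\oplus S[-1]^{b}$, which does \emph{not} directly exhibit $\Tw_S(T)$ as an iterated extension of copies of $S[-1]$ by something in $S^\perp$, because of the unshifted summand $S^a$. The actual verification that $\Tw_S(T)\in\langle S^\perp,S[-1]\rangle$ requires the cohomological long exact sequence for the heart $\mathcal{A}$, which gives a short exact sequence $0\to T\to H^0_{\mathcal{A}}(\Tw_S T)\to S^{a}\to 0$ and $H^1_{\mathcal{A}}(\Tw_S T)\cong S^{b}$, followed by the observation that $\Hom(S,\Tw_S T)\cong\Hom(\Tw_S^{-1}S,T)=\Hom(S[2],T)=0$ (using $\Tw_S(S)\simeq S[-2]$ and that $\Tw_S$ is an equivalence) to deduce $H^0_{\mathcal{A}}(\Tw_S T)\in S^\perp$. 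Finally, for part~(2) the appeal to ``the Seidel--Thomas braid relations'' to conjure an identity $\Tw_{T'}=w\Tw_T w^{-1}$ is not a proof: the relevant fact is the functorial identity $\Tw_{\Phi X}=\Phi\circ\Tw_X\circ\Phi^{-1}$ for any triangulated autoequivalence $\Phi$, and to apply it one must first identify the simple objects of $\mu_S^\pm(\mathcal{A})$ as images of simples of $\mathcal{A}$ under words in $\Tw_S^{\pm1}$ — which is precisely the nontrivial content delegated in \cite{BridgelandSmith} to the structural results of Keller--Yang and Koenig--Yang on tilted hearts and simple-minded collections, and not something that follows from the braid relations alone.
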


By property~(2), we can identify the group $\Sph_{\mathcal{A}}(\mathcal{D})$ for any reachable heart $\mathcal{A}$ with the group $\Sph_\Delta(\mathcal{D})\subset\Aut_\Delta(\mathcal{D})$ of spherical twists associated to simple obects of the canonical heart $\mathcal{A}(Q,W)$. We will write 
\[
\cSph_\Delta(\mathcal{D})\subset\cAut_\Delta(\mathcal{D})
\]
for the image of $\Sph_\Delta(\mathcal{D})$ in $\cAut_\Delta(\mathcal{D})=\Aut_\Delta(\mathcal{D})/\Nil_\Delta(\mathcal{D})$. This group acts on $\Tilt_\Delta(\mathcal{D})$, and the quotient 
\[
\Exch_\Delta(\mathcal{D})=\Tilt_\Delta(\mathcal{D})/\cSph(\mathcal{D})
\]
is known as the \emph{heart exchange graph} of~$\mathcal{D}$. The quotient group 
\[
\mathcal{G}_\Delta(\mathcal{D})=\cAut_\Delta(\mathcal{D})/\cSph_\Delta(\mathcal{D})
\]
acts effectively on $\Exch_\Delta(\mathcal{D})$ and is known as the \emph{cluster modular group}.

\subsection{Stability conditions}

We can now recall Bridgeland's notion of a stability condition on a triangulated category, following the treatment in~\cite{Bridgeland07}. We begin by defining a \emph{stability function} on an abelian category $\mathcal{A}$ as a group homomorphism $Z:K(\mathcal{A})\rightarrow\mathbb{C}$ such that for all nonzero objects $E\in\mathcal{A}$, the complex number $Z(E)$ lies in the semi-closed upper half plane 
\begin{equation}
\label{eqn:upperhalfplane}
\mathcal{H}=\{r\exp(i\pi\phi):r>0\text{ and }0<\phi\leq1\}\subset\mathbb{C}.
\end{equation}
Given a stability function $Z:K(\mathcal{A})\rightarrow\mathbb{C}$, the \emph{phase} of a nonzero object $E\in\mathcal{A}$ is defined as 
\[
\phi(E)=(1/\pi)\arg Z(E)\in(0,1].
\]
A nonzero object $E\in\mathcal{A}$ is \emph{semistable} with respect to~$Z$ if every nonzero proper subobject $A\subset E$ satisfies $\phi(A)\leq\phi(E)$.

For a given stability function on an abelian category~$\mathcal{A}$, the semistable objects provide a way to filter arbitrary objects of~$\mathcal{A}$. A \emph{Harder-Narasimhan filtration} of a nonzero $E\in\mathcal{A}$ is a finite sequence of subobjects 
\[
0=E_0\subset E_1\subset\dots\subset E_{n-1}\subset E_n=E
\]
such that each factor $F_j=E_j/E_{j-1}$ is semistable, and 
\[
\phi(F_1)>\phi(F_2)>\dots>\phi(F_n).
\]
The stability function $Z$ is said to have the \emph{Harder-Narasimhan property} if every nonzero object of~$\mathcal{A}$ has a Harder-Narasimhan filtration. If $\mathcal{A}$ is a finite-length heart, then the Harder-Narasimhan property is automatically satisfied for any stability function on~$\mathcal{A}$.

The definition of a stability condition can now be given as follows.

\begin{definition}[\cite{Bridgeland07}, Proposition~5.3]
A \emph{stability condition} $(\mathcal{A},Z)$ on a triangulated category~$\mathcal{D}$ consists of the heart $\mathcal{A}$ of a bounded t-structure on~$\mathcal{D}$ together with a stability function $Z$ on~$\mathcal{A}$ having the Harder-Narasimhan property.
\end{definition}

If $\mathcal{A}\subset\mathcal{D}$ is the heart of a bounded t-structure on~$\mathcal{D}$, then there is an isomorphism $K(\mathcal{A})\cong K(\mathcal{D})$, and thus we can view a stability function $Z$ on~$\mathcal{A}$ as a homomorphism $Z:K(\mathcal{D})\rightarrow\mathbb{C}$ called the \emph{central charge}. We will always assume that the Grothendieck group of our category is a lattice $K(\mathcal{D})\cong\mathbb{Z}^{\oplus n}$ of finite rank, and we will restrict attention to stability conditions satisfying the \emph{support property} of~\cite{KontsevichSoibelman}: For some norm $\|\cdot\|$ on $K(\mathcal{D})\otimes_{\mathbb{Z}}\mathbb{R}$, there is a constant $C>0$ such that 
\[
\|\gamma\|<C\cdot|Z(\gamma)|
\]
for each class $\gamma\in K(\mathcal{D})$ represented by a semistable object. We write $\Stab(\mathcal{D})$ for the set of all stability conditions on~$\mathcal{D}$ satisfying this support property. The most important property of $\Stab(\mathcal{D})$ is that this set has a natural complex manifold structure.

\begin{theorem}[\cite{Bridgeland07}, Theorem~1.2]
\label{thm:forgetfulmap}
The set $\Stab(\mathcal{D})$ has the structure of a complex manifold such that the map 
\[
\Stab(\mathcal{D})\rightarrow\Hom_\mathbb{Z}(K(\mathcal{D}),\mathbb{C})
\]
taking a stability condition to its central charge is a local isomorphism.
\end{theorem}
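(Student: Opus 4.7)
The plan is to follow Bridgeland's original strategy, constructing the complex structure by showing that the central charge map is a local biholomorphism onto an open subset of $V=\Hom_\mathbb{Z}(K(\mathcal{D}),\mathbb{C})$. First I would reformulate stability conditions in the equivalent language of \emph{slicings}: a stability condition is a pair $(Z,\mathcal{P})$ where $\mathcal{P}=\{\mathcal{P}(\phi)\}_{\phi\in\mathbb{R}}$ is a collection of full additive subcategories of $\mathcal{D}$ satisfying $\mathcal{P}(\phi+1)=\mathcal{P}(\phi)[1]$, the vanishing $\Hom_{\mathcal{D}}(A,B)=0$ for $A\in\mathcal{P}(\phi_1)$, $B\in\mathcal{P}(\phi_2)$ with $\phi_1>\phi_2$, the compatibility $Z(E)=m(E)\exp(i\pi\phi)$ for $0\neq E\in\mathcal{P}(\phi)$, and existence of Harder-Narasimhan (HN) filtrations. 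I would then equip $\Stab(\mathcal{D})$ with the generalized metric
\[
d(\sigma,\tau)=\sup_{0\neq E\in\mathcal{D}}\max\bigl\{|\phi_\sigma^-(E)-\phi_\tau^-(E)|,\;|\phi_\sigma^+(E)-\phi_\tau^+(E)|,\;|\log(m_\sigma(E)/m_\tau(E))|\bigr\},
\]
where $\phi^\pm(E)$ and $m(E)$ are extracted from the HN filtration, and verify that the central charge map is continuous for the induced topology.

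The technical core is a \emph{deformation theorem}: given $\sigma=(Z,\mathcal{P})\in\Stab(\mathcal{D})$ satisfying the support property with constant $C$ relative to a fixed norm $\|\cdot\|$ on $K(\mathcal{D})\otimes\mathbb{R}$, I would show that for every $W\in V$ with $\|W-Z\|$ sufficiently small (depending on $C$), there exists a \emph{unique} slicing $\mathcal{P}'$ with $d(\mathcal{P},\mathcal{P}')<1/2$ such that $(W,\mathcal{P}')$ is a stability condition, and moreover $(W,\mathcal{P}')$ still satisfies the support property with a slightly enlarged constant. Uniqueness is relatively clean: the bound $d<1/2$ forces any two candidate slicings to share their semistable objects in each phase. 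Existence is proved by refining $\mathcal{P}$ phase-by-phase; using the support property to bound the possible rotations of central charges of candidate semistable classes, one iteratively splits each old semistable object into $W$-semistable pieces and shows the process stabilizes in finitely many steps on any fixed HN filtration. A limit argument then assembles these local refinements into a global slicing $\mathcal{P}'$.

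Equipped with the deformation theorem, I would build complex charts as follows. Fix $\sigma_0\in\Stab(\mathcal{D})$. For $W$ in a small open neighborhood $U\subset V$ of $Z(\sigma_0)$, the theorem produces a unique stability condition $\sigma(W)$ with $d(\sigma(W),\sigma_0)<1/2$ and central charge $W$. The assignment $W\mapsto\sigma(W)$ inverts the central charge map on a neighborhood of $\sigma_0$ in $\Stab(\mathcal{D})$, exhibiting the latter as a local homeomorphism. Declaring each such local inverse $U\to\Stab(\mathcal{D})$ to be a holomorphic chart is consistent because on overlaps any two charts differ only by the identity map on central charges; transferring the complex structure from $V$ makes $\Stab(\mathcal{D})$ into a complex manifold on which the central charge map is locally biholomorphic.

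The main obstacle is the existence half of the deformation theorem. One cannot simply invoke an implicit function theorem, because a slicing is combinatorial data that does not vary smoothly with parameters, and naive refinements of HN filtrations could fail to converge if classes of arbitrarily large norm appeared as semistable factors for central charges close to $Z$. The role of the support property is precisely to preclude this: the bound $\|\gamma\|\leq C^{-1}|Z(\gamma)|$ on semistable classes forces any class nearly destabilized by $W$ to have bounded norm, yielding the finiteness needed at each step of the refinement. Propagating this bound to the deformed stability condition $(W,\mathcal{P}')$ while simultaneously constructing $\mathcal{P}'$ is the most delicate aspect of the argument.
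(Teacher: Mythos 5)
Your proposal correctly reconstructs the proof strategy of the cited reference: the paper does not reprove this theorem but simply cites \cite{Bridgeland07}, Theorem~1.2, and the route you sketch (reformulation via slicings, the generalized metric $d(\sigma,\tau)$ built from $\phi^\pm$ and $\log m$, and a deformation theorem guaranteeing a unique nearby slicing with prescribed central charge) is exactly Bridgeland's argument. Two small comments: Bridgeland's original deformation theorem works with $\epsilon<1/8$ rather than $1/2$, and his original hypothesis was ``local finiteness'' rather than the support property; the version stated here (support property, local isomorphism onto an open subset of the \emph{full} dual $\Hom_{\mathbb{Z}}(K(\mathcal{D}),\mathbb{C})$ rather than a subspace $V(\Sigma)$) is the now-standard Kontsevich--Soibelman refinement, and you correctly identify the support property as the ingredient that bounds the norms of classes of near-destabilizing semistable factors and thereby makes the phase-by-phase refinement terminate.
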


It follows immediately from the way we have defined things that every stability condition has an associated heart. For an abelian subcategory $\mathcal{A}\subset\mathcal{D}$, let us write $\Stab(\mathcal{A})\subset\Stab(\mathcal{D})$ for the subset of stability conditions with heart~$\mathcal{A}$. If $\mathcal{A}$ is of finite length with finitely many simple objects $S_1,\dots,S_n\in\mathcal{A}$ up to isomorphism, then by the remarks above, we obtain an isomorphism 
\[
\Stab(\mathcal{A})\cong\mathcal{H}^n,
\]
where $\mathcal{H}$ is the semi-closed upper half plane defined in~\eqref{eqn:upperhalfplane}, by sending a stability condition $(\mathcal{A},Z)$ to the point $(Z(S_1),\dots,Z(S_n))$. It follows that if we let $\Stab_{\mathrm{tame}}(\mathcal{D})\subset\Stab(\mathcal{D})$ be the subset consisting of stability conditions whose heart is of finite length, then as sets we have 
\[
\Stab_{\mathrm{tame}}(\mathcal{D})=\coprod_{\mathcal{A}\in\Tilt(\mathcal{D})}\Stab(\mathcal{A}).
\]
In fact, the following result shows that this provides a cell decomposition of $\Stab_{\mathrm{tame}}(\mathcal{D})$ with dual graph given by the tilting graph $\Tilt(\mathcal{D})$.

\begin{proposition}[\cite{BridgelandSmith}, Lemma~7.9]
\label{prop:wallandchamber}
Let $\mathcal{A}_1$,~$\mathcal{A}_2\subset\mathcal{D}$ be finite length hearts. Then the closures of the sets $\Stab(\mathcal{A}_i)\subset\Stab(\mathcal{D})$ intersect if and only if the $\mathcal{A}_i$ are related by a tilt at a simple object. In this case, the intersection has real codimension one in $\Stab(\mathcal{D})$.
\end{proposition}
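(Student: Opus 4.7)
The plan is to exploit the parametrization $\Stab(\mathcal{A}) \cong \mathcal{H}^n$ (by tuples of central charges of the simples of~$\mathcal{A}$) together with the local isomorphism of Theorem~\ref{thm:forgetfulmap}, which sends a stability condition to its central charge, in order to analyze when two such cells share a limit point in~$\Stab(\mathcal{D})$.

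For the sufficiency direction, assume that $\mathcal{A}_2 = \mu_S^+(\mathcal{A}_1)$ for some simple object $S \in \mathcal{A}_1$; the case of a left tilt is symmetric by interchanging the roles of~$\mathcal{A}_1$ and~$\mathcal{A}_2$. Label the simples of $\mathcal{A}_1$ by $S, T_1, \ldots, T_{n-1}$, so that the simples of $\mathcal{A}_2$ consist of $S[-1]$ together with modifications $T_j'$ of the $T_j$ by extensions with~$S$. I would consider the real codimension one locus
\[
W = \{\sigma \in \Stab(\mathcal{A}_1) : Z_\sigma(S) \in \mathbb{R}_{<0} \text{ and } Z_\sigma(T_j) \in \mathbb{H} \text{ for all } j\}.
\]
Given $\sigma_0 \in W$, deform the central charge $Z_0 = Z_{\sigma_0}$ to a new map $Z_\epsilon$ by rotating $Z_0(S)$ clockwise through a small angle $\pi\epsilon$, leaving the values on the other simples fixed. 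For $\epsilon > 0$ small, $Z_\epsilon(S[-1]) = -Z_\epsilon(S)$ lies in the open upper half plane, so the values of $Z_\epsilon$ on all simples of~$\mathcal{A}_2$ lie in~$\mathcal{H}$. The pair $(\mathcal{A}_2, Z_\epsilon)$ is then an honest stability condition $\sigma_\epsilon$, and by Theorem~\ref{thm:forgetfulmap} it is the unique one near~$\sigma_0$ with the prescribed central charge. Letting $\epsilon \to 0^+$ gives $\sigma_0 \in \overline{\Stab(\mathcal{A}_2)}$, so $W \subset \overline{\Stab(\mathcal{A}_1)} \cap \overline{\Stab(\mathcal{A}_2)}$, which also establishes the codimension one statement.

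For the necessity direction, suppose $\sigma \in \overline{\Stab(\mathcal{A}_1)} \cap \overline{\Stab(\mathcal{A}_2)}$ with $\mathcal{A}_1 \ne \mathcal{A}_2$, and pick a small neighborhood $U$ of~$\sigma$ on which the forgetful map of Theorem~\ref{thm:forgetfulmap} is a biholomorphism onto an open set in $\Hom_{\mathbb{Z}}(K(\mathcal{D}), \mathbb{C})$. Inside~$U$, each $\Stab(\mathcal{A}_i) \cap U$ is cut out by the open conditions requiring every simple of~$\mathcal{A}_i$ to have central charge in~$\mathcal{H}$. Since $\mathcal{A}_1 \ne \mathcal{A}_2$, the limit $\sigma$ must lie on the common boundary of these open subsets: some simple $S$ of $\mathcal{A}_1$ has $Z_\sigma(S) \in \mathbb{R}_{<0}$, which is the only boundary direction of $\mathcal{H}$ accessible from the open upper half plane, and likewise some simple~$S'$ of~$\mathcal{A}_2$ has $Z_\sigma(S') \in \mathbb{R}_{<0}$. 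Matching the two wall descriptions as real codimension one subsets of $\Hom_{\mathbb{Z}}(K(\mathcal{D}), \mathbb{C})$ forces $S$ and $S'$ to span the same line in $K(\mathcal{D}) \otimes_{\mathbb{Z}} \mathbb{R}$; compatibility with the requirement that all simples lie in $\mathcal{H}$ on their respective sides then gives $S' = S[\pm 1]$, and comparing the other simples identifies $\mathcal{A}_2$ with $\mu_S^{\pm}(\mathcal{A}_1)$.

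The hardest step will be the final identification in the necessity direction, namely deducing that the remaining simples of $\mathcal{A}_2$ are precisely the $T_j'$ dictated by the torsion-pair tilt $\mu_S^{\pm}$. This requires showing that Harder-Narasimhan filtrations in~$\mathcal{A}_1$-stability conditions degenerate into filtrations in~$\mathcal{A}_2$-stability conditions in the specific manner prescribed by tilting, which rests on the uniqueness supplied by Theorem~\ref{thm:forgetfulmap} combined with the explicit description $\mu_S^+(\mathcal{A}_1) = \langle S^\perp, S[-1] \rangle$ together with its left-tilt counterpart.
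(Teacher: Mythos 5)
Your overall strategy --- work in the central-charge chart of Theorem~\ref{thm:forgetfulmap} using the identification $\Stab(\mathcal{A})\cong\mathcal{H}^n$ --- is the natural one; the paper itself only cites~\cite{BridgelandSmith} and gives no proof. The sufficiency direction is essentially correct, with two small omissions. You verify only that $Z_\epsilon(S[-1])\in\mathcal{H}$ and then claim that \emph{all} simples of $\mathcal{A}_2$ land in $\mathcal{H}$; you should also observe that $[T_j']=[T_j]+m_j[S]$ with $m_j\geq 0$, so that $\Im Z_\epsilon(T_j')=\Im Z_0(T_j)+m_j\,\Im Z_\epsilon(S)>0$ for small $\epsilon>0$, using that $\sigma_0\in W$ forces $\Im Z_0(T_j)>0$. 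Also, invoking Theorem~\ref{thm:forgetfulmap} to conclude $\sigma_\epsilon\to\sigma_0$ is slightly circular, since uniqueness in a chart near $\sigma_0$ only applies once you already know $\sigma_\epsilon$ lies in that chart; one should instead verify the convergence of masses and phases directly.

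The necessity direction has genuine gaps. First, a point $\sigma\in\overline{\Stab(\mathcal{A}_1)}$ need not have a simple $S$ of $\mathcal{A}_1$ with $Z_\sigma(S)\in\mathbb{R}_{<0}$: the closure of $\mathcal{H}$ in $\mathbb{C}$ is the full closed upper half-plane, so a boundary point may instead have $Z_\sigma(S)\in\mathbb{R}_{>0}$. This occurs when a sequence in $\Stab(\mathcal{A}_1)$ converges while the phase of $S$ tends to $0$, so that $S[1]$ rather than $S$ lies in the heart of $\sigma$, and $\sigma\notin\Stab(\mathcal{A}_1)$. Your assertion that $\mathbb{R}_{<0}$ is ``the only boundary direction of $\mathcal{H}$ accessible from the open upper half plane'' is not correct and conceals this case. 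Second, the step in which you match the two wall descriptions ``as real codimension one subsets'' presupposes exactly what is to be shown, namely that the intersection \emph{is} codimension one; nothing in the hypothesis excludes the possibility that the closures meet only at a corner stratum where several simples of $\mathcal{A}_1$ (and of $\mathcal{A}_2$) simultaneously have real central charge, and that possibility must be handled. Third, as you concede, the final identification of $\mathcal{A}_2$ with $\mu_S^{\pm}(\mathcal{A}_1)$ --- the essential content of the lemma --- is not carried out. It cannot be extracted from central-charge data together with Theorem~\ref{thm:forgetfulmap} alone, since that theorem is blind to which heart a stability condition carries; one needs a categorical argument comparing the torsion pair on $\mathcal{A}_1$ cut out by phase~$1$ at~$\sigma$ with the corresponding structure on~$\mathcal{A}_2$, and verifying that it is precisely the pair $(\langle S\rangle, S^{\perp})$ (or its mirror) generated by the single simple~$S$.
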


It follows from Proposition~\ref{prop:wallandchamber} that if $\mathcal{D}=\mathcal{D}(Q,W)$ for nondegenerate $(Q,W)$, then there is a distinguished connected component $\Stab_\Delta(\mathcal{D})\subset\Stab(\mathcal{D})$ containing stability conditions whose associated hearts are vertices in the distinguished component $\Tilt_\Delta(\mathcal{D})\subset\Tilt(\mathcal{D})$.

\subsection{Group actions}

Typically, one is interested quotients of the space of stability conditions by various group actions. The group $\Aut(\mathcal{D})$ of autoequivalences of a triangulated category~$\mathcal{D}$ acts naturally on the manifold $\Stab(\mathcal{D})$. Indeed, if we are given a point $(\mathcal{A},Z)\in\Stab(\mathcal{D})$ and an autoequivalence $\Phi\in\Aut(\mathcal{D})$, we obtain a stability condition $\Phi\cdot(\mathcal{A},Z)=(\mathcal{A}',Z')$ where 
\[
\mathcal{A}'=\Phi(\mathcal{A}), \quad Z'(E)=Z(\Phi^{-1}(E)).
\]
In the case where $\mathcal{D}=\mathcal{D}(Q,W)$ is the CY$_3$ triangulated category associated to a quiver with potential, there is an induced action of $\cSph_\Delta(\mathcal{D})$ on the component $\Stab_\Delta(\mathcal{D})$. We will be interested in the quotient 
\[
\Sigma(Q,W)=\Stab_\Delta(\mathcal{D})/\cSph_\Delta(\mathcal{D})
\]
which has a natural action of $\mathcal{G}_\Delta(\mathcal{D})$.

As explained in~\cite{BridgelandSmith}, Section~7.5, there is also a natural action of the group of complex numbers on the space~$\Stab(\mathcal{D})$. If we let $z\in\mathbb{C}$ act on the space of central charges by mapping $Z\in\Hom_{\mathbb{Z}}(K(\mathcal{D}),\mathbb{C})$ to $e^{-i\pi z}\cdot Z$, then the forgetful map in Theorem~\ref{thm:forgetfulmap} is $\mathbb{C}$-equivariant. The $\mathbb{C}$-action on $\Stab(\mathcal{D})$ commutes with the action of $\Aut(\mathcal{D})$.

\subsection{The cluster Poisson variety}
\label{sec:TheClusterPoissonVariety}

We now recall the definition of the cluster variety. By a \emph{seed}, we mean a tuple $\mathbf{i}=(\Gamma,\{e_i\}_{i\in I}, \langle -,-\rangle)$ consisting of a lattice $\Gamma$ of finite rank, a basis $\{e_i\}_{i\in I}$ for~$\Gamma$ indexed by a set~$I$, and an integer-valued skew-symmetric bilinear form $\langle -,-\rangle$ on~$\Gamma$. For example, let $\mathcal{A}\in\Tilt_\Delta(\mathcal{D})$ be a finite length heart. The Grothendieck group $K(\mathcal{D})\cong\mathbb{Z}^{\oplus n}$ is a lattice of rank~$n$ with a basis consisting of isomorphism classes of simple objects in~$\mathcal{A}$. It has a skew form $\langle -,-\rangle$ given by the Euler form. Thus we have a seed associated to~$\mathcal{A}$.

For any seed $\mathbf{i}=(\Gamma,\{e_i\}_{i\in I}, \langle -,-\rangle)$, we will consider the algebraic torus 
\[
\mathbb{T}_{\mathbf{i}}=\Hom_{\mathbb{Z}}(\Gamma,\mathbb{C}^*)\cong(\mathbb{C}^*)^n,
\]
and we will write $X_\gamma:\mathbb{T}_{\mathbf{i}}\rightarrow\mathbb{C}^*$ for the character corresponding to $\gamma\in\Gamma$. The bracket given on characters by 
\[
\{X_\alpha,X_\beta\}=\langle\alpha,\beta\rangle\cdot X_\alpha\cdot X_\beta
\]
defines a natural Poisson structure on $\mathbb{T}_{\mathbf{i}}$.

Suppose $\mathcal{A}$,~$\mathcal{A}'\in\Tilt_\Delta(\mathcal{D})$ are connected by an edge, and let $\mathbf{i}$, $\mathbf{i}'$ be the corresponding seeds. Then there is a bijection between classes of simple objects of~$\mathcal{A}$ and classes of simple objects of~$\mathcal{A}'$ since the associated Ext quivers are related by mutation. Let $S_i$~$(i\in I)$ be the simple objects in~$\mathcal{A}$ and $S_i'$~$(i\in I)$ the corresponding simple objects in~$\mathcal{A}'$ up to isomorphism. Denote by $\gamma_i=[S_i]$ and $\gamma_i'=[S_i']$ the isomorphism classes of these simple objects in $K(\mathcal{D})$. If $\mathcal{A}$ and~$\mathcal{A}'$ are related by a right tilt at the simple object~$S_k$, then from the proof of Theorem~7.1 in~\cite{BridgelandSmith}, one sees that the bases defined in this way are related by 
\[
\gamma_j'=
\begin{cases}
-\gamma_k & \text{if $j=k$} \\
\gamma_j+\max(\langle\gamma_k,\gamma_j\rangle,0)\cdot\gamma_k & \text{if $j\neq k$}.
\end{cases}
\]
There is a birational map $\mu_k:\mathbb{T}_{\mathbf{i}}\dashrightarrow\mathbb{T}_{\mathbf{i}'}$ given on functions by 
\[
\mu_k^*(X_\gamma)=X_\gamma\cdot(1+X_{\gamma_k})^{\langle\gamma,\gamma_k\rangle}.
\]
It preserves the Poisson structures of the two tori. If we write $X_j\coloneqq X_{\gamma_j}$, $X_j'\coloneqq X_{\gamma_j'}$, and $\varepsilon_{ij}=\langle\gamma_j,\gamma_i\rangle$, then it is well known that $\mu_k^*(X_j')$ is given by the formula in Proposition~\ref{prop:flipcoordinate} (see for example Lemma~2.11 in~\cite{FockGoncharov2}).

Let us denote by $\mathcal{T}_n$ the universal cover of $\Exch_\Delta(\mathcal{D})$, which is an $n$-regular tree. By what we have said, there is a well defined Poisson algebraic torus~$\mathbb{T}_t$ associated to each vertex $t\in\mathcal{T}_n$. If two vertices $t$ and $t'$ are connected by an edge in~$\mathcal{T}_n$, then there is an associated birational map $\mathbb{T}_t\dashrightarrow\mathbb{T}_{t'}$.

\begin{lemma}[\cite{GHK}, Proposition~2.4]
Let $\{Z_i\}$ be a collection of integral separated schemes of finite type over~$\mathbb{C}$ and suppose we have birational maps $f_{ij}:Z_i\dashrightarrow Z_j$ for all $i$, $j$ such that $f_{ii}$ is the identity and $f_{jk}\circ f_{ij}=f_{ik}$ as rational maps. Let $U_{ij}$ be the largest open subset of $Z_i$ such that $f_{ij}:U_{ij}\rightarrow f_{ij}(U_{ij})$ is an isomorphism. Then there is a scheme obtained by gluing the $Z_i$ along the open sets $U_{ij}$ using the maps $f_{ij}$.
\end{lemma}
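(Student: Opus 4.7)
The plan is to verify the hypotheses of the standard scheme gluing construction: given schemes $Z_i$, open subsets $U_{ij} \subset Z_i$, and isomorphisms $f_{ij}:U_{ij}\rightarrow U_{ji}$ with $U_{ii} = Z_i$, $f_{ii} = \operatorname{id}$, the triple-overlap equality $f_{ij}(U_{ij} \cap U_{ik}) = U_{ji} \cap U_{jk}$, and the morphism identity $f_{jk} \circ f_{ij} = f_{ik}$ on $U_{ij} \cap U_{ik}$, one can glue the $Z_i$ along the $U_{ij}$ to produce a scheme $Z$.

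First I would check that each $U_{ij}$ is a well-defined open subset and that $f_{ij}$ restricts to an isomorphism from $U_{ij}$ onto $U_{ji}$. Since being an isomorphism onto an open subscheme is a local property, $U_{ij}$ can be described as the union of all opens $U \subset Z_i$ such that $f_{ij}|_U$ is an isomorphism onto an open subset of $Z_j$; this union is nonempty because $f_{ij}$ is birational. Clearly $U_{ii} = Z_i$ and $f_{ii} = \operatorname{id}$. Setting $k=i$ in the hypothesis $f_{jk} \circ f_{ij} = f_{ik}$ yields $f_{ji} \circ f_{ij} = \operatorname{id}$ as rational maps, from which it follows that $f_{ij}(U_{ij}) = U_{ji}$ and that $f_{ji}|_{U_{ji}}$ is a two-sided inverse for $f_{ij}|_{U_{ij}}$.

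The main step is the triple-overlap cocycle. Composing the hypothesis $f_{jk} \circ f_{ij} = f_{ik}$ on the right with $f_{ji}$ yields the rational identity $f_{jk} = f_{ik} \circ f_{ji}$. Put $V = U_{ij} \cap U_{ik}$. Then $f_{ij}(V)$ is an open subset of $U_{ji}$, and on this open set the composition $f_{ik} \circ f_{ji}$ is a morphism which is an isomorphism onto its image. Since two equal rational maps between integral schemes have the same domain of definition, $f_{jk}$ is also defined as a morphism on $f_{ij}(V)$; and since $Z_k$ is separated, this morphism must coincide with $f_{ik} \circ f_{ji}$ on all of $f_{ij}(V)$. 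Thus $f_{jk}|_{f_{ij}(V)}$ is an isomorphism onto its image, and by maximality of $U_{jk}$ this forces $f_{ij}(V) \subset U_{jk}$. Combined with the symmetric inclusion, this gives $f_{ij}(U_{ij} \cap U_{ik}) = U_{ji} \cap U_{jk}$, and the identity of morphisms $f_{jk} \circ f_{ij} = f_{ik}$ on $V$ follows from the same separatedness argument applied on $V$. With the cocycle verified, the standard gluing recipe produces the desired scheme $Z$ in which the $Z_i$ appear as an open cover glued via the $f_{ij}$.

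The principal obstacle is precisely this cocycle verification: one must promote rational identities to morphism-level equalities on precisely the opens $U_{ij} \cap U_{ik}$, and this relies essentially on both the integrality of the $Z_i$ (so that rational maps have intrinsic maximal domains of definition) and the separatedness of the $Z_k$ (so that agreement of morphisms on a dense open subset of a reduced scheme forces agreement everywhere).
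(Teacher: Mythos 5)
Your proposal is correct and follows the standard cocycle-verification route for scheme gluing; since the paper merely cites \cite{GHK}, Proposition~2.4, without reproducing an argument, this is the expected approach, and you correctly isolate the two places where the hypotheses enter (integrality for the intrinsic maximal domain of a rational map, separatedness of the target for upgrading dense-open agreement to agreement of morphisms).

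One point is stated wrongly and should be reordered. You justify the well-definedness of $U_{ij}$ as the union of all good opens by saying ``being an isomorphism onto an open subscheme is a local property,'' but this is false: a morphism can be a local isomorphism at every point of an open set and still fail to be injective on it, so a union of opens on which $f_{ij}$ is an isomorphism onto its image need not itself have this property. What actually makes the union work here is precisely the relation $f_{ji}\circ f_{ij}=\mathrm{id}$ that you derive a sentence later. Concretely, if $U$ and $U'$ are two good opens and $x\in U$, $y\in U'$ satisfy $f_{ij}(x)=f_{ij}(y)$, then $f_{ji}$ is defined as a morphism on both $f_{ij}(U)$ and $f_{ij}(U')$ (by the same integrality argument you use for the cocycle) and agrees there with the respective local inverses (by the same separatedness argument), so $x=f_{ji}(f_{ij}(x))=f_{ji}(f_{ij}(y))=y$. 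So injectivity on the union --- and hence the existence of a genuine largest $U_{ij}$ --- is a consequence of the inverse relation plus integrality and separatedness, not of any locality principle; you should establish the inverse relation first and then deduce the well-definedness of $U_{ij}$, rather than asserting it at the outset. With that reorganization the argument is complete.
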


To define the cluster variety, we apply this result to the tori $\mathbb{T}_t$ and the birational maps between them.

\begin{definition}
The \emph{cluster Poisson variety} $\mathscr{X}^{\mathrm{cl}}(Q)$ is the scheme obtained by gluing the tori $\mathbb{T}_t$ for all $t\in\mathcal{T}_{n}$ using the birational maps $\mu_k$ defined above.
\end{definition}

Our definition of the cluster modular group $\mathcal{G}_\Delta(\mathcal{D})$ is equivalent to its usual definition in cluster theory as a group generated by cluster transformations. It follows that there is a natural action of $\mathcal{G}_\Delta(\mathcal{D})$ on the cluster Poisson variety~$\mathscr{X}^{\mathrm{cl}}(Q)$. In this paper, we will employ a more concrete description of this action which is valid for cluster varieties arising from triangulated surfaces. For further discussion of this action from the categorical point of view, see Sections~5 and~7 of~\cite{Goncharov}.

\section{Categories from surfaces}

In this final section, we will interpret our earlier results in terms of stability conditions and the cluster variety, proving the main results from the introduction.

\subsection{Preliminaries}

Let us fix an amenable marked bordered surface $(\mathbb{S},\mathbb{M})$, and a signed triangulation $(T_0,\epsilon_0)$ of $(\mathbb{S},\mathbb{M})$. Then there is an associated quiver $Q=Q(T_0)$ with a canonical potential $W=W(T_0,\epsilon_0)$. For the remainder of this paper, we will write $\mathcal{D}=\mathcal{D}(Q,W)$ for the CY$_3$ triangulated category associated to this quiver with potential.

Note that there are two graphs associated to the data $(\mathbb{S},\mathbb{M})$: the graph $\Tri_{\bowtie}(\mathbb{S},\mathbb{M})$ of tagged triangulations and the heart exchange graph $\Exch_\Delta(\mathcal{D})$. There is an action of the signed mapping class group $\MCG^\pm(\mathbb{S},\mathbb{M})$ on $\Tri_{\bowtie}(\mathbb{S},\mathbb{M})$ and an action of the cluster modular group $\mathcal{G}_\Delta(\mathcal{D})$ on $\Exch_\Delta(\mathcal{D})$. The following result gives the basic link between these objects.

\begin{theorem}[\cite{BridgelandSmith}, Theorems~9.8 and~9.9]
\label{thm:graphsandgroups}
Take notation as in the previous two paragraphs. Then 
\begin{enumerate}
\item There is an isomorphism of graphs 
\[
\Tri_{\bowtie}(\mathbb{S},\mathbb{M})\cong\Exch_\Delta(\mathcal{D}).
\]
\item There is an isomorphism of groups 
\[
\MCG^\pm(\mathbb{S},\mathbb{M})\cong\mathcal{G}_\Delta(\mathcal{D}).
\]
\end{enumerate}
Moreover, under these isomorphisms, the action of the cluster modular group on the heart exchange graph coincides with the action of the signed mapping class group on the graph of tagged triangulations.
\end{theorem}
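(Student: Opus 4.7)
The strategy is to transport the canonical heart along flip sequences. Fix the base signed triangulation $(T_0,\epsilon_0)$ with associated quiver with potential $(Q,W)$ and canonical heart $\mathcal{A}_0 = \mathcal{A}(Q,W) \in \Tilt_\Delta(\mathcal{D})$. Every tagged triangulation $\tau$ is connected to the class of $(T_0,\epsilon_0)$ by a finite sequence of flips in $\Tri_{\bowtie}(\mathbb{S},\mathbb{M})$. Given such a sequence $\tau_0 \to \tau_1 \to \cdots \to \tau_r = \tau$, each flip at an edge $k_i$ corresponds by Theorem~\ref{thm:flipmutation} to a mutation $\mu_{k_i}$ of the underlying quiver with potential, which by Theorem~\ref{thm:mutationtilt} corresponds to a tilt of the current heart at the simple object indexed by~$k_i$, realized by a Keller-Yang equivalence $\Phi_\pm$. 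Composing along the sequence yields a heart in $\Tilt_\Delta(\mathcal{D})$, and every vertex of $\Tilt_\Delta(\mathcal{D})$ arises in this way because, by definition, the distinguished component is generated from $\mathcal{A}_0$ by iterated tilts at simples.

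The main obstacle is well-definedness modulo spherical twists: if two flip sequences from $\tau_0$ terminate at the same tagged triangulation $\tau$, the resulting hearts must differ by the action of some element of $\cSph_\Delta(\mathcal{D})$. One verifies this by checking the fundamental local relations that generate the $2$-cells of the tagged flip graph. The basic case is the double flip of a single tagged arc $k$: the composition of the two associated Keller-Yang equivalences returns the quiver with potential to itself, and by the defining property $\Tw_{S_k}(\mu_{S_k}^-\mathcal{A}) = \mu_{S_k}^+\mathcal{A}$ recalled in the previous section it is precisely the spherical twist $\Tw_{S_k}$. Commuting flips at disjoint edges produce commuting tilts on the categorical side. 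The remaining finitely many local relations (pentagon relations and those specific to the tagged setting at punctures of valency one or two) are checked by reducing to a neighborhood isomorphic to a small disc or punctured disc with a few marked points, where the categorical computation is essentially finite. Granting well-definedness, descending to the quotient $\Exch_\Delta(\mathcal{D}) = \Tilt_\Delta(\mathcal{D})/\cSph_\Delta(\mathcal{D})$ yields a graph map $\Tri_{\bowtie}(\mathbb{S},\mathbb{M}) \to \Exch_\Delta(\mathcal{D})$ which preserves edges by construction, is surjective by the remark above, and is injective because the Ext quiver of the associated heart recovers $Q(T)$, which together with the canonical potential and the amenability hypothesis determines the tagged triangulation.

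For part~(2), define $\Psi \colon \MCG^\pm(\mathbb{S},\mathbb{M}) \to \mathcal{G}_\Delta(\mathcal{D})$ by sending $\phi$ to the class, modulo $\cSph_\Delta(\mathcal{D})$ and negligible autoequivalences, of the autoequivalence of $\mathcal{D}$ obtained by composing the Keller-Yang equivalences along any flip sequence from $\tau_0$ to $\phi(\tau_0)$. The well-definedness analysis of the previous paragraph shows $\Psi(\phi)$ is independent of the chosen sequence, and it is a homomorphism because concatenation of flip paths matches composition of mapping classes. Equivariance of the graph isomorphism of part~(1) with respect to the two group actions is automatic from the construction. Both groups act faithfully on the graph $\Exch_\Delta(\mathcal{D}) \cong \Tri_{\bowtie}(\mathbb{S},\mathbb{M})$: for $\mathcal{G}_\Delta(\mathcal{D})$ this is built into the definition as a quotient by negligible autoequivalences, and for $\MCG^\pm(\mathbb{S},\mathbb{M})$ it is a classical fact about the action of the signed mapping class group on the tagged flip graph of an amenable surface. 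Faithfulness gives injectivity of $\Psi$, while surjectivity follows by transporting the action of a cluster modular group element through the graph isomorphism back to $\Tri_{\bowtie}(\mathbb{S},\mathbb{M})$ and invoking the same surface-topology fact to realize the resulting graph automorphism by a mapping class.
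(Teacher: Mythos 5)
This statement is imported verbatim as a citation from Bridgeland--Smith, Theorems~9.8 and~9.9; the paper offers no proof of its own, so there is nothing in the surrounding text to compare against beyond the reference itself. That said, your proof plan has a genuine gap that would need to be repaired even as a reconstruction of the cited argument.

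The problem is the claimed injectivity of the graph map. You assert it is injective ``because the Ext quiver of the associated heart recovers $Q(T)$, which together with the canonical potential and the amenability hypothesis determines the tagged triangulation.'' It does not. Any two tagged triangulations related by a nontrivial element of $\MCG^\pm(\mathbb{S},\mathbb{M})$ produce isomorphic quivers with isomorphic potentials, so the Ext quiver (even with its potential) cannot pin down~$\tau$. Nor can the classes $[S_i]\in K(\mathcal{D})$, since these only record the vertex of $\Exch_\Delta(\mathcal{D})$, which is exactly the datum whose fibre you need to show is a singleton. Once you have well-definedness and local bijectivity, your map is a covering of $n$-regular graphs, and injectivity amounts to ruling out nontrivial deck transformations; a deck transformation would be a mapping class $g$ with $\Psi(g)$ trivial. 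So the injectivity claimed in part~(1) is logically equivalent to the injectivity of $\Psi$ asserted in part~(2), while your surjectivity argument for part~(2) in turn transports along the graph isomorphism of part~(1). The argument as written is circular. Bridgeland--Smith avoid this by constructing the identification through the moduli space of quadratic differentials (reading off the tagged triangulation as the WKB triangulation of a saddle-free representative), which supplies the graph bijection directly rather than by first building a possibly-many-to-one combinatorial map. If you want a purely categorical route you would need an independent input---for instance a direct computation that the stabilizer in $\cAut_\Delta(\mathcal{D})/\cSph_\Delta(\mathcal{D})$ of the base vertex is trivial, or an Ivanov-type rigidity theorem for the automorphism group of the tagged flip graph applied after identifying that group on the categorical side.
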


\subsection{Quadratic differentials and stability conditions}

In~\cite{BridgelandSmith}, Bridgeland and Smith considered a moduli space $\Quad(\mathbb{S},\mathbb{M})$ parametrizing equivalence classes of pairs $(S,\phi)$ where $S$ is a compact Riemann surface and $\phi$ is a GMN differential on~$S$ whose associated marked bordered surface is isomorphic to~$(\mathbb{S},\mathbb{M})$. Two pairs $(S_1,\phi_1)$ and $(S_1,\phi_2)$ are considered to be equivalent if there is an isomorphism $f:S_1\rightarrow S_2$ such that $f^*(\phi_2)=\phi_1$.

As explained in Section~6 of~\cite{BridgelandSmith}, the space $\Quad(\mathbb{S},\mathbb{M})$ is either empty or is a complex orbifold of dimension~$n$ given by~\eqref{eqn:dimension}. There is a dense open subset $\Quad(\mathbb{S},\mathbb{M})_0\subset\Quad(\mathbb{S},\mathbb{M})$ consisting of complete differentials, and the hat-homology groups $\widehat{H}(\phi)$ define a local system over this open set. A slightly subtle point in~\cite{BridgelandSmith} is that this local system has monodromy of order two around each component of the divisor parametrizing differentials with a simple pole, and hence it cannot be extended to a local system on the larger orbifold $\Quad(\mathbb{S},\mathbb{M})$. For this reason, we consider the $2^{|\mathbb{P}|}$-fold branched cover 
\[
\Quad^\pm(\mathbb{S},\mathbb{M})\rightarrow\Quad(\mathbb{S},\mathbb{M})
\]
obtained by choosing a sign for the residue $\Res_p(\phi)$ of a differential $\phi\in\Quad(\mathbb{S},\mathbb{M})$ at each pole~$p$ of order two. As shown in Lemma~6.2 of~\cite{BridgelandSmith}, the pullback of the hat-homology local system on $\Quad(\mathbb{S},\mathbb{M})_0$ extends to a local system on $\Quad^\pm(\mathbb{S},\mathbb{M})$. We consider the quotient orbifold 
\begin{equation}
\label{eqn:Quadheart}
\Quad_\heartsuit(\mathbb{S},\mathbb{M})=\Quad^\pm(\mathbb{S},\mathbb{M})/\mathbb{Z}_2^{\oplus\mathbb{P}}
\end{equation}
where $\mathbb{Z}_2^{\oplus\mathbb{P}}$ acts in the obvious way on the signs. Note that $\mathbb{P}$ forms a nontrivial local system of sets over the moduli space $\Quad(\mathbb{S},\mathbb{M})$, and hence this quotient must be understood in the category of spaces over $\Quad(\mathbb{S},\mathbb{M})$. More concretely, we can locally trivialize $\mathbb{P}$, and then \eqref{eqn:Quadheart} is defined by taking local quotients by the groups $\mathbb{Z}_2^{\oplus\mathbb{P}}$ over each trivializing local set and gluing these together to get the global quotient. Note that the space~\eqref{eqn:Quadheart} differs from $\Quad(\mathbb{S},\mathbb{M})$ only in its orbifold structure; it has a larger automorphism group along the divisor parametrizing differentials with simple zeros. By construction, the hat homology groups provide a local system on this space. There is a natural $\mathbb{C}$-action on $\Quad_\heartsuit(\mathbb{S},\mathbb{M})$ where a complex number $z\in\mathbb{C}$ takes a differential $\phi$ to the differential $e^{-2\pi iz}\cdot\phi$.

\begin{theorem}[\cite{BridgelandSmith}, Theorem~11.2]
\label{thm:BStheorem}
Let $(\mathbb{S},\mathbb{M})$ be an amenable marked bordered surface. Then there is a $\mathbb{C}$-equivariant isomorphism of complex orbifolds 
\begin{equation}
\label{eqn:orbifolds}
\Quad_\heartsuit(\mathbb{S},\mathbb{M})\cong\Stab_\Delta(\mathcal{D})/\cAut_\Delta(\mathcal{D}).
\end{equation}
\end{theorem}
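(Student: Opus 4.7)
The plan is to build the isomorphism by first constructing a local map on the open locus of saddle-free differentials, and then extending across walls using the compatibility between flips and tilts established in Theorems~\ref{thm:flipmutation} and~\ref{thm:mutationtilt}. Concretely, given a complete saddle-free signed GMN differential $\phi$ with associated signed WKB triangulation $(T(\phi),\epsilon(\phi))$, I would associate the stability condition whose heart is $\mathcal{A}(Q(T(\phi)),W(T(\phi),\epsilon(\phi)))\subset\mathcal{D}$ and whose central charge is the period map $Z_\phi\colon\widehat{H}(\phi)\to\mathbb{C}$, after identifying the Grothendieck group $K(\mathcal{D})$ with $\widehat{H}(\phi)$ via the bijection between tagged arcs and simple objects. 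A short local check shows that each $Z_\phi$ takes values in the upper half plane $\mathcal{H}$, and the resulting assignment $\phi\mapsto\sigma(\phi)$ maps the saddle-free open subset of $\Quad^{\pm}(\mathbb{S},\mathbb{M})$ bijectively onto the union of open cells $\Stab(\mathcal{A})$ as $\mathcal{A}$ runs over hearts of the form $\mathcal{A}(Q(T),W(T,\epsilon))$.

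Next I would verify compatibility across walls. A one-parameter family $\phi_s$ crossing a wall where a single saddle connection appears corresponds, on one side, to a flip of the WKB triangulation at the edge dual to that saddle (Theorem~\ref{thm:flipmutation}) and, on the other side, to a tilt of the corresponding heart at the simple object represented by the saddle class (Theorem~\ref{thm:mutationtilt}). Combining this with Proposition~\ref{prop:wallandchamber}, which identifies the wall-and-chamber structure of $\Stab_\Delta(\mathcal{D})$ with the tilting graph, shows that $\phi\mapsto\sigma(\phi)$ extends continuously over codimension-one walls. Any differential with a non-generic trajectory configuration (higher-dimensional wall, spiral domain, degenerate ring domain, etc.) can be perturbed generically, so the open stratum of saddle-free differentials is dense in each component of $\Quad^{\pm}(\mathbb{S},\mathbb{M})$; the map extends by continuity, and a direct inspection shows it is a local biholomorphism on the open stratum (central charges give local coordinates on both sides).

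I would then account for the group actions. Spherical twists at simple objects of reachable hearts permute the cells of $\Stab_\Delta(\mathcal{D})$ but do not alter the underlying differential, since a tilt followed by a twist takes $\mathcal{A}$ to an isomorphic heart with the same simples up to reindexing (this is Proposition~7.1 of~\cite{BridgelandSmith}). Hence the map factors through $\Stab_\Delta(\mathcal{D})/\cSph_\Delta(\mathcal{D})$. Further quotienting by $\mathcal{G}_\Delta(\mathcal{D})=\cAut_\Delta(\mathcal{D})/\cSph_\Delta(\mathcal{D})$ on the right corresponds, via the identification $\mathcal{G}_\Delta(\mathcal{D})\cong\MCG^{\pm}(\mathbb{S},\mathbb{M})$ of Theorem~\ref{thm:graphsandgroups}, to passing from marked differentials $\mathscr{Q}^{\pm}(\mathbb{S},\mathbb{M})$ to unmarked ones, which by construction recovers precisely $\Quad_\heartsuit(\mathbb{S},\mathbb{M})$ (the extra $\mathbb{Z}_2^{\oplus\mathbb{P}}$ in the orbifold structure matches the fact that signings that differ at valency-one punctures give the same tagged triangulation and hence the same heart). $\mathbb{C}$-equivariance is immediate from the definition, since scaling $\phi$ by $e^{-2\pi iz}$ multiplies $Z_\phi$ by $e^{-i\pi z}$.

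The main obstacle will be verifying that the resulting map is a global isomorphism of orbifolds, not merely a local homeomorphism between dense open subsets. Injectivity requires showing that the period map $Z_\phi$, restricted to a single cell corresponding to a fixed WKB triangulation, uniquely determines the differential up to marked isomorphism; this follows from the standard fact that periods along a spanning set of hat-homology classes cut out local coordinates on $\Quad^{\pm}(\mathbb{S},\mathbb{M})$, but one must also check that global orbifold structure (particularly the $\mathbb{Z}_2$ stabilizers along the simple-pole divisor, which is why we introduced $\Quad_\heartsuit$ rather than $\Quad$) matches on both sides. Surjectivity is more delicate: one must argue that every reachable heart in $\mathcal{D}$ arises as $\mathcal{A}(Q(T),W(T,\epsilon))$ for some signed triangulation, then use the cell decomposition together with Bridgeland's deformation theorem (Theorem~\ref{thm:forgetfulmap}) to show that every central charge in $\mathcal{H}^n$ is realized by a differential. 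This relies crucially on completeness of the differentials and on the amenability hypothesis, which excludes the cases where the period map fails to be surjective or where extra automorphisms appear in a way incompatible with the orbifold structure on the right-hand side.
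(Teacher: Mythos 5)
The paper does not prove this theorem; it is a verbatim citation of Bridgeland and Smith (Theorem~11.2 of~\cite{BridgelandSmith}), and the paper only constructs the ``slight variant'' Theorem~\ref{thm:identifymanifolds} by lifting the cited isomorphism through covering maps via Lemmas~\ref{lem:coverdifferentials} and~\ref{lem:coverstability}. So there is no internal proof to compare against; you are proposing to reprove a long technical result from scratch, which is a very different route from what the paper actually does (cite it, then lift).

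With that caveat, your sketch does capture the broad architecture of the Bridgeland--Smith argument: cell-by-cell construction over the saddle-free locus using signed WKB triangulations and periods, wall-matching via Theorems~\ref{thm:flipmutation} and~\ref{thm:mutationtilt} together with Proposition~\ref{prop:wallandchamber}, and then quotienting by the relevant groups. Two substantive imprecisions are worth flagging. First, your justification for descent through $\cSph_\Delta(\mathcal{D})$ --- ``a tilt followed by a twist takes $\mathcal{A}$ to an isomorphic heart with the same simples up to reindexing'' --- misstates Proposition~7.1 of Bridgeland--Smith, which says $\Tw_S(\mu_S^-(\mathcal{A}))=\mu_S^+(\mathcal{A})$: a spherical twist carries a left tilt to the corresponding right tilt, not to a reindexed copy of the same heart. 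The actual reason the map factors is that $\cSph_\Delta(\mathcal{D})$-orbits on $\Tilt_\Delta(\mathcal{D})$ are, by definition, the vertices of $\Exch_\Delta(\mathcal{D})\cong\Tri_{\bowtie}(\mathbb{S},\mathbb{M})$, so two hearts differing by a spherical twist determine the same tagged triangulation and the construction yields the same differential. Second, surjectivity --- showing that every point of $\Stab_\Delta(\mathcal{D})/\cAut_\Delta(\mathcal{D})$ is realized --- is the genuinely hard part of Bridgeland--Smith; it requires a propagation argument across infinitely many chambers plus control on which central charges can arise, and ``perturb generically and extend by continuity'' restates the goal rather than achieving it. Since the paper simply cites the theorem, these gaps do not affect the paper's logic, but they are real gaps in your proposed proof.
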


We will be interested in a slight variant of this result involving the spaces $\mathscr{Q}^\pm(\mathbb{S},\mathbb{M})$ and $\Sigma(Q,W)$. To prove this modified result, note that $\Quad_\heartsuit(\mathbb{S},\mathbb{M})$ is the quotient of $\mathscr{Q}^\pm(\mathbb{S},\mathbb{M})$ by the action of the signed mapping class group $\MCG^\pm(\mathbb{S},\mathbb{M})$ and that the space $\mathscr{Q}^\pm(\mathbb{S},\mathbb{M})$ admits a $\mathbb{C}$-action where a complex number $z\in\mathbb{C}$ maps $\phi$ to $e^{-2\pi iz}\cdot\phi$.

\begin{lemma}
\label{lem:coverdifferentials}
The projection 
\[
q:\mathscr{Q}^\pm(\mathbb{S},\mathbb{M})\rightarrow\Quad_\heartsuit(\mathbb{S},\mathbb{M})
\]
is a $\mathbb{C}$-equivariant covering map.
\end{lemma}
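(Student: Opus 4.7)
The plan is to recognize $q$ as the quotient map for the natural action of $\MCG^\pm(\mathbb{S},\mathbb{M})$ on $\mathscr{Q}^\pm(\mathbb{S},\mathbb{M})$, and then check the covering and equivariance properties in turn. Since $\MCG^\pm(\mathbb{S},\mathbb{M})=\MCG(\mathbb{S},\mathbb{M})\ltimes\mathbb{Z}_2^{\mathbb{P}}$ acts on $\mathscr{Q}^\pm(\mathbb{S},\mathbb{M})$ with the $\MCG$ factor changing markings and the $\mathbb{Z}_2^\mathbb{P}$ factor changing signings, quotienting by the $\MCG$ factor forgets the marking and produces $\Quad^\pm(\mathbb{S},\mathbb{M})$, after which quotienting by the residual $\mathbb{Z}_2^\mathbb{P}$ gives $\Quad_\heartsuit(\mathbb{S},\mathbb{M})$ by definition. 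Thus, set-theoretically, $q$ coincides with the orbit map $\mathscr{Q}^\pm(\mathbb{S},\mathbb{M})\to\mathscr{Q}^\pm(\mathbb{S},\mathbb{M})/\MCG^\pm(\mathbb{S},\mathbb{M})$.

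To establish the covering property, the main point is that the $\MCG^\pm(\mathbb{S},\mathbb{M})$-action is properly discontinuous. This can be reduced, via the forgetful map to moduli of pointed Riemann surfaces, to the classical proper discontinuity of the mapping class group acting on Teichm\"uller-type spaces, together with the observation that only finitely many signings can be involved near any compact set. The action is \emph{not} free in general: an automorphism $g$ of a representative $(S,\phi)$ that fixes the isotopy class of the marking and preserves the chosen signing will stabilize the point. However, these stabilizers are precisely the local automorphism groups used to define the orbifold structure on $\Quad_\heartsuit(\mathbb{S},\mathbb{M})$ (the automorphisms of the signed differential, together with the extra $\mathbb{Z}_2^\mathbb{P}$-contributions from the quotient by sign-flips). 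Matching these two descriptions locally exhibits $q$ as an \'etale map of orbifolds; equivalently, in a manifold chart around a point of $\mathscr{Q}^\pm(\mathbb{S},\mathbb{M})$, the map $q$ factors as a local isomorphism onto a uniformizing chart of the orbifold, which is exactly what ``covering map'' means in this setting.

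Finally, $\mathbb{C}$-equivariance is immediate from the construction of the two $\mathbb{C}$-actions. On $\mathscr{Q}^\pm(\mathbb{S},\mathbb{M})$, the action sends $(S,\phi,\theta,\epsilon)$ to $(S,e^{-2\pi iz}\phi,\theta_z,\epsilon)$, where $\theta_z$ is obtained by flat transport of $\theta$ through the local system of markings as the differential rotates; the signing is preserved because $e^{-2\pi iz}$ varies the residues continuously through $\mathbb{C}^*$ without ever vanishing, so the sign choice can be transported. This rotation commutes tautologically with the $\MCG^\pm(\mathbb{S},\mathbb{M})$-action on markings and signings, so it descends to the $\mathbb{C}$-action on $\Quad_\heartsuit(\mathbb{S},\mathbb{M})$ defined in the paper, proving equivariance. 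The main technical obstacle I anticipate is the orbifold bookkeeping: one must carefully check that the stabilizers of the $\MCG^\pm(\mathbb{S},\mathbb{M})$-action on $\mathscr{Q}^\pm(\mathbb{S},\mathbb{M})$ match the automorphism groups built into the orbifold structure of $\Quad_\heartsuit(\mathbb{S},\mathbb{M})$, in particular along the divisor of differentials with simple poles where the extra $\mathbb{Z}_2^\mathbb{P}$-automorphisms appear. Once this matching is verified, the assertion follows from proper discontinuity.
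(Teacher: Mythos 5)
Your overall strategy agrees with the paper's: recognize $q$ as the orbit map of the $\MCG^\pm(\mathbb{S},\mathbb{M})$-action, reduce proper discontinuity to the classical Teichm\"uller-space action, and observe that $\mathbb{C}$-equivariance is formal. The paper, however, factors $q$ as $\mathscr{Q}^\pm(\mathbb{S},\mathbb{M})\rightarrow\Quad^\pm(\mathbb{S},\mathbb{M})\rightarrow\Quad_\heartsuit(\mathbb{S},\mathbb{M})$ and treats the two pieces separately: the first is the $\MCG(\mathbb{S},\mathbb{M})$-quotient, handled by a concrete two-step argument (an $h$-equivariant map $\pi:\mathscr{Q}^\pm(\mathbb{S},\mathbb{M})\rightarrow\mathscr{T}(g,d)$ lets one shrink to a neighborhood $U$ where $\MCG(g,d)$ acts with trivial recurrence, and then one checks separately that nontrivial elements of $\ker(h)$ necessarily \emph{move} the $(\mathbb{S},\mathbb{M})$-marking and hence cannot fix a local flat section $W$ of the marking local system), while the second is the orbifold quotient by $\mathbb{Z}_2^{\oplus\mathbb{P}}$ and is a covering by construction of $\Quad_\heartsuit$. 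Your proposal merges the two steps and does not supply the crucial argument for why the kernel of $h$ acts without fixed points on $W$.

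There is also a conceptual slip in your description of the stabilizers. You write that ``an automorphism $g$ of a representative $(S,\phi)$ that fixes the isotopy class of the marking and preserves the chosen signing will stabilize the point,'' and you take such $g$ as the source of the failure of freeness. But this describes only the trivial stabilizer: a biholomorphism of $S$ preserving $\phi$ and fixing the $(\mathbb{S},\mathbb{M})$-marking up to isotopy is isotopic to the identity, hence \emph{is} the identity, and this is precisely the observation (already invoked when $\mathscr{Q}(\mathbb{S},\mathbb{M})$ was shown to be a manifold) that makes the paper's argument work. The actual phenomena one must track are automorphisms of $(S,\phi)$ that \emph{move} the $(\mathbb{S},\mathbb{M})$-marking, giving rise to nontrivial elements of $\MCG(\mathbb{S},\mathbb{M})$ identified with the orbifold isotropy already built into $\Quad(\mathbb{S},\mathbb{M})$, and the $\mathbb{Z}_2$-factors at simple poles where the signing collapses. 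Your matching claim is true in spirit, but the way you identify the relevant automorphisms gets it backwards; this should be corrected before the stabilizer-matching step can be made precise.
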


\begin{proof}
The $\mathbb{C}$-equivariance is immediate from the definition of the $\mathbb{C}$-action on the two spaces. Note that $\Quad^\pm(\mathbb{S},\mathbb{M})$ is a cover of $\Quad_\heartsuit(\mathbb{S},\mathbb{M})$. It is the quotient of $\mathscr{Q}^\pm(\mathbb{S},\mathbb{M})$ by the action of the mapping class group $\MCG(\mathbb{S},\mathbb{M})$. It therefore suffices to show that the quotient map $\mathscr{Q}^\pm(\mathbb{S},\mathbb{M})\rightarrow\Quad^\pm(\mathbb{S},\mathbb{M})$ is a covering map, or equivalently that the action of the mapping class group is properly discontinuous.

Let $g=g(\mathbb{S})$ be the genus of~$\mathbb{S}$ and $d$ the number of boundary components of the associated surface $\mathbb{S}'$ defined in Section~\ref{sec:MarkedBorderedSurfaces}. Let $\mathscr{T}(g,d)$ be the Teichm\"uller space parametrizing Riemann surfaces of genus~$g$ with $d$ punctures, and let $\MCG(g,d)$ be the usual mapping class group acting on $\mathscr{T}(g,d)$. There is a natural group homomorphism $h:\MCG(\mathbb{S},\mathbb{M})\rightarrow\MCG(g,d)$ whose kernel is finite and consists of elements that change the markings and a map 
\[
\pi:\mathscr{Q}^\pm(\mathbb{S},\mathbb{M})\rightarrow\mathscr{T}(g,d)
\]
sending a signed, marked quadratic differential to its underlying marked Riemann surface. This map $\pi$ is equivariant with respect to the natural action of $\MCG(\mathbb{S},\mathbb{M})$ on $\Quad^\pm(\mathbb{S},\mathbb{M})$ and the action of $\MCG(\mathbb{S},\mathbb{M})$ on $\mathscr{T}(g,d)$ induced by~$h$.

Let $\alpha\in\mathscr{Q}^\pm(\mathbb{S},\mathbb{M})$ be a point, and write $S=\pi(\alpha)$. Then since $\MCG(g,d)$ acts properly discontinuously on $\mathscr{T}(g,d)$, we can find a neighborhood $S\in U\subset\mathscr{T}(g,d)$ such that for $\varphi\in\MCG(g,d)$, we have 
\[
\varphi(U)\cap U\neq\emptyset\implies \varphi=1.
\]
Let $V\subset\Quad^\pm(\mathbb{S},\mathbb{M})$ be the set of signed differentials defined on Riemann surfaces in~$U$. There is a local system of sets over~$V$ parametrizing choices of marking for the points of~$V$. Let $W\subset \mathscr{Q}^\pm(\mathbb{S},\mathbb{M})$ be a local flat section of this local system containing~$\alpha$. Since the local system has discrete fibers, $W$ is an open neighborhood of~$\alpha$. Now if $f\in\MCG(\mathbb{S},\mathbb{M})$ satisfies
\begin{equation}
\label{eqn:properlydiscontinuous}
f(W)\cap W\neq\emptyset,
\end{equation}
then $h(f)(U)\cap U\neq\emptyset$ so $h(f)=1$. Therefore $f$ is one of the finitely many elements in the kernel of~$h$. If $f\neq1$, then $f$ changes the marking and therefore cannot satisfy~\eqref{eqn:properlydiscontinuous}. It follows that $f=1$. Hence the action of $\MCG(\mathbb{S},\mathbb{M})$ is properly discontinuous.
\end{proof}

\begin{lemma}
\label{lem:coverstability}
The natural projection 
\[
p:\Sigma(Q,W)\rightarrow\Stab_\Delta(\mathcal{D})/\cAut_\Delta(\mathcal{D})
\]
is a $\mathbb{C}$-equivariant covering map.
\end{lemma}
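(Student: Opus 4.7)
The plan is to identify $p$ with the quotient of $\Sigma(Q,W)$ by the induced action of $\mathcal{G}_\Delta(\mathcal{D})$, and then to reduce the covering property to the analogous statement for quadratic differentials, Lemma~\ref{lem:coverdifferentials}.

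Since $\cSph_\Delta(\mathcal{D})\subset\cAut_\Delta(\mathcal{D})$, the action of $\cAut_\Delta(\mathcal{D})$ on $\Stab_\Delta(\mathcal{D})$ descends to an action of $\mathcal{G}_\Delta(\mathcal{D})=\cAut_\Delta(\mathcal{D})/\cSph_\Delta(\mathcal{D})$ on the quotient $\Sigma(Q,W)=\Stab_\Delta(\mathcal{D})/\cSph_\Delta(\mathcal{D})$, and $p$ is canonically identified with the quotient map
\[
\Sigma(Q,W)\to\Sigma(Q,W)/\mathcal{G}_\Delta(\mathcal{D})=\Stab_\Delta(\mathcal{D})/\cAut_\Delta(\mathcal{D}).
\]
The $\mathbb{C}$-equivariance is automatic, because the $\mathbb{C}$-action on $\Stab(\mathcal{D})$ commutes with the whole group $\Aut(\mathcal{D})$ and therefore descends compatibly to both sides. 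Consequently, it remains to show that the action of $\mathcal{G}_\Delta(\mathcal{D})$ on $\Sigma(Q,W)$ is free and properly discontinuous.

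To prove this, I would combine Theorem~\ref{thm:BStheorem} with the identifications of Theorem~\ref{thm:graphsandgroups} in an equivariant manner. The ``slight extension'' of~\cite{BridgelandSmith} alluded to in the introduction should provide an isomorphism of complex manifolds $\Sigma(Q,W)\cong\mathscr{Q}^{\pm}(\mathbb{S},\mathbb{M})$ which is equivariant under $\mathcal{G}_\Delta(\mathcal{D})\cong\MCG^{\pm}(\mathbb{S},\mathbb{M})$ and which, upon passing to quotients, recovers the isomorphism $\Stab_\Delta(\mathcal{D})/\cAut_\Delta(\mathcal{D})\cong\Quad_\heartsuit(\mathbb{S},\mathbb{M})$ of Theorem~\ref{thm:BStheorem}. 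Once this equivariant identification is in place, $p$ is identified with the covering map $q$ of Lemma~\ref{lem:coverdifferentials}, and the result follows at once.

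The main obstacle is establishing this equivariant refinement of Theorem~\ref{thm:BStheorem}. The Bridgeland-Smith construction associates a quadratic differential to a stability condition via its central charge and WKB heart, and one must check that this construction lifts in a $\mathcal{G}_\Delta$-equivariant way to an identification at the level of markings and signings, not merely of unmarked differentials. With this equivariance in hand, both freeness and proper discontinuity of the $\mathcal{G}_\Delta$ action on $\Sigma(Q,W)$ are inherited directly from the corresponding properties of the $\MCG^{\pm}(\mathbb{S},\mathbb{M})$ action on $\mathscr{Q}^{\pm}(\mathbb{S},\mathbb{M})$ already established in the proof of Lemma~\ref{lem:coverdifferentials}, via the rigidity of markings and the classical proper discontinuity of the ordinary mapping class group action on Teichm\"uller space.
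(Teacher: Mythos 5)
Your plan is circular as written. You want to deduce Lemma~\ref{lem:coverstability} from an equivariant identification $\Sigma(Q,W)\cong\mathscr{Q}^\pm(\mathbb{S},\mathbb{M})$ compatible with the group actions. But in the paper that identification is exactly Theorem~\ref{thm:identifymanifolds}, which comes \emph{after} this lemma and whose proof \emph{uses} it: the map $R$ is constructed by a lifting argument in the diagram whose vertical arrows are the covering maps $q$ and $p$ of Lemma~\ref{lem:coverdifferentials} and Lemma~\ref{lem:coverstability}. So the ``equivariant refinement of Theorem~\ref{thm:BStheorem}'' that you flag as the main obstacle is not something you can defer to the Bridgeland--Smith construction; in this paper it is a downstream consequence of precisely the lemma you are asked to prove. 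You would need an independent proof of that equivariant identification, which is strictly harder than the lemma itself.

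The paper's actual proof bypasses all of this and is much more direct. By Theorem~\ref{thm:BStheorem}, the quotient $\Stab_\Delta(\mathcal{D})/\cAut_\Delta(\mathcal{D})$ is isomorphic as a complex orbifold to $\Quad_\heartsuit(\mathbb{S},\mathbb{M})$; in particular it \emph{is} an orbifold, so the action of $\cAut_\Delta(\mathcal{D})$ on the manifold $\Stab_\Delta(\mathcal{D})$ is properly discontinuous. Since $\cSph_\Delta(\mathcal{D})$ is normal in $\cAut_\Delta(\mathcal{D})$, it follows formally that the induced action of $\mathcal{G}_\Delta(\mathcal{D})=\cAut_\Delta(\mathcal{D})/\cSph_\Delta(\mathcal{D})$ on $\Sigma(Q,W)=\Stab_\Delta(\mathcal{D})/\cSph_\Delta(\mathcal{D})$ is properly discontinuous, so $p$ is a covering map. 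The $\mathbb{C}$-equivariance is, as you say, automatic since the $\mathbb{C}$-action commutes with $\Aut(\mathcal{D})$. Note also that there is no need (and no attempt) to reduce to Lemma~\ref{lem:coverdifferentials}: proper discontinuity of the mapping class group action on Teichm\"uller space is the engine behind \emph{that} lemma, while here the orbifold statement of Theorem~\ref{thm:BStheorem} already hands you proper discontinuity on the stability side for free.
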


\begin{proof}
The space on the right hand side is known to be an orbifold, and therefore the action of $\cAut_\Delta(\mathcal{D})$ on $\Stab_\Delta(\mathcal{D})$ is properly discontinuous. It follows that the action of $\cAut_\Delta(\mathcal{D})/\cSph_\Delta(\mathcal{D})$ on $\Sigma(Q,W)=\Stab_\Delta(\mathcal{D})/\cSph_\Delta(\mathcal{D})$ is properly discontinuous. This proves that $p$ is a covering map. It is $\mathbb{C}$-equivariant because the $\mathbb{C}$-action on $\Stab_\Delta(\mathcal{D})$ commutes with the action of $\cAut_\Delta(\mathcal{D})$.
\end{proof}

\begin{theorem}
\label{thm:identifymanifolds}
Let $(\mathbb{S},\mathbb{M})$ be an amenable marked bordered surface. Then there is an isomorphism of complex manifolds 
\[
\mathscr{Q}^\pm(\mathbb{S},\mathbb{M})\cong\Sigma(Q,W)
\]
which is equivariant with respect to the actions of $\MCG^\pm(\mathbb{S},\mathbb{M})\cong\mathcal{G}_\Delta(\mathcal{D})$ and~$\mathbb{C}$.
\end{theorem}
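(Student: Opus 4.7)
The plan is to lift the Bridgeland--Smith isomorphism $\Phi\colon\Quad_\heartsuit(\mathbb{S},\mathbb{M})\to\Stab_\Delta(\mathcal{D})/\cAut_\Delta(\mathcal{D})$ of Theorem~\ref{thm:BStheorem} from the quotient orbifolds to the covering spaces. By Lemmas~\ref{lem:coverdifferentials} and~\ref{lem:coverstability}, the projections $q\colon\mathscr{Q}^\pm(\mathbb{S},\mathbb{M})\to\Quad_\heartsuit(\mathbb{S},\mathbb{M})$ and $p\colon\Sigma(Q,W)\to\Stab_\Delta(\mathcal{D})/\cAut_\Delta(\mathcal{D})$ are $\mathbb{C}$-equivariant Galois covering maps with deck groups $\MCG^\pm(\mathbb{S},\mathbb{M})$ and $\mathcal{G}_\Delta(\mathcal{D})$ respectively, and Theorem~\ref{thm:graphsandgroups} furnishes a canonical group isomorphism $\Psi\colon\MCG^\pm\to\mathcal{G}_\Delta$. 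The theorem thus reduces to producing a $\mathbb{C}$-equivariant lift $\widetilde{\Phi}\colon\mathscr{Q}^\pm(\mathbb{S},\mathbb{M})\to\Sigma(Q,W)$ of $\Phi$ that intertwines the $\Psi$-identified deck-group actions; such a lift is automatically a biholomorphism because $\Phi$, $p$, $q$ are local isomorphisms of complex manifolds, and $\widetilde{\Phi}$ is then fibrewise a bijection between $\MCG^\pm$- and $\mathcal{G}_\Delta$-torsors.

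To construct $\widetilde{\Phi}$, I would first define it pointwise on the dense open locus $\mathscr{Q}^{\pm,\mathrm{sf}}\subset\mathscr{Q}^\pm$ of complete, saddle-free differentials. A saddle-free $(S,\phi,\theta)$ in this locus determines, via its marking and signing, a signed WKB triangulation $(T(\phi),\epsilon(\phi))$ of the abstract surface $(\mathbb{S},\mathbb{M})$; under the graph isomorphism $\Tri_\bowtie(\mathbb{S},\mathbb{M})\cong\Exch_\Delta(\mathcal{D})$ of Theorem~\ref{thm:graphsandgroups}, this names a specific cell $\mathcal{C}\subset\Sigma(Q,W)$. The Bridgeland--Smith construction selects the unique stability condition in $\mathcal{C}$ whose central charge $Z\colon K(\mathcal{D})\to\mathbb{C}$ is the period map of $\phi$ evaluated on the hat-homology basis dual to the arcs of the WKB triangulation, and we set $\widetilde{\Phi}(S,\phi,\theta)$ equal to this stability condition. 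By construction $p\circ\widetilde{\Phi}=\Phi\circ q$ on $\mathscr{Q}^{\pm,\mathrm{sf}}$, and $\widetilde{\Phi}$ is $\Psi$-equivariant since the $\MCG^\pm$-action on tagged triangulations is matched by the $\mathcal{G}_\Delta$-action on hearts modulo spherical twists under Theorem~\ref{thm:graphsandgroups}.

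The third step is to extend $\widetilde{\Phi}$ to a continuous map on all of $\mathscr{Q}^\pm(\mathbb{S},\mathbb{M})$. Continuity within a single cell of $\mathscr{Q}^{\pm,\mathrm{sf}}$ is immediate: the WKB triangulation is locally constant and the period map depends holomorphically on $\phi$. Crossing a wall where $T(\phi)$ flips along an arc $k$, one invokes Theorem~\ref{thm:flipmutation} to identify the flip with the mutation $\mu_k(Q,W)$, Theorem~\ref{thm:mutationtilt} to identify this mutation with a tilt of the reachable heart in $\mathcal{D}$, and Proposition~\ref{prop:wallandchamber} to show that the two resulting stability conditions lie in adjacent cells of $\Sigma(Q,W)$ separated by exactly the corresponding tilting wall; this yields continuity of $\widetilde{\Phi}$ across the wall. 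A $\widetilde{\Phi}$ continuous on the dense open subset $\mathscr{Q}^{\pm,\mathrm{sf}}$ and lifting the everywhere-defined continuous map $\Phi\circ q$ through the covering $p$ then extends uniquely to all of $\mathscr{Q}^\pm$ by standard covering-space theory.

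The main obstacle is the wall-crossing compatibility of the third step: one must show that when a marked, signed differential is deformed through a WKB flip, the lift $\widetilde{\Phi}$ crosses to the adjacent cell in $\Sigma(Q,W)$ with no extra spherical-twist correction. This is a refinement of the technical core of \cite{BridgelandSmith}, and it works because the isomorphism $\Psi$ of Theorem~\ref{thm:graphsandgroups} is built from precisely the same flip-mutation-tilt dictionary that governs both sides of $\Phi$; one simply has to track the marking and the signing through that comparison. Once this bookkeeping is carried out, $\widetilde{\Phi}$ descends to $\Phi$ modulo the $\Psi$-identified deck groups, and the required $\mathbb{C}$-equivariance is automatic from the $\mathbb{C}$-equivariance of $\Phi$, $p$, and $q$.
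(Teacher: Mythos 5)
Your proposal takes the same overall strategy as the paper — lift the Bridgeland--Smith orbifold isomorphism of Theorem~\ref{thm:BStheorem} through the $\mathbb{C}$-equivariant covering maps of Lemmas~\ref{lem:coverdifferentials} and~\ref{lem:coverstability}, using Theorem~\ref{thm:graphsandgroups} to align the deck groups — but the execution differs. The paper never constructs the lift explicitly: it verifies the covering-space lifting criterion directly by taking a loop $\gamma\in\pi_1(\mathscr{Q}^\pm)$, representing it (via Proposition~5.8 of~\cite{BridgelandSmith}) by a path that hits the wall locus transversally, reading off a path in $\Tri_\bowtie(\mathbb{S},\mathbb{M})$ and hence in $\Exch_\Delta(\mathcal{D})$, and concluding that $(K\circ q)_*\pi_1(\mathscr{Q}^\pm)\subset p_*\pi_1(\Sigma)$ (with a symmetric argument for the reverse inclusion). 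Existence of $R$ then follows abstractly. You instead build $\widetilde{\Phi}$ chamber by chamber from the WKB triangulation and the period map, then glue across walls. The two routes rely on identical inputs, and your wall-crossing compatibility is the constructive counterpart of the paper's assertion that $(K\circ q)_*(\gamma)=p_*(\eta)$. One payoff of your version is that it describes $\widetilde{\Phi}$ pointwise rather than merely asserting its existence; the cost is that you carry the global-consistency burden yourself rather than offloading it to the lifting theorem.

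Two places in your third step are stated imprecisely and would need tightening. First, you say $\widetilde{\Phi}$ is ``continuous on the dense open subset $\mathscr{Q}^{\pm,\mathrm{sf}}$,'' but that subset is a disjoint union of chambers, so continuity there carries no gluing information. What your wall-crossing argument actually gives is continuity on the larger open set obtained by adjoining the codimension-one walls, and you should say so; it is that connected open set, whose complement has real codimension two, to which the $\pi_1$-surjectivity extension argument applies. Second, matching cells under the graph isomorphism is not by itself enough for continuity across a wall: you also need the Gauss--Manin identification of hat-homology with $K(\mathcal{D})$ to transport the period map consistently through the flip, so that the central charges, and not just the hearts, limit correctly from both sides. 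This is folded implicitly into the paper's appeal to ``the construction of the isomorphism $K$''; in a write-up following your route it should be made explicit.
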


\begin{proof}
Let us denote by $B_0\subset\mathscr{Q}^\pm(\mathbb{S},\mathbb{M})$ the set consisting of all complete, saddle-free differentials. Choose a basepoint $\phi_0$ in this set. If $\gamma\in\pi_1(\mathscr{Q}^\pm(\mathbb{S},\mathbb{M}),\phi_0)$, then by Proposition~5.8 of~\cite{BridgelandSmith}, $\gamma$ can be represented by a loop which lies in~$B_0$ except at finitely many points, each of which corresponds to a differential with a unique horizontal saddle trajectory. Thus $\gamma$ determines a path on the graph $\Tri_{\bowtie}(\mathbb{S},\mathbb{M})$ of tagged triangulations. By Theorem~\ref{thm:graphsandgroups}, there is a corresponding path in $\Exch_\Delta(\mathcal{D})$. Since this graph is dual to the cell decomposition of $\Sigma(Q,W)$, we get an element $\eta\in\pi_1(\Sigma(Q,W),\sigma_0)$ where $\sigma_0\in\Sigma(Q,W)$ is a basepoint projecting to the same point of~\eqref{eqn:orbifolds} as~$\phi_0$. Let $K$ be the isomorphism of Theorem~\ref{thm:BStheorem}. By the construction of this isomorphism, we have 
\[
(K\circ q)_*(\gamma)=p_*(\eta)
\]
where $p$ and~$q$ are the covering maps of Lemmas~\ref{lem:coverstability} and~\ref{lem:coverdifferentials}, respectively. Thus 
\[
(K\circ q)_*\pi_1(\mathscr{Q}^\pm(\mathbb{S},\mathbb{M}))\subset p_*\pi_1(\Sigma(Q,W))
\]
and a standard lifting result from the theory of covering spaces implies that there exists a map $R$ making the following diagram commute:
\[
\xymatrix{
\mathscr{Q}^\pm(\mathbb{S},\mathbb{M}) \ar[d]_{q} \ar@{.>}[rr]^{R} & & \Sigma(Q,W) \ar[d]^{p} \\
\Quad_\heartsuit(\mathbb{S},\mathbb{M}) \ar[rr]_-{K} & & \Stab_\Delta(\mathcal{D})/\cAut_\Delta(\mathcal{D}).
}
\]
This map is holomorphic since $K$ is an isomorphism of orbifolds and $p$ and~$q$ are covering maps.

Similarly, if $\eta\in\pi_1(\Sigma(Q,W),\sigma_0)$ is any element, then $\eta$ can be represented by a loop that meets finitely many walls defined by a condition of the form $Z(S)=0$ for a unique simple object~$S$. Then, arguing as before, we can find an element $\gamma\in\pi_1(\mathscr{Q}^\pm(\mathbb{S},\mathbb{M}),\phi_0)$ such that $(K^{-1}\circ p)_*(\eta)=q_*(\gamma)$. It follows that there exists a map $L:\Sigma(Q,W)\rightarrow\mathscr{Q}^\pm(\mathbb{S},\mathbb{M})$ commuting with the covering maps. This map $L$ is holomorphic and inverse to~$R$, and hence $R$ is an isomorphism of complex manifolds.

This isomorphism is $\mathbb{C}$-equivariant because the maps $p$, $q$, and~$K$ are. If $\phi\in B_0$ then there is an associated tagged triangulation $\tau\in\Tri_{\bowtie}(\mathbb{S},\mathbb{M})$. The corresponding stability condition $R(\phi)$ has an associated heart, and this is exactly the vertex $\mathcal{A}\in\Exch_\Delta(\mathcal{D})$ corresponding to $\tau$ under the isomorphism of Theorem~\ref{thm:graphsandgroups}. If $g$ is an element of the group $\MCG^\pm(\mathbb{S},\mathbb{M})$, then $g\cdot\phi$ is an element of~$B_0$ whose associated WKB triangulation is $g\cdot\tau$. Since $\phi$ and $g\cdot\phi$ are mapped by~$q$ to the same point of $\Quad_\heartsuit(\mathbb{S},\mathbb{M})$, the images $R(\phi)$ and $R(g\cdot\phi)$ must map to the same point of $\Stab_\Delta(\mathcal{D})/\cAut_\Delta(\mathcal{D})$, and hence they differ by an element of $\mathcal{G}_\Delta(\mathcal{D})$. The heart of the stability condition $R(g\cdot\phi)$ is $g\cdot\mathcal{A}$, so this element must be~$g$. Hence the restriction $R|_{B_0}$ is equivariant with respect to the actions of $\MCG^\pm(\mathbb{S},\mathbb{M})\cong\mathcal{G}_\Delta(\mathcal{D})$. Since $B_0$ is dense in $\mathscr{Q}^\pm(\mathbb{S},\mathbb{M})$ and the group actions are continuous, it follows that $R$ has the required equivariance property.
\end{proof}

\subsection{Local systems and the cluster variety}

For any tagged triangulation $\tau\in\Tri_{\bowtie}(\mathbb{S},\mathbb{M})$, we have seen that the Fock-Goncharov coordinates provide a birational map $X_\tau:\mathscr{X}(\mathbb{S},\mathbb{M})\dashrightarrow(\mathbb{C}^*)^n$ from the space of framed local systems to an algebraic torus. By Lemma~9.10 of~\cite{BridgelandSmith}, the Grothendieck group $K(\mathcal{D})$ is naturally identified with the lattice spanned by the tagged arcs of~$\tau$. Therefore the torus $\Hom_\mathbb{Z}(K(\mathcal{D}),\mathbb{C}^*)\cong(\mathbb{C}^*)^n$ appearing in the definition of the cluster variety is identified with the torus parametrizing the Fock-Goncharov coordinates. Moreover, the isomorphism of lattices identifies the Euler form $\langle -,-\rangle$ with the skew form defined by the exchange matrix~$\varepsilon_{ij}$. Using these facts, one sees that the birational map defined by Proposition~\ref{prop:flipcoordinate} coincides with the map $\mu_k$ appearing in the definition of the cluster variety. Thus we see that the Fock-Goncharov coordinates provide a canonical birational map $\mathscr{X}(\mathbb{S,\mathbb{M}})\dashrightarrow\mathscr{X}^{\mathrm{cl}}(Q)$ which restricts to a regular embedding from the set $\mathscr{X}^*(\mathbb{S},\mathbb{M})$ of generic framed local systems into~$\mathscr{X}^{\mathrm{cl}}(Q)$. This map is equivariant with respect to the actions of $\MCG^\pm(\mathbb{S},\mathbb{M})\cong\mathcal{G}_\Delta(\mathcal{D})$ on these two spaces.

\subsection{A map from stability conditions to the cluster variety}

We can now prove the first main result of this paper.

\begin{theorem}
\label{thm:main1}
There is a dense open set $\Sigma^*(Q,W)\subset\Sigma(Q,W)$ and a $\mathcal{G}_\Delta(\mathcal{D})$-equivariant continuous map 
\[
\widehat{F}:\Sigma^*(Q,W)\rightarrow\mathscr{X}^{\mathrm{cl}}(Q)
\]
from this set to the cluster variety. If $\sigma\in\Sigma^*(Q,W)$ lies in the cell corresponding to some vertex $\mathcal{A}\in\Exch_\Delta(\mathcal{D})$, then for $z\in\mathbb{C}$ with $-\frac{1}{2}<\Re(z)<\frac{1}{2}$ and $\Im(z)\gg0$, the point $\widehat{F}(z\cdot\sigma)$ lies in the algebraic torus corresponding to~$\mathcal{A}$.
\end{theorem}

\begin{proof}
By Theorem~\ref{thm:identifymanifolds}, there is an isomorphism of manifolds $\Sigma(Q,W)\cong\mathscr{Q}^\pm(\mathbb{S},\mathbb{M})$. We define $\Sigma^*(Q,W)$ to be the set corresponding to $\mathscr{Q}^*(\mathbb{S},\mathbb{M})$ under this isomorphism. The set $\mathscr{X}^*(\mathbb{S},\mathbb{M})$ of generic framed local systems can be considered as a subset of the cluster variety, and the map $\widehat{F}$ is identified with the monodromy map of Proposition~\ref{prop:monodromydifferentials}. The $\mathcal{G}_\Delta(\mathcal{D})$-equivariance follows from the mapping class group equivariance of the monodromy map.

Note that if $\sigma$ lies in the cell corresponding to $\mathcal{A}\in\Exch_\Delta(\mathcal{D})$, then $\sigma$ corresponds, under the isomorphism of Theorem~\ref{thm:identifymanifolds}, to a complete saddle-free differential whose associated tagged triangulation $\tau$ is the vertex of $\Tri_{\bowtie}(\mathbb{S},\mathbb{M})$ corresponding to~$\mathcal{A}$. The second statement then says that the framed local system obtained by applying the map of Proposition~\ref{prop:monodromydifferentials} to $\hbar^{-2}\cdot\phi$ has well defined Fock-Goncharov coordinates with respect to~$\tau$ where we have written $\hbar=e^{i\pi z}$. This follows from Theorem~1.3(1) in~\cite{Allegretti19}.
\end{proof}

We can give a description of the subset $\Sigma^*(Q,W)\subset\Sigma(Q,W)$ in the language of stability conditions. Indeed, suppose $\sigma\in\Sigma(Q,W)$ is a stability condition corresponding to the quadratic differential $\phi\in\mathscr{Q}^\pm(\mathbb{S},\mathbb{M})$. By definition, we have $\sigma\in\Sigma^*(Q,W)$ if and only if $\phi\in\mathscr{Q}^*(\mathbb{S},\mathbb{M})$. If $\sigma\not\in\Sigma^*(Q,W)$ then $\widehat{F}(\phi)$ has trivial monodromy around some~$p\in\mathbb{P}$. In particular, this monodromy has eigenvalues~$\pm1$, and by Lemma~\ref{lem:residueeigenvalues}, the residue $\Res_p(\phi)$ is an integer multiple of~$2\pi i$. As shown in Section~2.4 of~\cite{BridgelandSmith}, one has for each $p\in\mathbb{P}$ a natural class $\beta_p\in K(\mathcal{D})$ in the kernel of the Euler form whose central charge satisfies $Z_\sigma(\beta_p)=\Res_p(\phi)$. In particular, if none of these central charges is an integer multiple of $2\pi i$, then we have $\sigma\in\Sigma^*(\mathbb{S},\mathbb{M})$.

\subsection{Riemann-Hilbert problems from Donaldson-Thomas theory}

A choice of stability condition $\sigma$ on the CY$_3$ triangulated category $\mathcal{D}$ naturally determines a BPS structure $(\Gamma_\sigma,Z_\sigma,\Omega_\sigma)$ given by the following data:
\begin{enumerate}[label=(\alph*)]
\item $\Gamma_\sigma=K(\mathcal{D})$ is the Grothendieck group of~$\mathcal{D}$ equipped with the Euler form $\langle -,-\rangle$.
\item $Z_\sigma$ is the central charge of the stability condition.
\item $\Omega_\sigma(\gamma)$ is the BPS invariant as defined in Donaldson-Thomas theory.
\end{enumerate}

By the results of~\cite{BridgelandSmith}, a generic GMN differential $\phi\in\mathscr{Q}^\pm(\mathbb{S},\mathbb{M})$ corresponds under the isomorphism of Theorem~\ref{thm:identifymanifolds} to a stability condition $\sigma\in\Sigma(Q,W)$, and the BPS~structure $(\Gamma_\sigma,Z_\sigma,\Omega_\sigma)$ equals the BPS~structure $(\Gamma_\phi,Z_\phi,\Omega_\phi)$ defined in Section~\ref{sec:TheRiemannHilbertProblem}. In particular, it is convergent and defines a Riemann-Hilbert problem. The results of Section~\ref{sec:SolvingTheRiemannHilbertProblem} then imply the following, which is the second main result of this paper.

\begin{theorem}
If $\sigma\in\Sigma(Q,W)$ is a generic stability condition, then the meromorphic functions~$\mathcal{X}_r$ constructed in Section~\ref{sec:SolvingTheRiemannHilbertProblem} provide a solution of the weak Riemann-Hilbert problem associated to the BPS structure $(\Gamma_\sigma,Z_\sigma,\Omega_\sigma)$. If we assume moreover that the surface $\mathbb{S}$ is closed, then these functions provide a solution of the full Riemann-Hilbert problem.
\end{theorem}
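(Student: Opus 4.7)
The plan is to simply assemble the various ingredients prepared in Section~\ref{sec:SolvingTheRiemannHilbertProblem}, verifying each of the axioms (RH1), (RH2), and (for the second statement) (RH3) in turn. First I would invoke Theorem~\ref{thm:identifymanifolds} to identify a generic $\sigma \in \Sigma(Q,W)$ with a generic complete GMN differential $\phi \in \mathscr{Q}^\pm(\mathbb{S},\mathbb{M})$. Under this isomorphism, the lattice $\Gamma_\sigma = K(\mathcal{D})$ with its Euler form is identified with the hat-homology lattice $\Gamma_\phi = \widehat{H}(\phi)$ with its intersection form, the central charges agree, and the BPS invariants coincide by the main theorem of~\cite{BridgelandSmith}. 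Thus the Riemann-Hilbert problem attached to $\sigma$ is the same as the one attached to $\phi$, with distinguished basepoint $\xi \in \mathbb{T}_-$ provided by Proposition~\ref{prop:basepoint}. The candidate solutions $\mathcal{X}_r$ are defined on $\mathbb{H}_r$ for each non-active ray~$r$; Theorem~\ref{thm:meromorphic} shows that each component $\mathcal{X}_{r,\gamma}$ is a well-defined meromorphic function on $\mathbb{H}_r$, so the preliminary regularity requirement of the problem is satisfied.

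Next I would verify the weak Riemann-Hilbert problem. The jump condition (RH1) is exactly the content of the proposition following Theorem~\ref{thm:meromorphic}: for $r_\pm$ bounding an acute sector $\Delta$ in clockwise order, the functions $\mathcal{X}_{r_\pm}(t)$ are holomorphic near $t=0$ inside the intersection of the two half-planes, and part~(2) of Proposition~\ref{prop:basepoint} identifies the cluster transformation relating the two WKB Fock-Goncharov charts with the BPS automorphism $\mathbf{S}(\Delta)$ acting on $\mathbb{T}_-$ via the distinguished basepoint $\xi$. The asymptotic condition (RH2) is verified by Proposition~\ref{prop:RH2}, which was deduced from the Voros-symbol asymptotics of Theorem~\ref{thm:asymptoticssaddlefree} by a simple change of variables $\tilde t = e^{-i\theta}t$ and the compatibility $Z_{\phi_\theta}(\gamma)/\tilde t = Z_\phi(\gamma)/t$. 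This already establishes the first assertion of the theorem.

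To upgrade the weak solution to a full solution when $\mathbb{S}$ is closed, I would argue that in this case every pole of $\phi$ has order exactly two, so Proposition~\ref{prop:RH3} applies and delivers (RH3). Indeed, when $\mathbb{S}$ has empty boundary, the marked bordered surface $(\mathbb{S},\mathbb{M})$ has no boundary marked points, so $\mathbb{M} = \mathbb{P}$ consists entirely of interior punctures. By the construction recalled in the moduli-spaces subsection of Section~\ref{sec:MarkedBorderedSurfaces}, the punctures correspond precisely to the poles of $\phi$ of order $\leq 2$, and since $\phi$ is complete there are no simple poles, forcing every pole to have order exactly two. Hence Proposition~\ref{prop:RH3} furnishes the required polynomial bounds $|t|^{-k} < |\mathcal{X}_{r,\gamma}(t)| < |t|^k$ for $t \in \mathbb{H}_r$ with $|t| \gg 0$, completing the verification.

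The main obstacle, and the reason the full problem requires the closedness assumption on $\mathbb{S}$, lies entirely in condition (RH3). The proof of Proposition~\ref{prop:asymptoticinfinityY} relies on Lemma~\ref{lem:extendframing}, which in turn uses the explicit form of the eigenvalues $\lambda_\pm(\eta) = -\exp(\pm \Res_p(\phi)\eta/2)$ at a pole of order two from Lemma~\ref{lem:residueeigenvalues}; for irregular singularities of order $\geq 3$, the framing near the corresponding boundary marked point is constructed from subdominant Stokes solutions, and one would need a substantially more delicate analysis of the $\eta \to 0$ behavior of the Stokes data of~\eqref{eqn:etaschrodinger} to extract comparable bounds. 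This is exactly the obstruction the authors flag in the conjecture that follows the theorem.
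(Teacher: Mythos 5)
Your proof is correct and follows the same approach the paper takes: identify the stability condition with a complete GMN differential via Theorem~\ref{thm:identifymanifolds}, match the two BPS structures, and then cite the proposition following Theorem~\ref{thm:meromorphic} for (RH1), Proposition~\ref{prop:RH2} for (RH2), and — after observing that closedness of $\mathbb{S}$ together with completeness forces all poles to have order exactly two — Proposition~\ref{prop:RH3} for (RH3). The paper's own treatment is terser (it just says "the results of Section~\ref{sec:SolvingTheRiemannHilbertProblem} then imply the following"), but your expanded version, including the explanation of why closedness reduces to the poles-of-order-two case, is exactly the intended argument.
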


It is interesting to ask whether our solution of the Riemann-Hilbert problem has any uniqueness properties. As mentioned in Section~\ref{sec:ConstructingTheSolution}, our solution depends on a choice of meromorphic projective structure which we take to be the one constructed by uniformization in Section~\ref{sec:AnEmbeddingOfModuliSpaces}. It is possible to replace this meromorphic projective structure by a different one, but the choice of meromorphic projective structure should satisfy the property in Lemma~\ref{lem:Q2} so that we can apply the results of~\cite{Allegretti19}. The uniformizing projective structure seems to be the most canonical choice satisfying this property.

\bibliographystyle{amsplain}

\end{document}